\newcommand{\arian}{\textbf{\currency}}
\newcommand{\R}{\mathbb{R}}
\newcommand{\N}{\mathbb{N}}
\newcommand{\Z}{\mathbb{Z}}
\newcommand{\de}{\partial}
\renewcommand{\-}{\smallsetminus}
\newcommand{\D}{\mathbb{D}}
\newcommand{\B}{\mathbb{B}}
\renewcommand{\a}{\alpha}
\newcommand{\f}{\varphi}
\newcommand{\e}{\varepsilon}
\newcommand{\w}{{\omega}}
\newcommand{\transv}{\mathrel{\text{\tpitchfork}}}
\newcommand{\tpitchfork}{%
  \raise-0.1ex\vbox{
    \baselineskip\z@skip
    \lineskip-.52ex
    \lineskiplimit\maxdimen
    \m@th
    \ialign{##\crcr\hidewidth\smash{$-$}\hidewidth\crcr$\pitchfork$\crcr}
  }%
}
\newcommand{\en}{\E\#_{X\in W}}
\newcommand{\vol}{\mathrm{vol}}
\newcommand{\ci}{\mathcal{C}^1}
\newcommand{\RP}{\mathbb{R}\mathrm{P}}
\newcommand{\Prob}{(\Omega, \mathfrak{S},\P)}
\renewcommand{\P}{\mathbb{P}}
\newcommand{\E}{\mathbb{E}}
\newcommand{\nrw}{\Rightarrow}
\newcommand{\spt}{\text{supp}}
\newcommand{\g}[3]{\mathcal{G}^{#1}(#2,\R^{#3})}
\newcommand{\goo}[3]{\mathcal{G}^{#1}(#2|#3)}
\newcommand{\coo}[3]{\mathcal{C}^{#1}(#2,#3)}
\newcommand{\mC}{\mathcal{C}}
\newcommand{\mS}{\mathcal{S}}
\newcommand{\Gam}{\Gamma(X,W)}
\newcommand{\HPK}{Let $(O_p,N_0,\phi,g)$ be a KROK model for a KROK couple $(X,W)$}
\newcommand{\HP}{Let $(X,W)$ be KROK}
\newtheorem{thm}{Theorem}
\newtheorem{lemma}[thm]{Lemma}
\newtheorem{cor}[thm]{Corollary}
\newtheorem{prop}[thm]{Proposition}
\newtheorem{claim}[thm]{Claim}
\theoremstyle{definition}
\newtheorem{defi}[thm]{Definition}
\newtheorem{remark}[thm]{Remark}
\newtheorem{example}[thm]{Example}
\newcommand{\be}{\begin{equation}}
\newcommand{\ee}{\end{equation}}
\newcommand{\bega}{\begin{equation}\begin{aligned}}
\newcommand{\eega}{\end{aligned}\end{equation}}
\newcommand{\MW}{\mathcal{M}}
\newcommand{\XW}{\mathcal{X}}
\newcommand{\NW}{\mathcal{N}}
\newcommand{\DElta}{\mathcal{W}}
\numberwithin{equation}{section}
\title{Kac-Rice formula for transverse intersections}
\author{Michele Stecconi}
\begin{document}
\begin{abstract}
We prove a generalized Kac-Rice formula that, in a well defined regular setting, computes the expected cardinality of the preimage of a submanifold via a random map, by expressing it as the integral of a density. Our proof starts from scratch and although it follows the guidelines of the standard proofs of Kac-Rice formula, it contains some new ideas coming from the point of view of measure theory. Generalizing further, we extend this formula to any other type of counting measure, such as the intersection degree. 

We discuss in depth the specialization to smooth Gaussian random sections of a vector bundle. Here, the formula computes the expected number of points where the section meets a given submanifold of the total space, it holds under natural non-degeneracy conditions and can be simplified by using appropriate connections. Moreover, we point out a class of submanifolds, that we call sub-Gaussian, for which the formula is locally finite and depends continuously  with respect to the covariance of the first jet. In particular, this applies to any notion of singularity of sections that can be defined as the set of points where the jet prolongation meets a given semialgebraic submanifold of the jet space.

Various examples of applications and special cases are discussed. In particular, we report a new proof of the Poincaré kinematic formula for homogeneous spaces and we observe how the formula simplifies for isotropic Gaussian fields on the sphere.
\end{abstract}
\maketitle
\tableofcontents
%
%
%


\subsection{Overview}

What motivates this work is the interest in studying the expected number of realizations of a geometric condition. This topic is at the heart of stochastic geometry and geometric probability and, in recent years, it has gained a role also in subjects of more deterministic nature like enumerative geometry (see \cite{Kostlan:93}, \cite{shsm}, \cite{GaWe1,GaWe2,GaWe3}, \cite{SarnakWigman}, \cite{NazarovSodin1,NazarovSodin2}, \cite{FyLeLu,Letwo,Lerarioshsp,Lerariolemniscate}) and physics (see for instance \cite{marinucci_peccati_2011}, \cite{Wig,wigEBNRF},\cite{park2013betti}).

As a first example, let us consider a random $\mC^1$ function $X\colon M\to \R^n$, where $M\subset \R^m$ is an open subset and let $t\in \R^n$. Then, under appropriate assumptions on $X$, we have the so called \emph{Kac-Rice formula} for the expected cardinality of the set of solutions of the equation $X=t$.
\be\label{eq:KR}
\E\#X^{-1}(t)=\int_M\E\left\{\left|\det(d_uX)\right|\Big|X(u)=t\right\}\rho_{X(u)}(t)du,
\ee
where $\rho_{X(u)}$ is the density of the random variable $X(u)$, meaning that for every event $A\subset \R^n$ we have $\P\{X(u)\in A\}=\int_{A}\rho_{X(u)}(t)dt$. 
First appeared in the independent works by M. Kac \cite[1943]{kac43} and S. O. Rice \cite[1944]{rice44}, this formula is today one of the most important tool in the application of smooth stochastic processes (here called ``random maps'' or ``random fields'') both in pure and applied maths. In fact, the ubiquity of this formula is suggested by its name and birth in that Kac's paper is about random algebraic geometry, while Rice's one deals with the analysis of random noise in electronic engineering. We refer to the book \cite{AdlerTaylor} for a detailed treatment of Kac-Rice formula. We were also inspired by the book \cite{Wschebor} containing a simpler proof in the Gaussian case.

The main result of this paper is the generalization of Kac-Rice formula to one that computes the expected number of points in the preimage of a submanifold $W$, namely the number of solutions of $X\in W$,  rather than just $X=t$. 
\begin{multline}\label{eq:mainformula}
\E\#(X^{-1}(W))= \\=\int_{M}\left(\int_{ S_p\cap W}\E\left\{J_pX\frac{\sigma_q(X,W)}{\sigma_q(S_p,W)}\bigg|X(p)=q\right\}\rho_{X(p)}(q)d(S_p\cap W)(q)\right)dM(p),
\end{multline}
This is the content of Theorems \ref{thm:main}, \ref{thm:maindens} and  \ref{thm:maingraph} reported in Section \ref{sec:main}, after a brief introduction to the problem in Section \ref{sec:intro}. Such theorems are essentially equivalent alternative formulations of the same result. In presenting them, we  pay a special attention to their hypotheses, aiming to propose a setting, that we call KROK\footnote{Stands for ``Kac Rice OK''.} hypotheses (Definition \ref{def:krok}), that appears frequently in random geometry and that is easy to recognize, especially in the Gaussian case.
\begin{remark}[A comment on the proof]
The first idea that comes to mind is to write, locally, the submanifold $W$ as the preimage $\f^{-1}(0)$ of a smooth function $\f\colon N\to \R^m$ and then apply the standard Kac-Rice formula to the random map $\f\circ X$. After that, however, one wants to get rid of $\f$ since it is desirable to have an intrinsic statement, independent from the arbitrary choice of this auxiliary function. In fact, this is the key issue, but it ends up being ugly. So, we chose to reprove everything from the beginning, instead. In doing this, we aim also at proposing an alternative reference for the proof of the standard case. 

Specializing the proof of Theorem \ref{thm:main} to the case in which $W$ is a point, one obtains a proof for the standard Kac-Rice formula, in the KROK setting. Although this setting is very general, the complexity of the proof is comparable to that of Azais and Wschebor \cite{Wschebor} for the Gaussian case and quite simple if compared to the one reported in Adler and Taylor's book \cite{AdlerTaylor}. Moreover, we use an argument that is new in this context: instead of dividing the domain in many little pieces, we interpret the expectation as a measure and use Lebesgue Differentiation Theorem. This makes the hard step of the proof (the ``$\ge$'' part) a little more elegant.
\end{remark}
In Section \ref{sec:gausscase} we focus on the case of Gaussian random sections of a vector bundle. Here, the formula specializes to Theorem \ref{thm:maingau}, where the hypotheses reduce to simple non-degeneracy conditions, thanks also to the Probabilistic Transversality theorem from \cite{dtgrf}. We also provide alternative ways of writing the formula \eqref{eq:mainformula}: as a measure on the submanifold $W$ (Corollary \ref{cor:maingauW}), or using the canonical connection defined by the Gaussian field (Corollary  \ref{cor:connectedform}), see \cite{Nicolaescu2016, AdlerTaylor}. Moreover, in this case we establish a continuity property of the expected number of singular points of a Gaussian random section, with respect to the corresponding covariance tensor (Theorem \ref{thm:mainEgau}). 
This last result has a nice application in the study of semialgebraic singularities of Gaussian random fields (Corollary \ref{cor:mainjgau}).

We also discuss, in Section \ref{sec:weights}, the problem of counting solutions with ``weights'', for instance the intersection degree of $X$ and $W$. Here we show (Theorem \ref{thm:mainalph}) that, under KROK hypotheses, the formula can be directly generalized to hold for any counting measure with measurable weights.

Finally, in section \ref{sec:ex}, we test our formula in two prominent instances of random geometry. First, we show that it can be used to obtain a new quite elementary proof of Poincaré kinematic formula for homogeneous spaces (Theorem \ref{thm:howard}), in the case of zero dimensional intersection; then, we deduce a simple but general formula for isotropic Gaussian random fields on the sphere (Theorem \ref{thm:mainIsotrop}).

\subsection{Structure of the paper}
Sections \ref{sec:intro}-\ref{sec:weights}  contain the presentation of the results of the paper, without proofs. All of their proofs are contained in the Sections \ref{sec:proflemmameas}-\ref{sec:proofgauss}. In particular, Section \ref{sec:gen} is devoted to deduce from the coarea formula that the identity \eqref{eq:mainformula} holds for almost every $W$, under very general assumptions. This essentially allows to prove the ``$\le$'' part of \eqref{eq:mainformula} in the KROK setting, while the opposite inequality is proved in Section \ref{sec:mainproof}. 
Section \ref{sec:ex} contains minor results (and their proof) obtained from applications of the main formula. In the appendix we report some details regarding a few notions of which we make extensive use throughout this paper.

\begin{remark}
 The reader who wants to grasp the meaning of the generalized formula, without going into its more abstract aspects, may just skip Section \ref{sec:main} and go directly to the Gaussian case, discussed in Section \ref{sec:gausscase}. Enough references are provided so that this is a safe practice. However, we recommend that you take a look at Section \ref{sec:intro} first.
\end{remark}

\subsection{Aknowledgements}
The author wishes to thank Antonio Lerario for his useful suggestions indicating the most interesting directions; Léo Mathis and Riccardo Tione for being ``the one with the answer'' and ready to help on multiple occasions.

\section{Introduction}\label{sec:intro}
\subsection{Notations}

\begin{enumerate}[$\bullet$]
 \item We write $\#(S)$ for the cardinality of the set $S$.
\item We use the symbol $A\transv B$ to say that objects $A$ and $B$ are in transverse position, in the usual sense of differential topology (as in \cite{Hirsch}).
    \item The space of $\mC^r$ functions between two manifolds $M$ and $N$ is denoted by $\coo rMN$.
    If $E\to M$ is a vector bundle, we denote the space of its $\mC^r$ sections by $\mathcal{C}^r(M|E)$. In both cases, we consider it to be a topological space endowed with the weak Whitney's topology (see \cite{Hirsch}).
    \item We call $X$ a \emph{random element} (see \cite{Billingsley}) of the topological space $T$ if $X$ is a measurable map $X\colon \Omega\to T$, defined on some probability space $\Prob$ and we denote by $[X]=\P X^{-1}$ the Borel probability measure on $T$ induced by pushforward. We will alternatively use the following equivalent notations:
\be 
\P\{X\in U\}:=[X](U)=\P\left(X^{-1}(U)\right)=\int_{U}d[X],
\ee 
to denote the probability that $X\in U$, for some measurable subset $U\subset T$, and
\be 
\E\{f(X)\}:=\int_{T}f(t)d[X](t),
\ee
to denote the expectation of a measurable vector-valued function $f\colon T\to \R^k$.
Two random elements $X_1,X_2$ are said to be \emph{equivalent} and treated as if they were equal if $[X_1]=[X_2]$. We might call $X$ a \emph{random variable}, \emph{random vector} or \emph{random map} if $T$ is the real line, a vector space or a space of functions $\coo rMN$, respectively.
    \item The sentence: ``$X$ has the property $\mathcal{P}$ almost surely'' (abbreviated ``a.s.'') means that the set  $S=\{f\in \coo1MN| f \text{ has the property }\mathcal{P}\}$ contains a Borel set of $[X]$-measure $1$. It follows, in particular, that the set $S$ is $[X]$-measurable, i.e. it belongs to the $\sigma$-algebra obtained from the completion of the measure space $(\coo1MN,\mathcal{B},[X])$.
    
    \item The bundle of densities of a manifold $M$ is denoted by $\Delta M$, see Appendix \ref{app:densities} for details. If $M$ is a Riemannian manifold, we denote its volume density by $dM$. The subset of positive density elements is denoted by $\Delta^+M$. We denote by $B^+(M)$ the set of positive Borel functions $M\to [0,+\infty]$ and by $L^+(M)$ the set of positive densities, i.e. densities of the form $\rho dM$, where $\rho\in B^+(M)$ and $dM$ is the volume density of some Riemannian metric on $M$. In other words 
\be 
L^+(M)=\left\{ \text{measurable functions }M\ni p\mapsto \delta(p)\in\Delta^+_pM \cup \{+\infty\}\right\}
\ee 
is the set of all non negative, non necessarily finite Borel measurable densities.
The integral of a density $\delta\in L^+(M)$ is written as $\int_M \delta=\int_M \delta(p) dp$.
\item The Jacobian of a $\mC^1$ map $f\colon M\to N$ between Riemannian manifolds (see Definition \ref{def:jacob}), evaluated at a point $p\in M$ is denoted by $J_pf\in [0,+\infty)$. The Jacobian density is then $\delta_pf=J_pf dM(p)$ (see Appendix \ref{app:densities}). If moreover $M$ and $N$ have the same dimension then we may stress this fact by writing $J_pf=|\det (d_pf)|$. In case $f$ is a linear map between Euclidean spaces, then we will just write $Jf:=J_0f=J_pf$. 
    \item
Given a finite dimensional Euclidean space $E$, the expression $\sigma_E(V,W)$ denotes the ``angle''\footnote{Actually a better analogy is with the sine of the angle.} between two vector subspaces $V,W\subset E$, see Appendix \ref{app:angle}. If $f\colon M\to N$ is a $\ci$ map between Riemannian manifolds and $W\subset N$ is a submanifold, we will write shortly
\be 
\sigma_x(W,f):=\sigma_x(W,d_pf):=\sigma_{T_xN}(T_xW, d_pf(T_pM)),
\ee
whenever $f(p)=x\in W$. If $S\subset N$ is another submanifold and $x\in S\cap W$, then
\be 
\sigma_x(W,S):=\sigma_{T_xN}(T_xW, T_xS)).
\ee
\item If $E$ is a Euclidean space, we write $\Pi_V\colon E\to V$ for the orthogonal projection onto a subspace $V\subset E$.
\end{enumerate}
\subsection{The expected counting measure}\label{subs:expcm}
Let us start by considering the following setting. 
\begin{enumerate}[i.]\label{setting}
    \item $M, N$ smooth manifolds ($\mC^\infty$ and without boundary) of dimension $m,n$.
    \item $W\subset N$ smooth submanifold (image of a smooth embedding) of codimension $m$.
    \item $X\colon M\to N$ random $\mC^1$ map, i.e. it represents a Borel probability measure $[X]$ on the topological space $\coo1MN$ endowed with the (weak) Whitney $\mC^1$ topology.
    \item $X\transv W$ almost surely.
\end{enumerate}
If moreover $W$ is closed (this assumption can and will be removed with Lemma \ref{lemma:meas}), then the random set $X^{-1}(W)$ is almost surely a discrete  subset of $M$, so that for every $U\subset M$ relatively compact open set, the number 
\be 
\#_{X\in W}(U):=\#(X^{-1}(W)\cap U)
\ee
is almost surely finite (if $W$ is not closed, this number can be $+\infty$) and it is a continuous function with respect to $X\in \{f\in \coo1MN \colon f\transv W\}$, thus it defines an integer valued random variable. Now, Lemma \ref{lemma:meas} below guarantees that its mean value
\be 
\E\#_{X\in W}(U)=\E\{\#(X^{-1}(W)\cap U)\}
\ee
can be extended to a Borel measure on $M$.

\begin{lemma}\label{lemma:meas}
Let $X\colon M\to N\supset W$ satisfy \emph{i-iv}. For any $A\in \mathcal{B}(M)$, the number $\#_{X\in W}(A)=\#(X^{-1}(W)\cap A)$ is a measurable random variable 
and the set function
\be 
\E\#_{X\in W}\colon\mathcal{B}(M)\to [0,+\infty],\qquad A\mapsto \E\#_{X\in W}(A)
\ee
is a Borel (not necessarily finite) measure on $M$.
\end{lemma}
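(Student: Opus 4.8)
The statement has two parts: measurability of the integer-valued (possibly $+\infty$) random variable $\#_{X\in W}(A)$ for every $A\in\mathcal{B}(M)$, and countable additivity of $A\mapsto\E\#_{X\in W}(A)$. The plan is to reduce to the situation already treated just before the statement --- $W$ closed and $A$ relatively compact open, where $\#_{X\in W}(A)$ is a continuous function of $X$ on the full-measure set $\{X\transv W\}$, hence a random variable --- by two monotone reductions, and then to read off the measure property from the monotone convergence theorem, using that for $[X]$-almost every realization the set function $A\mapsto\#(X^{-1}(W)\cap A)$ is honestly a Borel measure on $M$, namely the counting measure of the locally finite set $X^{-1}(W)$.

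\emph{Reduction 1: removing the closedness of $W$.} An embedded submanifold is itself a second countable manifold, hence $\sigma$-compact, so I would fix open submanifolds $W_1\subseteq W_2\subseteq\cdots$ of $W$ with each closure $\overline{W_k}$ (taken inside $W$) compact and $\bigcup_k W_k=W$. Each $W_k$ is an open submanifold of $N$, so $X\transv W$ a.s.\ implies $X\transv W_k$ a.s.; and $\overline{W_k}$ is a compact, hence closed, subset of $N$ contained in $W$. From this, $X^{-1}(W_k)$ is a.s.\ closed and discrete in $M$: if $p_i\in X^{-1}(W_k)$ accumulated at some $p$, then $X(p)\in\overline{W_k}\subseteq W$, and transversality of $X$ to $W$ would force $X^{-1}(W)$, a fortiori $X^{-1}(W_k)$, to be finite near $p$ --- a contradiction; the same local argument shows every point of $X^{-1}(W_k)$ is isolated in it. Hence the reasoning used for the closed case applies verbatim to each $W_k$ (only local finiteness of the preimage and transversality entered there), and $\#_{X\in W_k}(U)$ is a random variable for every relatively compact open $U$. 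Since $W_k\uparrow W$, one has the pointwise increasing identity $\#_{X\in W}(A)=\lim_k\#_{X\in W_k}(A)$ for every $A$, so it suffices to prove the lemma for each $W_k$; equivalently, I may henceforth assume that $X^{-1}(W)$ is a.s.\ locally finite in $M$, so that $A\mapsto\#(X^{-1}(W)\cap A)$ is a.s.\ a Borel measure, finite on relatively compact sets.

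\emph{Reduction 2: from relatively compact opens to all Borel sets.} Fix an exhaustion $M=\bigcup_j M_j$ by relatively compact open sets with $M_j\subseteq M_{j+1}$. For $\omega$ in the (Borel, full-measure) set where $X(\omega)\transv W$, set $\mu^\omega(A):=\#(X(\omega)^{-1}(W)\cap A)$, a Borel measure on $M$ with $\mu^\omega(M_j)<\infty$; set $\mu^\omega:=0$ on the complementary null set. Fixing $j$, I would invoke Dynkin's $\pi$--$\lambda$ theorem: the family $\{A\in\mathcal{B}(M_j):\ \omega\mapsto\mu^\omega(A)\text{ is measurable}\}$ contains $M_j$ (since $\omega\mapsto\mu^\omega(M_j)=\#_{X\in W}(M_j)$ is measurable by Reduction 1 and finite), is stable under complementation within $M_j$ (legitimate because $\mu^\omega(M_j)<\infty$) and under increasing unions (monotone convergence), and contains the $\pi$-system of relatively compact open subsets of $M_j$, which generates $\mathcal{B}(M_j)$; hence it equals $\mathcal{B}(M_j)$. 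Thus $\omega\mapsto\mu^\omega(A\cap M_j)$ is measurable for every $A\in\mathcal{B}(M)$ and every $j$, and letting $j\to\infty$, $\#_{X\in W}(A)=\lim_j\#_{X\in W}(A\cap M_j)$ is measurable. Together with Reduction 1 this yields measurability of $\#_{X\in W}(A)$ for all $A\in\mathcal{B}(M)$.

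\emph{The measure property.} Given measurability, for pairwise disjoint $A_n\in\mathcal{B}(M)$ I would use countable additivity of the a.s.\ defined counting measure, $\#_{X\in W}(\bigsqcup_n A_n)=\sum_n\#_{X\in W}(A_n)$ a.s., and exchange $\E$ with the sum by monotone convergence (all terms are $\geq 0$) to get $\E\#_{X\in W}(\bigsqcup_n A_n)=\sum_n\E\#_{X\in W}(A_n)$; since $\E\#_{X\in W}(\emptyset)=0$, this is a Borel measure, not necessarily finite. (Alternatively, $\E\#_{X\in W}=\lim_k\E\#_{X\in W_k}$ exhibits it as an increasing limit of Borel measures.) I expect the only delicate point to be Reduction 1 --- checking that the exhausting pieces $W_k$ still have locally finite preimage, so that counting them makes sense, even though transversality to a non-closed $W$ fails to be an open condition --- together with the bookkeeping of the almost-sure hypotheses through the completion of $(\coo1MN,\mathcal{B},[X])$.
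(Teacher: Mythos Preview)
Your proof is correct, and Reduction~2 together with the measure step match the paper's argument almost verbatim (Dynkin class on each relatively compact $M_j$, then monotone convergence for continuity from below). The genuine difference is Reduction~1. The paper does not exhaust $W$; instead it replaces $X\colon M\to N$ by $X_W\colon M\times W\to N\times N$, $(p,w)\mapsto(X(p),w)$, and $W$ by the diagonal $\Delta\subset N\times N$, which is closed. Since $X\transv W\iff X_W\transv\Delta$ and $\#_{X\in W}(A)=\#_{X_W\in\Delta}(A\times W)$, this reduces to the closed case in one stroke, at the price of enlarging $M$ to $M\times W$. Your exhaustion by $W_k$ with $\overline{W_k}\subset W$ compact is more elementary but needs a touch more care than you give it: the sentence ``the reasoning used for the closed case applies verbatim'' is not quite accurate, because that reasoning used continuity of $f\mapsto\#(f^{-1}(W)\cap U)$ on the \emph{open} set $\{f\transv W\}$, and openness of that set genuinely requires $W$ closed. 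What does go through for your $W_k$ is that the set
\[
\{f\in\mC^1(M,N):\exists\ \text{distinct }p_1,\dots,p_j\in U,\ f(p_i)\in W_k,\ f\transv W_k\text{ at }p_i\}
\]
is open (transverse intersection points persist, and $W_k$ is open in $W$), and on the full-measure Borel set contained in $\{f\transv W\}$ this set coincides with $\{\#(f^{-1}(W_k)\cap U)\ge j\}$; hence $\#_{X\in W_k}(U)$ is measurable for the completed $\sigma$-algebra. With this small patch your argument is complete. The paper's graph trick is slicker and, as the footnote there hints, it also foreshadows the construction $\XW\colon\MW\to\NW$ used later to pass from Theorem~\ref{thm:main} to Theorem~\ref{thm:maingraph}.
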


The proof of this Lemma is postponed to Section \ref{sec:proflemmameas}. At this point, a couple of curiosities about this measure naturally arise: 
when is it a Radon measure\footnote{A Borel measure that is finite on compact sets.}? When is it absolutely continuous (in the sense of Definition \ref{def:abscont})? 
In this paper we are going to address these questions giving sufficient conditions for $\E\#_{X\in W}$ to be an absolutely continuous Radon measure and a formula to compute it in this case.

\section{KROK hypotheses and the main result.}\label{sec:main}

By considering the following particularly simple examples, that we should always bear in mind, we can observe that the setting i-iv described above is far too general to allow to give a yes/no answer to the questions raised at the end of the last subsection.
\begin{enumerate}[$\bullet$]
\item Let $X$ be \emph{deterministic}, in the sense that it is constantly equal to a function ${f\transv W}$. Then $\E\#_{X\in W}$ is the counting measure of the set $f^{-1}(W)$. This measure is Radon if and only if the set has no accumulation point, which is a consequence of transversality when $W$ is closed. In this situation the only case where $\E\#_{X\in W}$ is absolutely continuous is if $f^{-1}(W)=\emptyset$.
\item Let $\dim M=0$, i.e. $M=\{p_i\}_{i\in\N}$, and let $W\subset N$ be an open subset. In this case $X$ is a random element in the product space $X\in N^M$ and it can be easily checked that
\be\label{eq:stupid} 
\E\#\{p\in M\colon X_p\in W\}=\sum_{p\in M}\P\{X_p\in W\}.
\ee
This is a rather stupid case, however, the above formula \eqref{eq:stupid} is close in spirit to the one we are going to prove (in fact it is a special case of Theorem \ref{thm:main}), in that the right hand side depends only on the marginal probabilities of the random variables $\{X_p\}_{p\in N}$.
\item Let $X\colon M\to M\times M$ be the map $p\mapsto (p,\xi)$, for some random element $\xi\in M$ and let $W=\Delta$ be the diagonal. Then the measure $\E\#_{X\in W}$ is the law of $\xi$. Since the hypotheses i-iv are satisfied for every random variable $\xi$, this example shows that certainly every Borel probability measure on $M$ can be realized in this way (it is more difficult to realize an arbitrary measure with total mass greater than $1$).
\item Let $G$ be a group. Let $X\colon M\to E$ be a random section of a $G$-equivariant\footnote{Meaning that $G$ acts on the left on both $E$ and $M$ and the action commutes with the projection: $\pi(g\cdot x)=g\cdot \pi(x)$, for any $x\in E$ and $g\in G$. Thus, the function $gXg^{-1}$ such that $p\mapsto g\cdot X(g^{-1}\cdot p)$ is a section.} bundle $E\to M$
such that $gXg^{-1}$ has the same law (on the space of $\mC^1$ sections of $E$) as $X$, then the measure $\E\#_{X\in W}$ is $G$-invariant. This condition, in many situations, implies that $\E\#_{X\in W}$ is a constant multiple of the volume measure of some Riemannian metric on $M$ and therefore it is absolutely continuous. 
\item Let $M\subset \R^m$ be an open subset, $W=\{t\}$ a point of $N=\R^n$ and $X\colon M\to \R^n$  a $\mC^1$ Gaussian random field such that $X(p)$ is non degenerate for every $p\in M$. Then Kac-Rice formula \eqref{eq:KR} holds (see \cite{AdlerTaylor}) 
\end{enumerate}
To be able to say something meaningful we need to restrict our field of investigation. We will now make a series of assumption on the random map $X$ and on the submanifold $W$ under which the measure $ \E\#_{X\in W}\colon \mathcal{B}(M)\to [0,+\infty]$ is absolutely continuous and we can write a formula for its density. 
In doing so, one of our aim is to propose a setting that is easy to recognize in contexts involving differential topology and smooth random maps.
Although such hypotheses do not reach the highest level of generality in which Kac-Rice formula holds (see \cite{AdlerTaylor}), they describe a much more general setting than that in which the random map is assumed to be Gaussian and at the same time allow to give a proof whose simplicity is comparable to those for the Gaussian case.

\begin{defi}\label{def:krok}
 Let $M$ and $N$ be two smooth manifolds ($\mC^\infty$ and without boundary) of dimension $m$ and $n$. Let $W\subset N$ be a smooth submanifold (without boundary) and $X\colon M\to N$ a random map. We will say that $(X,W)$ is a \emph{KROK}
  couple if the following hypotheses are satisfied.
\begin{enumerate}[ , wide, left=0pt]
    \item Properties of $X$:
\begin{enumerate}[(i)]
\item\label{itm:krok:2} \boldmath{$X\in \mC^1$: }\unboldmath

$X\colon M\to N$ is a random $\mC^1$ map, i.e. it represents a Borel probability measure $[X]$ on the topological space $\coo1MN$ endowed with the weak Whitney $\mC^1$ topology (see \cite{Hirsch}).
\item\label{itm:krok:3} \boldmath{$d[X(p)]=\rho_{X(p)}dS_p$: }\unboldmath 

Let $N$ be endowed with a Riemannian metric. Assume that for each $p\in M$, the probability measure $[X(p)]$ is absolutely continuous with respect to the Riemannian volume density of a certain smooth submanifold $S_p$. 
In other words, there exists a measurable function $\rho_{X(p)}\colon S_p\to [0,+\infty]$ such that
    \be 
    \E\{F(X(p))\}=\int_{S_p}F(q)\rho_{X(p)}(q)dS_p(q).
    \ee
    for every Borel function $F\colon N\to\R$. ($\rho_{X(p)}$ is allowed to vanish on $S_p$.)
\end{enumerate}
\item Properties of $W$ (transversality):
\begin{enumerate}[(i)]
\setcounter{enumii}{2}
\item\label{itm:kroktransvas} \boldmath{$X\transv W$ }\unboldmath

 almost surely.
\item\label{itm:krok:6} \boldmath{$S_p\transv W$ }\unboldmath  

for every $p\in M$. 
\item\label{itm:krok:4} \boldmath{$\dim X^{-1}(W)=0$: }\unboldmath 

The codimension of $W$ is $m=\dim M$.
\end{enumerate}
\item Continuity properties:
\begin{enumerate}[(i)]
\setcounter{enumii}{5}
\item\label{itm:krok:7} \boldmath{$S_{(\cdot)}\in\mC^\infty$: }\unboldmath

The set $\mathcal{S}=\{(p,q)\colon q\in S_p\}\subset M\times N$ is a 
closed smooth submanifold. 
This, together with \ref{itm:krok:6}, implies that the set defined as $\MW  =\{(p,q)\in M\times N\colon q\in S_p\cap W\}$ is a smooth manifold. 
\item \boldmath{$\rho_{X(\cdot)}(\cdot)\in\mC^0$: }\unboldmath

The function $M\times N\ni (p,q)\mapsto \rho_{X(p)}(q)\in \R_+$ is continuous at all points of $\MW  $.
\item{\label{itm:krokEcont}}\boldmath{$[X|X(\cdot)=\cdot]\in\mC^0$: }\unboldmath 

There exists a regular conditional probability\footnote{See \cite{dudley}, or wait for the next subsection \ref{intro:krok9}, for an explanation of this concept.} ${[X|X(p)=\cdot]}$ such that the expectation
$M\times N\ni (p,q)\mapsto \E\big\{\a(X,p)J_pX\big|X(p)=q\big\}$ is continuous \emph{at}\footnote{i.e. continuous at every point of $\MW  $.
} $\MW  $, for any bounded and continuous function $\a\colon \coo1MN\times M\to \R$. 
\end{enumerate}
\end{enumerate}
In the following, we will refer to this hypotheses as KROK.\ref{itm:krok:2}, KROK.\ref{itm:krok:3}, etc\dots
\end{defi}

\begin{thm}[Generalized Kac-Rice formula]\label{thm:main}
Let $X\colon M \to N$ be a random $\mC^1$ map between two Riemannian manifolds and let $W\subset N$ such that $(X,W)$ is a KROK couple. Then for every Borel subset $A\subset M$ we have
\begin{multline}\label{eq:formrho}
\E\#_{X\in W}(A)=\int_A \rho_{X\in W}(p)dM(p)= \\=\int_{A}\left(\int_{ S_p\cap W}\E\left\{J_pX\frac{\sigma_q(X,W)}{\sigma_q(S_p,W)}\bigg|X(p)=q\right\}\rho_{X(p)}(q)d(S_p\cap W)(q)\right)dM(p),
\end{multline}
where $d(S_p\cap W)$ and $dM$ denote the volume densities of the corresponding Riemannian manifolds and $J_pX$ is the jacobian of $X$ (see Definition \ref{def:jacob}); besides, $\sigma_q(X,W)$ and $\sigma_x(S_p,W)$ denote the ``angles'' (in the sense of Definition \ref{defi:angle}) made by $T_qW$ with, respectively, $d_pX(T_pM)$ and $T_qS_p$.
\end{thm}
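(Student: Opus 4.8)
The plan is to prove the two inequalities ``$\le$'' and ``$\ge$'' between the Borel measures $\mu:=\E\#_{X\in W}$ and $\nu:=\rho_{X\in W}\,dM$ on $M$, and then to merge them by the Lebesgue differentiation theorem. By Lemma~\ref{lemma:meas}, an exhaustion of $W$ by relatively compact open pieces, and outer regularity of Borel measures, I may assume that $W$ is closed and that $A$ is a relatively compact open set sitting inside a chart of $N$ on which $W$ is cut out by a submersion $\psi\colon\mathcal U\to\R^m$, i.e.\ $W=\psi^{-1}(0)$; the general statement then follows by patching. Rather than reducing to the classical Kac--Rice formula for $\psi\circ X$ (which works but leaves an unwanted dependence on the auxiliary $\psi$), I would use $\psi$ only inside a coarea computation in which it cancels. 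Note also that $\nu$ is a Radon measure: the continuity hypotheses KROK.\ref{itm:krok:7}, the one on $\rho_{X(\cdot)}(\cdot)$ and KROK.\ref{itm:krokEcont}, together with smoothness of $\mathcal S$, make the integrand $\rho_{X\in W}$ locally finite.

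Upper bound (this is Section~\ref{sec:gen}). I would first prove \eqref{eq:formrho} for almost every member of the deformed family $W_t:=\psi^{-1}(t)$, $t\in\R^m$. The change-of-variables/area identity applied to the equidimensional $\ci$ map $g:=\psi\circ X$ reads, for every nonnegative test function $\phi$ on $\R^m$ and every $X$ transverse to all the $W_t$ (a.s.), \[ \int_A J_p(\psi\circ X)\,\phi\big(\psi(X(p))\big)\,dM(p)=\int_{\R^m}\phi(t)\,\#\big(X^{-1}(W_t)\cap A\big)\,dt. \] Taking expectations, using Tonelli on the left and Fubini on the right, and disintegrating $[X(p)]=\rho_{X(p)}\,dS_p$ via KROK.\ref{itm:krok:3} and the regular conditional probability of KROK.\ref{itm:krokEcont}, the left side becomes $\int_A\big(\int_{S_p}\E\{J_p(\psi\circ X)\mid X(p)=q\}\,\phi(\psi(q))\,\rho_{X(p)}(q)\,dS_p(q)\big)\,dM(p)$. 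Since this holds for all $\phi$, and both $t\mapsto\E\#(X^{-1}(W_t)\cap A)$ and $t\mapsto\int_A\rho_{X\in W_t}\,dM$ are locally integrable, they coincide for Lebesgue--a.e.\ $t$; the coarea factor of $\psi|_{S_p}$ contributes $\sigma_q(S_p,W_t)$ in the denominator (using KROK.\ref{itm:krok:6}), the factorization of $J_p(\psi\circ X)$ as $J_pX$ times the $\psi$-distortion along $d_pX(T_pM)$ contributes $\sigma_q(X,W_t)$ in the numerator, and the remaining $\psi$-dependent factors cancel, so the common value is exactly $\int_A\rho_{X\in W_t}\,dM$. To pass from a.e.\ $W_t$ to $W=W_0$, I would use that for each fixed transverse $f$ the count $\#(f^{-1}(W_t)\cap A)$ is lower semicontinuous at $t=0$ (a transverse intersection of $f$ with $W_0$ in the open set $A$ persists for small $t$, by the implicit function theorem for $\psi\circ f$); Fatou along a sequence $t_k\to0$ in the good set, together with the continuity in $t$ of $\int_A\rho_{X\in W_t}\,dM$ (again from the continuity hypotheses), then yields $\mu(A)\le\nu(A)$ for every such $A$, hence $\mu\le\nu$.

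Lower bound (this is Section~\ref{sec:mainproof}), the crux. I would show that at every $p_0$ where $\rho_{X\in W}$ is continuous (so $dM$-a.e.\ $p_0$) one has $\liminf_{r\to0}\mu(B_r(p_0))/\vol(B_r(p_0))\ge\rho_{X\in W}(p_0)$. Here $\E\#_{X\in W}(B_r(p_0))\ge\P\{X^{-1}(W)\cap B_r(p_0)\neq\emptyset\}$, and a solution in $B_r(p_0)$ is forced, by a quantitative implicit function theorem, whenever (i) $X$ is $\ci$-close on $B_r(p_0)$ to its affine approximation determined by the first jet $(X(p_0),d_{p_0}X)$, and (ii) this approximation composed with $\psi$ has its unique zero well inside $B_r(p_0)$ --- a condition on the first jet saying that $X(p_0)$ lies in an explicit $O(r)$-neighbourhood of $W$ whose normal extent is governed by $J_{p_0}X$ and $\sigma_q(X,W)$. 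The probability of failing (i) is bounded, via the continuity of the regular conditional probability $[X\mid X(p_0)=\cdot]$ of KROK.\ref{itm:krokEcont}, by a quantity that tends to $0$ with $r$ uniformly for $X(p_0)$ in that $O(r)$-neighbourhood of $W$; the delicate point is precisely that this error must be negligible compared with $\vol(B_r(p_0))$, which is of order $r^m$, and that is exactly why one needs the conditional law and not merely $\ci$-regularity of $X$. Estimating the probability of (ii) through $\rho_{X(p_0)}\,dS_{p_0}$ (KROK.\ref{itm:krok:3}), disintegrating along $S_{p_0}\cap W$ (using $S_{p_0}\transv W$ and smoothness of $\mathcal S$, which brings $\sigma_q(S_{p_0},W)$ into the denominator), and dividing by $\vol(B_r(p_0))$, the limit $r\to0$ reproduces \[ \rho_{X\in W}(p_0)=\int_{S_{p_0}\cap W}\E\left\{J_{p_0}X\,\frac{\sigma_q(X,W)}{\sigma_q(S_{p_0},W)}\bigg|X(p_0)=q\right\}\rho_{X(p_0)}(q)\,d(S_{p_0}\cap W)(q), \] using the continuity of the conditional expectation at $\MW$.

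Conclusion. From $\mu\le\nu$ and $\nu\ll dM$ we get $\mu=h\,dM$ with $0\le h\le\rho_{X\in W}$ $dM$-a.e.; since $\mu$ is a Radon measure, the Lebesgue differentiation theorem gives $h(p_0)=\lim_{r\to0}\mu(B_r(p_0))/\vol(B_r(p_0))$ for $dM$-a.e.\ $p_0$, and the lower bound forces $h(p_0)\ge\rho_{X\in W}(p_0)$ there; hence $h=\rho_{X\in W}$ $dM$-a.e., i.e.\ $\mu=\nu$, which is \eqref{eq:formrho}. I expect the main obstacle to be the lower bound, and within it the uniform smallness --- relative to $r^m$ --- of the probability that $X$ fails to be close enough to affine on $B_r(p_0)$; the rest is a careful but essentially routine assembly of the area/coarea formula, the disintegration of $[X(p)]$ along $S_p$, and the linear algebra of the angle functions from Appendix~\ref{app:angle}.
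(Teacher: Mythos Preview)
Your overall architecture --- upper bound via the foliation $W_t=\psi^{-1}(t)$ and Fatou, lower bound via Lebesgue differentiation --- matches the paper exactly, and your upper bound argument is essentially the paper's Theorem~\ref{thm:general}. The differences, and the gaps, are all in the lower bound.

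First, a concrete error: you assert that the continuity hypotheses make $\rho_{X\in W}$ locally finite, so that $\nu$ is Radon. This is false in general. The density $\rho_{X\in W}(p)$ is an integral over $S_p\cap W$, which can be non-compact; the integrand is continuous, but the integral need not be (see the Remark after Corollary~\ref{thm:maindens}). The paper handles this by a reduction you have not made: via the auxiliary map $\XW\colon\MW\to\NW$ of Section~\ref{sec:constru}, it replaces $(X,W)$ by a KROK couple for which $S_p\cap W$ is a single point for every $p$. In that reduced setting $\rho_{X\in W}$ is genuinely continuous, the upper bound gives $\mu=g\,dM$ with $g\le\rho_0$, and the Lebesgue differentiation argument is on solid ground. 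Without this reduction, both your Radon claim and your ``disintegration along $S_{p_0}\cap W$'' step in the lower bound are heuristic.

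Second, the paper's lower bound uses a different and cleaner mechanism than your quantitative implicit function theorem. After the reduction above and a choice of KROK coordinates (Lemma~\ref{lem:dieci}), the paper does \emph{not} estimate $\P\{X^{-1}(W)\cap B_r\neq\emptyset\}$ directly. Instead it introduces continuous cutoffs $\a_{s,\eta}\nearrow 1_{\mathscr A_s}$, where $\mathscr A_s$ is the open event that $X^1|_{B_s}$ is an embedding into $\R^m$; on this event $\#_{X\in W_t}(B_r)\le 1$ for $r\le s$, so dominated convergence gives $\E\{\#_{X\in W_0}(B_r)\a_{s,\eta}(X)\}=\lim_{t\to 0}\E\{\#_{X\in W_t}(B_r)\a_{s,\eta}(X)\}$. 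But the right-hand side is, by the already-proven a.e.-$t$ identity applied with the continuous weight $\a_{s,\eta}$, equal to $\int_{B_r}\rho^{s,\eta}_{X\in W_t}$, and KROK.\ref{itm:krokEcont} makes this continuous at $t=0$. Dividing by $|B_r|$, letting $r\to 0$, then $\eta\to\infty$, then $s\to 0$, yields $g(p)\ge\rho_0(p)$ at every Lebesgue point. The point is that the paper \emph{reuses} the a.e.-$t$ formula with a weight to manufacture the exact lower bound, rather than building it from scratch via an IFT estimate.

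Your IFT route is the classical one and can probably be pushed through in the reduced setting (single-point $S_p\cap W$), but the difficulty you flag --- that the probability of failing to be close-to-affine on $B_r$ must be $o(r^m)$ after conditioning on $X(p_0)$ lying in the $O(r)$-tube --- is real, and narrow continuity of $[X\mid X(p_0)=\cdot]$ alone does not obviously deliver it: the ``bad'' event depends on $r$, so you cannot simply invoke continuity on a fixed set. The paper's weighted-formula trick neatly sidesteps this.
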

\begin{remark}[Special cases]
 The standard Kac-Rice formula corresponds to the situation when $S_p=N$ and $W=\{q\}$. Here, the term $\frac{\sigma_q(X,W)}{\sigma_q(S_p,W)}$ disappears, since both angles are equal to $1$. 

 When $T_qS_p=T_qN$, then $\sigma(S_p,W)=1$.

 When $S_p\cap W=\{q\}$, there is no integration $\int_{S_p\cap W}$.

 When $W$ is an open subset, then $\sigma_q({X,W})=1$ and $\sigma_q(S_p,W)=1$, but the dimension hypothesis KROK.\ref{itm:krok:4} falls, unless $m=0$. In such case, the above formula reduces to equation \eqref{eq:stupid}.

 If $S_p=\{f(p)\}$ it means that $X=f$ is deterministic. Unless $m=0$, the couple $(f,W)$ is KROK only if  $f^{-1}(W)=\emptyset$ because of KROK.\ref{itm:krok:3}. Indeed, as we previously observed in the first of the examples above, in the deterministic case the measure $\E\#_{f\in W}=\#_{f\in W}$ is not absolutely continuous, for obvious reasons, unless it is zero.
This is one of the reason why we can't change KROK.\ref{itm:krok:6} into ``$S_p\transv W$ for a.e. $p$''.
\end{remark}
In formula \eqref{eq:formrho} above, $\rho_{X\in W}\in B^+(M)$ is a not necessarily finite Borel measurable function $\rho_{X\in W}\colon M\to [0,+\infty]$. It is precisely the Radon-Nykodim derivative of $\E\#_{X\in W}$ with respect to the Riemannian volume measure of $M$.

We can write the above formula in another equivalent way, using the jacobian density
\be\label{eq:intro:indensity}
\delta_pX=J_pXdM(p)\in \Delta_pM
\ee
defined in \eqref{eq:indensity}, which is a more natural object in that it doesn't depend on the Riemannian  structure of $M$. 

By using the notion of \emph{density} we can write a more intrinsic formula, without involving a Riemannian metric on $M$. 
A density is a section of the vector bundle $\Delta M=\wedge^m(T^*M)\otimes L$ obtained by twisting the bundle of top degree forms with the orientation bundle $L$ (see \cite[Section 7]{botttu}). 
  The peculiarity of densities is that they can be integrated over $M$ in a canonical way. In particular, the volume density of a Riemannian manifold $M$ is a density in all respects and we denote it by $p\mapsto dM(p)\in\Delta_pM$. We collected some details and notations regarding densities in Appendix \ref{app:densities}.
Although the function $\rho_{X\in W}$ appearing in \eqref{eq:formrho} depends on the Riemannian structure of $M$, the expression $\rho_{X\in W}(p)dM(p)=:\delta_{X\in W}(p)$ defines a positive measurable density ($\delta_{X\in W}\in L^+(M)$) that is independent from the metric. This is clarified in the subsection  \ref{sec:indepmetr}, but it is actually a consequence of Theorem \ref{thm:main}, since the left hand side of \eqref{eq:formrho} depends merely on the ``set theoretic nature'' of the objects in play.

\begin{cor}[Main Theorem/Definition]\label{thm:maindens}
Let $X\colon M \to N$ be a random $\mC^1$ map between two Riemannian manifolds and let $W\subset N$ such that $(X,W)$ is a KROK couple. Then the measure $\en$ is absolutely continuous on $M$ with density $\delta_{X\in W}\in L^+(M)$ defined as follows.
\begin{multline}\label{eq:formdel}
\delta_{X\in W}(p):=\left(\int_{ S_p\cap W}\E\left\{\delta_pX\frac{\sigma_q(X,W)}{\sigma_q(S_p,W)}\bigg|X(p)=q\right\}\rho_{X(p)}(q)d(S_p\cap W)(q)\right)\in\Delta_pM,
\end{multline}
where $d(S_p\cap W)$ denotes the volume densities of the corresponding Riemannian manifold and $\delta_pX$ is the Jacobian density of $X$ defined as in \eqref{eq:intro:indensity}; besides, $\sigma_q(X,W)$ and $\sigma_q(S_p,W)$ denote the ``angles'' (in the sense of Definition \ref{defi:angle}) made by $T_qW$ with, respectively, $d_pX(T_pM)$ and $T_qS_p$. Therefore, for every Borel subset $A\subset M$,
\be 
\en(A)=\int_A \delta_{X\in W}.
\ee
\end{cor}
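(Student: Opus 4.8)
The plan is to deduce Corollary~\ref{thm:maindens} directly from Theorem~\ref{thm:main}, treating the present statement essentially as a metric-free repackaging of formula~\eqref{eq:formrho}. First I would recall that Theorem~\ref{thm:main} already asserts that $\en$ is absolutely continuous with respect to the Riemannian volume measure $dM$, with Radon--Nikodym derivative $\rho_{X\in W}$ given by the bracketed integral in~\eqref{eq:formrho}. The only thing to check, then, is that the product $\rho_{X\in W}(p)\,dM(p)$ equals the expression $\delta_{X\in W}(p)$ of~\eqref{eq:formdel}, and that this latter expression is a well-defined element of $L^+(M)$ that does not depend on the auxiliary Riemannian metric on $M$.

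The key step is the identity $\delta_pX = J_pX\,dM(p)$ from~\eqref{eq:intro:indensity}, combined with the observation that the conditional expectation in~\eqref{eq:formrho} is linear, so that
\[
\E\left\{\delta_pX\,\tfrac{\sigma_q(X,W)}{\sigma_q(S_p,W)}\,\Big|\,X(p)=q\right\}
= \E\left\{J_pX\,\tfrac{\sigma_q(X,W)}{\sigma_q(S_p,W)}\,\Big|\,X(p)=q\right\}\,dM(p),
\]
since $dM(p)\in\Delta_pM$ is a deterministic density factor that pulls out of the expectation. Substituting this into the inner integral over $S_p\cap W$ and comparing with~\eqref{eq:formrho} gives $\delta_{X\in W}(p) = \rho_{X\in W}(p)\,dM(p)$, hence $\en(A)=\int_A \rho_{X\in W}\,dM = \int_A \delta_{X\in W}$ for every Borel $A\subset M$, which is the displayed conclusion. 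Measurability of $p\mapsto\delta_{X\in W}(p)$ as a section of $\Delta^+M\cup\{+\infty\}$ follows from measurability of $\rho_{X\in W}\in B^+(M)$ (part of Theorem~\ref{thm:main}) together with smoothness of $dM$, so $\delta_{X\in W}\in L^+(M)$.

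Finally, for metric-independence: if $g'$ is another Riemannian metric on $M$ with volume density $dM'$, write $dM' = \lambda\, dM$ for a smooth positive function $\lambda$. The Jacobian $J_pX$ computed with respect to $g'$ scales by $\lambda(p)^{-1}$ relative to the one computed with $g$ (the domain volume element changes), so $\delta_pX = J_pX\,dM(p)$ is unchanged; likewise the angles $\sigma_q(X,W)$ and $\sigma_q(S_p,W)$ and the density $\rho_{X(p)}$ on $S_p$ depend only on the metric of $N$ and $S_p$, not on that of $M$. Hence every factor in~\eqref{eq:formdel} is intrinsic to $(X,W)$ and the choice of metric on $N$, so $\delta_{X\in W}$ is well-defined independently of the metric on $M$; this is also forced a posteriori by the fact that $\en(A)$ on the left-hand side is purely set-theoretic, as remarked before the statement. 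The only mildly delicate point is bookkeeping the scaling behaviour of $J_pX$ and $\delta_pX$ correctly under change of metric, which I would handle by citing the conventions fixed in Appendix~\ref{app:densities} rather than recomputing; no genuine obstacle is expected, since the substance of the result is already contained in Theorem~\ref{thm:main}.
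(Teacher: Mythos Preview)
Your proposal is correct and matches the paper's approach exactly: the corollary is stated as an immediate density-theoretic repackaging of Theorem~\ref{thm:main}, obtained by replacing $J_pX\,dM(p)$ with $\delta_pX$ via~\eqref{eq:intro:indensity} and noting that the resulting expression is metric-independent (as the paper remarks just before and after the statement, and in Subsection~\ref{sec:indepmetr}). No separate proof is given in the paper beyond this observation, and your write-up fills in precisely the expected details.
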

\begin{remark}\label{rem:altforms}
Other alternative forms of the above formula can be obtained from the identities: 
\be \delta_p X\sigma(X,W)=J(\Pi_{TW^\perp}\circ dX)dM(p)=|X^*\nu|,
\ee where $\nu=\nu^1\wedge \dots \wedge \nu^r$ for some orthonormal basis on $TW^\perp$. The first identity follows from Proposition \ref{prop:chi}, while in the second we are representing the density element as the modulus of a differential form, via the function $|\cdot |\colon \wedge^mT^*_pM\to \Delta_pM$, defined in Appendix \ref{app:densities}.
\end{remark}
\begin{remark}
Theorem \ref{thm:main} does not guarantee that $\E\#_{X\in W}$ is a Radon measure. This condition corresponds to the local integrability of the density: $\delta_{X\in W}\in L^1_{loc}(M)$, while in general $\rho_{X\in W}\in B^+(M)$ and $\delta_{X\in W}\in L^+(M)$. This issue, as it will be clear from Theorem \ref{thm:maingraph} below, comes from the integration over $W\cap S_p$, which can be a non-compact submanifold.
\end{remark}

The strength of this formula is that, exactly as in the standard Kac-Rice case (when $W$ is a point), although the left hand side depends, a priori, on the whole probability $[X]$ on $\coo 1MN)$, the right hand side depends only on the pointwise law of the first jet $j^1_pX=(X(p),d_pX)$. This is a significant simplification in that the former is the joint probability of all the random variables $\{X(p)\}_{p\in M}$, while the latter is the collection of the marginal probabilities $\{[j^1_pX]\}_{p\in M}$, which is a simpler data. 

\subsection{Explanation of condition KROK.\ref{itm:krokEcont}}\label{intro:krok9}
Given a random element $X\in \mC^1(M,N)$ as in KROK.\ref{itm:krok:3} and a point $p \in M$,
a \emph{regular conditional probability}\footnote{
See \cite{dudley} or \cite{Erhan}. In the latter the same object is called a \emph{regular version of the conditional probability}.
} for $X$ given $X(p)$ is a function 
\be
\P\{X\in \cdot\ |X(p)=\cdot\}\colon \mathcal{B}(\mC^1(M,N))\times N\to [0,1],
\ee
that satisfies the following two properties.
\begin{enumerate}[(a)]
\item For every $B\in\mathcal{B}(\mC^1(M,N))$, the function $\P(X\in B|X(p)=\cdot)\colon N\to [0,1]$ is Borel and for every $V\in \mathcal{B}(N)$, we have
\be 
\P\{X\in B ; X(p)\in V\}=\int_V\P(X\in B|X(p)=q\}d[X(p)](q).
\ee
\item For all $q\in N$, $\P\{X\in \cdot\ |X(p)=q\}$ is a probability measure on $\mC^1(M,N)$.
\end{enumerate} 
The notation that we use is what we believe to be the most intuitive one and consistent with the other used in this paper. Given $p\in M$ and $q\in N$, we write ${[X|X(p)=q]}:={\P\{X\in \cdot\ |X(p)=q\}}$ for the probability measure and ${\E\{\a(X)|X(p)=q\}}$ for the expectation/integral of a function $\a\colon \mC^1(M,N)\to \R$ with respect to the probability measure ${[X|X(p)=q]}$.

The fact that the space $\mC^1(M,N)$ is Polish ensures that, for every $p\in M$, a regular conditional probability measures $[X|X(p)=\cdot]$ exists (see \cite[Theorem 10.2.2]{dudley}) and it is unique up to $[X(p)]$-a.e. equivalence on $N$. However, strictly speaking, it is not a well defined function, although the notation can mislead to think that. 

In our case such ambiguity may be traumatic, since we are interested in evaluating $\E\{\dots|X(p)=q\}$ for $q\in W$ which, under KROK.\ref{itm:krok:3} and KROK.\ref{itm:krok:6}, is negligible for the measure $[X(p)]$, i.e. $\P\{X(p)\in W\}=0$. Therefore it is essential to choose a regular conditional probability that has some continuity property at $W$, otherwise formula \eqref{eq:formrho} doesn't make sense, as well as all of its siblings. This is the motivation for the hypothesys KROK.\ref{itm:krokEcont}.

\begin{remark}
In the common statements of Kac-Rice formula ($W$ is a point), one finds the analogous hypothesis that there exists a density for the measure $[J_pX|X(p)=q]$ that is continuous at $q\in W$ (see for instance \cite{AdlerTaylor}). This is different than KROK.\ref{itm:krokEcont}, in that we don't need to assume that $[J_pX|X(p)=q]$ has a density.
\end{remark}

To have a complete perspective, let us rewrite the hypothesis KROK.\ref{itm:krokEcont} in a more suggestive way.
Let $\mathcal{F}=\mC^1(M,N)$. 
Consider the space $\mathscr{P}(\mathcal{F})$ of all Borel probability measures on $\mathcal{F}$, endowed with the narrow topology (also called weak topology: see \cite{dtgrf}), namely the one induced by the inclusion $\mathscr{P}(\mathcal{F})\subset \mC_b(\mathcal{F})^*$. A sequence of measures $[X_n]$ converges in this topology: $X_n\nrw X$, if and only if $\E\{\a(X_n)\}\to\E\{\a(X)\}$ for every $\a\in \mC_b(\mathcal{F})$, see \cite{Parth,Billingsley}.

Let $[X|X(p)=\cdot]$ be a regular conditional probability. Consider, for each $p\in M$ and $q\in N$, the probability $\mu(p,q)\in\mathscr{P}(\mathcal{F})$, given by
\be 
\mu(p,q)(B)=\E\{1_B(X)J_pX|X(p)=q\}=\int_B (J_pf) d[X|X(p)=q](f).
\ee
In other words, $\mu(p,q)=J_p\cdot [X|X(p)=q]$ is the multiplication of the measure ${[X|X(p)=q]}$ by the positive function $J_p\colon \mathcal{F}\to \R$, such that $J_p(f)=J_pf$.
This defines a function $\mu\colon M\times N\to \mathscr{P}(\mathcal{F})$.
\begin{prop}\label{prop:CazzoDuro}
KROK.\ref{itm:krokEcont} holds if and only if $\mu$ is continuous at $\MW  $.
\end{prop}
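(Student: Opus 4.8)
The plan is to unwind both sides into the same statement about continuity of certain integrals against bounded continuous test functions. Recall that $\mathscr{P}(\mathcal{F})$ carries the narrow topology, so a map $g\colon M\times N\to\mathscr{P}(\mathcal{F})$ is continuous at a point $(p_0,q_0)$ if and only if for every $\a\in\mC_b(\mathcal{F})$ the scalar function $(p,q)\mapsto\int_{\mathcal{F}}\a\,dg(p,q)$ is continuous at $(p_0,q_0)$; this is just the definition of the topology on $\mathscr{P}(\mathcal{F})\subset\mC_b(\mathcal{F})^*$, spelled out pointwise. Applying this to $g=\mu$ gives: $\mu$ is continuous at $\MW$ if and only if, for every $\a\in\mC_b(\mathcal{F})$, the function
\[
(p,q)\longmapsto \int_{\mathcal{F}}\a(f)\,d\mu(p,q)(f)=\E\{\a(X)J_pX\mid X(p)=q\}
\]
is continuous at every point of $\MW$. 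So the content of the proposition is that this last condition, quantified over $\a\in\mC_b(\mathcal{F})$, is equivalent to KROK.\ref{itm:krokEcont}, which quantifies over bounded continuous $\a\colon\mC^1(M,N)\times M\to\R$, i.e.\ test functions that may also depend on the base point $p$.

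The two directions are then routine. For ``$\Leftarrow$'' (KROK.\ref{itm:krokEcont} implies $\mu$ continuous): given $\a\in\mC_b(\mathcal{F})$, regard it as a function $\widetilde\a\colon\mC^1(M,N)\times M\to\R$, $\widetilde\a(f,p)=\a(f)$, which is manifestly bounded and continuous; then $\E\{\widetilde\a(X,p)J_pX\mid X(p)=q\}=\E\{\a(X)J_pX\mid X(p)=q\}$, and continuity at $\MW$ is exactly what KROK.\ref{itm:krokEcont} provides. Hence every coordinate function of $\mu$ in the narrow topology is continuous at $\MW$, so $\mu$ is. For ``$\Rightarrow$'' (continuity of $\mu$ implies KROK.\ref{itm:krokEcont}): fix a bounded continuous $\a\colon\mC^1(M,N)\times M\to\R$ and a point $(p_0,q_0)\in\MW$; we must show $(p,q)\mapsto\E\{\a(X,p)J_pX\mid X(p)=q\}$ is continuous at $(p_0,q_0)$. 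Take a sequence $(p_n,q_n)\to(p_0,q_0)$. Split
\[
\E\{\a(X,p_n)J_{p_n}X\mid X(p_n)=q_n\}
=\E\{\a(X,p_0)J_{p_n}X\mid X(p_n)=q_n\}
+\E\{(\a(X,p_n)-\a(X,p_0))J_{p_n}X\mid X(p_n)=q_n\}.
\]
The first term converges to $\E\{\a(X,p_0)J_{p_0}X\mid X(p_0)=q_0\}$ by narrow continuity of $\mu$ at $(p_0,q_0)$ applied to the fixed test function $f\mapsto\a(f,p_0)\in\mC_b(\mathcal{F})$ (note $\E\{\a(X,p_0)J_pX\mid X(p)=q\}=\int\a(\cdot,p_0)\,d\mu(p,q)$). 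For the second term one needs a mild equicontinuity-in-$p$ argument: since $\a$ is continuous on the product and $p_n\to p_0$, the functions $f\mapsto\a(f,p_n)-\a(f,p_0)$ tend to $0$ uniformly on compact subsets of $\mathcal{F}$, and one controls the tails using that the total masses $\mu(p_n,q_n)(\mathcal{F})=\E\{J_{p_n}X\mid X(p_n)=q_n\}$ converge (again by narrow continuity applied to $\a\equiv 1$), hence are bounded; a standard tightness argument — narrowly convergent sequences in $\mathscr{P}(\mathcal{F})$ are tight, $\mathcal{F}$ being Polish — then shows this term vanishes.

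The main obstacle is precisely this last point: handling the dependence of $\a$ on $p$ in the ``$\Rightarrow$'' direction, since narrow continuity of $\mu$ only controls $p$-independent test functions. The clean way around it is to invoke tightness of the narrowly convergent family $\{\mu(p_n,q_n)\}$ (Prokhorov, valid because $\mathcal{F}=\mC^1(M,N)$ is Polish), pick a compact $K\subset\mathcal{F}$ capturing all but $\e$ of the mass uniformly in $n$, bound the off-$K$ contribution by $\|\a\|_\infty\cdot 2\e\cdot(\text{bound on total masses})$, and on $K$ use joint continuity of $\a$ to make $\sup_{f\in K}|\a(f,p_n)-\a(f,p_0)|$ small for large $n$. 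Everything else is bookkeeping with the definition of the narrow topology.
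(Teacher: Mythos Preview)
Your proof is correct but takes a different route from the paper's. Both arguments handle the ``$\Leftarrow$'' direction identically (specialize to $p$-independent test functions). For the ``$\Rightarrow$'' direction, the paper invokes Skorohod's representation theorem: given $(p_n,q_n)\to(p,q)\in\MW$ and $\mu(p_n,q_n)\nrw\mu(p,q)$, it produces random elements $Y_n\to Y$ almost surely with the right laws, so that $\a(Y_n,p_n)\to\a(Y,p)$ pointwise by joint continuity of $\a$, and concludes by dominated convergence (using boundedness of $\a$). Your approach instead splits off the $p$-dependence, handles the frozen-$p$ term by narrow continuity, and controls the remainder via Prokhorov tightness plus uniform continuity of $\a$ on compacta.

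Both are standard and comparable in length. The Skorohod route is slicker---one line once the representation is in hand---and sidesteps the explicit tightness/compactness bookkeeping. Your route is arguably more elementary in that it avoids the Skorohod machinery and makes the role of tightness explicit; it also makes transparent that what is really needed is uniform boundedness of the total masses $\mu(p_n,q_n)(\mathcal{F})$, which you correctly extract from narrow convergence applied to $\a\equiv 1$. One small caveat worth stating in your write-up: the measures $\mu(p,q)=J_p\cdot[X\mid X(p)=q]$ are finite but not probability measures, so Prokhorov (and, in the paper's argument, Skorohod) applies after normalizing by the total mass; this is harmless since those masses converge to a finite limit, but it deserves a sentence.
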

\begin{proof}
If the function $\a$ in KROK.\ref{itm:krokEcont} was not allowed to depend on $p\in M$, this fact would be obvious from the definition of the topology on $\mathscr{P}(\mathcal{F})$.  This, in particular, implies the \emph{only if} part of the statement.

Let us show the converse.
Fix $\a\in \mC_b(\mathcal{F}\times M)$ and let $(p_n,q_n)\in M\times N$ be any  sequence of points  such that $(p_n,q_n)\to (p,q)\in \MW  $. Then $\mu_n:=\mu(p_n,q_n)\nrw \mu=\mu(p,q)$ in $\mathscr{P}(\mathcal{F})$. By the Skorohod theorem (see \cite{Billingsley, Parth}) there exists a representation $\mu_n=[Y_n]$, $\mu=[Y]$ for some sequence of random elements $Y_n,Y\in \mathcal{F}$ such that $Y_n\to Y$ in $\mathcal{F}$ almost surely. It follows that $\a(Y_n,p_n)\to \a(Y,p)$ almost surely and since $\a$ is bounded, we conlcude by dominated convergence that $\E\{\a(Y_n,p_n)\}\to \E\{\a(Y,p)\}$. This concludes the proof, since for all $n\in\N\cup\{\infty\}$:
\be 
\E\{\a(X,p_n)J_{p_n}X|X(p_n)=q_n\} =\int_{\mathcal{F}}\a(f,p_n)d\mu(p_n,q_n)(f)=\E\{\a(Y_n,p_n)\}.
\ee 
\end{proof}

When dealing with a KROK couple $(X,W)$, we will always implicitely assume that the function $(p,q)\mapsto [X|X(p)=q]$ is chosen among those for which $\mu$ is continuous at $\MW$. This arbitrary choice does not influence the final result, in that formula \eqref{eq:mainformula} depends only on $\mu|_{\MW}$.
\subsection{A closer look to the density}
In order to have a better understanding of the density $\delta_{X\in W}$, it is convenient to adopt a more general point of view. Let us consider, for any $V\subset M\times N$, the random number $\#_{\Gam}(V)=\#(\Gam\cap V)$, where $\Gam$ is the graph of the map $X|_{X^{-1}(W)}$, that is:
\be 
\Gamma(X,W):=\left\{(p,q)\in M\times W\colon X(p)=q\right\}, \quad \#_{\Gam}(V):=\#(\Gam\cap V).
\ee
The expectation $\E\#_{\Gam}$ of such random variable can be proven\footnote{The argument is exactly the same as that used to prove Lemma \ref{lemma:meas}} to be a Borel measure on $M\times N$ and by viewing it as an extension of the measure $\E\#_{X\in W}$, we can deduce Theorem \ref{thm:main} from the following slightly more general result.
\begin{thm}\label{thm:maingraph}
Let $(X,W)$, be a KROK couple, then the measure $\E\#_{\Gam}$ is supported on $\MW  =\{(p,q)\in M\times W\colon q\in S_p\}$ and it is an absolutely continuous measure on it, with a continuous density
\be\label{eq:gamdens}
\delta_{\Gam}(p,q)=\E\left\{J_pX{\sigma_q(X,W)}\bigg|X(p)=q\right\}\rho_{X(p)}(q)\delta \MW  (p,q),
\ee
where $\delta \MW  $ is the density on $\MW  $ defined below\footnote{We are implicitely making the identification $\Delta_{(p,q)}\MW  \cong \Delta_q(S_p\cap W)\otimes \Delta_pM$. By the KROK hypotheses \ref{def:krok}, $\MW  $ is a smooth submanifold of $M\times N$. However, $\delta {\MW  }$ is not the volume density of the metric induced by inclusion in the product Riemannian manifold $M\times N$.}.
\be 
\delta \MW  (p,q)=\frac{1}{\sigma_q(S_p,W)}d(S_p\cap W)(q)dM(p),
\ee  
\end{thm}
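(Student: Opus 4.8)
The plan is to work directly with the Borel measure $\E\#_\Gamma$ on $M\times N$ (it is a measure by the argument used for Lemma~\ref{lemma:meas}) and to establish its three claimed properties in turn; by Lemma~\ref{lemma:meas} I may assume $W$ closed, and since $\MW$ is second countable it is enough to identify the density near an arbitrary $(p_0,q_0)\in\MW$. For the support statement, observe that for a countable dense $D\subset M$ one has, almost surely, $X(p)\in S_p$ for every $p\in D$ by KROK.\ref{itm:krok:3}; as $X$ is continuous and $\mathcal S=\{(p,q):q\in S_p\}$ is closed (KROK.\ref{itm:krok:7}), this forces $(p,X(p))\in\mathcal S$ for \emph{every} $p\in M$, a.s. Together with KROK.\ref{itm:kroktransvas} this gives $\Gamma(X,W)\subset\mathcal S\cap(M\times W)=\MW$ almost surely, and $\Gamma(X,W)$ is a.s.\ locally finite in $\MW$ because $X\transv W$; hence $\E\#_\Gamma$ is supported on $\MW$.

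Next comes the inequality $\E\#_\Gamma\le\delta_\Gamma$ on $\MW$, which is what Section~\ref{sec:gen} is for. Near $q_0$ choose a submersion $\psi\colon O\to\R^m$ with $W\cap O=\psi^{-1}(0)$, set $W_t=\psi^{-1}(t)$ and $\Gamma_t=\Gamma(X,W_t)$; by openness of transversality on compacta, $X\transv W_t$ and $S_p\transv W_t$ for all small $t$. Applying the coarea formula to the equidimensional map $\psi\circ X\colon X^{-1}(O)\to\R^m$, then Tonelli, then the disintegration $[X(p)]=\rho_{X(p)}\,dS_p$ of KROK.\ref{itm:krok:3}, one gets, for every bounded Borel $g\ge 0$ supported in $X^{-1}(O)$, the identity $\int g(p)\,\E\{J_p(\psi\circ X)\}\,dM(p)=\int_{\R^m}(\int g\, d\,\E\#_{\Gamma_t})\,dt$. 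On the left, whenever $X(p)=q\in S_p$ one has $d_p(\psi\circ X)=(d_q\psi|_{T_qS_p})\circ d_pX$, so $J_p(\psi\circ X)=J_pX\cdot J(d_q\psi|_{d_pX(T_pM)})$, and the Jacobian/angle identities of Appendix~\ref{app:angle} and Proposition~\ref{prop:chi} split the last factor as $\sigma_q(X,W_t)\,J_q(\psi|_{S_p})/\sigma_q(S_p,W_t)$; applying the coarea formula a second time to $\psi|_{S_p}\colon S_p\to\R^m$ rewrites the left side as $\int_{\R^m}(\int g\, d\,\delta_{\Gamma_t})\,dt$. Comparing integrands yields $\E\#_{\Gamma_t}=\delta_{\Gamma_t}$ for a.e.\ small $t$ (this is the generic-$W$ statement of Section~\ref{sec:gen}). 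Finally pick $t_k\to0$ among these good values: since transverse intersection points persist under $\mC^1$ perturbation of $W$, one has $\liminf_k\#_{\Gamma_{t_k}}(V)\ge\#_\Gamma(V)$ pathwise for every small open $V$, while the continuity hypotheses of Definition~\ref{def:krok} (continuity of $\rho_{X(\cdot)}(\cdot)$, of the angle functions, and of the map $\mu$ at $\MW$, i.e.\ KROK.\ref{itm:krokEcont} read through Proposition~\ref{prop:CazzoDuro}) give $\delta_{\Gamma_{t_k}}\to\delta_\Gamma$ locally uniformly near $\MW$; Fatou for the counts and dominated convergence for the densities then yield $\E\#_\Gamma(V)\le\int_V\delta_\Gamma$, hence $\E\#_\Gamma\ll\delta_\Gamma$ on $\MW$.

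The reverse inequality is the hard part, and I would prove it by a Lebesgue-differentiation (blow-up) argument, as in Section~\ref{sec:mainproof}. Write $\delta_\Gamma=h\cdot\delta\MW$ with $h(p,q)=\E\{J_pX\,\sigma_q(X,W)\mid X(p)=q\}\,\rho_{X(p)}(q)$; since $\delta\MW$ is a continuous positive density, Lebesgue differentiation identifies, at $\delta\MW$-a.e.\ point $(p_0,q_0)\in\MW$, the Radon--Nikodym density of $\E\#_\Gamma$ with respect to $\delta\MW$ with the limit of $\E\#_\Gamma(B_r)/\delta\MW(B_r)$ over balls $B_r\subset\MW$ shrinking to $(p_0,q_0)$, and it remains to bound this below by $h(p_0,q_0)$. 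The mechanism is the classical one: conditionally on $X(p_0)=q$ with $q$ near $q_0$, on the event that the first jet $j^1_{p_0}X$ is transverse to $W$ with Jacobian and second-order term suitably controlled, the implicit function theorem applied to $\psi\circ X$ produces exactly one point of $X^{-1}(W)$ in a small ball about $p_0$, with a change of variables whose scaling produces $J_{p_0}X$ and whose angular distortion, as $q$ moves inside $S_{p_0}\cap W$, produces $\sigma_{q_0}(X,W)$ (the complementary normalization $1/\sigma_q(S_p,W)$ being already carried by the reference density $\delta\MW$ through the coarea formula for $\psi|_{S_{p_0}}$); integrating against $\rho_{X(p_0)}(q)$ and the conditional law of $d_{p_0}X$ and shrinking the neighborhoods reproduces $h(p_0,q_0)$. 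The main obstacle is making all of this uniform: the quantitative inverse-function estimates, the change of variables with the $\sigma$-factors, and, above all, the fact that the conditioning takes place on the $[X(p_0)]$-null set $W$, which is exactly what the continuity encoded in KROK.\ref{itm:krokEcont} (equivalently, continuity of $\mu$ at $\MW$, Proposition~\ref{prop:CazzoDuro}) is built to handle. Combining the two inequalities gives $\E\#_\Gamma=\delta_\Gamma$ $\delta\MW$-almost everywhere on $\MW$, and $h$ is continuous — by KROK.\ref{itm:krokEcont}, continuity of $\rho_{X(\cdot)}(\cdot)$ and of the angle functions, and smoothness of $\mathcal S$ and $\MW$ (KROK.\ref{itm:krok:7}), which makes $d(S_p\cap W)(q)\,dM(p)$, hence $\delta\MW$, vary continuously — so the identity holds with a continuous density, as claimed. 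Theorem~\ref{thm:main} then follows by pushing $\E\#_\Gamma$ forward along $M\times N\to M$ and disintegrating $\delta\MW$ over $M$.
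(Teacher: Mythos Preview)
Your proposal is correct and follows the same overall strategy as the paper: the support statement via closedness of $\mathcal S$, the upper bound $\E\#_\Gamma\le\int\delta_\Gamma$ from the generic-$W_t$ identity of Section~\ref{sec:gen} plus Fatou, and the lower bound by Lebesgue differentiation using KROK.\ref{itm:krokEcont}.

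The one organizational difference worth noting is that the paper does not carry out the Lebesgue-differentiation argument directly on $\MW$ as you propose; instead it introduces the auxiliary KROK couple $(\XW,\DElta)$ of Section~\ref{sec:constru}, with $\XW\colon\MW\to\NW$, $(p,q)\mapsto(p,q,X(p))$, whose fiber $Z_{p,q}$ meets the diagonal $\DElta$ in the single point $(p,q,q)$. This reduces Theorem~\ref{thm:maingraph} to the special case of Theorem~\ref{thm:main} in which $S_p\cap W$ is a singleton, so the blow-up is performed once, on the base, with no fiber integral to track. Your direct approach on $\MW$ is the same computation unpacked; the paper's construction buys cleaner bookkeeping and gives Theorem~\ref{thm:main}$\Leftrightarrow$Theorem~\ref{thm:maingraph} for free. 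One technical point you allude to but should make explicit: KROK.\ref{itm:krokEcont} only gives continuity of $\E\{\alpha(X,p)J_pX\mid X(p)=q\}$ for \emph{continuous} bounded $\alpha$, so in the lower-bound step the paper approximates the indicator of the open event $\mathscr A_s=\{f:\ f^1|_{B_s}\hookrightarrow\R^m\text{ is an embedding}\}$ by continuous $\alpha_{s,\eta}\nearrow\mathbb 1_{\mathscr A_s}$, takes the limit in $t$ first (where continuity is needed), and only then lets $\eta\to\infty$ and $s\to0$.
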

Precisely, this means that $ \E\#_{\Gam}(V)= \int_{V\cap \MW  }\delta_{\Gam}$,
for any Borel subset $V\subset M\times N$. In particular, if $V=A\times B$ we get
\be\label{eq:AperB}
\E\#_{X\in W\cap B}(A)=\E\#_{\Gam}(A\times B)=\int_A\left(\int_{B\cap W\cap S_p}\delta_{\Gam} (p,q)dq\right) dp.
\ee
for every $A\in \mathcal{B}(M)$ and $B\in \mathcal{B}(N)$.
\begin{remark}
The density $\delta_{X\in W}\in L^+(M)$ of the measure $\E\#_{X\in W}$ is obtained from the continuous density $\delta_{\Gam}\in \mathscr{D}^0(\MW  )$, by integration over the fibers of the projection map $\MW  \to M$.
\be\label{eq:twodens}
\delta_{X\in W}(p,q)=\int_{S_p\cap W}\delta_{\Gam}(p,q)dq.
\ee
This has to be intended as follows. The splitting $T_{(p,q)}\MW  \cong T_q(S_p\cap W)\oplus T_pM$ yields a natural identification $\Delta_{(p,q)} \MW  \cong \Delta_q (S_p\cap W)\otimes \Delta_p M$, allowing to define the \emph{partial integral} $\int_{S_p\cap W}\colon \mathscr{D}(\MW  )\to \Delta_p(M)$. 
\end{remark}
\begin{remark}
If $\mu=\E\#_{\Gamma(X,W)}$ on $M\times N$, then the integral of a measurable function $f\colon M\times N\to \R$ is given by the formula
\be 
\int fd\mu=\int_{\MW  }f\cdot \delta_{\Gamma(X,W)}=\E\left\{\sum_{p\in X^{-1}(q),\ q\in W}f(p,q)\right\}.
\ee
The proof of this fact, by monotone convergence, can be reduced to the case of characteristic functions $f=1_{A\times B}$, case in which  the formula is equivalent to equation \eqref{eq:AperB}.
\end{remark}

\subsection{The case of fiber bundles and the meaning of $\delta \MW$}

Let us consider the situation in which $\pi: N\to M$ is a smooth fiber bundle with fiber $S_p=\pi^{-1}(p)$
and let $W\subset N$ be a smooth submanifold such that $W\transv S_p$ for every $p\in M$.
Assume that $X\colon M\to N$ is a $\mC^1$ random section of $\pi$ 
and that $(X,W)$ is a KROK couple. 

In this case, the projection $\MW  =\{(p,q)\in M\times W\colon q\in S_p\cap W\}\to W$ is a bijection and we can identify the two spaces $\MW  \cong W$. Assume that both manifolds are endowed with Riemannian metrics in such a way that $\pi$ is a Riemannian submersion, meaning that the next map is an isometry\footnote{Such pair of metrics, always exists. To construct them, first define any metrics on $M$ and $N$. Then consider the subbundle $H\subset TN$ given by the orthogonal complement of the vertical one, namely $H_q=\ker(d_q\pi)^\perp$ (alternatively, take $H$ to be any Ehresmann connection). Now modifiy the metric on $H_q$ by declaring the map $d_q\pi\colon H_q\to T_pM$ a linear isometry.},
\be 
d_q\pi \colon \ker(d_q\pi)^\perp\to T_pM .
\ee
Then the formula \eqref{eq:gamdens} for the density, given in Theorem \ref{thm:maingraph} becomes easier and more meaningful.
\begin{thm}\label{thm:megafica}
Let $\pi\colon N\to M$ be a fiber bundle and Riemannian submersion. Let $(X,W)$ be a KROK couple such that $S_p=\pi^{-1}(p)$ for each $p\in M$. Then $\delta_{\Gam}$ is the continuous density on $W$ defined by the formula
\be\label{eq:megafica} 
\delta_{\Gam}(q)=\E\left\{J_{\pi(q)}X\sigma_q(X,W)\bigg|X(\pi(q))=q\right\}\rho_{X(\pi(q))}(q)dW(q).
\ee
\end{thm}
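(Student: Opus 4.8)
The plan is to obtain Theorem~\ref{thm:megafica} as a direct reading of Theorem~\ref{thm:maingraph} in the present geometry, the only genuine point being the evaluation of the auxiliary density $\delta\MW$ appearing in \eqref{eq:gamdens}.

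First I would exploit the hypothesis $S_p=\pi^{-1}(p)$: the incidence set $\mathcal S=\{(p,q)\colon q\in S_p\}$ is exactly the graph of $\pi$, so $q\mapsto(\pi(q),q)$ is a diffeomorphism $N\to\mathcal S$, and, since a section of $\pi$ can meet $W$ at $q$ only over the base point $p=\pi(q)$, it restricts to a diffeomorphism $\iota\colon W\to\MW$. Under this identification the conditional expectation, the Jacobian $J_pX$, the angle $\sigma_q(X,W)$ and the factor $\rho_{X(p)}(q)$ occurring in \eqref{eq:gamdens} become literally the quantities written in \eqref{eq:megafica} with $p=\pi(q)$; hence \eqref{eq:megafica} follows from \eqref{eq:gamdens} as soon as we show that $\iota^{*}\delta\MW=dW$, i.e.\ that the density $\delta\MW(p,q)=\tfrac{1}{\sigma_q(S_p,W)}\,d(S_p\cap W)(q)\,dM(p)$ is carried by $\iota$ to the Riemannian volume density of $W$.

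This last equality is a pointwise statement in Euclidean linear algebra, and it is where the Riemannian submersion hypothesis is used. Fix $q\in W$, set $p=\pi(q)$, and write $E=T_qN$, $V=T_qS_p=\ker d_q\pi$, $H=V^{\perp}$, $P=T_qW$. Transversality $S_p\transv W$ gives $V+P=E$, so $d_q\pi|_P\colon P\to T_pM$ is onto with kernel $V\cap P=T_q(S_p\cap W)$; pulled back along $d_q\iota=(d_q\pi,\mathrm{id})$ this is exactly the short exact sequence through which the identification $\Delta_{(p,q)}\MW\cong\Delta_q(S_p\cap W)\otimes\Delta_pM$ is defined. I would then evaluate the image of $d(S_p\cap W)(q)\otimes dM(p)$ under the canonical density isomorphism attached to $0\to V\cap P\to P\xrightarrow{d_q\pi}T_pM\to0$ on an orthonormal basis of $P$ adapted to the orthogonal splitting $P=(V\cap P)\oplus L$ with $L:=(V\cap P)^{\perp_P}$; since $\pi$ is a Riemannian submersion, $d_q\pi|_H\colon H\to T_pM$ is an isometry and $d_q\pi|_P=(d_q\pi|_H)\circ\Pi_H|_P$, so the computation collapses to $J(\Pi_H|_L)\cdot dW(q)$, where $\Pi_H$ is the orthogonal projection onto $H$ and $J$ its Jacobian. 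Finally, the definition of the angle in Appendix~\ref{app:angle} identifies $J(\Pi_H|_L)$ with $\sigma_{T_qN}(T_qS_p,T_qW)=\sigma_q(S_p,W)$, since $L$ is the orthogonal complement of $T_qS_p\cap T_qW$ in $T_qW$ and $H=(T_qS_p)^{\perp}$. Cancelling with the $\tfrac{1}{\sigma_q(S_p,W)}$ in $\delta\MW$ gives $\iota^{*}\delta\MW=dW$; plugging this into \eqref{eq:gamdens} produces \eqref{eq:megafica}, while the continuity of the resulting density on $W$ is inherited from the continuity of $\delta\MW$ on $\MW$ proved in Theorem~\ref{thm:maingraph}.

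I expect the one real obstacle to be bookkeeping: one must remember (as the footnote to Theorem~\ref{thm:maingraph} warns) that $\delta\MW$ is \emph{not} the volume density of the metric induced on $\MW\subset M\times N$ but the one built from the short exact sequence above, and one must track the isometry $d_q\pi|_H$ so that it is exactly $dM(p)$, and not a rescaled multiple, that survives the transport to $W$. Everything else is a routine unwinding of Appendices~\ref{app:densities} and~\ref{app:angle}.
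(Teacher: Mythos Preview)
Your proposal is correct and follows essentially the same approach as the paper: both reduce Theorem~\ref{thm:megafica} to Theorem~\ref{thm:maingraph} via the identification $W\cong\MW$, and both establish the key identity $\delta\MW=dW$ by a pointwise Euclidean computation showing that the volume distortion of the map $T_qW\to T_pM$ is exactly $\sigma_q(S_p,W)$. The only cosmetic difference is that the paper carries out the frame computation by hand (writing $\partial_z=h(\partial_u)+A\partial_t$ and evaluating the resulting block-triangular determinant), whereas you package the same step as an application of Proposition~\ref{prop:chi} to the orthogonal projection $\Pi_H|_L$; the content is identical.
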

This is due to the fact that in this case we have $\delta \MW  =dW$.

\subsection{Independence on the metric}\label{sec:indepmetr}
It is important to note that the Riemannian structure on $N$ is just an auxiliary object that allows to write the formulas \eqref{eq:formdel}, \eqref{eq:gamdens}. In fact, the densities $\delta_{\Gam}$ and $\delta_{X\in W}$ must be independent of the chosen metric on $N$, since the corresponding measures have nothing to do with the Riemannian structure. 
Indeed, let us define a notation for the following expression:
\be 
d_pX\ \underset{T_qS_p}{\overset{T_qW}{\llcorner}}\ \delta_{X(p)}(q):=J_pX\frac{\sigma_q(X,W)}{\sigma_q(S_p,W)}\rho_{X(p)}(q)d(S_p\cap W)(q)dM(p).
\ee
This defines a density element in $\Delta_{(p,q)}\MW  $ depending only on the transverse vector subspaces $T_qW, T_qS_p\subset T_qN$, on the linear map $d_pX\colon T_pM\to T_qN$, and on the density element $\delta_{X(p)}(q)=\rho_{X(p)}(q)dS_p(q)\in \Delta_{q}S_p$. 
With this notation we can give a totally intrinsic version of formula \eqref{eq:gamdens}:
\be 
\delta_{\Gam}(p,q)=\E\left\{d_pX\ \underset{T_qS_p}{\overset{{T_qW}}{\llcorner}}\ \delta_{X(p)}(q)\ \Bigg|X(p)=q\right\}.
\ee

\section{The Gaussian case}\label{sec:gausscase}
\subsection{Smooth Gaussian random sections}\label{sec:gausscaseintro}
The first type of random maps that one encounters in random geometry are, with a very high probability, \emph{Gaussian random fields}, which are random maps $X\colon M\to \R^n$, whose evaluations at points are Gaussian (we refer to \cite{dtgrf} for a systematic treatment of smooth Gaussian random fields). In this section we are going to deal with a slight generalization of this concept, namely Gaussian random sections of a vector bundle. 

Precisely, let $\pi\colon E\to M$ be a smooth vector bundle of rank $s$ over a smooth $m$-dimensional manifold $M$ and let $X\colon M\to E$ be a random $\mC^r$ section of $\pi$. The random section $X$ is said to be \emph{Gaussian} if for any finite set of points $p_1,\dots, p_j$ the random vector 
\be 
\left(X(p_1), \dots , X(p_j)\right)\in E_{p_1}\oplus \dots \oplus E_{p_j}
\ee
is Gaussian. For simplicity in this paper we will assume all Gaussian variables to be centered, although this assumption is not necessary. Taking up the notation of \cite{dtgrf}, we will denote as $\goo rME$ the set of $\mC^r$ Gaussian Random Sections (GRS) of a vector bundle $E$ over $M$. As for every Gaussian stochastic process, a GRS $X\in \goo rME$ is completely determined by its covariance tensor, which is the section $K_X\colon M\times M\to  \text{pr}_1^*E\otimes \text{pr}_2^*E$, defined by the following identity holding for every $\lambda_1,\lambda_2 \in E^*$:
\be 
K_X(p,q)\langle\lambda_1,\lambda_2\rangle=\E\left\{\lambda_1(X(p))\lambda_2(X(q))\right\}.
\ee
In particular, $K_X(p,p)=K_{X(p)}$ is a symmetric, semipositive, bilinear form on $E_p^*$.
\begin{defi}
 If $K_{X(p)}$ is positive definite (equivalently, $\spt[X(p)]=E_p$) for every $p\in M$, then we say that $X$ is \emph{non-degenerate}. 
 \end{defi}
In this case, if moreover $E$ is endowed with a bundle metric $g\colon E\cong E^*$, 
one can define the inverse covariance tensor, which is a bilinear form on $E_p$ that we denote by $K_{X(p)}^{-1}\left\langle \cdot,\cdot\right\rangle$. Then
we have $d[X(p)]=\rho_{X(p)}dE_p$ (in the sense of point \ref{itm:krok:3} of Definition \ref{def:krok}), where $dE_p$ is the Riemannian volume density of the fiber $E_p$ and
\be\label{eq:dgauvec}
\rho_{X(p)}(x)= \frac{\exp\left(-\frac{1}{2}K_{X(p)}^{-1}\left\langle x,x\right\rangle\right)}{\pi^\frac s2\sqrt{\det\left(K_{X(p)}\right)}}.
\ee
(The same formula is true in coordinates, if $K_{X(p)}$ denotes the covariance matrix.)

We want to apply Theorem \ref{thm:main} to compute the average number of points $p\in M$ such that $X(p)$ belongs to a given smooth submanifold $W\subset E$ of the total space $E$, having codimension $m$. In the Gaussian case it is particularly easy to verify the hypotheses of the theorem, indeed with the help of the (Gaussian) Probabilistic Transversality theorem from \cite{dtgrf}.
\begin{thm}[Theorem $7$ from \cite{dtgrf}]\label{thm:probtransv}
Let $X\in\goo \infty ME$. Assume that for every $p\in M$
\be 
\spt[X(p)]=E_p.
\ee
Then for any smooth submanifold $W\subset E$, we have that $\P\{X\transv W\}=1$.
\end{thm}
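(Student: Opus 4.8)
The plan is to reduce the statement to the finite-dimensional parametric transversality theorem. First I would localize: write $M=\bigcup_i U_i$ as a countable union of open sets with $\overline{U_i}$ compact, and $W=\bigcup_j W_j$ as a countable union of pieces, each a closed submanifold of an open subset $O_j\subseteq E$ (so that $W_j=W\cap O_j$ is open in $W$). Since transversality of the section $X$ to $W$ at a point $p$ with $X(p)\in W$ is a condition on $X$ near $p$ and on $W$ near $X(p)$, one has $\bigcap_{i,j}\{X\transv W_j\text{ on }U_i\}\subseteq\{X\transv W\}$, so it is enough to prove $\P\{X\transv W_j\text{ on }U_i\}=1$ for every pair; I fix one and abbreviate $U=U_i$, $W=W_j$. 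The single consequence of the hypothesis I will use is the following (see \cite{dtgrf}): being Gaussian, $X$ admits a spectral expansion $X=\sum_{n\ge1}\xi_n v_n$ with $(\xi_n)$ i.i.d.\ standard Gaussian, $v_n\in\mC^\infty(M|E)$, a.s.\ convergent in $\mC^\infty(M|E)$, and $\spt[X(p)]$ equal to the linear span of $\{v_n(p)\}_{n\ge1}$ inside $E_p$; thus $\spt[X(p)]=E_p$ says precisely that $\{v_n(p)\}_n$ spans $E_p$ for every $p$.

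Next I would peel off a finite-dimensional summand. Since $\dim\mathrm{span}\{v_1(p),\dots,v_k(p)\}$ is nondecreasing in $k$, lower semicontinuous in $p$, and increases to $\dim E_p$, compactness of $\overline U$ gives an $N$ with $\mathrm{span}\{v_1(p),\dots,v_N(p)\}=E_p$ for all $p\in\overline U$. Put $Z=\sum_{n\le N}\xi_n v_n$, a finite-dimensional $\mC^\infty$ Gaussian section, and $Y=X-Z=\sum_{n>N}\xi_n v_n$, which is a $\mC^\infty$ Gaussian section \emph{independent} of $Z$ because $(\xi_n)$ is i.i.d. Conditioning on $Y=g$ for a fixed $g\in\mC^\infty(M|E)$, I would study the $\mC^\infty$ map
\be
G\colon\R^N\times U\to E,\qquad G(\xi,p)=\Big(\textstyle\sum_{n\le N}\xi_n v_n(p)\Big)+g(p)\in E_p.
\ee
Its differential $d_{(\xi,p)}G$ surjects onto $T_pM$ under the bundle projection $\pi\colon E\to M$ (vary $p$), and its image contains the whole vertical subspace $\ker d\pi\cong E_p$, because varying $\xi$ sweeps out $\mathrm{span}\{v_1(p),\dots,v_N(p)\}=E_p$; hence $G$ is a submersion, and in particular $G\transv W$.

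Then I would invoke Thom's parametric transversality theorem \cite{Hirsch} for the $\mC^\infty$ map $G$: the set of $\xi\in\R^N$ such that $G(\xi,\cdot)\colon U\to E$ is transverse to $W$ is the set of regular values of the restriction to $G^{-1}(W)$ of the projection $\R^N\times U\to\R^N$, hence has full Lebesgue measure by Sard's theorem — and this is exactly where the $\mC^\infty$ hypothesis enters, to supply the derivatives Sard needs when $\mathrm{codim}_E W$ is small (when $\mathrm{codim}_E W>m$, "transverse" simply means $G(\xi,\cdot)^{-1}(W)=\emptyset$). Since $(\xi_1,\dots,\xi_N)$ has a density on $\R^N$, this gives $\P\{Z+g\transv W\text{ on }U\}=1$ for every $g$; integrating over the independent tail $Y$ by Fubini yields $\P\{X\transv W\text{ on }U\}=1$, and the countable intersection over $\{U_i\}$ and $\{W_j\}$ finishes the argument.

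The step I expect to be the real obstacle is the passage from a generic to an almost sure statement, which is why the proof must take this particular shape. A direct infinite-dimensional parametric transversality argument, using the whole Cameron--Martin space of $X$ as the parameter space, would only show that the set of "good" parameters is comeager; but a comeager subset of an infinite-dimensional space can carry zero Gaussian measure, so this does not deliver $\P\{X\transv W\}=1$. Splitting off a finite-dimensional part and integrating over the independent remainder is the device that converts the Baire-category conclusion — where Thom together with finite-dimensional Sard does give full Lebesgue, hence full Gaussian, measure — into a genuine probability-one statement. The remaining points are routine: the uniform choice of $N$ over the relatively compact $\overline U$, and the reduction from the possibly non-closed $W$ to the pieces $W_j$.
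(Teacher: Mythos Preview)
The paper does not prove this statement: it is quoted verbatim as Theorem~7 of \cite{dtgrf} and used as a black box, so there is no proof in the present paper to compare your proposal against. That said, your argument is correct and is essentially the standard route (and almost certainly the one taken in \cite{dtgrf}): Karhunen--Lo\`eve expansion, compactness to extract a finite spanning family $v_1,\dots,v_N$ over $\overline U$, the observation that the evaluation map $G(\xi,p)=\sum_{n\le N}\xi_n v_n(p)+g(p)$ is a submersion, parametric transversality together with Sard, and Fubini over the independent tail to upgrade ``full Lebesgue measure in $\xi$'' to ``probability one''. Your closing remark explaining why one must peel off a finite-dimensional summand --- so that Sard gives a full-measure rather than merely residual set --- is exactly the point of the construction.
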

From this, one deduces easily that the couple $(X,W)$ is KROK (Definition \ref{def:krok}) if $X$ is non-degenerate and $W\transv E_p$ for every $p\in M$. The only non obvious condition to check is KROK.\ref{itm:krokEcont}, which turns out to be a consequence of the Gaussian regression formula. This argument is used also in the proof of the standard Kac-Rice formula given in \cite{Wschebor}. Here, it is proved in Lemma \ref{lem:gioiellino}.
\begin{remark}
If $X\in \goo 1 ME$ is non-degenerate and $W\subset E$ is a submanifold such that $W\transv E_p$ for every $p\in M$, then $(X,W)$ is a KROK couple provided that $X\transv W$ almost surely. In the smooth case, the last hypothesis is redundant, due to Theorem \ref{thm:probtransv}.
This result holds only for sufficiently smooth fields, as well as its finite dimensional analogue, because it relies on Sard's Theorem. For this reason, we chose to focus on smooth GRS.
\end{remark}
The following theorem is the translation of the main Theorem \ref{thm:main} in the Gaussian setting. Although it is stated in a simpler way, it actually holds whenever the couple $(X,W)$ is KROK.
\begin{thm}\label{thm:maingau}
Let $\pi\colon E\to M$ and let $X\in\goo \infty ME$ be a non-degenerate $\mC^\infty$ Gaussian random section. Let $W\subset E$ be a smooth submanifold of codimension $m$ such that $W\transv E_p$ for every $p\in M$ and let $W_p=W\cap E_p$. Let the total space of $E$ be endowed with a Riemannian metric that is euclidean on fibers. 
Then for any Borel subset $A\subset M$
\begin{multline}\label{eq:formgaus}
\E\#_{X\in W}(A)= \int_{A}\delta_{X\in W}=\\
=\int_{A}\int_{W_p}\E\left\{J_pX\frac{\sigma_x(X,W)}{\sigma_x(E_p,W)}\bigg|X(p)=x\right\}\frac{e^{\left(-\frac{1}{2}K_p^{-1}
\left\langle x,x\right\rangle\right)}}{\pi^\frac s2\sqrt{\det(K_p)}}dW_p(x)dM(p).
\end{multline}
Here $K_p=K_X(p,p)$
; $dW_p$ and $dM$ denote the volume densities of the corresponding Riemannian manifolds; $J_pX$ is the Jacobian of $d_pX:T_pM\to T_xE$ (see Definition \ref{def:jacob}); besides, $\sigma_x(X,W)$ and $\sigma_x(E_p,W)$ denote the ``angles'' (in the sense of Definition \ref{defi:angle}) made by $T_xW$ with, respectively, $d_pX(T_pM)$ and $T_xE_p$.
\end{thm}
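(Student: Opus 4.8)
The plan is to deduce Theorem \ref{thm:maingau} from Theorem \ref{thm:main} by verifying that $(X,W)$ is a KROK couple (Definition \ref{def:krok}) with $N=E$, $S_p=E_p$, and $\rho_{X(p)}$ equal to the Gaussian density \eqref{eq:dgauvec}. Several conditions are immediate. KROK.\ref{itm:krok:2} holds since $X\in\goo\infty ME\subset\coo1ME$. KROK.\ref{itm:krok:3} is exactly \eqref{eq:dgauvec}: non-degeneracy makes $K_{X(p)}$ invertible, so $[X(p)]$ is absolutely continuous on the fiber $E_p$ with respect to its volume density, with the stated density. KROK.\ref{itm:kroktransvas} ($X\transv W$ a.s.) is the Probabilistic Transversality Theorem \ref{thm:probtransv}, applicable because $\spt[X(p)]=E_p$; this is where the $\mC^\infty$ hypothesis enters, through Sard's theorem. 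KROK.\ref{itm:krok:6} is the standing assumption $W\transv E_p$, and KROK.\ref{itm:krok:4} is the codimension hypothesis on $W$.

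For the continuity part: KROK.\ref{itm:krok:7} holds because $\mathcal S=\{(p,q)\in M\times E: q\in E_p\}$ is the graph of the bundle projection $\pi$, hence a closed smooth submanifold of $M\times E$, and then $\MW\cong W$ is smooth. Continuity of $(p,q)\mapsto\rho_{X(p)}(q)$ at $\MW$ follows from \eqref{eq:dgauvec}, since $K_X$ is a smooth section, so $p\mapsto K_{X(p)}$ and $p\mapsto K_{X(p)}^{-1}$ are continuous with $\det K_{X(p)}>0$, and the right-hand side of \eqref{eq:dgauvec} is continuous in $(x,K_{X(p)})$. The only substantial point is KROK.\ref{itm:krokEcont}. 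Here I would use the Gaussian regression formula: conditioning $X$ on $X(p)=q$ yields again a Gaussian field, whose mean is an explicit expression that is linear in $q$ and built from the covariance data, and whose covariance does not depend on $q$; both depend continuously (in fact smoothly) on $p$. This produces a distinguished regular conditional probability $[X|X(p)=q]$ for which, by Proposition \ref{prop:CazzoDuro}, it suffices to check that $\mu(p,q)=J_p\cdot[X|X(p)=q]$ varies narrowly continuously at $\MW$. Narrow convergence of the conditional laws along $(p_n,q_n)\to(p,q)\in\MW$ is immediate from the continuous dependence just described; the multiplication by the unbounded weight $J_{p_n}X$ is then absorbed by a uniform integrability argument, using that the conditional moments of $J_pX$ are polynomial in the (continuously varying) conditional covariance. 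This is the content of Lemma \ref{lem:gioiellino}, the same mechanism used for the Gaussian proof of the classical Kac-Rice formula in \cite{Wschebor}.

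With KROK.\ref{itm:krok:2}--KROK.\ref{itm:krokEcont} established, Theorem \ref{thm:main} gives \eqref{eq:formrho} for $(X,W)$; substituting $S_p=E_p$ (so $S_p\cap W=W_p$ and $\sigma_q(S_p,W)=\sigma_x(E_p,W)$) and the density \eqref{eq:dgauvec} produces \eqref{eq:formgaus}. The main obstacle is KROK.\ref{itm:krokEcont}: establishing continuity of the conditional Gaussian law is routine, but controlling the unbounded Jacobian weight $J_pX$ in the narrow limit is delicate, and this is exactly where the Gaussian structure — polynomial moment bounds for $J_pX$ conditioned on $X(p)=q$ — is essential.
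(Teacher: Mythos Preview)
Your proposal is correct and follows essentially the same route as the paper: reduce to Theorem \ref{thm:main} by verifying that $(X,W)$ is KROK, with the only nontrivial checks being KROK.\ref{itm:kroktransvas} via Theorem \ref{thm:probtransv} and KROK.\ref{itm:krokEcont} via Gaussian regression (Lemma \ref{lem:gioiellino}). The paper's own proof is terser about the routine conditions, but the substance is identical.
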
 
We say that a Riemannian metric on the vector bundle $\pi\colon E\to M$ is Euclidean on fibers when the metric induced on each fiber $E_p\subset E$ is a vector space metric, meaning that $E_p$ is linearly isometric to the Euclidean space $\R^s$, as Riemannian manifolds. 

Such metric always exists on any vector bundle. The natural way to construct one is by defining a metric on $M$, a vector bundle metric on $E$ and an Ehressmann connection $H$ for the bundle $\pi$, that is: a vector subbundle of $TE$, the \emph{horizontal} bundle, such that $d_q\pi|_{H_q}\colon H_q\to T_{\pi(q)}M$ is a bijection. Then, the metric on $E$ is defined by declaring the implied isomorphism $T_qE\cong T_{\pi(p)}M\oplus_{\perp} E_q$ to be an isometry. A metric defined with this procedure is Euclidean on fibers, but it also make $\pi\colon E\to M$ a Riemannian submersion.
\begin{defi}
Let $\pi\colon E\to M$ be a vector bundle, such that $E$ is endowed with a metric constructed via a connection, with the above procedure. Then, we say that $\pi$ is a \emph{connected Riemannian bundle} or that it has a \emph{connected Riemannian metric}. We will say \emph{linearly connected} if the connection is linear.\footnote{A connection $H\subset E$ is linear if $H_{\lambda q}=d_qL_{\lambda}(H_q)$ for every $\lambda \in\R$, where $L_\lambda$ denotes the scalar multiplication. in this case, the operator $\nabla\colon \mC^\infty(M|E)\to \mC^{\infty}(M|E\otimes T^*M)$ satisfies the Leibnitz rule and thus it defines a covariant derivative.}
\end{defi}

Notice that in the case of Theorem \ref{thm:maingau} it is easy to see that $\MW $ is diffeomorphic to $W$ and $\delta_{\Gam}$ is a continuous density on it, although $\delta_{X\in W}\in L^+(M)$. Thus, by endowing $E$ with a connected Riemannian metric, Theorem \ref{thm:megafica} implies the following more natural formula. 
\begin{cor}\label{cor:maingauW}
In the same setting of Theorem \ref{thm:maingau}, assume that $\pi\colon E\to M$ is endowed with a connected Riemannian metric. Let $V\subset E$ be any Borel subset, then there exists a smooth density $\delta_{\Gam}\in \mathscr{D}^\infty(W)$ such that
\bega\label{eq:gaudelgamma}
\E\#\left(W \cap V\right)&=\int_{W\cap V}\delta_{\Gam}\\
&=\int_{W\cap V}
\E\left\{J_pX\sigma_x(X,W)\bigg|X(p)=x\right\}\frac{e^{\left(-\frac{1}{2}K_p^{-1}\left\langle x,x\right\rangle\right)}}{\pi^\frac s2\sqrt{\det(K_p)}}dW(x).
\eega
\end{cor}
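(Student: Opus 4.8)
The plan is to reduce Corollary~\ref{cor:maingauW} to a direct combination of Theorem~\ref{thm:maingau}, Theorem~\ref{thm:megafica}, and the remark that the total space $E$, equipped with a connected Riemannian metric, becomes a Riemannian submersion over $M$ whose fibers are the $E_p$. First I would record the KROK verification: since $X$ is non-degenerate, $\spt[X(p)]=E_p$ for every $p$, so Theorem~\ref{thm:probtransv} gives $X\transv W$ a.s.; together with the hypothesis $W\transv E_p$ for every $p$ and $\mathrm{codim}\,W=m$, this shows (as in the Remark following Theorem~\ref{thm:probtransv}, via Lemma~\ref{lem:gioiellino} for KROK.\ref{itm:krokEcont}) that $(X,W)$ is a KROK couple with $S_p=E_p$. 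Hence all the machinery of Section~\ref{sec:main} applies.

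Next I would invoke the fiber-bundle specialization. A connected Riemannian metric on $\pi\colon E\to M$ makes $\pi$ a fiber bundle and a Riemannian submersion with $S_p=\pi^{-1}(p)=E_p$, so the hypotheses of Theorem~\ref{thm:megafica} are met. That theorem identifies $\MW$ with $W$ and yields the continuous density on $W$
\be
\delta_{\Gam}(x)=\E\left\{J_{\pi(x)}X\,\sigma_x(X,W)\,\bigg|\,X(\pi(x))=x\right\}\rho_{X(\pi(x))}(x)\,dW(x),
\ee
because in this situation $\delta\MW=dW$. Substituting the explicit Gaussian density \eqref{eq:dgauvec} for $\rho_{X(p)}$ — here with $s=$ rank of $E$, $K_p=K_X(p,p)$, and using that the metric is Euclidean on fibers so that $dE_p$ is the genuine Lebesgue density on $E_p\cong\R^s$ — turns this into exactly the integrand displayed in \eqref{eq:gaudelgamma}. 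Smoothness of $\delta_{\Gam}$ on $W$ follows since $X\in\goo\infty ME$ makes $(p,q)\mapsto\mu(p,q)$ and hence the conditional expectation smooth (not merely continuous) on $\MW$, while $\rho_{X(p)}(x)$ depends smoothly on $(p,x)$ by \eqref{eq:dgauvec} and non-degeneracy, and $\sigma_x(X,W)$, $J_pX$ are smooth in the relevant arguments.

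Finally, for an arbitrary Borel $V\subset E$, I would apply the identity $\E\#_{\Gam}(V)=\int_{V\cap\MW}\delta_{\Gam}$ from Theorem~\ref{thm:maingraph} (equivalently the statement of Theorem~\ref{thm:megafica}) with the identification $\MW\cong W$: since $X$ is a section, $X^{-1}(W)\ni p$ corresponds bijectively to the point $X(p)\in W\cap E_p$, so $\#(W\cap V)$ counted along the section equals $\#_{\Gam}(V)$, giving $\E\#(W\cap V)=\int_{W\cap V}\delta_{\Gam}$. The main obstacle — really the only non-formal point — is justifying that the chosen regular conditional probability can be taken so that $\mu$ is continuous (indeed smooth) at $\MW$, i.e. verifying KROK.\ref{itm:krokEcont}; this is precisely the content of Lemma~\ref{lem:gioiellino}, which rests on the Gaussian regression (conditioning) formula, so I would simply cite it. Everything else is bookkeeping: matching the density factors and checking that the connected metric makes $\delta\MW=dW$.
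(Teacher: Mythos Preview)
Your proposal is correct and follows essentially the same route as the paper: reduce to the fiber-bundle version (Theorem~\ref{thm:megafica}) after checking KROK via Theorem~\ref{thm:probtransv} and Lemma~\ref{lem:gioiellino}, then plug in the Gaussian density \eqref{eq:dgauvec}. The one place where your argument is slightly informal is the smoothness of $\delta_{\Gam}$: Lemma~\ref{lem:gioiellino} gives only continuity of the conditional expectation, and your claim that ``$\mu(p,q)$ and hence the conditional expectation is smooth'' needs the explicit Gaussian-regression computation carried out in Lemma~\ref{lem:gagagaga}, which writes $\delta_{\Gam}$ as $f_W(u,x,K_{j^1_uX})\,dW$ with $f_W$ smooth in all its arguments---this is exactly what the paper cites for smoothness.
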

\begin{remark}\label{rem:connectionForm}
If moreover $W$ is parallel, for the given connection, that is: $T_qW^{\perp}\subset E_q$, then at a point $q=X(p)$ we have
\be 
J_pX\sigma_q(X,W)=J\left(\Pi_{T_qW^\perp}\circ d_pX\right)=|\det\left(\Pi_{T_qW_p^\perp}\circ\left(\nabla X\right)_p\right) |,
\ee
where $W_p=W\cap E_p$ and $(\nabla X)_p\colon T_pM\to E_p$ is the vertical projection of $d_pX$. 
\end{remark}
We are (ab)using the symbol $\nabla$, since this notion coincides with that of \emph{covariant derivative}, in the case in which the connection is linear. 
Given that $W$ is transverse to the fibers of $\pi$,  one can always define a horizontal space $h_q=T_qW\cap (T_qW\cap T_qE_{\pi(q)})^\perp$ for each point $q\in W$. Then, if $h$ can be extended to  the whole $E$, it defines a connection (non linear, in general) $H$ for which $W$ is parallel. This construction is possible whenever $W\subset E$ is closed, by Tietze's extension theorem, but in general, there can be problems at $\overline{W}\- W$.

A particularly special case is when the connection is $\nabla=\nabla^X$ and the bundle metric on $E$ are the ones naturally defined by $X$ (see \cite{Nicolaescu2016}), namely $K_p$ is the dual metric and
\be \label{eq:nablaX}
\nabla^Xs:=Ds-\E\{DX|X=s\},
\ee
for any other connection $D$.
Since $\nabla^X$ is a metric connection in this case, it follows that for any Riemannian metric on $M$, a non-degenerate Gaussian random section defines a connected Riemannian structure on $E$. Moreover, since $(\nabla^XX)_p$ and $X(p)$ are independent, the formula in this case becomes much simpler.
\begin{cor}\label{cor:connectedform}
In the same setting of Theorem \ref{thm:maingau}, assume that $\pi\colon E\to M$ is endowed with the connected Riemannian structure defined by $X$. Let $W\subset E$ be parallel for this structure. Then
\bega\label{eq:connectedgaudelgamma}
\E\#\left(W \cap V\right)&=\int_{W\cap V}
\E\left\{|\det\left(\Pi_{T_xW_p^\perp}\circ\left(\nabla^X X\right)_p\right) |\right\}\frac{e^{-\frac{|x|^2}{2}}}{(2\pi)^{\frac s2}}dW(x).
\eega
\end{cor}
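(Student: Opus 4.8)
The plan is to specialize Corollary~\ref{cor:maingauW} to the connected Riemannian structure on $E$ determined by $X$ (which, as noted just before the statement, is indeed a connected Riemannian metric, $\nabla^X$ being metric for it) and then to observe that this particular choice collapses \emph{both} factors in the integrand. Corollary~\ref{cor:maingauW} reads
\begin{equation}
\E\#(W\cap V)=\int_{W\cap V}\E\left\{J_pX\,\sigma_x(X,W)\,\big|\,X(p)=x\right\}\,\rho_{X(p)}(x)\,dW(x),
\end{equation}
with $p=\pi(x)$ throughout, so it suffices to simplify the density factor $\rho_{X(p)}$ and the factor $J_pX\,\sigma_x(X,W)$ separately.

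For the density: by construction the bundle metric on $E_p$ is the one dual to $K_p$, so $K_p^{-1}\langle x,x\rangle=|x|^2$ and $\det(K_p)=1$ in any orthonormal frame of $E_p$; hence \eqref{eq:dgauvec} reduces to the standard Gaussian density $(2\pi)^{-s/2}e^{-|x|^2/2}$ on $E_p\cong\R^s$. For the other factor: the hypothesis that $W$ is parallel means $T_xW^\perp\subset E_x$ is vertical; comparing dimensions and using $W\transv E_p$ one gets $T_xW^\perp=T_xW_p^\perp$ with $W_p=W\cap E_p$, and since the orthogonal projection onto a vertical subspace annihilates the horizontal part of $d_pX$, Remark~\ref{rem:connectionForm} applies with $\nabla=\nabla^X$ (the first identity below being that of Remark~\ref{rem:altforms}, i.e.\ Proposition~\ref{prop:chi}) and gives, at $x=X(p)$,
\begin{equation}
J_pX\,\sigma_x(X,W)=J\!\left(\Pi_{T_xW^\perp}\circ d_pX\right)=\left|\det\!\left(\Pi_{T_xW_p^\perp}\circ(\nabla^XX)_p\right)\right|,
\end{equation}
where $(\nabla^XX)_p$ is the vertical component of $d_pX$ relative to $\nabla^X$.

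The crucial point is then the independence of $(\nabla^XX)_p$ and $X(p)$. By \eqref{eq:nablaX}, $(\nabla^XX)_p=(DX)_p-\E\{(DX)_p\,|\,X\}$ is the residual of the Gaussian regression of $(DX)_p$ onto the field $X$; being a regression residual of jointly Gaussian variables it is uncorrelated with, hence independent of, $X(p)$. Thus, choosing the regular conditional probability in Corollary~\ref{cor:maingauW} compatibly (as furnished by the Gaussian regression argument behind Lemma~\ref{lem:gioiellino}), conditioning on $X(p)=x$ has no effect on the random factor:
\begin{equation}
\E\left\{\left|\det\!\left(\Pi_{T_xW_p^\perp}\circ(\nabla^XX)_p\right)\right|\,\Big|\,X(p)=x\right\}=\E\left\{\left|\det\!\left(\Pi_{T_xW_p^\perp}\circ(\nabla^XX)_p\right)\right|\right\},
\end{equation}
the right-hand side depending on $x$ only through the deterministic projector $\Pi_{T_xW_p^\perp}$. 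Substituting the two simplifications into the displayed form of Corollary~\ref{cor:maingauW} produces exactly \eqref{eq:connectedgaudelgamma}.

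The main obstacle is this last replacement: the conditional expectation in Corollary~\ref{cor:maingauW} is taken with respect to a regular conditional probability that is pinned down only up to the compatibility requirement of Proposition~\ref{prop:CazzoDuro}, so one must know that it is genuinely computed by ``freeze $X(p)=x$, keep the independent residual $(\nabla^XX)_p$''. This is precisely what the explicit Gaussian regression formula of Lemma~\ref{lem:gioiellino} provides: it both verifies KROK.\ref{itm:krokEcont} and identifies the continuous version of $\mu$ at $\MW$ with the one coming from regression, after which the independence just established makes the $x$-dependence drop out of the random part of the integrand. Everything else is bookkeeping about the metric and the connection induced by $X$, which are set up in the paragraphs preceding the statement.
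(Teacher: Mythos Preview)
Your proposal is correct and follows essentially the same route as the paper: invoke Corollary~\ref{cor:maingauW} in the $X$-induced connected Riemannian structure, use Remark~\ref{rem:connectionForm} for the parallel $W$ to rewrite $J_pX\,\sigma_x(X,W)$, and then remove the conditioning via the Gaussian-regression independence of $(\nabla^X X)_p$ and $X(p)$. One minor imprecision: you describe $(\nabla^X X)_p$ as the regression residual of $(DX)_p$ onto ``the field $X$'', whereas \eqref{eq:nablaX} and the paper's computation condition only on the value $X(p)$; either reading yields the desired independence from $X(p)$, so this does not affect the argument.
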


Thanks again to the Probabilistic Transversality theorem from \cite{dtgrf}, the above result immediately generalizes to the case of a Whitney stratified submanifold (see \cite{GoreskyMacPherson}) $W\subset E$ of codimension $m$, simply because the probability that $X(M)$ intersects the lower dimensional strata is zero, therefore one can replace $W$ with its smooth locus $W^{(s)}$, namely the stratum of dimension $s$. In this case we still write $\delta_{X\in W}$ for the density, in place of $\delta_{X\in W^{(s)}}$.

\subsection{Finiteness and continuity}

\begin{enumerate}[\textbf{Q.}$1$:]
\item \emph{Is $\E\#_{X\in W}$ a Radon measure?} This question has positive answer precisely when the density is locally integrable, that is $\delta_{X\in W}\in L^1_{loc}(M)$. Theorem \ref{thm:maingau} leaves open the possibility that the density $\delta_{X\in W}(p)=\int_{W_p}\delta_{\Gam}(p,x)dx$ is even infinite. 
\item  \emph{Is the function $X\mapsto \E\#_{X\in W}(A)$ continuous?}  Understanding this is really useful in those situations where one is interested in the asymptotic behavior of things, for instance when dealing with Kostlan polynomials (see \cite{mttrp}). 
\end{enumerate}


From Corollary \ref{cor:maingauW} it is clear that $\delta_{X\in W}$ is finite at least in the case in which $W$ has finite volume. However, this would not be satisfying, since in many possible applications, $W$ has infinite volume. For instance, when $W\subset E$ is a vector subbundle of $E$, in fact, we will see that the density is finite in this case.  On the other hand, it should be clear that, due to the natural additivity of the formula:
$
\delta_{X\in \cup_n W_n}=\sum_n \delta_{X\in W_n},$
there are cases in which $\delta_{X\in W}(p)=+\infty$.
The intuition behind this is that if $W$ is too much ``concentrated'' over the fiber over a point $p_0$, then the probability that $X(p)\in W$ for some point near $p$ is too big, resulting in having $\E\#_{X\in W}(O)=+\infty$ for some neighborhood $O\subset M$ of $p_0$.

To express such concept, we introduce the notion of \emph{sub-Gaussian concentration}. This will allow us to compare the magnitude of $W$ with that of Gaussian sections, by passing through the linear structure of the bundle.
\begin{defi}\label{def:subexpvol}
Let $\pi\colon E\to M$ be a linearly connected Riemannian vector bundle. Let $B_R\subset E$ be the subset of vectors $e\in E$ with length at most $R$ for the given bundle metric.
We say that a smooth submanifold $W\subset E$ \emph{has sub-Gaussian concentration} if: for every compact $D\subset M$, the volume of $\pi^{-1}(D)\cap W\cap B_R$ (in the Riemannian manifold $W$) grows less than any Gaussian density, that is: $\forall \e>0$ $\exists C(\e)\ge 0$ such that $\forall R>0$,
\be 
\text{Vol}_W\left(
\pi^{-1}(D)\cap W\cap B_R
\right)
=\int_{\pi^{-1}(D)\cap W\cap B_R}dW\le C(\e)e^{\e R^2}.
\ee
If $W$ is a Whitney stratified submanifold, we say that it has sub-Gaussian concentration if its smooth locus has sub-Gaussian concentration.

\end{defi}
It turns out that the property of having sub-Gaussian concentration is local and it is independent from the choice of a metric. In fact, this condition can be checked by proving that $W$ has sub-Gaussian concentration in each chart $E|_U\cong\D^m\times \R^s$ of a trivialization atlas for the bundle $E\to M$, and with respect to the standard metric. This is proved in Lemma \ref{lem:subGwell}. 
For this reason, in the following results we won't need to mention the Riemannian structure at all.

\begin{thm}\label{thm:mainEgau}
Let $\pi\colon E\to M$.
Let $W=\sqcup_{i\in I}W_i\subset E$ be a smooth Whitney stratified subset of codimension $m$ such that $W^{(s)}\transv E_p$ for every $p\in M$, where $W^{(s)}$ is the union of the higher dimensional strata.
Assume that $W$ has sub-Gaussian concentration. 
\begin{enumerate}[]
\item  Let $X\in\goo \infty ME$ be a non-degenerate $\mC^\infty$ Gaussian random section. Then $\delta_{X\in W}$ is locally integrable, hence $\E\#_{X\in W}$ is an absolutely continous Radon measure on $M$. 
\item  Let $X_d,X_\infty\in\goo \infty ME$ be a sequence of non-degenerate $\mC^\infty$ Gaussian random sections such that $K_{X_d}\to K_{X_\infty}$ in the $\mC^2$ topology (weak Whitney), as $d\to +\infty$. Assume that the limit $X_\infty$ is also non-degenerate. Then
\be 
\lim_{d\to +\infty}\E\#_{X_d\in W}(A)=\E\#_{X_\infty \in W}(A)
\ee
for every relatively compact Borel subset $A\subset M$.
\end{enumerate}
\end{thm}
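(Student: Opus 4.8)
The plan is to reduce both assertions to the single density formula of Corollary~\ref{cor:maingauW} and then to play the Gaussian regression formula (which controls the conditional expectation in the integrand) against the sub-Gaussian concentration hypothesis (which controls the fibered volume growth of $W$). Fix once and for all a linearly connected Riemannian metric on $E$; it exists, makes $\pi$ a Riemannian submersion, and by Lemma~\ref{lem:subGwell} the sub-Gaussian concentration of $W$ does not depend on this choice. By the Probabilistic Transversality theorem (Theorem~\ref{thm:probtransv}) together with a dimension count — a stratum $W_i$ with $\dim W_i<s$ has codimension $>m$ in $E$, so $X_d\transv W_i$ forces $X_d^{-1}(W_i)=\emptyset$ almost surely — the set $X_d^{-1}(W)$ almost surely meets only the top stratum, hence we may assume $W=W^{(s)}$ is a smooth submanifold with $W\transv E_p$ for all $p$ and with sub-Gaussian concentration. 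Each $(X_d,W)$ (and $(X_\infty,W)$) is then a KROK couple, so Corollary~\ref{cor:maingauW} with $V=\pi^{-1}(A)$ gives, for every relatively compact Borel $A\subset M$,
\[
\E\#_{X_d\in W}(A)=\int_{\pi^{-1}(A)\cap W}\E\Big\{J_pX_d\,\sigma_x(X_d,W)\,\Big|\,X_d(p)=x\Big\}\,\rho_{X_d(p)}(x)\,dW(x),\qquad p=\pi(x),
\]
and by Remark~\ref{rem:altforms} we may rewrite $J_pX_d\,\sigma_x(X_d,W)=J\big(\Pi_{T_xW^\perp}\circ d_pX_d\big)\le\|d_pX_d\|^m$, since $T_xW^\perp$ and $T_pM$ are both $m$-dimensional.

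For the first assertion, fix a compact $D\subset M$ and bound $\int_D\delta_{X_d\in W}=\int_{\pi^{-1}(D)\cap W}(\cdots)\,dW$. By the Gaussian regression formula (as in the proof of Lemma~\ref{lem:gioiellino}), for $p\in D$ the conditional law of $d_pX_d$ given $X_d(p)=x$ is Gaussian with mean $A_px$ linear in $x$ and covariance $\Sigma_p$ independent of $x$, with $A_p,\Sigma_p$ continuous in $p$; hence $\E\{\|d_pX_d\|^m\mid X_d(p)=x\}\le C_D(1+|x|)^m$ uniformly over $p\in D$. Likewise, non-degeneracy of $X_d$ and compactness of $D$ give $\rho_{X_d(p)}(x)\le C_D\,e^{-c_D|x|^2/2}$ for some $c_D>0$. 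Thus the integrand is at most $C(1+|x|)^m e^{-c_D|x|^2/2}$ on $\pi^{-1}(D)\cap W$. Decomposing this set into the shells $\big(B_{k+1}\setminus B_k\big)\cap\pi^{-1}(D)\cap W$, $k\ge0$, and applying Definition~\ref{def:subexpvol} with $\e=c_D/4$, the integral is dominated by $\sum_{k\ge0}C(k+2)^m e^{\,(c_D/4)(k+1)^2-(c_D/2)k^2}<\infty$ because the exponents tend to $-\infty$. Hence $\delta_{X_d\in W}\in L^1_{loc}(M)$ and $\E\#_{X_d\in W}$ is an absolutely continuous Radon measure.

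For the second assertion, fix a relatively compact Borel $A$, so $\overline A$ is compact, and apply dominated convergence to the integral above over the fixed set $\pi^{-1}(A)\cap W$ as $d\to+\infty$. The estimate of the previous paragraph yields a dominating function $C(1+|x|)^m e^{-c|x|^2/2}$, integrable over $\pi^{-1}(A)\cap W$ by sub-Gaussian concentration, and $C,c$ can be taken independent of $d$ for all large $d$: indeed $K_{X_d}\to K_{X_\infty}$ in $\mC^2$ implies that the covariance of the first jet $j^1_pX_d$ — which involves $K_{X_d}$ and its mixed partials of order $\le1$ in each variable — converges uniformly on $\overline A$, and non-degeneracy of $X_\infty$ on $\overline A$ then yields $K_{X_d(p)}\succeq c_0 I$ and a uniform bound on the regression data $A_p^d,\Sigma_p^d$ for $p\in\overline A$ and $d$ large. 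For the pointwise limit of the integrand at a fixed $x$ with $p=\pi(x)$: $\rho_{X_d(p)}(x)\to\rho_{X_\infty(p)}(x)$ because the explicit Gaussian density is continuous in a non-degenerate covariance and $\det K_{X_d(p)}\to\det K_{X_\infty(p)}>0$; and, writing the conditional expectation as $\int J\big(\Pi_{T_xW^\perp}\circ(\Sigma_p^{d,1/2}z+A_p^d x)\big)\,d\gamma(z)$ against the standard Gaussian $\gamma$, with $\Sigma_p^{d,1/2}\to\Sigma_p^{\infty,1/2}$ and $A_p^d\to A_p^\infty$, an inner dominated convergence (the integrand is continuous in these data and $O((1+|z|)^m)$) gives $\E\{J_pX_d\sigma_x(X_d,W)\mid X_d(p)=x\}\to\E\{J_pX_\infty\sigma_x(X_\infty,W)\mid X_\infty(p)=x\}$. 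Dominated convergence over $\pi^{-1}(A)\cap W$ then gives $\E\#_{X_d\in W}(A)\to\E\#_{X_\infty\in W}(A)$.

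The crux — and the only place where $\mC^2$, rather than mere $\mC^0$, convergence of the covariances is genuinely needed — is the uniform-in-$d$ domination: one must transfer the non-degeneracy of the limit $X_\infty$ to $X_d$ for $d$ large, at the level of the full first jet over the compact $\overline A$, so that the Gaussian decay rate $c$ stays bounded away from $0$ and the regression coefficients stay bounded; it is exactly this uniform decay, matched against the uniform-in-$R$ volume bound supplied by sub-Gaussian concentration, that makes the dominating function integrable. A minor technical point is that the conditional covariance $\Sigma_p^d$ of $d_pX_d$ may be degenerate, so throughout one uses a continuous choice of symmetric square root $\Sigma_p^{d,1/2}$ (whose continuity at $\Sigma_p^{\infty,1/2}$ is all the pointwise-convergence step requires) rather than $(\Sigma_p^d)^{-1}$.
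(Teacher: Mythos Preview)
Your argument is correct and follows essentially the same route as the paper: both reduce to the density formula on $W$ from Corollary~\ref{cor:maingauW}, bound the conditional expectation via Gaussian regression by a polynomial in $|x|$ and the Gaussian density by $Ce^{-c|x|^2}$, then use the sub-Gaussian concentration hypothesis on annular shells to get a uniform integrable majorant and conclude by dominated convergence. The only cosmetic difference is that the paper packages the estimate into an auxiliary lemma (Lemma~\ref{lem:gagagaga}) expressing the integrand as a continuous function $f_W(u,x,K)$ of the first-jet covariance $K\in\mathcal U$ with bound $N(K)e^{-|x|^2/N(K)}$, whereas you carry the regression data $(A_p^d,\Sigma_p^d)$ explicitly; the content is the same.
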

\subsection{Semialgebraic singularities}
Clearly, if $W$ is compact, or a linear subbundle, or a cylinder over a compact, then it has sub-Gaussian concentration. The example that we are most interested in, though, is the case in which $W\subset E$ is locally semialgebraic. By this we mean that every $p\in M$ has a neighborhood $U\subset M$ such that there is a trivialization of the bundle $E|_{U}\cong \R^s\times \R^m$ such that $W\cap E|_{U}$ is a semialgebraic subset of $\R^s\times \R^m$. In this case the volume of $W\cap E|_{U} \cap B_R$ evidently grows in a polynomial way and thus...
\begin{remark}
...if $W\subset E$ is locally semialgebraic, it has sub-Gaussian concentration.
\end{remark}
The reason why we put the accent on the semialgebraic case is that Theorems \ref{thm:maingau} and \ref{thm:mainEgau} can be used to study the expected number of singular points of a GRS. The meaning of ``singular point'' depends on the situation, but in general it is a point $p\in M$ where the section satisfies some condition involving its derivatives. A general model for that  (the same proposed in \cite{mttrp} and \cite{HaniehPaulAnto}) is to consider a subset $W\subset J^rE$ of the bundle of $r$ jets (if the derivatives involved are of order less than $r$) of sections of $E$ (see \cite{Hirsch} for a definition of the space of jets) and call \emph{singular points of class $W$} those points $p\in M$ such that the $r^{th}$ jet of $X$ at $p$ belongs to $W$. 
Examples are:
\begin{enumerate}[$\bullet$]
\item ``Zeroes'', when $W=M\subset E=J^0(E)$.
\item ``Critical points'', when $W\subset J^1(M,\R^k)$, is  such that $j^1_pX\in W$ if and only if $d_pX$ is not surjective.
\item 
Combining the two previous examples, one can consider
$W\subset J^1(E\times \R)$, such that given a function $g\colon M\to \R$ and a section $s\colon M\to E$, then
$(j_p^1(s,g))^{-1}(W)=\text{Crit}(g|_{s^{-1}(0)})$. This is useful in that it provides an upper bound for the total Betti number of the set of zeroes of $s$. Indeed, generically, by Morse theory the latter must be smaller than the \emph{number} of  singular points of class $W$.
\item The Boardman singularity classes: $W=\Sigma^{(i_1,\dots,i_r)}\subset J^rE$, see \cite{arnold2012singularities}.
\end{enumerate}
In all of the above examples, and in most natural situations, the singularity class is given by a locally semialgebraic subset $W\subset J^rE$.

Considered this, we rewrite the statements of Theorems \ref{thm:maingau} and \ref{thm:mainEgau} in the case when the vector bundle is $J^rE\to M$ and the Gaussian random section is holonomic, namely it is of the form $j^rX$.

\begin{cor}\label{cor:mainjgau}
Let  $\pi\colon E\to M$ a smooth vector bundle. Let $W\subset J^rE$, with $r\in \N$, be a smooth submanifold of codimension $m$ such that $W\transv J^r_pE$ for every $p\in M$. 
Let $A\subset M$ be any relatively compact Borel subset.
\begin{enumerate}[$1$]
\item Let $X\in\goo \infty ME$ be a $\mC^{\infty}$ Gaussian random section with non-degenerate $r^{th}$ jet. Then there exists a smooth density $\delta_{\Gamma(j^rX,W)}\in \mathscr{D}^\infty(W)$ such that, 
\be\label{eq:formjgaus}
\E\#\{p\in A\colon j^r_pX_d\in W\}= \int_{W\cap \pi^{-1}(A)}\delta_{\Gamma(j^rX,W)}.
\ee
Moreover, if $W$ has sub-Gaussian concentration, then the above quantity is finite.
\item Assume that $W$ has sub-Gaussian concentration. Let $X_d, X_\infty\in\goo {\infty}ME$ be a sequence of  $\mC^{\infty}$ Gaussian random sections with non-degenerate $r^{th}$ jet and assume that 
$ 
K_{X_d}\to K_{X_\infty}
$ in the $\mC^{2r+2}$ topology (weak Whitney), as $d\to+\infty$. Assume that the limit $X_\infty$ also has non-degenerate $r^{th}$ jet. Then
\be 
\lim_{d\to +\infty}\E\#\{p\in A\colon j^r_pX_d\in W\}=\E\#\{p\in A\colon j^r_pX_{\infty}\in W\}.
\ee
\end{enumerate}
\end{cor}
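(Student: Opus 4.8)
The plan is to apply Theorems~\ref{thm:maingau} and~\ref{thm:mainEgau}, almost verbatim, to the vector bundle $J^rE\to M$ of $r$-jets of sections of $E$ and to the holonomic section $j^rX$. First I would recall that $J^rE$ is itself a smooth vector bundle over $M$ and that the $r$-jet prolongation is a continuous linear operator $j^r\colon\mathcal{C}^\infty(M|E)\to\mathcal{C}^\infty(M|J^rE)$; hence, if $X\in\goo\infty ME$, then $j^rX\in\goo\infty M{J^rE}$ is again a centered Gaussian random section, its law being the pushforward of $[X]$ by $j^r$. By definition, the hypothesis that $X$ has non-degenerate $r^{th}$ jet means exactly that $j^rX$ is non-degenerate, i.e.\ $\spt[j^r_pX]=J^r_pE$ for every $p\in M$. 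Since $W\transv J^r_pE$ for all $p$ by assumption, Theorem~\ref{thm:probtransv} gives $j^rX\transv W$ almost surely, so $(j^rX,W)$ is a KROK couple (by the Remark following Theorem~\ref{thm:probtransv}). Finally, the set of singular points of class $W$ is exactly $(j^rX)^{-1}(W)$, and $\E\#\{p\in A\colon j^r_pX\in W\}=\E\#_{j^rX\in W}(A)$.

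For item~1, I would apply Theorem~\ref{thm:maingau} to the KROK couple $(j^rX,W)$; after endowing $J^rE$ with a connected Riemannian metric, its refinement Corollary~\ref{cor:maingauW} produces a smooth density $\delta_{\Gamma(j^rX,W)}\in\mathscr{D}^\infty(W)$ with $\E\#\{p\in A\colon j^r_pX\in W\}=\int_{W\cap\pi^{-1}(A)}\delta_{\Gamma(j^rX,W)}$, which is~\eqref{eq:formjgaus}. If moreover $W$ is locally semialgebraic, then in each trivializing chart $W$ meets $B_R$ in a semialgebraic set, whose volume grows polynomially in $R$; hence $W$ has sub-Gaussian concentration (this is the Remark preceding the statement), and the finiteness of the above integral follows from the first item of Theorem~\ref{thm:mainEgau} applied to $(j^rX,W)$.

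For item~2, the only substantial point is to convert the convergence hypothesis on $K_{X_d}$ into the one required by Theorem~\ref{thm:mainEgau} for the sequence $j^rX_d$. Fixing an auxiliary linear connection on $E$, the covariance tensor of the holonomic section is obtained from $K_X$ by differentiating $r$ times in each of the two variables: schematically, $K_{j^rX}(p,q)$ is assembled from the mixed covariant derivatives $\nabla^a_p\nabla^b_qK_X(p,q)$ with $a,b\le r$. Consequently the map $K_X\mapsto K_{j^rX}$ is continuous from the $\mathcal{C}^{2r+2}$ weak Whitney topology on sections over $M\times M$ to the $\mathcal{C}^2$ one, so $K_{X_d}\to K_{X_\infty}$ in $\mathcal{C}^{2r+2}$ implies $K_{j^rX_d}\to K_{j^rX_\infty}$ in $\mathcal{C}^2$. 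Since non-degeneracy of the $r^{th}$ jet of $X_\infty$ is non-degeneracy of $j^rX_\infty$, the second item of Theorem~\ref{thm:mainEgau}, applied in the bundle $J^rE$, yields $\lim_{d}\E\#\{p\in A\colon j^r_pX_d\in W\}=\E\#\{p\in A\colon j^r_pX_\infty\in W\}$ for every relatively compact Borel $A\subset M$.

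The one genuinely technical step is this last bookkeeping: checking that $K_X\mapsto K_{j^rX}$ loses precisely $r$ orders of regularity in each variable and is continuous for the weak Whitney topologies, so that $\mathcal{C}^{2r+2}$ is the correct threshold. I expect no surprises — in a local trivialization $K_{j^rX}$ is literally a block matrix of partial derivatives $\partial^\alpha_p\partial^\beta_qK_X$ with $|\alpha|,|\beta|\le r$ — but I would write out this continuity carefully, since it is the only place where the exponent $2r+2$ (and, in the statement, the regularity class demanded of the limit) actually enters.
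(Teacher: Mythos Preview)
Your proposal is correct and follows essentially the same route as the paper's own proof: apply Theorems~\ref{thm:maingau} (via Corollary~\ref{cor:maingauW}) and~\ref{thm:mainEgau} to the Gaussian random section $j^rX$ of $J^rE\to M$, and track the regularity needed on $K_X$. The paper compresses your bookkeeping into the single observation that $j^1(j^rX)$ is equivalent to $j^{r+1}X$, so its covariance is determined by the $(2r+2)^{\text{th}}$ jet of $K_X$; this is exactly your computation with the mixed derivatives $\nabla^a_p\nabla^b_qK_X$, $a,b\le r$, phrased more succinctly. One small slip: in item~1 you argue finiteness under the stronger hypothesis that $W$ is locally semialgebraic, whereas the statement only assumes sub-Gaussian concentration---just invoke the first item of Theorem~\ref{thm:mainEgau} directly under that hypothesis and drop the detour.
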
 
 
\begin{remark}
Here the formula for $\delta_{\Gamma(j^rX,W)}$ is obtained from formula \ref{eq:gaudelgamma} by replacing $E$ with $J^rE$, $X$ with $j^rX$ and $K_X$ with the covariance tensor of $j^rX$. Notice that the latter can be derived from the jet of order $2r$ of $K_X$.
\end{remark}

%
%
%

\section{Expectation of other counting measures}\label{sec:weights}
Let $X\colon M\to N \supset W$ be a random $\mC^1$ map. In this manuscript we chose to focus on the expectation of the actual number of points of intersection of $X$ and $W$. However, all the discussion can be generalized with minimal effort to the case in which a different weight is assigned to each point, in the following way.

For any $\a\colon \mC^1(M,N)\times M\to \R^k$ Borel measurable and $A\subset M$, define
\be\label{eq:Na}
\#^\a_{X\in W}(A)=\sum_{p\in A\cap X^{-1}(W)}\a(X,p) \in \R^k,
\ee 
and a density $\delta^\a_{X\in W}\colon M \to \Delta M$, such that
\be\label{eq:deltaa}
\delta_{X\in W}^\a(p)=\int_{ S_p\cap W}\E\left\{\a(X,p)\delta_pX\frac{\sigma_q(X,W)}{\sigma_q(S_p,W)}\bigg|X(p)=q\right\}\rho_{X(p)}(q)d(S_p\cap W)(q).
\ee
The following result extends Theorem \ref{thm:maindens} (Compare with \cite[Theorem 12.4.4]{AdlerTaylor} and \cite[Proposition 6.5]{Wschebor}, in the standard case.).
\begin{thm}\label{thm:mainalph}
Let $(X,W)$ be a KROK couple. Then Theorem \ref{thm:maindens} holds for $\#^\a_{X\in W}$ and $\delta^\a_{X\in W}$: for any Borel subset $A\subset M$ we have
\be\label{eq:mainalph}
\E\#^\a_{X\in W}(A)=\int_A\delta^\a_{X\in W} 
\ee
When not finite, both sides take the same infinite value among $+\infty,-\infty$ or $\infty-\infty$.
\end{thm}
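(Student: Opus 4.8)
The plan is to reduce Theorem \ref{thm:mainalph} to the already-established Theorem \ref{thm:maindens} (equivalently \ref{thm:maingraph}) by a standard measure-theoretic bootstrapping: first prove it for simple weights $\a$, then extend to nonnegative measurable $\a$ by monotone convergence, and finally to signed (or vector-valued) $\a$ by splitting into positive and negative parts. The key observation is that the right-hand side \eqref{eq:deltaa} is linear in $\a$ and that the left-hand side \eqref{eq:Na}, being an honest finite sum almost surely (since $(X,W)$ is KROK, so $X^{-1}(W)$ is a.s. discrete and $\#_{X\in W}(A)$ is a.s. finite for relatively compact $A$), is also linear in $\a$. So the whole statement is really about interchanging $\E$, the sum over $X^{-1}(W)\cap A$, and the various integrals — which is exactly what the machinery behind Theorem \ref{thm:maingraph} already does for $\a\equiv 1$.

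Concretely, I would proceed as follows. \emph{Step 1 (indicator weights).} Take $\a(X,p)=1_{B}(p)$ for a Borel set $B\subset M$; then $\#^\a_{X\in W}(A)=\#_{X\in W}(A\cap B)$ and \eqref{eq:mainalph} is just Theorem \ref{thm:maindens} applied to $A\cap B$. More generally, for $\a(X,p)=1_{V}(p,X(p))$ with $V\subset M\times N$ Borel, \eqref{eq:mainalph} follows from the graph version: $\E\#^\a_{X\in W}(A)=\E\#_{\Gam}((A\times N)\cap V)=\int_{(A\times N)\cap V\cap\MW}\delta_\Gam$, and unwinding the fiber integral \eqref{eq:twodens} gives precisely $\int_A\delta^\a_{X\in W}$ for this $\a$. \emph{Step 2 (general bounded continuous $\a$, then measurable).} For $\a$ depending on the full path $f\in\mC^1(M,N)$, the identity for fixed $(p,q)$ reads $\E\{\a(X,p)J_pX\mid X(p)=q\}=\int_{\mathcal F}\a(f,p)\,d\mu(p,q)(f)$, and KROK.\ref{itm:krokEcont} (via Proposition \ref{prop:CazzoDuro}) guarantees this is continuous on $\MW$ for bounded continuous $\a$, so the density $\delta^\a_{X\in W}$ is well-defined exactly as $\delta_{X\in W}$ was; one then reruns the proof of Theorem \ref{thm:maingraph} verbatim with the integrand multiplied by $\a$, since $\a$ is bounded and the ``$\le$''/``$\ge$'' arguments (coarea plus Lebesgue differentiation) are insensitive to a bounded continuous multiplicative factor. \emph{Step 3 (monotone class / truncation).} For $\a\ge 0$ merely Borel measurable, approximate from below by bounded continuous (or bounded Borel, using a monotone class argument and the fact that the graph measure $\E\#_\Gam$ and the measure $\en$ are genuine Borel measures) $\a_n\uparrow \a$; both sides of \eqref{eq:mainalph} converge by monotone convergence (on the left, by monotone convergence applied inside $\E$ and to the a.s.-finite sum; on the right, to the integral of the monotone limit of integrands). \emph{Step 4 (signed and vector-valued).} Write $\a=\a^+-\a^-$ componentwise; apply Step 3 to each. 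If both $\int_A\delta^{\a^\pm}$ are finite the identity holds with finite values; otherwise both sides equal the same element of $\{+\infty,-\infty,\infty-\infty\}$, which is exactly the convention stated in the theorem.

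The main obstacle is \emph{Step 2}: one must make sure that the proof of Theorem \ref{thm:maingraph} (located in Sections \ref{sec:gen}--\ref{sec:mainproof}) really does go through with a bounded continuous weight inserted, i.e. that at no point did that proof secretly use $\a\equiv 1$ in a way incompatible with a varying factor. This should be fine because the ``$\le$'' direction comes from the coarea formula applied $W$ by $W$ (where inserting a bounded measurable function on $M\times N$ is harmless), and the ``$\ge$'' direction uses Lebesgue's differentiation theorem on the measure $\en$ together with the continuity of the conditional expectation at $\MW$ — and KROK.\ref{itm:krokEcont} was stated precisely for arbitrary bounded continuous $\a\colon\coo1MN\times M\to\R$, so the continuity input is already available in the exact generality needed. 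A secondary, purely bookkeeping, obstacle is handling the non-finite cases in Step 4 without sign errors; this is dispatched by noting that $\delta^{\a}_{X\in W}=\delta^{\a^+}_{X\in W}-\delta^{\a^-}_{X\in W}$ as densities (pointwise, with values in $\Delta M$, allowing $+\infty$), and that $\E$ of a sum of nonnegative terms commutes with the sum regardless of finiteness.
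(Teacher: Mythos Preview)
Your proposal is correct and follows essentially the same route as the paper: reduce to bounded continuous positive $\a$, observe that the entire proof of Theorem~\ref{thm:maindens} (via Theorem~\ref{thm:general}) goes through verbatim with such a weight inserted because KROK.\ref{itm:krokEcont} was formulated for arbitrary bounded continuous $\a$, and then extend to general Borel $\a$ by monotone convergence and the decomposition $\a=\a^+-\a^-$. Your Step~1 (indicators via the graph measure) is an extra warm-up the paper omits, and you are slightly more explicit than the paper about the monotone-class passage from continuous to Borel, but the substance is the same.
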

\subsection{The intersection degree}
\newcommand{\dg}{\text{deg}}
\newcommand{\lk}{\text{link}}
Let $M$ be oriented and let $W\subset N$ be a closed cooriented submanifold. Then, given $q\in W$ and a linear bijection $L\colon T_pM\to T_qN/T_qW$, the sign of $\det(L)$ is well defined, since both vector spaces are oriented. If $f\colon M\to N$ is a $\mC^1$ map such that $f\transv W$, then the \emph{intersection degree} of $f$ and $W$ is defined as $\text{deg}(f,W)=\#^\a_{f\in W}(M)\in \Z$, where
\be 
\a(f,p)=\text{sgn}\left(\frac{d_pf}{T_{f(p)}W}\right).
\ee
In this situation, we can incorporate the sign in the definition of the angle, by defining $\text{sgn}\sigma_q(f,W)=\a(f,p)\sigma_q(f,W)$, so that the formula \eqref{eq:mainalph} for the expected intersection degree of a KROK couple $(X,W)$ and an open subset $U\subset M$ becomes
\be\label{eq:Edeg}
\E\{\dg(f|_U,W)\}=\int_U \int_{ S_p\cap W}\E\left\{\delta_pX\frac{\text{sgn}\sigma_q(X,W)}{\sigma_q(S_p,W)}\bigg|X(p)=q\right\}\rho_{X(p)}(q)d(S_p\cap W)(q).
\ee
\begin{remark}\label{rem:anto}
This confirms a general idea suggested to the author by Antonio Lerario, according to which the general philosophy to deal with these kind of formulas should be:
\emph{To get the formula for the signed count, add the sign to both members of the formula for the normal count.} Theorem \ref{thm:mainalph} can be thought as an extension of this philosophy from the \emph{sign} to any choice of \emph{weight} $\a$.
\end{remark}
\begin{remark} If $U\subset M$ be an open set whose closure is a compact topological submanifold with boundary, such that $f(\de U)\subset N\- W$. It can be seen that $\dg (f|_U,W)$ actually depends only on homological data and thus it is defined and locally constant on the space of continuous functions $f\colon \overline{U}\to N$ such that $f(\de U)\subset N\- W$. Moreover, if the Poincaré dual of $W$ in $N$ vanishes, then one can define the \emph{linking number}\footnote{
Since $W\subset N$ is cooriented, there exists a Thom class $\tau\in H^m(N,N\smallsetminus W)$. By definition, $\dg(f|_U,W)=\int_Uf^*\tau$ in the case $f\transv W$, but now this identity remains true for any continuous $f$ such that $f(\de U)\subset N\smallsetminus W$.
Looking at the long exact sequence for the pair $(N,N\smallsetminus W)$, we see that $\tau\mapsto e$ maps to the Poincaré dual of $W$, so that if $e=0$ then there exists an element $\a\in H^{m-1}(N\smallsetminus W)$ that maps to $\tau$ (i.e. $d^*\a=\tau$), therefore $\int_Uf^*\tau=\int_{\de U} f^*\a$ by naturality (or Stokes theorem from De Rham's point of view). In such case, $\lk(f|_U,W)=\int_{\de U}f^*\a$. If $N$ is a tubular neighborhood of $W$, then $e,\tau,\a$ are  respectively the Euler class, the Thom class and the class of a closed global angular form (Compare with \cite{botttu}).
} 
$\lk(g,W)\in \Z$  for any $g\colon B\to N\-W$, where $B$ is a closed manifold of dimension $m-1$. In this case $\dg(f|_U,W)=\lk(f|_{\de U},W)$ depends only on the restriction to the boundary. For this reason, $\dg(X|_U,W)$ can be thought to be \emph{less random} than $\#_{X\in W}(U)$; in fact, often it ends up being deterministic.
\end{remark}
\subsection{Multiplicativity and currents}\label{sub:multcurr}
The formula \eqref{eq:Edeg} can be written as the integral of a differential form $\E\{\dg(f|_U,W)\}=\int_{M_+} \Omega_{X\in W}$ over the oriented manifold $M_+$, defined as
\be\label{eq:Omega}
\Omega_{X\in W}(p)=\int_{ S_p\cap W}\E\left\{{(X^*\nu)_p}\big|X(p)=q\right\}\rho_{X(p)}(q)\frac{d(S_p\cap W)(q)}{\sigma_q(S_p,W)} \in \wedge^m T^*_pM,
\ee
where $\nu\in \wedge^m TW^\perp$ is the volume form of the oriented metric bundle $TW^\perp$. This follows simply from remark \ref{rem:altforms} and the fact that if $M$ and $N$ are Riemannian, then
\be 
\text{sgn}\left(\frac{d_pf}{T_{f(p)}W}\right)=\text{sgn}\det\left(\Pi_{ T_qW^\perp}\circ d_pX\right).
\ee
What is interesting about this is that the form $\Omega_{X\in W}\in \Omega^r(M)$ is still defined if the codimension of $W$ is $r< m$. 

If the couple $(X,W)$ satisfies the KROK hypotheses except for the requirement on the codimension of $W$ in KROK.\ref{itm:krok:4}, then let us say that $(X,W)$ is semi-KROK.
Let us consider semi-KROK couples $(X_i,W_i)$ on the manifolds $M, N_i$ for $i=1,\dots,k$ where the codimensions $r_i$ of $W_i$ are such that $r_1+\dots+r_k=m$ and all the $W_i$s are cooriented. Then it is easy to see that the product map $X:=X_1\times \dots \times X_k\colon M\to N:=N_1\times \dots \times N_k$ and the submanifold $W=W_1\times \dots \times W_k$ form a KROK couple $(X,W)$, and formula \eqref{eq:Omega} gives
\be 
\Omega_{X\in W}= \Omega_{X_1\in W_1}\wedge \dots \wedge \Omega_{X_k\in W_k}.
\ee
This can be interpreted in the language of currents, in the same spirit of \cite{Nicolaescu2016}.

\begin{claim}\label{claimcurr}
The $r$-form $\Omega_{X_i\in W_i}\in \Omega^r(M)$ is equal, as a current, to the expectation of the random current $[[X_i^{-1}(W)]]$ defined by the integration over the oriented submanifold $X_i^{-1}(W)$. 
\end{claim}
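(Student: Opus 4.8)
The plan is to reduce the statement to the already-proven zero-dimensional formula \eqref{eq:Edeg}, by slicing $M$ into $r$-dimensional plaques. Unwinding the definitions, the claim asserts that for every compactly supported smooth $(m-r)$-form $\eta$ on the oriented manifold $M$,
\[
\E\left\{\int_{X_i^{-1}(W_i)}\eta\right\}=\int_M \Omega_{X_i\in W_i}\wedge\eta,
\]
the left side being the pairing of the random de Rham current $[[X_i^{-1}(W_i)]]$ with $\eta$, and the right side that of the current represented by the $r$-form $\Omega_{X_i\in W_i}$. Fix $i$ and drop it. Using a partition of unity subordinate to an atlas of coordinate charts $U\subset\R^m$, $y=(y^1,\dots,y^m)$, together with $\R$-linearity in $\eta$, it suffices to treat $\eta=h\,dy^I$ with $h\in\mC_c^\infty(U)$ and $I=(i_1<\dots<i_{m-r})$ a multi-index; write $J$ for the complementary multi-index (size $r$) and let $\pi\colon U\to\R^{m-r}$, $\pi(y)=(y^{i_1},\dots,y^{i_{m-r}})$, with fibers $U_t:=\pi^{-1}(t)$, affine $r$-planes, oriented as the fibers of the oriented submersion $\pi$.

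Second, I would write down the oriented coarea identity. Since $X\transv W$ a.s., $X^{-1}(W)$ is a.s. a smooth oriented $(m-r)$-submanifold of $M$ (oriented, as in the definition of $\deg(X,W)$, by $\mathrm{or}(M)$ and the coorientation of $W$). As $dy^I=\pi^*(dt^1\wedge\dots\wedge dt^{m-r})$, the area-with-multiplicity formula applied to $\pi|_{X^{-1}(W)}\colon X^{-1}(W)\to\R^{m-r}$ (equal dimensions) gives
\[
\int_{X^{-1}(W)}h\,dy^I=\int_{\R^{m-r}}\Bigg(\sum_{p\in X^{-1}(W)\cap U_t}\varepsilon(p)\,h(p)\Bigg)\,dt,
\]
where $\varepsilon(p)=\pm1$ compares, at $p$, the orientation of $X^{-1}(W)$ with that of $U_t$; a bookkeeping of conventions identifies $\varepsilon(p)$ with $\mathrm{sgn}(d_pX/T_{X(p)}W)$ relative to $\mathrm{or}(U_t)$ and the coorientation of $W$ — precisely the sign absorbed into $\mathrm{sgn}\,\sigma_q(X|_{U_t},W)$ in \eqref{eq:Edeg}. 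Taking absolute values bounds $\int_{X^{-1}(W)}|\eta|$ by $\int_{\R^{m-r}}\#^{|h|}_{X|_{U_t}\in W}(U_t)\,dt$; this shows $\omega\mapsto\int_{X^{-1}(W)}\eta$ is a measurable random variable and, once the finiteness below is noted, lets us exchange $\E$ with $\int dt$ by Tonelli.

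Third, I would apply \eqref{eq:Edeg} fiber by fiber. For a.e.\ $t$ the couple $(X|_{U_t},W)$ is KROK: the codimension condition KROK.\ref{itm:krok:4} now holds because $\dim U_t=r=\mathrm{codim}\,W$; the a.s.\ transversality KROK.\ref{itm:kroktransvas} for $X|_{U_t}$ holds for a.e.\ $t$ by Sard's theorem applied to $\pi|_{X^{-1}(W)}$ combined with Fubini on $\Omega\times\R^{m-r}$; and the remaining KROK conditions only involve the pointwise laws $[X(p)]$, the submanifolds $S_p$, and the regular conditional probabilities $[X|X(p)=q]$, all of which are insensitive to restricting the domain (conditional laws restrict by pushforward; $S_p$ and $\rho_{X(p)}$ are unchanged). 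Theorem \ref{thm:mainalph} in the form \eqref{eq:Edeg}, with weight $\a(f,p)=h(p)$, then yields
\[
\E\Bigg\{\sum_{p\in X^{-1}(W)\cap U_t}\varepsilon(p)\,h(p)\Bigg\}=\int_{U_t}h\cdot\iota_t^*\Omega_{X\in W},
\]
where $\iota_t\colon U_t\hookrightarrow U$ and I used $\Omega_{X|_{U_t}\in W}=\iota_t^*\Omega_{X\in W}$: by \eqref{eq:Omega} the form $\Omega$ depends on $X$ only through the pulled-back $r$-form $(X^*\nu)_p\in\wedge^r T_p^*M$ together with the domain-independent data $S_p,\rho_{X(p)},\sigma_q(S_p,W)$, and $(X|_{U_t})^*\nu=\iota_t^*(X^*\nu)$. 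Integrating over $t$ and using the Fubini decomposition $\int_U\Omega_{X\in W}\wedge h\,dy^I=\int_{\R^{m-r}}\big(\int_{U_t}h\,\iota_t^*\Omega_{X\in W}\big)\,dt$ — with the fiber orientation fixed above, so that no stray sign appears — completes the reduction.

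The main obstacle I foresee is the orientation bookkeeping: one must verify that the local-degree sign $\varepsilon(p)$ of $\pi|_{X^{-1}(W)}$ is exactly the $\mathrm{sgn}(d_pX/T_{X(p)}W)$ that enters \eqref{eq:Edeg} for the sliced couple, i.e.\ that the fiber orientation of $U_t$, the orientation of $X^{-1}(W)$, the coorientation of $W$, and the Fubini factorization of $\R^m$ are all normalized compatibly (there is a potential $(-1)^{r(m-r)}$ hiding in a wrong fiber convention). This is the step where a sign error would slip through unnoticed. The remaining points — that $(X|_{U_t},W)$ is KROK for a.e.\ $t$ (Sard plus Fubini), and the finiteness needed for Tonelli (from the continuity of $\delta_{\Gamma}$, Theorem \ref{thm:maingraph}, and compactness of $\pi(\spt h)$) — are routine. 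In one line: integrate, over the slicing parameter, the zero-dimensional formula \eqref{eq:Edeg} applied to $(X|_{U_t},W)$.
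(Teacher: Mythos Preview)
Your approach is genuinely different from the paper's, and in fact more complete. The paper does \emph{not} prove Claim~\ref{claimcurr}: immediately after stating it, it offers only a heuristic via \emph{products} of semi-KROK couples in the target---forming a KROK couple $(X_1\times\dots\times X_k,\,W_1\times\dots\times W_k)$ on $M$ so that the codimensions sum to $m$---and then, in the Remark following~\eqref{eq:doubt}, explicitly says ``Claim~\ref{claimcurr} is not a theorem yet,'' deferring to a subsequent paper the missing step of deducing the individual identities $\Omega_{X_i\in W_i}=\E[[X_i^{-1}(W_i)]]$ from the product identity.

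Your route is the dual one: instead of multiplying in the codomain to reach codimension $m$, you slice the \emph{domain} by coordinate $r$-planes $U_t$ so that $(X|_{U_t},W)$ becomes an honest KROK couple, apply the signed zero-dimensional formula~\eqref{eq:Edeg} (in its weighted form, Theorem~\ref{thm:mainalph}) fiberwise, and integrate in $t$. This is the standard slicing argument for identifying a current with a smooth form and, unlike the paper's product heuristic, it needs no further ``enough test objects'' step. The checks you flag---the orientation bookkeeping for $\varepsilon(p)$, that $(X|_{U_t},W)$ is KROK for a.e.\ $t$ (Sard on $\pi|_{X^{-1}(W)}$ plus Fubini on $\Omega\times\R^{m-r}$, exactly as in Remark~\ref{rem:toni}), and the naturality $\Omega_{X|_{U_t}\in W}=\iota_t^{*}\Omega_{X\in W}$ from the defining formula~\eqref{eq:Omega}---are the right ones and are routine. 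Your argument actually closes the gap the paper leaves open.
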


This follows from to the fact that the intersection degree $\dg(X|_U,W)$ may be viewed as the evaluation of the $0$-dimensional current $[[X^{-1}(W)]]=[[X_1^{-1}(W_1)\cap\dots\cap X^{-1}_k(W_k)]]$ on the function $1_U$. And since the intersection of currents is linear, we have 
\bega \label{eq:doubt}
\Omega_{X_1\in W_1}\wedge &\dots \wedge \Omega_{X_k\in W_k}
\\
=\Omega_{X\in W}&
=\E[[X^{-1}(W)]]
\\
=\E[[X_1^{-1}(W_1)]]\cap &\dots \cap \E[[X_k^{-1}(W_k)]].
\eega
\begin{remark}
Claim \ref{claimcurr} is not a theorem yet, for it has to be proved that the validity of the equation \eqref{eq:doubt} for every family of semi-KROK couples, implies that $\Omega_{X_i\in W_i}=\E[[X_i^{-1}(W_i)]]$. We plan to elaborate more on this subject in a subsequent paper.
\end{remark}

\subsection{Euler characteristic}
A special case of intersection degree is when $N=E\to M$ is the total space of an oriented vector bundle over an oriented manifold $M$ and $W=0_E\subset E$ is the zero section. Then $\dg(f,0_E)=\chi(E)$ is the Euler characteristic. In this case we can present the formula \eqref{eq:mainalph} in the form of Remark \ref{rem:connectionForm}.
\bega
\E\#_{X\in W}(U) &=\int_U \E\left\{|\det(\nabla X)_p|\bigg|X(p)=0\right\}\rho_{X(p)}(0)dM(p),
\\
\E\dg(X|_U,0_E) &=\int_U \E\left\{\ \det(\nabla X)_p\ \bigg|X(p)=0\right\}\rho_{X(p)}(0)dM(p)
\\
(&= \chi(E) ,\quad \text{If $U=M$ and is compact}),
\eega
where $E$ is endowed with some connected Riemannian metric\footnote{
Actually here the connection is not needed, since if $X(p)=0$ then $(\nabla X)_p$ is independent from $\nabla$.
 }.
  Notice that here, the density $\det(\nabla X)\rho_{X(\cdot)}(0)dM\colon M\to \Delta M$ is actually an intrinsic object, independent from the chosen Riemannian structures. 
  
  Following the discussion in the previous subsection \ref{sub:multcurr}, we now view the intersection degree $\dg(X|_{(\cdot)},0_E)$ as a random current $[[X^{-1}(0_E)]]\in\Omega_c^{0}(M)^*$. Its expectation is thus given by $\E[[X=0]] = \Omega_{X\in 0_E}$, defined in \eqref{eq:Omega}.

  
  Suppose now that $X\in\goo \infty M E$ is a nondegenerate smooth Gaussian random section and $s\colon M\to E$ is any smooth section. Let $E$ be endowed with the bundle metric defined by $X$ and let $\nabla=\nabla^X$ be the metric connection naturally associated with $X$ (see \cite{Nicolaescu2016}). Let $\nabla^2\in \bigwedge^2T^*M\otimes \bigwedge^2 E$ be its curvature. 
  Assume that $m$ is even, then a formula for $\Omega_{X\in 0_E}$ was computed in \cite{Nicolaescu2016}.
 \bega
 \Omega_{X\in 0_E}=
\E\left\{ \det(\nabla X)\right\}\frac{\w_{M}}{(2\pi)^{\frac m2}}=\frac{\text{Pf}(\nabla^2)}{(2\pi)^{-\frac{m}{2}}
}  \eega
 where $\w_{M}\in\Omega^m(M)$ is the volume form of the oriented Riemannian manifold $M$.  
\begin{remark}  
  The result of Nicolaescu \cite{Nicolaescu2016}, extends this to vector bundles with arbitrary even rank $r$. He proves that the expectation of the random current $[[X=0]]\in \Omega_c^{m-r}(M)^*$ is the current defined by the $r$-form $e(E,\nabla)=(2\pi)^{-\frac{r}{2}}Pf(\nabla^2)$. (Our sign convention in the definition of the Pfaffian $\text{Pf}$ is the same as in \cite{milnor-stasheff}, which differs to that of \cite{Nicolaescu2016} by a factor $(-1)^\frac {m}{2}$.)
 \end{remark}
 \subsection{Absolutely continuous measures}
 \HP\  on $M$ and $N$. Let $\mu$ be a finite Borel signed measure on $\mC^1(M,N)$ that is absolutely continuous with respect to $X$. By the Radon-Nikodym theorem, this is equivalent to the existence of an integrable Borel function $\a\colon \mC^1(M,N)\to \R$, with $\E\{|\a(X)|\}<\infty$, such that 
 \be 
 \int_{\mC^1(M,N)} \mathcal{F}(f)d\mu(f)=\E\{\mathcal{F}(X)\a(X)\}.
 \ee
Considering the case in which the function $\a$ in Theorem \ref{thm:mainalph} does not depends on the point, we deduce that the generalized Kac-Rice formula holds for every such measure $\mu$.
\be 
\int_{\mC^1(M,N)}\#(f^{-1}(W)\cap A)d\mu(f)=\E\#^\a_{X\in W}(A)=\int_{A}\delta^\a_{X\in W}.
\ee

\section{Examples}\label{sec:ex}

\subsection{Poincaré Formula for Homogenous Spaces}
In this section we will use Theorem \ref{thm:main} to give a new proof of the following Theorem. It is a special case of the Poincaré Formula for homogenous spaces, see \cite[Th. 3.8]{Howard}. 
\begin{thm}\label{thm:howard}
Let $G$ be a Lie group and let $K\subset G$ be a compact subgroup. Assume that $G$ is endowed with a left-invariant Riemannian metric that is also right-invariant for elements of $K$ and define the metric on $G/K$ to be the one that makes the projection map $\pi\colon G\to G/K$ a Riemannian submersion. Let $M,W\subset G/K$ be two smooth submanifolds (possibly with boundary) of complementary dimensions. Then
\be\label{eq:howardpoincare}
\int_G\#(gM\cap W)dG(g)=\int_M\int_W \bar{\sigma}_K\left(T_xM^\perp,T_yW^\perp\right)\Delta_K(x)dM(x)dW(y).
\ee
\end{thm}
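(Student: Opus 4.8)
The plan is to set this up as a direct application of Theorem~\ref{thm:main} (or rather Theorem~\ref{thm:maingraph} in its fiber-bundle incarnation, Theorem~\ref{thm:megafica}), applied to a suitably chosen homogeneous random map. First I would fix $M,W\subset G/K$ as in the statement and introduce the random map $X\colon M\to G/K$ defined by $X(x) = g\cdot x$, where $g\in G$ is distributed according to the normalized Haar measure — or, since $G$ may be noncompact, work locally and reduce to the compact case $K$ acting, or alternatively regard $G$ with its left-invariant metric and let the ``random'' object be $g$ itself with the reference measure $dG(g)$, so that the left-hand side of \eqref{eq:howardpoincare} is exactly $\E\#_{X\in W}(M)$ with $X$ as above (here one must be slightly careful that $dG$ need not be a probability measure, so one either restricts to $g$ in a relatively compact set and uses the additivity of $\E\#$, or interprets everything via the unsigned counting measure of Lemma~\ref{lemma:meas} directly). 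The key point is that $X^{-1}(W)=\{x\in M: g\cdot x\in W\} = M\cap g^{-1}W$, so $\#_{X\in W}(M)=\#(g^{-1}W\cap M)$, and after the change of variables $g\mapsto g^{-1}$ (which preserves Haar measure up to the modular function — here another place to be careful, though for the relevant unimodular or compact-$K$ situations it is fine) this is $\#(gM\cap W)$ integrated against $dG$.

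Next I would verify the KROK hypotheses for the couple $(X,W)$. The crucial structural input is that the pointwise law of $X(x)=g\cdot x$ is, for each fixed $x$, the pushforward of Haar measure under the orbit map $G\to G/K$, $g\mapsto g\cdot x$; since $G$ acts transitively, this is (a constant multiple of) the Riemannian volume of $G/K$ restricted to... no, it is a smooth positive measure on all of $G/K = N$, so we are in the case $S_x = N$, and the density $\rho_{X(x)}$ is everywhere positive and smooth by homogeneity; transversality $X\transv W$ almost surely holds by the standard moving-submanifold argument (Sard / Thom transversality: for almost every $g$, $gM\transv W$), and $\sigma_x(S_p,W)=1$ since $T_qS_p = T_qN$. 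The conditional expectation hypothesis KROK.\ref{itm:krokEcont} is where homogeneity does the real work: conditioning on $X(x)=q$, i.e.\ on $g\cdot x = q$, confines $g$ to a coset $g_0 K_x$ where $K_x$ is the stabilizer of $x$ (a conjugate of $K$), and the conditional law of $d_pX$ is a homogeneous object varying smoothly in $(x,q)$, so the continuity is automatic. Thus $(X,W)$ is KROK and Theorem~\ref{thm:main} applies, giving
\be
\int_G \#(gM\cap W)\,dG(g) = \int_M \int_W \E\left\{J_xX\,\sigma_q(X,W)\,\Big|\,X(x)=q\right\}\rho_{X(x)}(q)\,dW(q)\,dM(x).
\ee

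The remaining work is to identify the integrand with the right-hand side of \eqref{eq:howardpoincare}: one must show that
\be
\E\left\{J_xX\,\sigma_q(X,W)\,\big|\,X(x)=q\right\}\rho_{X(x)}(q) = \bar\sigma_K\!\left(T_xM^\perp, T_qW^\perp\right)\Delta_K(x),
\ee
where $\bar\sigma_K$ denotes the $K$-average of the angle function. Here the computation is: conditioning on $g\cdot x = q$ means averaging over the stabilizer; the differential $d_xX = d(g\cdot)_x$ composed with the identification $T_x(G/K)\cong T_q(G/K)$ is, after trivializing via the group action, exactly an element of (a conjugate of) $K$ acting on the tangent space, so $J_xX$ and $\sigma_q(X,W)$ become, respectively, a Jacobian factor that accounts for the mismatch between the volume of $M$ and the ambient volume (this is where $\Delta_K$, a density/Jacobian of the $K$-action, enters — I would need to match conventions with Howard's $\Delta_K$) and the angle $\sigma(k\cdot T_xM^\perp, T_qW^\perp)$ averaged over $k\in K$. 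The main obstacle I anticipate is precisely this last bookkeeping: correctly normalizing Haar measures on $G$ and $K$, pinning down the constant $\rho_{X(x)}(q)$ via the coarea formula for the orbit map $G\to G/K$, and reconciling the angle-function conventions of Appendix~\ref{app:angle} with the kinematic-density function $\Delta_K$ of \cite{Howard}; the differential-geometric content is routine once the invariance is exploited, but the constants are where errors hide. A secondary subtlety is the noncompactness/non-unimodularity of $G$, handled by the left-invariance hypothesis and by noting that everything is a local identity of densities on $M\times W$, so it suffices to integrate over $g$ ranging in relatively compact sets and pass to the limit using Lemma~\ref{lemma:meas}.
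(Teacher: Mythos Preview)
Your plan is correct and matches the paper's proof essentially step for step: define $X(p)=g\cdot p$ with $g$ uniform on a relatively compact $\Omega\subset G$, verify the KROK hypotheses (with $S_p=G/K$ the whole target, so $\sigma_q(S_p,W)=1$), apply Theorem~\ref{thm:main}, identify the conditional expectation as an average over the stabilizer coset $\nu K\mu^{-1}$ to produce $\bar\sigma_K$, recognize $\Delta_K$ as arising from the Jacobian of right translation in the computation of $\rho_{X(p)}$, and finish by exhausting $G$ with $\Omega_n\nearrow G$ via monotone convergence. One small redundancy: the change of variables $g\mapsto g^{-1}$ you mention is unnecessary, since $\#(M\cap g^{-1}W)=\#(gM\cap W)$ directly ($g$ being a bijection), so no modular-function correction enters at that step; the modular function $\Delta_K$ instead appears exactly where you later anticipate it, in the density $\rho_{X(p)}(q)$ via the coarea formula for $\pi\colon G\to G/K$.
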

Here, $\Delta_K(y)$ is defined as follows. Let $y=\nu K=\pi(\nu)$, then $\Delta_K(y)$ is defined to be the jacobian of the right multiplication by $\nu^{-1}$ in $G$,
\be 
\Delta_K(y):=\Delta(\nu)=J(R_{\nu^{-1}}).
\ee
To see the this definition is well posed, observe that since $\Delta\colon G\to \R$ is a group homomorphism and $K$ is compact, the set $\Delta(K)$ must be a compact subgroup of $\R$, thus $\Delta(K)=\{1\}$. It follows that $\Delta$ factorizes to a well defined function $\Delta_K\colon G/K\to \R$.

Notice that the angle $\bar{\sigma}_K\left(T_xM^\perp,T_yW^\perp\right)$ takes two subspaces that do not belong to the same tangent space $T_x(G/K)\neq T_y(G/K)$, in general. In fact $\bar{\sigma}_K$ is not the usual angle of Definition \ref{defi:angle}, but it is defined as follows. 
If $x=\mu K$ and $y=\nu K$ and $U,V$ are vector subspaces of, respectively, $T_x(G/K)\subset T_\mu G$ and $T_y(G/K)\subset T_\nu G$, then $\mu^{-1}_*U$ and $\mu^{-1}_*V$ are both subspaces of $T_1G$ and we can compute the angle $\sigma(\mu^{-1}_*U,\nu^{-1}_*V)$ using Definition \ref{defi:angle}. Then $\bar{\sigma}_K\left(U,V\right)$ is obtained by taking the average among all choices of $\mu$ and $\nu$:
\be 
\bar{\sigma}_K(U,V)=\int_K \sigma_{T_1 G}(\mu^{-1}_*U,k^{-1}_*\nu^{-1}_*V)dK(k).
\ee
Observe that, by Proposition \ref{prop:angleperp}, we have that $\bar{\sigma}_K(U,V)=\bar{\sigma}_K(U^\perp,V^\perp)$.
\begin{proof}[Proof of Theorem \ref{thm:howard}]
Let $\Omega\subset G$ be an open subset with compact closure and with $\vol_G(\de \Omega)=0$. Define the smooth random map $X\colon M\to G/K$ such that
\be
\P\{X=(L_g)|_{M}, \text{ for some $g\in A$}\}=\frac{\vol_G(A\cap \Omega)}{\vol_G(\Omega)}.
\ee
In other words, $X$ is uniformly distributed on the set of left multiplications by elements of $\Omega\subset G$, that means that $X\colon M\to G/K$ is the map $X(p)=g\cdot p$, where $g$ is a uniform random element of $\Omega$. We want to apply Theorem \ref{thm:main} to the random map $X$, where $W\subset N=G/K$. To this end, let us show that the couple $(X,W)$ is KROK (see Definition \ref{def:krok}).

\begin{enumerate}[(i)]
\item Ok.
\item \label{itm:how3} Let us fix $p=\mu K\in M$. The support of $[X(p)]$ is the open set $\Omega \cdot p\subset G/K$. Let $A\subset G/K$
\bega
\P\{X(p)\in A\}&=\frac{1}{\vol_G(\Omega)}\vol_G\left(\{g\in \Omega\colon g\mu K\in A\}\right)\\
&=\frac{1}{\vol_G(\Omega)}\vol_G\left(\Omega\cap\left(\pi^{-1}(A)\cdot\mu^{-1}\right) \right)\\
&=\frac{\Delta(\mu)}{\vol_G(\Omega)}\vol_G\left( \Omega\cdot\mu\cap\pi^{-1}(A)\right)\\
&=\frac{\Delta_K(p)}{\vol_G(\Omega)}\int_A\left(\int_{\Omega\cdot\mu\cap \pi^{-1}(q)}d\left(\pi^{-1}(q)\right)\right)d\left(G/K\right)(q) 
\\
&= \frac{\Delta_K(p)}{\vol_G(\Omega)}\int_A\vol_K(K\cap \nu^{-1}\Omega \mu)\ d\left(G/K\right)(\nu K),
\eega
where in the fourth step, we used the Coarea Formula (Theorem \ref{thm:coarea}). 

At this point, we can define a continuos function $f_\Omega\colon G/K\times G/K\to \R$ such that 
\be 
f_\Omega(p,q)=\vol_K(K\cap q^{-1}\Omega p):=\vol_K(K\cap \nu^{-1}\Omega\mu)\ee
 for any $\nu\in\pi^{-1}(q)$ and $\mu\in\pi^{-1}(p)$. This definition is well posed since the metric, and hence the volume form, on $G$ is invariant with respect to elements of $K$, both on the left and on the right. To see that $f_\Omega$ is continuous it is enough to show that if $\nu_n\to \nu$ and $\mu_n\to \mu$ in $G$, then $\vol_K(K\cap \nu_n^{-1}\Omega\mu_n)\to \vol_K(\nu^{-1}\Omega\mu)$.
 We prove this via the dominated convergence theorem (here we use the fact that $\vol_G(\de\Omega)=0$), since 
 \be 
 \vol_K(K\cap \nu_n^{-1}\Omega\mu_n)=\int_K 1_{\Omega}(\nu_n a \mu_n^{-1})dK(a),
 \ee
where $1_{\Omega}(\nu_n a \mu_n^{-1})\to 1_{\Omega}(\nu a \mu^{-1})$ for every $a\notin  \nu^{-1}\de\Omega \mu$ and, eventually,
\be 
1_{\Omega}(\nu_n a \mu_n^{-1})\le 1_{B_1(\nu^{-1})\cdot\Omega\cdot B_1(\mu)}(a)\in L^1(G).
\ee
It follows that $[X(p)]$ is absolutely continuous on $S_p:=G/K$, with a continuous density 
\be \label{eq:howarddensity}
\delta_{X(p)}(q)=\rho_{X(p)}(q)\cdot d(G/K)(q)= \frac{\Delta_K(p)}{\vol_G(\Omega)}f_\Omega(p,q)\cdot d(G/K)(q).
\ee
\begin{remark}
Notice that $\rho_{X(p)}(q)\neq 0$ if and only if $q\in \Omega\cdot p$. However, it would be a bad idea to define $S_p=\Omega\cdot p$. Indeed, with that choice the set $\mathcal{S}$ would not be closed and thus KROK.\ref{itm:krok:7} would not hold.
\end{remark}
\item Let us consider the smooth map $F\colon \Omega\times M\to G/K$, given by $F(g,p)=g\cdot p$. Clearly $F(\cdot,p)$ is a submersion, for every $p\in M$, therefore $F\transv W$. This, by a standard argument (Parametric Transversality, see \cite[Theorem 2.7]{Hirsch}) implies that $(L_g)|_M=F(g,\cdot)\transv W$ for almost every $g\in \Omega$. We conclude that
\be 
\P\{X\transv W\}=1.
\ee
\item Since $S_p=G/K$, the transversality assumption $S_p \transv W$ is certainly satisfied for every $p\in M$.
\item Ok.
\item In this case we have that $\mathcal{S}=M\times (G/K)$ is, without a doubt, a closed submanifold of $M\times (G/K)$. Moreover,
$\MW =M\times W$.
\item By equation \eqref{eq:howarddensity}, we have that $\rho_{X(p)}(q)$ is continuous with respect to all $(p,q)\in M\times (G/K)$.
\item This is the most complicated condition to check. Let $\a\colon \mC^1(M,G/K)\to \R$ be a continuous function (the case in which $\a$ depends on $p$ follows automatically, because of Proposition \ref{prop:CazzoDuro}). We have to show that the function $e\colon M\times G/K\to \R$, defined as $e(p,q)=\E\left\{\a(X)\big|X(p)=q\right\}$, is continuous at all points of $M\times W$. Let $p,q\in G/K$ and let $\nu,\mu\in G$  be such that $p=\mu K$ and $q=\nu K$. Notice that if $X=(L_g)|_{M}$, then $X(p)=q$ if and only $g\in \nu K\mu^{-1}$. Therefore if $\w$ denotes a uniformly distributed random element of $\Omega$ and $\xi$ denotes a uniformly distributed random element of $K$, then
\bega\label{eq:howcondi}
\E\left\{\a(X)\big|X(p)=q\right\}
&= \E\left\{\a\left((L_\w)|_M\right)\big|\w\in \Omega \cap \nu K \mu^{-1} \right\}
\\
&= \E\left\{\a\left((L_{\nu \xi\mu^{-1}})|_M\right)\big|k\in K\cap \nu^{-1} \Omega \mu \right\}
\\
&= \int_{K\cap \nu^{-1} \Omega \mu}\a\left((L_{\nu k\mu^{-1}})|_M\right) \frac{dK(k)}{\vol_K\left(K\cap \nu^{-1} \Omega \mu\right)}
\\
&=.\frac{1}{f_\Omega(p,q)}\int_{K\cap \nu^{-1} \Omega \mu}f_\a(\nu k\mu^{-1}) dK(k),
\eega
where $f_\a\colon G\to \R$ is a continuous function, defined as $f_\a(g)=\a\left(L_g|_M\right)$. Notice, that the last integral depends only on $p,q$, for if $\mu'=\mu a$ and $\nu'=\nu b$ for some $a,b\in K$, then the change in the integral corresponds to the change of coordinates $k'= b^{-1}ka$. 
To see that $e$ is continuous it is enough to check the continuity of the composed function $e(\pi(\cdot),\pi(\cdot))$. Let $\mu_n\to \mu$ and $\nu_n\to \nu$ be converging sequences in $G$ and define, for any $n_0\in\N$, the number
\bega
s_{n_0}=\sup_{n\ge n_0}\sup_{k\in K}|f_\a(\nu_nk\mu_n^{-1})-f_\a(\nu k\mu^{-1})|.
\eega
Since $f_\a$ is continuous and $K$ is compact, we have that $s_n\to 0$. As a consequence we get that
\bega
\lefteqn{\limsup_{n\to \infty}|e(\mu_nK,\nu_n K)-e(\mu K, \nu K)|\le}
\\
& &\le\lim_{n_0\to \infty}\sup_{n\ge n_0}\ \int_{K\cap \nu^{-1} \Omega \mu}\frac{\left|f_\a(\nu_n k\mu_n^{-1})-f_\a(\nu k\mu^{-1})\right|}{\vol_K\left(K\cap \nu^{-1} \Omega \mu\right)}{dK(k)}
\\
& &\le \lim_{n_0\to \infty}s_{n_0}=0.
\eega
This proves that $e(\pi,\pi)\colon G\times G\to\R$ is continuous.
\end{enumerate}
At this point, we know that the couple $(X,W)$ is KROK, therefore the expected number of intersections of the submanifolds $X(M)$ and $W$ is given by the generalized Kac-Rice formula of Theorem \ref{thm:main}, where $S_p=G/K$.
\begin{multline} 
\E\{\#_{X\in W}(M)\}=\int_{M}\rho_{X\in W}dM(p)
\\
=\int_M\int_W\E\left\{J_pX\sigma_q\left(d_pX(T_pM),T_qW\right)\big|X(p)=q\right\}\rho_{X(p)}(q)dW(q)dM(p).
\end{multline}
Recall that $X$ ranges among left translations, which are isometries, thus $JX=1$ with probability one. Moreover, we already computed $\rho_{X(p)}(q)$ (see \eqref{eq:howarddensity}) and we already understood how to compute the conditional expectation $\E\{\cdot|X(p)=q\}$ (see \eqref{eq:howcondi}). 
\begin{multline}
\E\{\#_{X\in W}(M)\}= \\
\int_M\int_W\frac{\Delta_K(\mu K)}{\vol_G(\Omega)}\left(\int_{K\cap \nu^{-1} \Omega \mu}\sigma_{\nu K}\left(\nu k \mu^{-1}T_{\mu K}M,T_{\mu K}W\right) dK(k)\right)dW(\nu K)dM(\mu K)=
\\
\int_M\int_W\frac{\Delta_K(\mu K)}{\vol_G(\Omega)}\left(\int_{K\cap \nu^{-1} \Omega \mu}\sigma_{T_1G}\left(\mu^{-1}T_{\mu K}M,k^{-1}\nu^{-1}T_{\nu K}W\right) dK(k)\right)dW(\nu K)dM(\mu K).
\end{multline}
Let $\Omega_n\subset \Omega_{n+1}\subset G$ be a sequence of relatively compact subsets such that $\cup_{n}\Omega_n =G$ and with $\vol_G(\de \Omega_n)=0$, and let $X_n\colon M\to G/K$ be the random map defined as above, with $\Omega=\Omega_n$. We obtain the thesis \eqref{eq:howardpoincare} by monotone convergence:
\bega
\int_{G}\#(gM\cap W)dG(g) = 
\sup_{n\in \N}\int_{\Omega_n}\#(gM\cap W)dG(g)
= \sup_{n\in\N}\E\{\#_{X_n\in W}(M)\}\vol_G(\Omega_n)
\\
=\int_M\int_W\Delta_K(\mu K)\left(\int_{K}\sigma_{T_1G}\left(\mu^{-1}T_{\mu K}M,k^{-1}\nu^{-1}T_{\nu K}W\right) dK(k)\right)dW(\nu K)dM(\mu K)
\\
=\int_M\int_W\Delta_K(p)\bar{\sigma}_{K}\left(T_{p}M,T_{q}W\right)dW(q)dM(p).
\eega
\end{proof}
\subsection{Isotropic Gaussian fields on the Sphere}
Let $X\colon S^m\to \R^k$ be a $\mathcal{C}^1$ Gaussian random field on the sphere. Using the notation of section \ref{sec:gausscaseintro} (consistently with \cite{dtgrf}), we say that $X\in \g 1{S^m}k=\goo 1{S^m}E$, meaning that $X$ is a Gaussian random section of the trivial bundle $E=S^m\times \R^k$. We say that $X$ is \emph{isotropic} if $[X\circ R]=[X]$ for any rotation $R\in O(m+1)$. In particular, the covariance of $X(x)$ and $X(y)$ depends only on the angular distance $\a(x,y)=\arccos(\langle x,y\rangle)$ (because $S^m\times S^m/O(m+1)\cong [0,\pi]$):
\be 
K_X(x,y)=\E\left\{X(x)\cdot X(y)^T\right\}=K(\langle x,y\rangle)=F(\a(x,y)),
\ee
where $K\colon [-1,1]\to \R^{k\times k}$ and $F\colon \R\to \R^{k\times k}$ are $\mathcal{C}^2$ functions, such that $F$ is even and $2\pi-$periodic and
\be \label{eq:stropFK}
F(\a)=K(\cos\a).
\ee
As a consequence, the covariance structure of the first jet of $X$ at a given point $p\in S^m$, namely the couple $j^1_pX=(X(p),d_pX)$, is understood as follows. Define \be
\Sigma_0:=K(1)=F(0) \quad \text{and} \quad \Sigma_1=K'(1)=-F''(0).
\ee
Then given any orthonormal basis $\de_1,\dots, \de_m$ of $T_p S^m$, we have the following identities
\be\label{eq:istropId}
\E\left\{X(p)X(p)^T\right\}=\Sigma_0 ,
\quad 
\E\left\{\de_i X\cdot X(p)^T\right\}=0, \quad
\E\left\{\de_iX \cdot \de_jX^T\right\}=\Sigma_1\delta_{i,j}.
\ee
\begin{example}[Kostlan Polynomials]
Let $\psi_d\in\g 1{S^m}{}$, be defined as the restriction to $S^m$ of the random homogeneous Kostlan polynomial of degree $d$:
\be 
\psi_d(x)=\sum_{|\a|=d}{d\choose\a}^\frac12 \gamma_\a x^\a,
\ee
where $\gamma_\a\sim N(0,1)$ are independent normal Gaussian. Then $\psi_d$ is a smooth isotropic Gaussian field $X\in\g 1{S^m}k$ with $K(t)=t^d$. In fact, any isotropic Gaussian field for which the function $K(t)$ has the form
\be
K(t)=K_0+K_1t+\dots +K_dt^d,
\ee 
for some positive definite simmetric matrices $K_\ell$, is a linear combination of Kostlan fields. To see this, let $A_\ell$ be a $k\times k$ matrix such that $A_\ell A_\ell^T=K_\ell$ and define 
\be \label{eq:stropKostlanmix}
\tilde{X}=A_0\begin{pmatrix}
\psi_0^1 \\ \vdots \\ \psi_0^k
\end{pmatrix}+A_1\begin{pmatrix}
\psi_1^1 \\ \vdots \\ \psi_1^k
\end{pmatrix}
+\dots +
A_d\begin{pmatrix}
\psi_d^1 \\ \vdots \\ \psi_d^k
\end{pmatrix},
\ee
where $\{\psi_\ell^i\}_{i,\ell}$ are independent copies of Kostlan polynomials of degree $\ell=1,\dots,d$. Then $\tilde{X}$ is equivalent to $X$, since they have the same covariance function. In the particular case in which $X=(\psi_{d_1},\dots,\psi_{d_k})^T$ where $\psi_{d_\ell}$ are independent Kostlan polynomials of degree $d_\ell$, then $\Sigma_0=\mathbb{1}_k$ and $\Sigma_1=\text{diag}(d_1,\dots,d_k)$.
\end{example}
We recall that the density function of a Gaussian random vector $\xi\sim N(0,\Sigma)$ in $\R^k$ with nondegenerate covariance matrix $\Sigma$ is given by
\be\label{eq:isodensga}
\rho_{\Sigma}(y)=\frac{e^{-\frac12y^T\Sigma^{-1}y}}{(2\pi)^\frac{k}2(\det \Sigma)^\frac12}.
\ee
\begin{lemma}\label{lem:MWRn}
Let $M$ be a Riemannian manifold. Let $X\in \g \infty Mk$ and assume that $X(p)$ has nondegenerate covariance matrix  $\Sigma_0(p)$. Let $W\subset \R^k$ be any submanifold (possibly stratified) of codimension $m$. Then 
\begin{multline}
\E\{\#X^{-1}(W)\}=\int_M\delta_{X\in W}(p)
\\
=\int_M\int_W\E\left\{\left|\det\left(\Pi_{T_yW^\perp}\circ d_p X\right)\right|\Bigg| X(p)=y\right\}\rho_{\Sigma_0}(y) dW(y)dM(p).
\end{multline}
\end{lemma}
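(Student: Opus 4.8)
The plan is to recognize Lemma~\ref{lem:MWRn} as the specialization of the Gaussian Kac--Rice formula (Theorem~\ref{thm:maingau}), which in the fiber-bundle case takes the simpler form of Theorem~\ref{thm:megafica}, to the \emph{trivial} vector bundle $E=M\times\R^k\to M$ equipped with the product Riemannian metric and the product (flat, linear) connection, and to the submanifold $\mathcal{W}=M\times W\subset E$.

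First I would reformulate. Identifying the field $X\colon M\to\R^k$ with the random section $p\mapsto(p,X(p))$ of $\pi\colon E\to M$, one has $\{p\in M\colon X(p)\in W\}=X^{-1}(\mathcal{W})$, where $\mathcal{W}=M\times W$ has codimension $m$ in $E$ because $W$ has codimension $m$ in $\R^k$; in particular $\E\#X^{-1}(W)=\E\#_{X\in\mathcal{W}}(M)=\E\#_{\Gamma(X,\mathcal{W})}(\mathcal{W})$. Next I would check that $(X,\mathcal{W})$ is a KROK couple (Definition~\ref{def:krok}): non-degeneracy of $X$ is exactly the hypothesis that $\Sigma_0(p)$ is invertible, and then $d[X(p)]=\rho_{\Sigma_0(p)}\,dE_p$ with $\rho_{\Sigma_0(p)}$ given by \eqref{eq:isodensga}, so $S_p=E_p=\pi^{-1}(p)$; the transversality $\mathcal{W}\transv E_p$ is immediate because $T_{(p,y)}\mathcal{W}=T_pM\oplus T_yW$ contains the horizontal subspace $T_pM$; and smoothness of $X$ together with Theorem~\ref{thm:probtransv} gives $X\transv\mathcal{W}$ almost surely. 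The continuity hypotheses KROK.\ref{itm:krok:7}--KROK.\ref{itm:krokEcont} hold for the same reasons recalled in the discussion preceding Theorem~\ref{thm:maingau}; in particular KROK.\ref{itm:krokEcont} follows from the Gaussian regression formula (Lemma~\ref{lem:gioiellino}). If $W$ is only Whitney stratified, one replaces it by its top-dimensional stratum, as explained after Corollary~\ref{cor:connectedform}, since $X(M)$ almost surely avoids the strata of higher codimension.

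Then I would apply Theorem~\ref{thm:megafica}: since $\pi$ is a fiber bundle and Riemannian submersion with $S_p=\pi^{-1}(p)=E_p$, the measure $\E\#_{\Gamma(X,\mathcal{W})}$ is absolutely continuous on $\mathcal{W}=M\times W$, with continuous density
\[
\delta_{\Gamma}(p,y)=\E\bigl\{J_pX\,\sigma_{(p,y)}(X,\mathcal{W})\bigm| X(p)=y\bigr\}\,\rho_{\Sigma_0(p)}(y)\,dM(p)\,dW(y),
\]
the product volume density of $\mathcal{W}=M\times W$ being $dM\,dW$. Now $\mathcal{W}$ is parallel for the product connection --- its normal bundle $T_{(p,y)}\mathcal{W}^{\perp}=\{0\}\oplus T_yW^{\perp}$ is vertical --- and the connection is linear, the covariant derivative of the section $X$ being the ordinary differential $d_pX\colon T_pM\to\R^k$; hence, on the event $\{X(p)=y\}$, Remark~\ref{rem:connectionForm} gives
\[
J_pX\,\sigma_{(p,y)}(X,\mathcal{W})=\bigl|\det\!\bigl(\Pi_{T_yW^{\perp}}\circ d_pX\bigr)\bigr|.
\]
Substituting this into the formula for $\delta_{\Gamma}$ and integrating over $\mathcal{W}$ by Fubini for densities on the product $M\times W$ yields
\[
\E\#X^{-1}(W)=\int_M\int_W\E\bigl\{\bigl|\det\!\bigl(\Pi_{T_yW^{\perp}}\circ d_pX\bigr)\bigr|\bigm| X(p)=y\bigr\}\,\rho_{\Sigma_0}(y)\,dW(y)\,dM(p),
\]
which is the claimed identity. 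Both sides are read in $[0,+\infty]$: by Lemma~\ref{lemma:meas} the left-hand side extends to a Borel measure in $A\subset M$, and the equality persists when it is not finite.

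I do not expect a genuine obstacle: the argument is essentially bookkeeping. The two points deserving care are the verification of KROK.\ref{itm:krokEcont}, which is not self-evident but is precisely what the Gaussian regression argument (Lemma~\ref{lem:gioiellino}) handles, and the correct matching of the Jacobian-times-angle factor of the general formula with its connection-simplified form --- namely, that for the trivial bundle the normalizing density $\delta\mathcal{W}$ equals $dM\,dW$ (equivalently $\sigma_{(p,y)}(E_p,\mathcal{W})=1$) and that $J_pX\,\sigma_{(p,y)}(X,\mathcal{W})$ collapses to the absolute value of the determinant of the composition $\Pi_{T_yW^{\perp}}\circ d_pX$.
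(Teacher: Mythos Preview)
Your proposal is correct and follows essentially the same route as the paper: identify $X$ with a section of the trivial bundle $E=M\times\R^k$, set $\mathcal{W}=M\times W$, apply the Gaussian Kac--Rice formula (the paper invokes Theorem~\ref{thm:maingau} directly, you go through Theorem~\ref{thm:megafica}, but this is the same ingredient), and then simplify via Remark~\ref{rem:connectionForm} using that $\mathcal{W}$ is parallel for the trivial connection and $\nabla X=dX$. The paper's proof is just more terse about the KROK verification, which is already packaged into Theorem~\ref{thm:maingau}.
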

\begin{proof}
Let $\Gamma_X$ be the graph of $X$ and $\MW :=M\times W$. Then $\Gamma_X$ is a non-degenerate Gaussian random section of the trivial bundle $E=M\times \R^k$ and $\MW $ is clearly transverse to all fibers of the bundle: $M\times W\transv \{p\}\times \R^k$. Therefore we can apply Theorem \ref{thm:maingau} to obtain a density $\delta_{X\in W}=\delta_{\Gamma(X)\in \MW }\in L^+(M)$. Moreover, the trivial connection on $E=M\times \R^k$ makes it into a linearly connected Riemannian bundle, for which $\MW $ is a parallel submanifold, so that we can present the density $\delta_{X\in W}$ with the formula of Remark \ref{rem:connectionForm}, knowing that $\nabla \Gamma_X=dX$ and $\MW \cap E_p=W$.
\be 
\delta_{X\in W}(p)=\int_W\E\left\{\left|\det\left(\Pi_{T_yW^\perp}\circ d_p X\right)\right|\Bigg| X(p)=y\right\}\rho_{\Sigma_0}(y) dW(y)dM(p).
\ee
\end{proof}
Given a smooth submanifold $W\subset \R^k$ (possibly stratified) of codimension $m$, we say that a measurable map $\nu\colon W\to \R^{m\times k}$ is a \emph{measurable normal framing} for $W$, if for almost every $y\in W$ the columns of the matrix $\nu(y)$ form an orthonormal basis of $T_yW^\perp$ (if $W$ is stratified, then this has to hold only for almost every $y$ in the top dimensional stratum of $W$).
\begin{thm}\label{thm:mainIsotrop}
Let $X\colon S^m\to \R^k$ be an isotropic $\mathcal{C}^1$ Gaussian random field. Let $\Sigma_0$ and $\Sigma_1$ be the $k\times k$ matrices defined as above and assume that $\Sigma_0$ is nondegenerate. Let $W\subset \R^k$ be any submanifold (possibly stratified) of codimension $m$ and let $\nu \colon W\to \R^{m\times k}$ be a measurable normal framing. Then
\be
\E\left\{\# X^{-1}(W)\right\}=2\cdot \int_W\sqrt{\det(\nu(y)^T\Sigma_1\nu(y))}\cdot\frac{e^{- \frac12y^T\Sigma_0^{-1}y}}{(2\pi)^\frac{k-m}2(\det \Sigma_0)^\frac12}dW(y)
.
\ee
\end{thm}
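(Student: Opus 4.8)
The plan is to apply Lemma~\ref{lem:MWRn} with $M=S^m$ (round metric) and then use isotropy to collapse the conditional expectation to a universal Gaussian determinant moment. By Lemma~\ref{lem:MWRn},
\[
\E\{\#X^{-1}(W)\}=\int_{S^m}\int_W\E\left\{\left|\det\left(\Pi_{T_yW^\perp}\circ d_pX\right)\right|\ \Big|\ X(p)=y\right\}\rho_{\Sigma_0}(y)\,dW(y)\,dS^m(p),
\]
with $\rho_{\Sigma_0}$ as in \eqref{eq:isodensga}; one first checks the hypotheses of that lemma hold, which they do since $\Sigma_0$ is nondegenerate. Fix $p$ and an orthonormal basis $\de_1,\dots,\de_m$ of $T_pS^m$, and represent $d_pX$ by the $k\times m$ matrix $J=[\de_1X\,|\,\cdots\,|\,\de_mX]$. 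Since the columns of $\nu(y)$ form an orthonormal basis of $T_yW^\perp$, the composition $\Pi_{T_yW^\perp}\circ d_pX$ is represented in these orthonormal bases by the $m\times m$ matrix $\nu(y)^TJ$, so the inner integrand equals $\E\{|\det(\nu(y)^TJ)|\mid X(p)=y\}$.

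Next I would invoke the isotropy identities \eqref{eq:istropId}: $\E\{X(p)X(p)^T\}=\Sigma_0$, $\E\{\de_iX\,X(p)^T\}=0$ and $\E\{\de_iX\,\de_jX^T\}=\Sigma_1\delta_{ij}$. Being jointly Gaussian and uncorrelated, $X(p)$ and $J$ are independent, so the conditioning drops out; moreover the columns of $J$ are i.i.d.\ $N(0,\Sigma_1)$, hence the columns of $\nu(y)^TJ$ are i.i.d.\ $N(0,\Lambda(y))$ with $\Lambda(y):=\nu(y)^T\Sigma_1\nu(y)$. Factoring $\nu(y)^TJ\overset{d}{=}\Lambda(y)^{1/2}G$ for an $m\times m$ matrix $G$ of i.i.d.\ standard normals (valid even when $\Lambda(y)$ is singular, in which case both sides of the asserted identity vanish at $y$) yields $\E\{|\det(\nu(y)^TJ)|\}=\sqrt{\det\Lambda(y)}\cdot c_m$, where $c_m:=\E|\det G|$ depends on neither $p$ nor $y$. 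Consequently the $p$-integral simply contributes the factor $\vol(S^m)=2\pi^{(m+1)/2}/\Gamma(\tfrac{m+1}{2})$.

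It then remains to evaluate $c_m$. Using the Gram--Schmidt (``base times height'') factorization, $|\det G|$ is the product of the distances of each column of $G$ from the span of the preceding ones; conditionally these are independent with $\chi_m,\chi_{m-1},\dots,\chi_1$ distributions, so $c_m=\prod_{j=1}^m\E[\chi_j]=2^{m/2}\prod_{j=1}^m\frac{\Gamma((j+1)/2)}{\Gamma(j/2)}=\frac{2^{m/2}\Gamma((m+1)/2)}{\sqrt\pi}$ by telescoping. Substituting $\vol(S^m)$, $c_m$ and \eqref{eq:isodensga} into the formula, the $2\pi$-powers combine as $(2\pi)^{k/2}/(2\pi)^{(k-m)/2}=(2\pi)^{m/2}$ while $\vol(S^m)\cdot c_m=2(2\pi)^{m/2}$, and one reads off exactly the claimed identity. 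The one point needing genuine care is this final numerical coincidence between the volume of the round sphere and the moment $c_m$ that produces the clean factor $2$; the rest — the matrix representation of $\Pi_{T_yW^\perp}\circ d_pX$, the independence of $X(p)$ and $d_pX$, and the measurability of $y\mapsto\sqrt{\det\Lambda(y)}$ — is routine.
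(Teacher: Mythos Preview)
Your proof is correct and follows essentially the same approach as the paper: apply Lemma~\ref{lem:MWRn}, drop the conditioning by independence from \eqref{eq:istropId}, pull out $\vol(S^m)$, and reduce to the moment $c_m=\E|\det G|$ for an $m\times m$ standard Gaussian matrix. The only cosmetic difference is that the paper expresses this moment as $m!\,\vol(\B^m)/(2\pi)^{m/2}$ and then checks $\vol(S^m)\vol(\B^m)m!=2(2\pi)^m$ via the duplication formula, whereas you obtain $c_m=2^{m/2}\Gamma((m+1)/2)/\sqrt\pi$ directly by the telescoping Gram--Schmidt argument; both routes yield $\vol(S^m)\,c_m=2(2\pi)^{m/2}$.
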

\begin{proof}
By Lemma \ref{lem:MWRn}, we have that 
\be 
\E\{\#X^{-1}(W)\}=\int_W\E\left\{\left|\det\left(\Pi_{T_yW^\perp}\circ d_p X\right)\right|\right\}\rho_{\Sigma_0}(y) dW(y)dS^2(p).
\ee
We can omit the conditioning $X(p)=y$, since in this case $X(p)$ and $d_pX$ are independent. The fact that the field is isotropic implies that the measure $\E\#_{X\in W}$ is an invariant measure on $S^m$, so that 
\be 
\E\{\#X^{-1}(W)\}=\vol(S^m)\int_W\E\left\{\left|\det\left(\Pi_{T_yW^\perp}\circ d_p X\right)\right|\right\}\rho_{\Sigma_0}(y) dW(y),
\ee
where $p\in S^2$ is any point. Let $\de_1,\dots,\de_m$ be an orthonormal basis of $T_pM$. It remains only to compute the expectation of the determinant of the random matrix $A(y)$ with columns $A_1(y),\dots, A_m(y)$ defined as
\be 
A_i(y):=\nu(y)^T\de_i X=\left(\Pi_{T_yW^\perp}\circ d_p X\right)(\de_i).
\ee
By the third set of the identities in  \eqref{eq:istropId}, we deduce that the columns of $A(y)$ are independent Gaussian random vectors, each of them having covariance matrix $K(y)=\nu(y)^T\Sigma_1\nu(y)$. Therefore
\bega
\E\left\{\left|\det\left(\Pi_{T_yW^\perp}\circ d_p X\right)\right|\right\}=\E\left\{\left|\det A(y)\right|\right\}=\sqrt{\det \nu(y)^T\Sigma_1\nu(y)} \frac{m!\vol(\B^m)}{(2\pi)^{\frac{m}{2}}}.
\eega

To conclude, let us observe that $\vol(S^m)\vol(\B^m) m!=2(2\pi)^m$, because of a special property of the Gamma function: $\Gamma(z+\frac12)\Gamma(z)=2^{1-2z}\sqrt{\pi}\Gamma(2z)$.
\bega 
\vol(S^m)\vol(\B^m) m!&= \frac{\pi^\frac{m+1}2(m+1)}{\Gamma\left(\frac{m+1}2 +1\right)} \frac{\pi^\frac{m}2}{\Gamma\left(\frac{m}2 +1\right)}m!
=\frac{
\pi^{m+\frac12}(m+1)!
}{
2^{- (m+1)}\sqrt{\pi}\Gamma(m+2)
}
=2^{m+1}\pi^m.
\eega
\end{proof}
\begin{remark} In the case $\Sigma_0=\sigma^2\mathbb{1}_k$, we obtain a particularly nice formula
\be 
\E\{\#X^{-1}(W)\}=\int_W \E\{\#X^{-1}(T_yW)\}\rho_{\sigma^2\mathbb{1}_{k-m}}(y)
dW(y).\ee
This allows to reduce to the case $k=m$ and to the standard version of Kac-Rice formula. Indeed $X\in (T_yW)$ if and only if $\nu(y)^TX=0$. For completeness, we report two results that can be proved by applying the standard Kac-Rice formula (Corollary \ref{thm:stropzero} and \ref{thm:stropshubsmale} are not new results).
\end{remark}
\begin{cor}(Gaussian Isotropic Kac-Rice formula)\label{thm:stropzero}
Let $X\colon S^m\to \R^m$ be an isotropic $\mathcal{C}^1$ Gaussian random field. 
Let $\Sigma_0$ and $\Sigma_1$ be the $m\times m$ matrices defined by the identities \eqref{eq:istropId} and assume that $\Sigma_0$ is nondegenerate. Then, for any $y\in\R^m$,
\be 
\E\left\{\# X^{-1}(y)\right\}=2\cdot \sqrt{\frac{\det(\Sigma_1)}{\det(\Sigma_0)}}e^{-\frac12 y^T\Sigma_0^{-1}y}.
\ee
\end{cor}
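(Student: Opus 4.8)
The plan is to obtain this as the special case $W=\{y\}$ of Theorem \ref{thm:mainIsotrop}: a point has codimension $m$ in $\R^m$, any orthogonal matrix serves as its normal framing $\nu$, so $\det(\nu^T\Sigma_1\nu)=\det\Sigma_1$; the factor $(2\pi)^{(k-m)/2}$ equals $1$ since here $k=m$; and integration over the $0$-dimensional manifold $\{y\}$ is just evaluation. This already produces the stated formula. Since the corollary is phrased for merely $\mathcal{C}^1$ fields, however, I would prefer to give a self-contained derivation straight from the classical Kac-Rice formula \eqref{eq:KR}, which is the route indicated by the surrounding remark.

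First I would observe that, $\Sigma_0$ being non-degenerate and $X$ isotropic, every evaluation $X(p)$ has law $N(0,\Sigma_0)$, so the hypotheses of \eqref{eq:KR} are met (passing from an open subset of $\R^m$ to $S^m$ via a finite cover by charts and additivity of $\E\#$, and using that a non-degenerate $\mathcal{C}^1$ Gaussian field meets a point transversally almost surely, by a Bulinskaya-type argument ruling out critical preimage points), and $\rho_{X(p)}(y)=\rho_{\Sigma_0}(y)$ as in \eqref{eq:isodensga}. Next, the middle identity in \eqref{eq:istropId}, $\E\{\de_iX\cdot X(p)^T\}=0$, says that $d_pX$ and $X(p)$ are uncorrelated, hence independent since jointly Gaussian, so the conditioning in \eqref{eq:KR} may be dropped: $\E\{|\det d_pX|\mid X(p)=y\}=\E\{|\det d_pX|\}$, a quantity independent of $p$ by isotropy. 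Thus \eqref{eq:KR} collapses to $\E\{\#X^{-1}(y)\}=\vol(S^m)\,\rho_{\Sigma_0}(y)\,\E\{|\det d_pX|\}$ for any fixed $p$.

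It then remains to compute $\E\{|\det d_pX|\}$. Choosing an orthonormal basis $\de_1,\dots,\de_m$ of $T_pS^m$, the third set of identities in \eqref{eq:istropId} shows that the columns $\de_1X,\dots,\de_mX$ of $d_pX$ are independent centred Gaussian vectors, each with covariance $\Sigma_1$; writing $d_pX=\Sigma_1^{1/2}G$ with $G$ an $m\times m$ matrix of independent standard Gaussians gives $\E\{|\det d_pX|\}=\sqrt{\det\Sigma_1}\;\E\{|\det G|\}=\sqrt{\det\Sigma_1}\;m!\,\vol(\B^m)/(2\pi)^{m/2}$, the last step being the standard Gaussian determinant moment already invoked in the proof of Theorem \ref{thm:mainIsotrop}. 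Substituting $\rho_{\Sigma_0}(y)=e^{-\frac12 y^T\Sigma_0^{-1}y}/\big((2\pi)^{m/2}(\det\Sigma_0)^{1/2}\big)$ and simplifying with the identity $\vol(S^m)\,\vol(\B^m)\,m!=2(2\pi)^m$ (established at the end of that same proof) yields exactly $2\sqrt{\det\Sigma_1/\det\Sigma_0}\;e^{-\frac12 y^T\Sigma_0^{-1}y}$. There is no genuine obstacle in this argument; the only point deserving a word of care is the passage of the classical $\mathcal{C}^1$ Kac-Rice formula from an open subset of $\R^m$ to the sphere, which is routine via charts.
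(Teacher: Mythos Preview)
Your proposal is correct and matches the paper's intent: the corollary is stated without proof, with the surrounding text indicating it is the special case $W=\{y\}$ of Theorem \ref{thm:mainIsotrop} (equivalently, a direct application of the standard Kac-Rice formula), and you carry out both routes accurately, including the Gaussian determinant moment and the volume identity $\vol(S^m)\vol(\B^m)m!=2(2\pi)^m$ already established there.
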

\begin{cor}(Shub-Smale Theorem \cite{shsm})\label{thm:stropshubsmale}
 Let $\psi_{1},\dots ,\psi_{m}$ be independent Kostlan homogenous polynomials of degrees $d_1,\dots, d_m$ and denote by $Z\subset\RP^m$ the random subset defined by the equations $\psi_i=0$. Then $\E\{\#Z\}=\sqrt{d_1 \cdot\cdot\cdot  d_m}$.
\end{cor}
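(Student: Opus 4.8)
The plan is to obtain this as an immediate application of Corollary \ref{thm:stropzero} to the vector-valued field obtained by stacking the $\psi_i$, followed by a two-to-one comparison between zeros on the sphere and zeros in $\RP^m$.

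First I would set $X=(\psi_1,\dots,\psi_m)^T\colon S^m\to\R^m$, where each $\psi_i$ is understood as restricted to $S^m$. By the Kostlan example, $\psi_i|_{S^m}$ is a smooth isotropic Gaussian field with covariance $K_i(t)=t^{d_i}$, so $X$ is an isotropic $\mathcal{C}^1$ Gaussian random field. Since the $\psi_i$ are independent, the jet covariance matrices of $X$ are $\Sigma_0=\mathbb{1}_m$ (because $K_i(1)=1$) and $\Sigma_1=\mathrm{diag}(d_1,\dots,d_m)$ (because $K_i'(1)=d_i$), exactly as recorded in the example. In particular $\Sigma_0$ is nondegenerate, so Corollary \ref{thm:stropzero} applies, and evaluating its formula at $y=0$ gives
\be
\E\{\#X^{-1}(0)\}=2\sqrt{\frac{\det(\Sigma_1)}{\det(\Sigma_0)}}=2\sqrt{d_1\cdots d_m}.
\ee

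Next I would descend to projective space. Each $\psi_i$ is homogeneous, so $\psi_i(-x)=(-1)^{d_i}\psi_i(x)$ and its zero locus on $S^m$ is invariant under the antipodal involution; hence the common zero set $X^{-1}(0)\subset S^m$ is a union of antipodal pairs, and the quotient map $S^m\to\RP^m$ carries it two-to-one onto $Z$. This set is almost surely finite: $X$ is non-degenerate, so $X\transv\{0\}$ almost surely by Theorem \ref{thm:probtransv}, and $\{0\}$ has codimension $m=\dim S^m$. Therefore $\#X^{-1}(0)=2\,\#Z$ almost surely, and taking expectations yields $\E\{\#Z\}=\sqrt{d_1\cdots d_m}$, as claimed.

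There is no serious obstacle here; the argument is essentially bookkeeping. The only two points that deserve explicit mention are the identification $\Sigma_0=\mathbb{1}_m$, $\Sigma_1=\mathrm{diag}(d_1,\dots,d_m)$ coming from $K_i(t)=t^{d_i}$, and the factor $2$ relating the spherical count to the projective one, both of which were already prepared in the preceding discussion.
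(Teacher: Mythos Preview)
Your proposal is correct and follows exactly the route the paper intends: the paper does not spell out a proof here but explicitly records that for $X=(\psi_{d_1},\dots,\psi_{d_m})^T$ one has $\Sigma_0=\mathbb{1}_m$ and $\Sigma_1=\mathrm{diag}(d_1,\dots,d_m)$, and states that Corollary~\ref{thm:stropshubsmale} is obtained by applying the isotropic Kac--Rice formula (Corollary~\ref{thm:stropzero}). Your computation $\E\{\#X^{-1}(0)\}=2\sqrt{d_1\cdots d_m}$ together with the antipodal two-to-one passage to $\RP^m$ is precisely this argument made explicit.
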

Notice that Corollary \ref{thm:stropzero} covers also the case in which the equations in the Shub-Smale Theorem \ref{thm:stropshubsmale} are dependant, as long as they are jointly orthogonally invariant, in particular in the case of a mixed Kostlan polynomial defined as in equation \eqref{eq:stropKostlanmix}, we have 
\be
\E\{\#X^{-1}(0)\}=2\sqrt{\frac{\det\left(A_0A_0^T+A_1A_1^T+\dots +A_dA_d^T\right)}{\det\left(A_1A_1^T+2A_2A_2^T+\dots +dA_dA_d^T\right)}}.
\ee
%


\section{Proof of Lemma \ref{lemma:meas}}\label{sec:proflemmameas}

\begin{proof}[Proof of Lemma \ref{lemma:meas}]
We can assume that $W$ is closed, by replacing $X\colon M\to N$, with the random map $X_W\colon M\times W\to N\times N$, such that $(p,w)\mapsto (X(p),w)$ and $W\subset N$ with the diagonal $\Delta\subset N\times N$, which is surely a closed submanifold. It is straightforward to see that $X\transv W$ if and only if $X_W\transv \Delta$ and that $\#_{X\in W}(A)=\#_{X_W\in \Delta}(A\times W)$, for every $A\subset M$, moreover if $A\in \mathcal{B}(M)$, then $A\times W\in \mathcal{B}(M\times W)$\footnote{A consequence of this trick is that $\mu(A\times B):=\E\#_{X\in W\cap B}(A)$ defines a Borel measure on the product space $M\times W$. We are going to develop this idea properly later, in Section \ref{sec:constru}.}.

Let $\mathcal{D}$ be the family of subsets $A\subset M$ such that the function $\#_{X\in W}(A)$ is measurable. The class $\mathcal{D}$ contains the subfamily $\mathcal{P}$ of all relatively compact open sets in $M$, which is closed under intersection, hence the idea is to prove that it is also a Dynkin class\footnote{Let $M$ be a nonempty set; a \emph{Dynkin class} $\mathcal{D}$ is a collection of subsets of $M$ such that:
\begin{enumerate}
    \item $M\in \mathcal{D}$;
    \item if $A, B\in \mathcal{D}$ and $A\subset B$, then $B\backslash A\subset \mathcal{D};$
    \item given a family of sets $\{A_k\}_{k\in \N}$ with $A_k\in \mathcal{D}$ and $A_k\subset A_{k+1}$, then $\bigcup_{k}A_k\in \mathcal{D}$.
\end{enumerate}
The Monotone Class Theorem (see \cite[p. 3]{Erhan}) says that if a family $\mathcal{D}$ is a Dynkin class which contains a family $\mathcal{P}$ closed by intersection, then it contains also the  $\sigma$-algebra generated by $\mathcal{P}$.} to conclude, by the Monotone Class Theorem (see \cite[p. 3]{Erhan}), that $\mathcal{D}$ contains the $\sigma$-algebra generated by $\mathcal{P}$, which is precisely the Borel $\sigma$-algebra $\mathcal{B}(M)$.  

Actually, to prevent $\#_{X\in W}(A)$ from taking infinite values, it is more convenient to consider a countable increasing family of relatively compact open subsets $M_i$ such that $\cup_i M_i=M$ and work with the class $\mathcal{D}_i=\{A\in\mathcal{D}\colon A\subset M_i\}$, since $\#_{X\in W}(M_i)$ is almost surely finite.

By previous considerations, $M_i\in \mathcal{D}_i$. If $A,B\in\mathcal{D}_i$ and $A\subset B$, then  $B\- A \in \mathcal{D}_i$, because since $\#_{X\in W}(B)$ is almost surely finite, we can write $\#_{X\in W}(B\- A)=\#_{X\in W}(B)-\#_{X\in W}(A)$ . Suppose that $A_k\in \mathcal{D}$ is increasing, then 
\be \label{eq:contbelow}
\#_{X\in W}(\cup_k A_k)=\lim_{k}\#_{X\in W}(A_k),
\ee
thus $A=U_k A_k\in \mathcal{D}$ because $\#_{X\in W}(A)$ is the pointwise limit of measurable functions, thus in particular if $A_k\in\mathcal{D}_i$, then $A\in \mathcal{D}_i$. It follows that $\mathcal{D}_i$ is indeed a Dynkin class, hence $\mathcal{D}\supset\mathcal{D}_i\supset \mathcal{B}(M_i)$. Now let $A\in\mathcal{B}(M)$, then $A$ is the union of the increasing sequence $A\cap M_i$ and since $A\cap M_i\in\mathcal{B}(M_i)\subset \mathcal{D}$, we can use again the formula in \eqref{eq:contbelow}, to conclude that $A\in\mathcal{D}$.

Clearly $\en$ is finitely additive and $\en(\emptyset)=0$, therefore to prove that $\en$ is a measure it is enough to show that it is continuous from below. This can be seen just by taking the mean value in \eqref{eq:contbelow} and using the Monotone Convergence theorem, since $\#_{X\in W}(A_k)$ is increasing for any increasing sequence $A_k\in\mathcal{B}(M)$.
\end{proof}

%
%

\section{General formula}\label{sec:gen}

This section is devoted to the proof of the following theorem. It is a more general result than the main Theorem \ref{thm:main}, but in fact it is too abstract to be useful on its own. Its role is to create a solid first step for the proof of the main theorem and to better understand its hypotheses.

\begin{figure}\begin{center}
\includegraphics[scale=0.2]{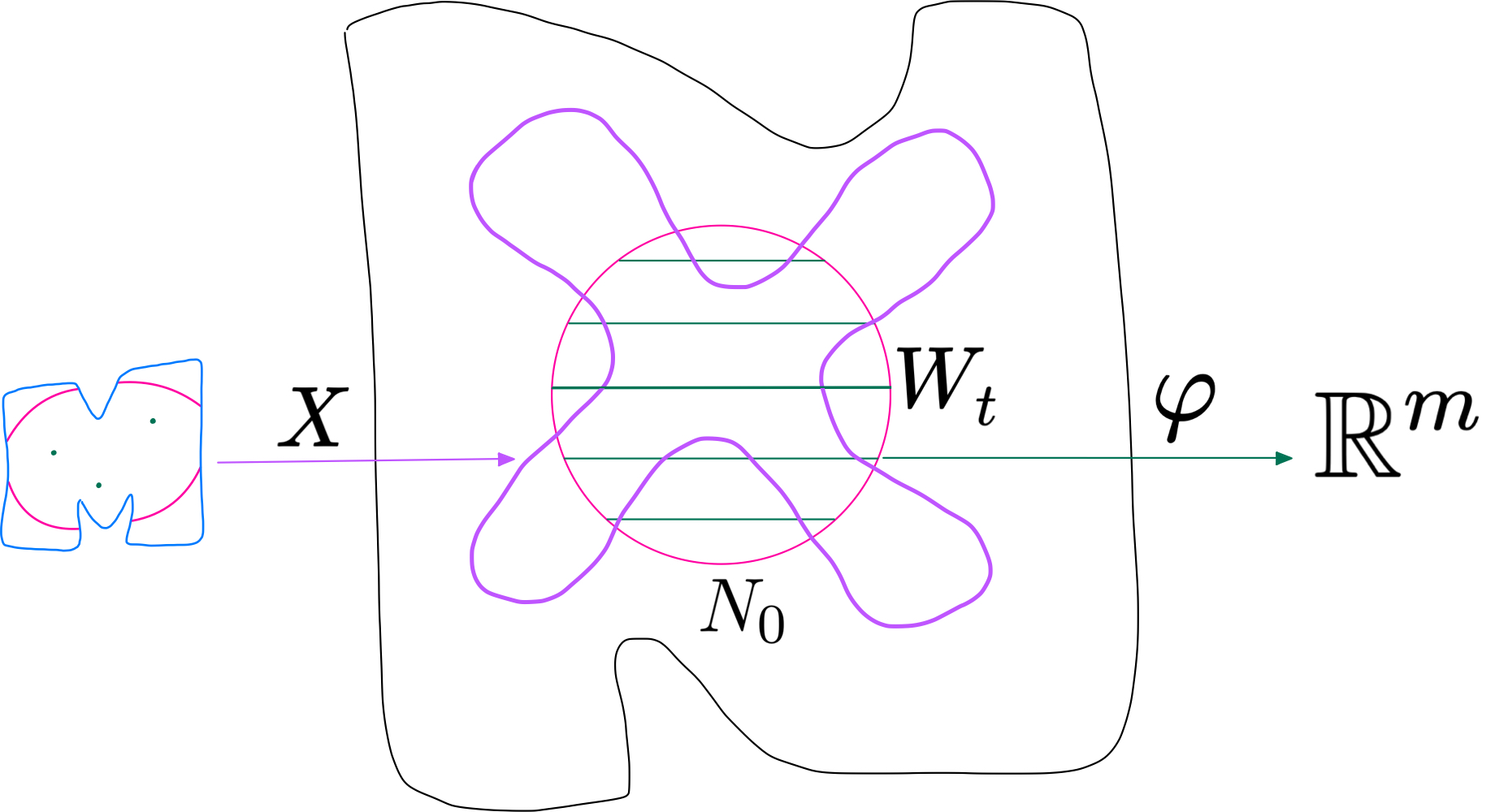}\caption{This figure is meant to give an idea of the set theoretic positions of the objects involved in Theorem \ref{thm:general}.}\label{fig:MXNR}
\end{center}
\end{figure}
\begin{thm}\label{thm:general}
Let $X\colon M\to N$ be a $\mC^1$ random map, such that $d[X(p)]=\rho_{X(p)}dS_p$\footnote{See point \ref{itm:krok:3} of Definition \ref{def:krok}.}, for some Riemannian submanifold $S_p\subset N$ and measurable function $\rho_{X(p)}\colon S_p\to [0,+\infty]$. Let $\{W_t\}_{t\in \R^m}$ be a smooth foliation of an open set $N_0\subset N$, defined by a submersion $\f\colon N_0\to \R^m$ via $W_t=\f^{-1}(t)$,  such that $W_t\transv S_p$ for all $p$ and $t$. Consider the density $\delta_{X\in W_t}(p)$ defined by the same formula given in \eqref{eq:formdel}:
\be
\delta_{X\in W_t}(p)=\int_{ S_p\cap W_t}\E\left\{J_pX\frac{\sigma_x(X,W_t)}{\sigma_x(S_p,W_t)}\bigg|X(p)=x\right\}\rho_{X(p)}(x)d(S_p\cap W_t)(x)dM(p),
\ee
and assume it to be a measurable function with respect to the couple $(p,t)$. Let $A\in \mathcal{B}(M)$ be any Borel subset of $M$.
\begin{enumerate}[$(i)$]
    \item\label{itm:general:1} If $A\in\mathcal{B}(M)$, 
then for almost every $t\in \R^m$
    \be\label{eq:askr}
    \E\#[X^{-1}(W_t)\cap A]=\int_A \delta_{X\in W_t}.
    \ee 
Equivalently, there is a full measure set $T\subset \R^m$, such that for all $t\in T$, the set function $A\mapsto \E\#_{X\in W_t}(A)=\E\#[X^{-1}(W_t)\cap A]$ is an absolutely continuous Borel measure on $M$ with density $\delta_{X\in W_t}$.
    \item\label{itm:general:3} Let $X\transv W_{t_0}$ almost surely and assume that there exists a density $\delta_{top}\in L^1_{loc}(M)$ such that $\limsup_{t\to t_0}\int_{K}\delta_{X\in W_t}\le \int_K{\delta_{top}}$ for every compact set $K\subset M$. Then the measure $\E\#_{X\in W_{t_0}}$ is an absolutely continuous Radon measure on $M$. 
    In this case the corresponding density $\delta\in L^1_{loc}(M)$ satisfies $\delta\le \delta_{top}$.
    
    In particular, if $\delta_{X\in W_t}\to \delta_{X\in W_{t_0}}$ in $L^1_{loc}(M)$, then 
    \be 
    \E\#_{X\in W_{t_0}}(A)=\int_{A}\delta\le\int_{A}\delta_{X\in W_{t_0}}.
   \ee
\end{enumerate}
\end{thm}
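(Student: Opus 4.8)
The plan is to derive part $(i)$ from the coarea formula applied twice: to $\f\circ X$ on $M$, and to the restriction $\f|_{S_p}$ on each $S_p$. Fix a Borel set $S\subseteq\R^m$. For a fixed realization of $X$, the map $g=\f\circ X$ is $\mC^1$ on the open set $X^{-1}(N_0)$, with $g^{-1}(t)=X^{-1}(W_t)$, so the area formula (coarea in equal dimension $m$) gives
\[
\int_{S}\#\bigl[X^{-1}(W_t)\cap A\bigr]\,dt=\int_{A\cap X^{-1}(N_0)}1_{S}\bigl(\f(X(p))\bigr)\,J_p(\f\circ X)\,dM(p).
\]
Taking expectations (Tonelli applies: everything is nonnegative, and one checks the relevant joint measurability in $(t,\omega)$) and then disintegrating $\E\{\,\cdot\,\}$ via $d[X(p)]=\rho_{X(p)}dS_p$ and the regular conditional probability $[X|X(p)=x]$, the right-hand side becomes
\[
\int_{A}\left(\int_{S_p}1_{N_0}(x)\,1_{S}(\f(x))\,\E\{J_p(\f\circ X)\mid X(p)=x\}\,\rho_{X(p)}(x)\,dS_p(x)\right)dM(p).
\]

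Next, apply the coarea formula to the submersion $\f|_{S_p}\colon S_p\cap N_0\to\R^m$ (a submersion since $W_t\transv S_p$): the inner integral becomes $\int_{S}\bigl(\int_{S_p\cap W_t}\E\{J_p(\f\circ X)\mid X(p)=x\}\,\rho_{X(p)}(x)\,J_x(\f|_{S_p})^{-1}\,d(S_p\cap W_t)(x)\bigr)dt$. It then remains to check the pointwise identity, for $x\in S_p\cap W_t$,
\[
\frac{J_p(\f\circ X)}{J_x(\f|_{S_p})}=J_pX\,\frac{\sigma_x(X,W_t)}{\sigma_x(S_p,W_t)}.
\]
This is linear algebra: writing $K=T_xW_t=\ker d_x\f$ and factoring $d_x\f=(d_x\f|_{K^\perp})\circ\Pi_{K^\perp}$ gives $J_p(\f\circ X)=|\det(d_x\f|_{K^\perp})|\cdot J(\Pi_{K^\perp}\circ d_pX)$ and $J_x(\f|_{S_p})=|\det(d_x\f|_{K^\perp})|\cdot J(\Pi_{K^\perp}|_{T_xS_p})$; by Remark \ref{rem:altforms} (which rests on Proposition \ref{prop:chi}) $J(\Pi_{K^\perp}\circ d_pX)=J_pX\,\sigma_x(X,W_t)$, and applying the same to the inclusion $S_p\hookrightarrow N$ in place of $X$ gives $J(\Pi_{K^\perp}|_{T_xS_p})=\sigma_x(S_p,W_t)$. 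The factor $|\det(d_x\f|_{K^\perp})|$ cancels — which is exactly why the density does not depend on $\f$. Assembling everything, $\int_S\E\#_{X\in W_t}(A)\,dt=\int_S\bigl(\int_A\delta_{X\in W_t}\bigr)dt$ for every Borel $S\subseteq\R^m$; since both are integrals of nonnegative measurable functions of $t$, the integrands agree for a.e. $t$, which is $(i)$. Running this over a countable basis of relatively compact open sets and using Lemma \ref{lemma:meas} together with uniqueness of $\sigma$-finite measures agreeing on such a family, one obtains a single full-measure $T\subseteq\R^m$ valid simultaneously for all Borel $A$.

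For $(ii)$, fix a realization with $X\transv W_{t_0}$ (a.s.) and a relatively compact open $U\subseteq M$. By transversality $X^{-1}(W_{t_0})\cap U$ is finite, and the implicit function theorem supplies, around each of its points, pairwise disjoint balls inside $U$ each containing exactly one point of $X^{-1}(W_t)$ for $t$ near $t_0$; hence $\liminf_{n}\#[X^{-1}(W_{t_n})\cap U]\ge\#[X^{-1}(W_{t_0})\cap U]$ for every sequence $t_n\to t_0$. Choosing such $t_n$ inside the set $T$ from $(i)$ and applying Fatou's lemma,
\[
\E\#_{X\in W_{t_0}}(U)\le\liminf_{n}\E\#_{X\in W_{t_n}}(U)=\liminf_{n}\int_U\delta_{X\in W_{t_n}}\le\limsup_{t\to t_0}\int_{\overline U}\delta_{X\in W_t}\le\int_{\overline U}\delta_{top}<\infty.
\]
Thus $\E\#_{X\in W_{t_0}}$ is finite on relatively compact open sets, so by Lemma \ref{lemma:meas} it is a Radon measure; exhausting an arbitrary open $U$ by relatively compact opens $U_n$ with $\overline{U_n}\subseteq U$ gives $\E\#_{X\in W_{t_0}}(U)\le\int_U\delta_{top}$, and outer regularity of both Radon measures promotes this to all Borel sets. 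Hence $\E\#_{X\in W_{t_0}}\ll dM$ with a density $\delta\le\delta_{top}$ a.e.; the final claim is the special case $\delta_{top}=\delta_{X\in W_{t_0}}$ when $\delta_{X\in W_t}\to\delta_{X\in W_{t_0}}$ in $L^1_{loc}(M)$.

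The step I expect to be most delicate is the double-coarea computation in $(i)$: securing the joint measurability needed for Tonelli, handling the possibly infinite values of $\rho_{X(p)}$ and of the conditional expectations on null sets, and — above all — establishing the displayed Jacobian identity cleanly enough that the auxiliary submersion $\f$ provably drops out (this independence is what makes $\delta_{X\in W_t}$ an intrinsic object). Part $(ii)$ is then comparatively routine, its only geometric ingredient being the lower semicontinuity of the intersection count under transversality.
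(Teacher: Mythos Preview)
Your proposal is correct and follows essentially the same route as the paper: part $(i)$ is the paper's argument verbatim (area formula for $\f\circ X$, disintegration through $[X(p)]$, coarea for $\f|_{S_p}$, and the Jacobian identity---which is the paper's Lemma~\ref{lem:sette}), and your factorization $d_x\f=(d_x\f|_{K^\perp})\circ\Pi_{K^\perp}$ is exactly how the auxiliary $\f$ is seen to cancel. For part $(ii)$ the paper mollifies with bump functions $a_\e$ supported in $B_\e(t_0)$ and passes to the limit inside the integral, whereas you pick a discrete sequence $t_n\to t_0$ inside the full-measure set $T$ from $(i)$ and use lower semicontinuity of the count plus Fatou; the two arguments are interchangeable and yield the same inequality $\E\#_{X\in W_{t_0}}(U)\le\int_{\overline U}\delta_{top}$.
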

\begin{remark}\label{rem:toni}
The left hand side of equation \eqref{eq:askr} is well defined for almost every $t\in\R^k$. This can be seen as follows. Using Tonelli's theorem, we can prove that $\P\{X\transv W_t\}=1$ for almost every $t\in \R^m$:
\be 
\begin{aligned}
\int_{\R^m}\P\{X\not\transv W_t\}dt
&=d[X]\otimes \mathscr{L}^m(\left\{(f,t)\in \ci(M,N)\times \R^m:(\f\circ f)|_{f^{-1}(N_0)}\transv \{t\} \right\}) =\\
&= \E\{\mathscr{L}^m(\{\text{critical values of }(\f\circ X)|_{X^{-1}(N_0)}\})\}=0,
\end{aligned}
\ee
The last equality following from Sard's theorem: critical values of a $\ci$ map between two manifolds of the same dimension form a set of zero Lebesgue measure. Combining this fact with Lemma \ref{lemma:meas}, we deduce that there is a full measure set $T\subset \R^m$ such that the set function
\be 
\mathcal{B}(M)\ni \quad A\mapsto \E\#_{X\in W_t}(A):=\E\#[X^{-1}(W_t)\cap A]\quad \in [0,+\infty]
\ee
is well defined and is a Borel measure.
\end{remark}
The density $\delta_{X\in W_t}$ appearing above is the same that appears in the statement of Theorem \ref{thm:maindens},
where $M$ and $N$ are endowed with auxiliary Riemannian metrics. Since the conditional expectation  
\be 
x\mapsto \E\left\{J_pX\frac{\sigma_x(X,W_t)}{\sigma_x(S_p,W_t)}\bigg|X(p)=x\right\}
\ee
is defined (for every $p$) up to almost everywhere equivalence and $S_p\cap W_t$ has Lebesgue measure zero, it follows that the value of $\delta_{X\in W_t}(p)$ is not uniquely determined. By saying that  $\delta_{X\in W_t}(p)$ should be measurable in $p$ and $t$, we mean that we assume to have chosen a representative of the conditional expectation above in such a way that the function $(p,t)\mapsto \delta_{X\in W_t}(p)$ is measurable. In the KROK case (Definition \ref{def:krok}), however, there is no such ambiguity (see Subsection KROK.\ref{itm:krokEcont}).

Moreover, notice that the value of $\delta_{X\in W_t}(p)$ depends on the choices of $S_p$ and $\rho_{X(p)}$ rather than just $[X(p)]$. 
Indeed, the choice of the submanifold $S_p$ such that $d[X(p)]=\rho_{X(p)}dS_p$ is not unique in general. In fact, $S_p$ can even be replace with $S_p\-W_0$, so that $\delta_{X\in W_0}=0$. This is not in contradiction with the theorem, because the identity 
\eqref{eq:askr} is valid for all $t$ out of a measure zero set. Again, in the KROK case we don't have to worry about that, because, by point \ref{itm:krok:7} of \ref{def:krok}, $S_p\subset N$ is required to be closed.
\begin{remark}\label{rem:sorry}
There is a treacherous measurability issues, that the author wasn't able to solve.
 \emph{Given a random $\mC^1$ map $X\colon M\to N$ and a map $\f\colon N_0\subset N\to \R^m$ satisfying the hypotheses of Theorem \ref{thm:general}, is it true that there exists a version of the density $\delta_{X\in W_t}$ that is measurable with respect to $(p,t)$?} Measurability is crucial to use Fubini-Tonelli's Theorem, thus in Theorem \ref{thm:general} we assume that we are in a situation where the previous question has a positive answer, though such hypothesis may be redundant.
\end{remark}

Before going into the proof of Theorem \ref{thm:general}, let us prove two important preliminary results. The first (see Corollary \ref{cor:measurable}) ensures that the number $\#[X^{-1}(W_t)\cap A]$ is measurable in $(X,t)$ for every Borel set $A$. The second (see Lemma \ref{lem:sette}) gives an alternative expression for $\delta_{X\in W_t}$, that is convenient to use in the proof of the Theorem.
\begin{lemma}\label{lem:fsigmeas}
Let $M,N$ and $T$ be metrizable topological spaces and let $\f\colon K\to T$ be a continuous function,where $K\subset N$ is a closed subset. Then, for every compact set $A\subset M$, the function below is Borel
\be 
\coo{0}{M}{N}\times T \ni \quad (f,t)\mapsto \#(f^{-1}(\f^{-1}(t))\cap A)\quad\in \N\cup \{\infty\}.
\ee
\end{lemma}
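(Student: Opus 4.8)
The plan is to present $\#\bigl(f^{-1}(\f^{-1}(t))\cap A\bigr)$ as the fibre‑cardinality function of a single closed set, and then to exploit that a projection along a compact factor is a closed map. Since $\f$ is defined on $K$ we have $\f^{-1}(t)\subset K$ for every $t$, so
\[
f^{-1}(\f^{-1}(t))\cap A=\bigl\{x\in A:\ f(x)\in K\ \text{and}\ \f(f(x))=t\bigr\}.
\]
Introduce
\[
Z:=\bigl\{(f,t,x)\in\coo{0}{M}{N}\times T\times A:\ f(x)\in K\ \text{and}\ \f(f(x))=t\bigr\},
\]
so that the quantity in the statement equals $\#\bigl(Z_{(f,t)}\bigr)$, the cardinality of the fibre of $Z$ over $(f,t)$. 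The first step is to check that $Z$ is closed in $\coo{0}{M}{N}\times T\times A$. The evaluation map $\coo{0}{M}{N}\times A\to N$, $(f,x)\mapsto f(x)$, is continuous: it factors as the restriction $\coo{0}{M}{N}\to\coo{0}{A}{N}$ (continuous for the compact–open topologies, since a compact subset of $A$ is compact in $M$) followed by the evaluation on the compact Hausdorff space $A$. Hence $\{(f,x):f(x)\in K\}$ is closed because $K$ is, and on this closed set $(f,x)\mapsto\f(f(x))$ is continuous because $\f$ is continuous on $K$; as $T$ is Hausdorff, the graph of this map is closed, and $Z$ is exactly that graph sitting inside $\coo{0}{M}{N}\times T\times A$.

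Next, fix an integer $k\ge1$ and consider the set of ``$k$ distinct points in a common fibre'',
\[
Z^{(k)}:=\bigl\{(f,t,x_1,\dots,x_k)\in\coo{0}{M}{N}\times T\times A^k:\ (f,t,x_i)\in Z\ \forall i,\ \ x_i\ne x_j\ \forall i\ne j\bigr\}.
\]
Each condition $(f,t,x_i)\in Z$ defines a closed subset of $\coo{0}{M}{N}\times T\times A^k$ (the preimage of $Z$ under a coordinate projection), so their intersection is closed; intersecting further with $\coo{0}{M}{N}\times T\times U_k$, where $U_k=\{\vec x\in A^k:x_i\ne x_j\ \forall i\ne j\}$ is open in the metrizable space $A^k$ and hence $F_\sigma$ there, shows that $Z^{(k)}$ is $F_\sigma$ in $\coo{0}{M}{N}\times T\times A^k$. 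Now the projection $\pi\colon\coo{0}{M}{N}\times T\times A^k\to\coo{0}{M}{N}\times T$ is a closed map, because $A^k$ is compact (the tube lemma), and a closed map sends $F_\sigma$ sets to $F_\sigma$ sets. Since
\[
\bigl\{(f,t):\#\bigl(Z_{(f,t)}\bigr)\ge k\bigr\}=\pi\bigl(Z^{(k)}\bigr),
\]
this set is $F_\sigma$, in particular Borel, for every $k\ge1$.

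To conclude, observe that for $k\in\N$ the level set $\{\#(Z_{(f,t)})=k\}=\{\#\ge k\}\setminus\{\#\ge k+1\}$ is Borel, and $\{\#(Z_{(f,t)})=\infty\}=\bigcap_{k\ge1}\{\#\ge k\}$ is Borel as a countable intersection; hence $(f,t)\mapsto\#\bigl(f^{-1}(\f^{-1}(t))\cap A\bigr)$ is Borel measurable with values in $\N\cup\{\infty\}$. The load‑bearing ingredients are all soft topology: continuity of the evaluation map over a compact domain, the fact that projection along a compact factor is closed, and the fact that open subsets of a metric space are $F_\sigma$. The step that deserves the most care is checking that $Z$ is closed, which is where the continuity of evaluation and the Hausdorffness of $T$ enter; everything after that is bookkeeping.
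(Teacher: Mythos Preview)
Your argument is correct and takes a genuinely different route from the paper's. The paper approximates the cardinality by the $\e$-covering number $\#_\e(S)$ (essentially $\mathcal{H}^0_\e$), proves by a direct sequential compactness argument that $(f,t)\mapsto\#_\e\bigl(f^{-1}(\f^{-1}(t))\cap A\bigr)$ is upper semicontinuous, and then takes the supremum over $\e>0$. You instead encode the whole problem as the fibre cardinality of a single closed set $Z\subset\coo{0}{M}{N}\times T\times A$, and obtain the super-level sets $\{\#\ge k\}$ as projections of $F_\sigma$ sets along the compact factor $A^k$ via the tube lemma. Both arguments pivot on compactness of $A$, and both ultimately show $\{\#\ge k\}$ is $F_\sigma$; the paper's route is more hands-on and ties into the Hausdorff-measure viewpoint, while yours is cleaner bookkeeping once the closedness of $Z$ is established, and it makes transparent exactly where each hypothesis (continuity of evaluation, Hausdorffness of $T$, closedness of $K$, compactness of $A$) enters.
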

\begin{proof}

Fix $\e>0$ and define, for each subset $S\subset M$ the number $\#_{\e}(S)$ to be the minimum number of open subsets of diameter smaller than $\e$ that are needed to cover $S$\footnote{It corresponds to the set function $\mathcal{H}_\e^0(S)$ used in the construction of the Hausdorff measure. }. Observe that $\#(S)=\sup_{\e>0}\#_\e(S)$, therefore we can conclude the proof by showing that the function $(f,t)\mapsto \#_\e(f^{-1}(\f^{-1}(t)))$ is Borel for every $\e$.

Let us consider two convergent sequences $f_n\to f$ in $\coo0MT$ and $t_n\to t$. Assume that $B_1,\dots, B_k$ are open balls in $M$ of diameter smaller than $\e$ such that $\cup_{i=1}^kB_i\supset f^{-1}(t)$. We claim that for $n$ big enough, we have an inclusion
\be 
\bigcup_{i=1}^kB_i\supset f_n^{-1}(\f^{-1}(t_n)).
\ee 
If not, there would be a sequence $p_n\in f_n^{-1}(\f^{-1}(t_n))$ such that $p_n\notin \cup_{i=1}^kB_i$; now, by the compactness of $M$, we can assume that $p_n\to p \notin \cup_{i=1}^kB_i$, but then we find a contradiction as follows. We have $f_n(p_n)\in K$ for all $n$, so that $f(p)\in K$, thus
 $f(p)\in\f^{-1}(t)$ because
$t_n=\f(f_n(p_n))\to \f(f(p))$. 
Hence $p\in f^{-1}(\f^{-1}(t))\subset \cup_{i=1}^kB_i$, which contradicts a previous statement.

It follows that $\#_\e(f^{-1}(\f^{-1}(t)))$ is an upper semicontinuous function:
\be 
\limsup_{(f_n,t_n)\to (f,t)}\#_\e(f_n^{-1}(\f^{-1}(t_n)))\le \#_\e(f^{-1}(\f^{-1}(t)))
\ee
and therefore it is measurable.
\end{proof}
\begin{cor}\label{cor:measurable}
Let $X\colon M\to N\supset W_t\subset  N_0\xrightarrow{\f}\R^m$ satisfy the hypotheses of Theorem \ref{thm:general}. Let $A\subset \mathcal{B}(M)$ be any Borel set. Then the function $u_A(f,t):= \#[f^{-1}(W_t)\cap A]$ is a measurable function on the completion of the measure space $\left(\mC^1(M,N)\times \R^m,\mathcal{B},[X]\otimes \mathcal{L}^m\right)$.
\end{cor}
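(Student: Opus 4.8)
The plan is to adapt the Dynkin-class argument from the proof of Lemma~\ref{lemma:meas}, using Lemma~\ref{lem:fsigmeas} in place of the continuity of $\#_{X\in W}$, and paying attention to the one new feature of the general setting: each leaf $W_t=\f^{-1}(t)$ is closed in $N_0$ but in general \emph{not} closed in $N$. I would first fix an increasing exhaustion $N_0=\bigcup_j C_j$ by compact sets $C_j\subset N_0$, so that $W_t\cap C_j=(\f|_{C_j})^{-1}(t)$ is a closed subset of $N$ and $\f|_{C_j}\colon C_j\to\R^m$ is continuous. The idea is to prove measurability of the truncated counts $u_A^{(j)}(f,t):=\#\!\bigl(f^{-1}(W_t\cap C_j)\cap A\bigr)$ first, and then remove the truncation by monotone convergence.

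For a relatively compact open $A\subset M$, write $A=\bigcup_k K_k$ with $K_k$ compact and increasing; then
\[
u_A^{(j)}(f,t)=\sup_k \#\!\bigl(f^{-1}(W_t\cap C_j)\cap K_k\bigr),
\]
and each term on the right is Borel on $\mC^0(M,N)\times\R^m$ by Lemma~\ref{lem:fsigmeas} (applied with the closed set $C_j$ and the continuous map $\f|_{C_j}$), hence Borel on $\mC^1(M,N)\times\R^m$ by restriction along the continuous inclusion. Thus $u_A^{(j)}$ is Borel for every relatively compact open $A$ and every $j$.

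Next I would run the monotone-class machine. Fix an increasing exhaustion $M=\bigcup_i M_i$ by relatively compact open sets. The crucial input is a finiteness fact: for $[X]\otimes\mathcal{L}^m$-almost every $(f,t)$ one has $\#(f^{-1}(W_t\cap C_j)\cap\overline{M_i})<\infty$ for all $i,j$. Indeed, by Remark~\ref{rem:toni} we may restrict to those $(f,t)$ for which $t$ is a regular value of $(\f\circ f)|_{f^{-1}(N_0)}$, so that $f^{-1}(W_t)$ is closed and discrete in the open set $f^{-1}(N_0)$; since $f^{-1}(C_j)\cap\overline{M_i}$ is a compact subset of $f^{-1}(N_0)$, it meets the discrete set $f^{-1}(W_t)$ in finitely many points. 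Granting this, for each fixed pair $i,j$ the collection
\[
\mathcal{D}_{i,j}:=\{A\in\mathcal{B}(M_i)\colon u_A^{(j)}\text{ is measurable for the completion of }[X]\otimes\mathcal{L}^m\}
\]
is a Dynkin class: it contains the $\pi$-system of relatively compact open subsets of $M_i$ by the previous step; it is closed under increasing countable unions by monotone convergence; and it is closed under proper differences because, off a null set, $u_{B\setminus A}^{(j)}=u_B^{(j)}-u_A^{(j)}$ with $u_B^{(j)}\le u_{M_i}^{(j)}<\infty$ (this is where the truncation by $C_j$ plays the role that the truncation by $M_i$ plays in Lemma~\ref{lemma:meas}). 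By the monotone class theorem, $\mathcal{D}_{i,j}=\mathcal{B}(M_i)$.

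Finally, for an arbitrary Borel set $A\subset M$ I would conclude by two monotone limits: each $u_{A\cap M_i}^{(j)}$ is measurable for the completion by the previous paragraph, and since $W_t\cap C_j\nearrow W_t$ and $A\cap M_i\nearrow A$,
\[
u_A(f,t)=\#\!\bigl(f^{-1}(W_t)\cap A\bigr)=\sup_{i,j}\,\#\!\bigl(f^{-1}(W_t\cap C_j)\cap(A\cap M_i)\bigr)=\sup_{i,j}u_{A\cap M_i}^{(j)}(f,t),
\]
and a countable supremum of completion-measurable functions is completion-measurable, which is the assertion. I expect the only genuine obstacle to be precisely the gap between ``closed in $N_0$'' and ``closed in $N$'': without the compact truncations $C_j$, the set $f^{-1}(W_t)$ may accumulate on $f^{-1}(N\setminus N_0)$ and hence be infinite on compact subsets of $M$, which would break the proper-difference step of the Dynkin argument; once that finiteness is secured almost everywhere, the rest is a routine transcription of the proof of Lemma~\ref{lemma:meas}.
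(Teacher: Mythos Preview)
Your argument is correct and follows essentially the same route as the paper: exhaust $N_0$ by sets that are closed in $N$ (you take compacts $C_j$, the paper takes closed subsets $K_i$), invoke Lemma~\ref{lem:fsigmeas} for compact test sets in $M$, use Remark~\ref{rem:toni} to secure almost-sure finiteness of the truncated counts, run the Dynkin-class argument from Lemma~\ref{lemma:meas}, and finally pass to the supremum over the truncations. The only cosmetic difference is that the paper restricts everything to the full-measure Borel set $\mathcal{F}=\{(f,t):f\transv W_t\}$ and proves Borel measurability there, whereas you phrase the same thing as completion-measurability on the whole product.
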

\begin{proof}
Let us consider the Borel set $\mathcal{F}=\{(f,t)\colon f\transv W_t\}\subset \mC^1(M,N)\times \R^m$. Remark \ref{rem:toni} implies that $\mathcal{F}$ is a full measure subset of $\left(\mC^1(M,N)\times \R^m,\mathcal{B},[X]\otimes \mathcal{L}^m\right)$, therefore we can conclude by showing that the restriction $(u_A)|_{\mathcal{F}}$ is Borel for every $A\in\mathcal{B}(M)$. 

Let $K_i\subset N_0$, for $i\in \N$, be a sequence of increasing closed subsets (in $N$) whose union is equal to $N_0$ and define
\be 
u_A^{(i)}(f,t)=\#[f^{-1}(W_t\cap K_i)\cap A].
\ee
Then $u_A(f,t)=\sup_{i\in \N} u_A^{(i)}(f,t)$, and we already know that $u_A^{(i)}$ is Borel for any compact subset $A\subset M$ because of Lemma \ref{lem:fsigmeas}. Moreover, if $(f,t)\in\mathcal{F}$, we have that $f^{-1}(W_t\cap K_i)$ is a closed discrete subset of $M$, 
 because $f_t\transv W_t$ and $W_t\cap K_i=(\f|_{K_i})^{-1}(t)$ is closed. Therefore $(u_A^{(i)})\big|_{\mathcal{F}}$ is finite whenever $A$ is contained in a compact set.

Thanks to this observation, we can argue exactly as in the proof of Lemma \ref{lemma:meas}: write $M=\cup_j M_j$ as an increasing union of compact subsets; show that the family $\mathcal{D}_j$ of subsets $A\subset M_j$ such that $U_A^{(i)}|_{\mathcal{F}}$ is measurable forms a Dynkin class; conclude with the Monotone Class Theorem (see \cite{Erhan}) that $(u^{(i)}_A)|_{\mathcal{F}}$ is Borel for any $A\in\mathcal{B}(M)$.
\end{proof}
The following Lemma helps to rewrite the candidate formula \eqref{eq:formdel} for the Kac-Rice density into something that is more directly comparable with the Coarea formula (see Theorem \ref{thm:coarea}), which will be one of the main ingredient in the proof of Theorem \ref{thm:general}.
\begin{lemma}\label{lem:sette}
Let $X\colon M\to N\supset W_t\subset  N_0\xrightarrow{\f}\R^m$ satisfy the hypotheses of Theorem \ref{thm:general}. Then 
\be 
\delta_{X\in W_t}(p)=\int_{S_p\cap W_t}\E\left\{\frac{\delta_p(\f\circ X)}{J_x(\f|_{S_p})}\bigg| X(p)=x\right\}\rho_{X(p)}(x)d(S_p\cap W_t)(x).
\ee
\end{lemma}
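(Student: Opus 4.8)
The plan is to reduce the statement to a single pointwise identity of densities and then verify that identity by a short exterior‑algebra computation with the pullback of the volume form of $\R^m$.

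First I would observe that the two formulas for $\delta_{X\in W_t}(p)$ differ only in the factor that is conditionally averaged, so the lemma is equivalent to
\begin{equation}\label{eq:plan:dens}
\frac{\delta_p(\f\circ X)}{J_x(\f|_{S_p})}=J_pX\,\frac{\sigma_x(X,W_t)}{\sigma_x(S_p,W_t)}\,dM(p)
\end{equation}
for every $p\in M$, every $x\in S_p\cap W_t$ and every linear map $d_pX\colon T_pM\to T_xN$ (here $J_x(\f|_{S_p})$ and $\sigma_x(S_p,W_t)$ are strictly positive, since the standing hypothesis $W_t\transv S_p$ makes $\f|_{S_p}$ a submersion at $x$). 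Since $\delta_p(\f\circ X)=J_p(\f\circ X)\,dM(p)$ by Definition \ref{def:jacob}, \eqref{eq:plan:dens} is in turn equivalent to the scalar identity
\begin{equation}\label{eq:plan:scalar}
J_p(\f\circ X)=J_x(\f|_{S_p})\cdot J_pX\cdot\frac{\sigma_x(X,W_t)}{\sigma_x(S_p,W_t)},
\end{equation}
a statement of linear algebra in $E:=T_xN$ depending only on the subspaces $T_xW_t$, $T_xS_p$ and on $d_pX$. Plugging it back into the formula of Theorem \ref{thm:general} gives the lemma.

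To prove \eqref{eq:plan:scalar} I would fix $x$ and introduce the $m$-covector $\omega:=(d_x\f)^*(dt^1\wedge\dots\wedge dt^m)\in\wedge^mE^*$. It is decomposable with $\ker\omega=T_xW_t$, so that for any orthonormal basis $n_1,\dots,n_m$ of $T_xW_t^\perp$ one has $\omega=c\,\langle n_1,\,\cdot\,\rangle\wedge\dots\wedge\langle n_m,\,\cdot\,\rangle$ for a scalar $c$ with $|c|=\|\omega\|=\sqrt{\det((d_x\f)(d_x\f)^*)}=J_x\f$ (a Gram‑determinant computation). The key point is that both Jacobians in \eqref{eq:plan:scalar} can be read off from $\omega$. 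For the numerator, $\f\circ X$ is a map between $m$‑dimensional manifolds, so $J_p(\f\circ X)=\|(d_pX)^*\omega\|$; since $\omega$ annihilates $T_xW_t$, $(d_pX)^*\omega$ factors through $\Pi_{T_xW_t^\perp}\circ d_pX$ and one obtains $J_p(\f\circ X)=J_x\f\cdot J(\Pi_{T_xW_t^\perp}\circ d_pX)=J_x\f\cdot J_pX\cdot\sigma_x(X,W_t)$, the last step being the identity recorded in Remark \ref{rem:altforms} (Proposition \ref{prop:chi}); note that both sides vanish simultaneously when $d_pX(T_pM)$ is not transverse to $T_xW_t$, so no transversality assumption on $X$ is needed. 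For the denominator, $\f|_{S_p}$ is a submersion at $x$, so $J_x(\f|_{S_p})=\|\omega|_{T_xS_p}\|=J_x\f\cdot\|(\Pi_{T_xS_p}n_1)\wedge\dots\wedge(\Pi_{T_xS_p}n_m)\|$, obtained by restricting each $\langle n_i,\,\cdot\,\rangle$ to $T_xS_p$. Dividing the two expressions makes $J_x\f$ cancel, and \eqref{eq:plan:scalar} reduces to the identification $\|(\Pi_{T_xS_p}n_1)\wedge\dots\wedge(\Pi_{T_xS_p}n_m)\|=\sigma_x(S_p,W_t)$.

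This last identification is the only step requiring real care, and I would carry it out as follows. Transversality $W_t\transv S_p$ gives $T_xW_t+T_xS_p=E$, hence $T_xW_t^\perp\cap T_xS_p^\perp=0$; fixing an orthonormal basis $q_1,\dots,q_k$ of $T_xS_p^\perp$ and using that each $\Pi_{T_xS_p}n_i$ differs from $n_i$ by a vector in $\mathrm{span}(q_1,\dots,q_k)$, one gets $(\Pi_{T_xS_p}n_1)\wedge\dots\wedge(\Pi_{T_xS_p}n_m)\wedge q_1\wedge\dots\wedge q_k=n_1\wedge\dots\wedge n_m\wedge q_1\wedge\dots\wedge q_k$; taking norms (the first wedge lies in $T_xS_p$, orthogonal to the $q_l$, which are orthonormal) identifies the left‑hand norm with $\sigma_{T_xN}(T_xW_t^\perp,T_xS_p^\perp)$, which equals $\sigma_{T_xN}(T_xW_t,T_xS_p)=\sigma_x(S_p,W_t)$ by Proposition \ref{prop:angleperp}. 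The subtlety being handled here is that when $\dim S_p>m$ the pair $(T_xW_t,T_xS_p)$ has dimensions adding up to more than $\dim N$, so its ``angle'' is by definition that of the complementary pair — which is exactly the pair appearing in the exterior‑algebra computation — and this is precisely where Proposition \ref{prop:angleperp} is used. Everything else is routine bookkeeping.
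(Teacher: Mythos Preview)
Your argument is correct and takes essentially the same route as the paper: both reduce the lemma to the pointwise identity $J_p(\f\circ X)=J_x(\f|_{S_p})\cdot J_pX\cdot\frac{\sigma_x(X,W_t)}{\sigma_x(S_p,W_t)}$ and verify it via the projection formula for the angle (Proposition~\ref{prop:chi}); the only difference is cosmetic---the paper picks an adapted orthonormal basis $(\tau,w,\eta)$ of $T_xN$ and computes with matrices, while you phrase the same computation with the pullback $m$-form $\omega$ and close the denominator with Proposition~\ref{prop:angleperp}. One small slip in your write-up: the parenthetical ``both sides vanish simultaneously when $d_pX(T_pM)$ is not transverse to $T_xW_t$'' is not correct---if $d_pX$ is injective but $d_pX(T_pM)\cap T_xW_t\neq 0$ then $J(\Pi_{T_xW_t^\perp}\circ d_pX)=0$ while $J_pX\cdot\sigma_x(X,W_t)>0$, since $\sigma>0$ always by Definition~\ref{defi:angle}; the paper's proof also tacitly treats only the generic (transverse) case at this step, so your remark introduces no new gap.
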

\begin{proof}
 It is sufficient to prove the following identity (the Definitions of the object in play are in the Appendices \ref{app:densities} and \ref{app:angle}):
\be 
\frac{\delta_p(\f\circ X)}{J_x(\f|_{S})}= \delta_pX \frac{\sigma_x(W,d_pX)}{\sigma_x(W,S)},
\ee
where $X(p)=x$, $\f(x)=t$ and $W=\f^{-1}(t)$. Let us take an orthonormal basis $(\tau w \eta)$ of $T_xN$ such that the first vectors $\tau$ form a basis of $T_xW\cap T_xS$ and the $\eta$ form a basis of $T_xW^\perp$. The matrix of $d_x\f$ in such basis has the form $(00A)$ for some $m\times m$ invertible matrix $A$ and the space $T_xS$ (written in terms of the basis $(\tau w \eta)$) is spanned by the columns of a matrix of the form
\be 
\begin{pmatrix}
\mathbb{1} & 0\\
0 & B \\
0 & C
\end{pmatrix}.
\ee
Without loss of generality we can choose $\begin{pmatrix} B^T & C^T\end{pmatrix}^T$ to be an orthonormal frame. 
Notice that $\det C\neq 0$ because $T_xS\transv T_xW$ and that $\ker (d_u\f|_{S})$ is spanned by the first $m$ columns, since they correspond to the basis $\tau$, hence $J_x(\f|_{S})=|\det A C|$.

For any $u\in M=\D^m$ we have, by definition, that $\delta_{u}(\f\circ X)=J_u(\f\circ X)du$.
\be 
\begin{aligned}
\frac{J_p(\f\circ X)}{J_x(\f|_{S})}&=
\left|\frac{\det\left(d_p\f\frac{\de X}{\de u}\right)}{\det(A)\det(C)} \right|=\\
&=
\frac{|\det(A)| \vol\left(\Pi_{T_xW^\perp}\left(\frac{\de X}{\de u}\right)\right)}{|\det(A)|\vol\left(\Pi_{T_xW^\perp}\begin{pmatrix}
0\\ B \\ C
\end{pmatrix}\right)}  =\\
&= \frac{\sigma\left(T_xW, \text{span}\left(\frac{\de X}{\de u}\right)\right)\cdot \vol\left(\frac{\de X}{\de u}\right)}{\sigma\left(T_xW, T_xS\right)} .
\end{aligned}
\ee
\end{proof}
\subsection{Proof of the general formula: Theorem \ref{thm:general}}
\begin{proof}[Proof of Theorem \ref{thm:general}]
\ref{itm:general:1} Let $A\in\mathcal{B}(M)$. In what follows, let us keep in mind that, by Corollary \ref{cor:measurable}, the function $u_A(f,t)=\#(f^{-1}(W_t)\cap A)$ is measurable on the completion of the measure space $\left(\mC^1(M,N)\times \R^m,\mathcal{B},[X]\otimes \mathcal{L}^m\right)$. This, together with the hypothesis that $(p,t)\mapsto \delta_{X\in W_t}(p)$ is measurable, means that we don't need to worry about measurability issues.

Let $a\in B^+(\R^m)$ and apply the Area formula (see Theorem \ref{thm:area}) to the (deterministic) map $(\f\circ X)|_{X^{-1}(N_0)}\colon X^{-1}(N_0)\to \R^k$, with 
\be 
g(p)=a\left(\f\left(X(p)\right)\right)1_A(p)1_{N_0}\left(X(p)\right).
\ee
We obtain an identity, valid for all $X\in \mathcal{C}^1(M,N)$:
\be \label{eq:detform}
\int_{\R^m}a(t)\#[X^{-1}(W_t)\cap A] dt=\int_{A} \left( (a\circ\f\circ X)\cdot (1_{N_0}\circ X)\right) \delta(\f\circ X).
\ee
Using the Coarea formula \ref{thm:coarea}, we deduce a second identity, as follows:
\be\label{eq:appcoarea}
\E\left\{\left( (a\circ\f\circ X)\cdot (1_{N_0}\circ X)\right) \delta(\f\circ X)\right\}|_p= \ee
\be
\begin{aligned}
&= \E\left\{ a(\f(X(p))1_{N_0}(X(p))\delta_p(\f\circ X)\right\}=\\
&= \int_{N_0} a(\f(x))\E\{\delta_p(\f\circ X)|X(p)=x\}d[X(p)](x) =\\
&= \int_{S_p\cap N_0}a(\f(x))\E\{\delta_p(\f\circ X)|X(p)=x\}\rho_{X(p)}(x)dS_p =\\
&= \int_{\R^m}a(t)\int_{S_p\cap \f^{-1}(t)}\E\{\delta_p(\f\circ X)|X(p)=x\}\rho_{X(p)}(x)\frac{d\left((\f|_{S_p})^{-1}(t)\right)}{J_x(\f|_{S_p})}dt=\\
&=\int_{\R^m}a(t)\left(\int_{S_p\cap W_t}\E\left\{\frac{\delta_p(\f\circ X)}{J_x(\f|_{S_p})}\bigg| X(p)=x\right\}\rho_{X(p)}(x)d(S_p\cap W_t)(x)\right)dt\\
&=\int_{\R^m}a(t)\delta_{X\in W_t}(p) dt.
\end{aligned}
\ee
The Coarea formula was applied to the function $\f|_{S_p\cap N_0}\colon S_p\cap N_0\to \R^k$ in the fourth line.

Taking the expectation on both sides of \eqref{eq:detform} and repeatedly interchanging the order of integration via Tonelli's theorem\footnote{It is possible because the functions involved are positive and measurable.}, we obtain
\be 
\begin{aligned}
\int_{\R^m}a(t)\E\left\{\#[X^{-1}(W_t)\cap A]\right\}dt
&=
\E\left\{\int_{\R^m}a(t)\#[X^{-1}(W_t)\cap A] dt\right\}=\\
&=\E\left\{\int_{A} \left( (a\circ\f\circ X)\cdot (1_{N_0}\circ X)\right) \delta(\f\circ X)\right\}\\
&=\int_A \int_{\R^m}a(t)\delta_{X\in W_t}(p)dt\\
&= \int_{\R^m}a(t)\left(\int_A \delta_{X\in W_t}(p)\right)dt.
\end{aligned}
\ee
Given the arbitrariness of $a$ this proves \ref{itm:general:1}.


\ref{itm:general:3} Lemma \ref{lemma:meas} guarantees that in this case the set function $\mu(A)=\E\#[X^{-1}(W_0)\cap A]$ is a well defined Borel measure on $M$. 

Let $K\subset M$ be a compact set and let $O$ be its interior. If $X\transv W_0$, then $0$ is a regular value for the map $(\f\circ X)\colon X^{-1}(N_0)\to \R^{m}$, hence there is an $\e(X)>0$ such that, for any $0<\e<\e(X)$, the set $(\f\circ X)^{-1}(B_\e)\cap O$ is a disjoint union of balls $U_i$ with the property that $(\f\circ X)|_{U_i}\colon U_i\to B_\e$ is a $\ci$ diffeomorphism, therefore the number $\#[X^{-1}(W_t)\cap O]$ is constant for all $|t|<\e(X)$. 

Let $a_\e\in \mathcal{C}^\infty_c(\R^m)$ be a non negative function supported in the ball $B_\e$ of radius $\e>0$ and with $\int_{\R^m} a_\e =1$. In this case, formula \eqref{eq:detform} implies that
\be 
\#[X^{-1}(W_0)\cap O]= \lim_{\e\to 0^+}\int_{O} \left( (a_\e\circ\f\circ X)\cdot (1_{N_0}\circ X)\right) \delta(\f\circ X).
\ee

Taking expectation on both sides we have
\be
\begin{aligned}
\mu(O)
&=\E\left\{\lim_{\e\to 0^+}\int_{O} \left( (a_\e\circ\f\circ X)\cdot (1_{N_0}\circ X)\right) \delta(\f\circ X)\right\} \\
&\le \liminf_{\e\to 0}\E\left\{\int_{O} \left( (a_\e\circ\f\circ X)\cdot (1_{N_0}\circ X)\right) \delta(\f\circ X)\right\}\quad \textrm{(Fatou)}\\
&= \liminf_{\e\to 0}\int_O\int_{\R^m}a_\e(t)\delta_{X\in W_t} dt\quad\textrm{(Tonelli and \eqref{eq:appcoarea})}\\
&=\liminf_{\e\to 0}\int_{B_\e}a_\e(t)\left(\int_O\delta_{X\in W_t} \right)dt\quad\textrm{(Tonelli again)}\\
&\le \int_K \delta_{top}.
\end{aligned}
\ee


In particular, if $O\subset M$ belongs to the class $\mathcal{S}$ of relatively compact open subsets such that $\de O$ has measure zero, then $\nu(O)\le \int_O\delta_{top}$.
It follows that the measure $\mu$ is absolutely continuous with respect to the measure $\nu=\int \delta_{top}$. Indeed if $A\in \mathcal{B}(M)$ is such that $\nu(A)=0$, then
\be \label{eq:munu}
\mu(A)\le \inf_{B_i \in\mathcal{S},\ A\subset \cup_{i}B_i}\sum_i\mu(B_i)\le \inf_{B_i \in\mathcal{S},\ A\subset \cup_{i}B_i}\sum_i\int_{B_i}\delta_{top}=\nu(A)=0.
\ee
By the Radon-Nikodym theorem (see \ref{thm:RadoNiko}), this implies the existence of a measurable density $\delta\in L^1_{loc}(M)$ such that $\mu(A)=\int_A \delta$ for every $A\in\mathcal{B}(M)$. Moreover, since $\mu(A)\le \nu(A)$ by equation \eqref{eq:munu}, $\delta$ satisfies almost everywhere the inequality: $\delta\le \delta_{top}$.
\end{proof}



\section{Proof of the main Theorem}\label{sec:mainproof}
The goal of this section is to specialize the abstract result of Theorem \ref{thm:general} to the, more friendly, KROK situation. Let us consider a $\mC^1$ random map $X\colon M\to N$ and a submanifold $W\subset N$ such that the couple $(X,W)$ is KROK, that is, it satisfies the conditions i-ix of Definition \ref{def:krok}. 

To facilitate the next proofs, we will show that, without loss of generality we can make some further assumptions on $X$ and $W$.
\begin{figure}\begin{center}
\includegraphics[scale=0.15]{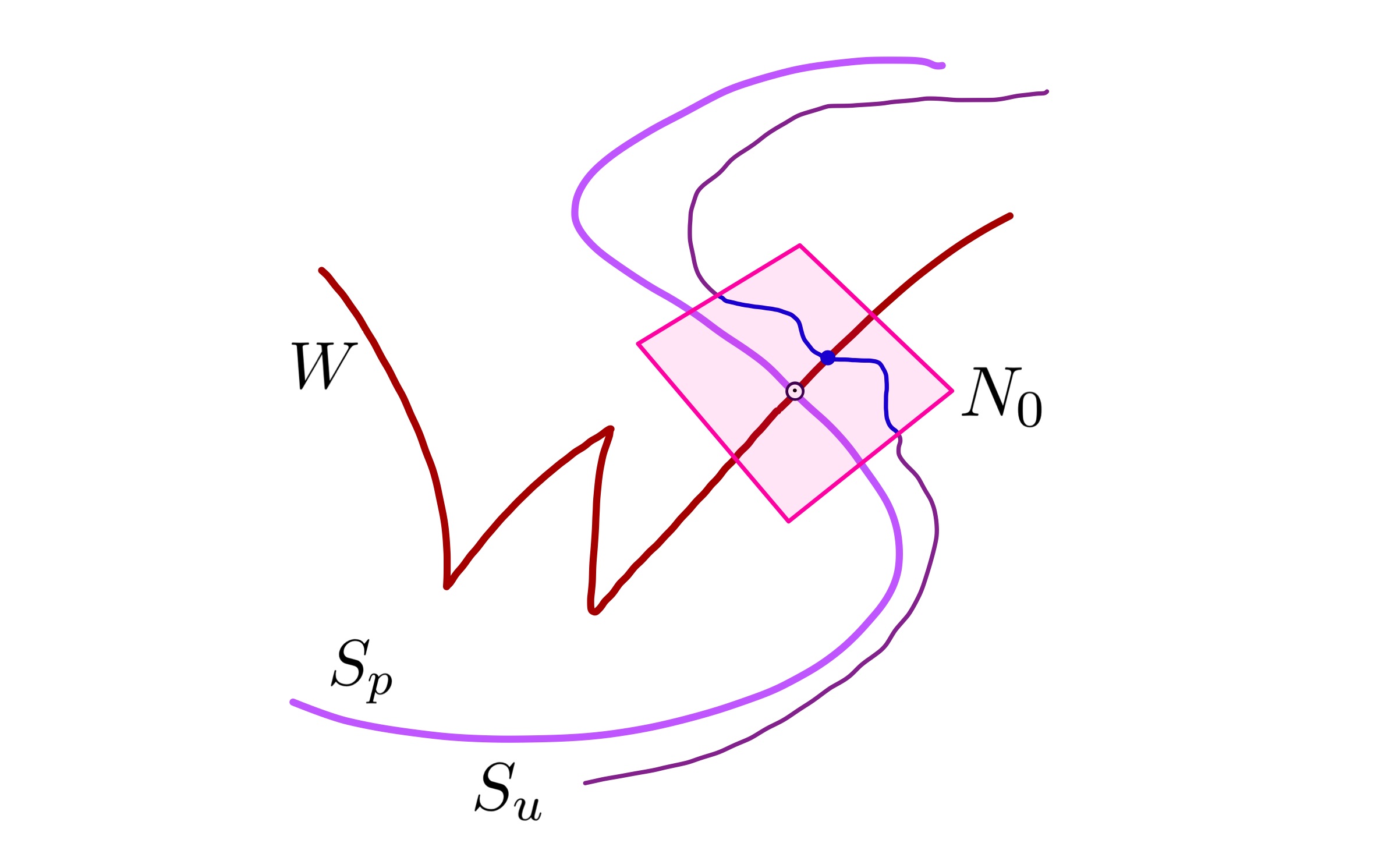}\caption{An illustration of Lemma \ref{lem:dieci}.}\label{fig:WSP}
\end{center}
\end{figure}

\begin{defi}\label{defi:krokcoords}
Let $(X,W)$ be a KROK couple. Let
\be 
    \phi\colon \overline{N_0}\xrightarrow{\sim} \D^m\times\D^{s-m}\times \D^{n-s}=\{ (t,y,z)\}
    \ee 
be a chart of $N$. We say that $\phi$ is a KROK chart at $(p,q)\in \MW $ if the following assumptions are satisfied.
\begin{enumerate}[(i)]
    \item $N_0\subset N$ is a relatively compact open subset such that $W\cap N_0= \phi^{-1}\{t=0\}$.
    \item  There is an open neighborhood $O_p\subset M$ of $p$ and a smooth map $g\in \coo\infty {O_p\times \D^m\times\D^{s-m}}{\D^{n-s}}$, such that
        \be 
        \phi(S_u\cap \overline{N_0})= \text{graph}(g(u,\cdot))=\{(t,y,g(u,t,y))\colon (t,y)\in \D^m\times\D^{s-m}\}.
        \ee
\end{enumerate}
In this case we say that the tuple $ (O_p,N_0,\phi, g)$ is a KROK model for $(X,W)$.
\end{defi}
\begin{lemma}\label{lem:dieci}
\HP. Then for all $(p,q)\in \MW $, there are open sets $p\in O_p\subset M$ and $q\in N_0\subset N$ and a KROK model $(O_p,N_0,\phi,g)$ for $(X,W)$.
\end{lemma}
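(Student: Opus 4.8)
The plan is to build the chart in three moves: straighten $W$; perform a linear change of the remaining coordinates so that the single fibre $S_p$ becomes a graph in the prescribed directions; and then run the implicit function theorem in a version with parameter $u\in M$ to produce the smooth family $g$.

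First, since $W\subset N$ is a smooth submanifold of codimension $m$, choose a submanifold chart $\psi\colon V\xrightarrow{\sim}\R^m_t\times\R^{n-m}_w$ of $N$ around $q$, with $\psi(q)=0$ and $\psi(W\cap V)=\{t=0\}$. Transport the family of supports into this chart: by KROK.\ref{itm:krok:7} the set $\mathcal S=\{(u,q')\colon q'\in S_u\}$ is a closed smooth submanifold of $M\times N$, and (as is built into the requirement that the $S_u$'s form a smooth family, i.e. that $g$ in Definition \ref{defi:krokcoords} be defined on all of $O_p\times\D^m\times\D^{s-m}$) the projection $\mathcal S\to M$ is a submersion near $(p,q)$; hence $\Sigma:=(\mathrm{id}_M\times\psi)(\mathcal S\cap(M\times V))$ is a smooth submanifold of $M\times\R^m_t\times\R^{n-m}_w$ of dimension $m+s$, whose fibre over $u$ near $p$ is $\Sigma_u=\psi(S_u\cap V)$, an $s$-dimensional submanifold, and $\Sigma_p\ni 0=\psi(q)$.

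Now the linear algebra at $(p,0)$. Since $S_p\transv W$ at $q$ (KROK.\ref{itm:krok:6}), the subspace $T_0\Sigma_p$ projects onto $\R^m_t$; put $L:=T_0\Sigma_p\cap(\{0\}\times\R^{n-m}_w)$, a subspace of dimension $s-m$. Choose a linear splitting $\R^{n-m}_w=\R^{s-m}_y\oplus\R^{n-s}_z$ with $L\cap(\{0\}\times\R^{n-s}_z)=0$ (possible, since $(s-m)+(n-s)=n-m$), and replace the $w$-coordinate by the resulting $(y,z)$; this is a global linear diffeomorphism fixing $\{t=0\}$. One checks directly that $\pi_{(t,y)}\colon T_0\Sigma_p\to\R^m_t\times\R^{s-m}_y$ is injective, hence an isomorphism by dimension count. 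Combining this with submersivity of $\mathcal S\to M$, the projection $\pi_{(u,t,y)}\colon T_{(p,0)}\Sigma\to T_pM\times\R^m_t\times\R^{s-m}_y$ is onto, and again bijective by dimension count. Equivalently, writing $\Sigma$ locally as the zero set of a submersion onto $\R^{n-s}_z$, the partial differential in the $z$-directions is invertible at $(p,0)$, so the implicit function theorem presents $\Sigma$, near $(p,0)$, as a graph $\{z=g(u,t,y)\}$ with $g$ smooth on a neighbourhood of $(p,0)$ in $M\times\R^m_t\times\R^{s-m}_y$.

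Finally, shrink to polydisks: take a neighbourhood $O_p\subset M$ of $p$ and polydisk neighbourhoods of $0$ in the $t$- and $y$-directions on which $g$ is defined with relatively compact image in the $z$-direction, postcompose $\psi$ (after the above linear change) with a rescaling diffeomorphism of the $N$-coordinates carrying these onto $\D^m\times\D^{s-m}\times\D^{n-s}$ and fixing $\{t=0\}$, call the result $\phi$, and set $N_0=\phi^{-1}(\D^m\times\D^{s-m}\times\D^{n-s})$ (relatively compact), shrinking $O_p$ further so that $S_u\cap\overline{N_0}$ is exactly the closed graph of $g(u,\cdot)$ for all $u\in O_p$. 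Conditions (i) and (ii) of Definition \ref{defi:krokcoords} then hold by construction, so $(O_p,N_0,\phi,g)$ is a KROK model. The one genuinely delicate point is the third paragraph — passing from the single slice $S_p$ to the whole family — which is the parametric implicit function theorem and rests on $\mathcal S$ being a smooth submanifold submersing onto $M$ near $(p,q)$; the rest is routine bookkeeping with charts and rescalings.
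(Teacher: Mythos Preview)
Your proof is correct and follows essentially the same strategy as the paper's: use $S_p\transv W$ to build a chart in which $W=\{t=0\}$ and $S_p$ is a graph over $(t,y)$, then invoke the smoothness of $\mathcal S$ to promote this to a smooth family $g(u,t,y)$. The paper compresses your first two moves into one line by invoking directly that two transverse submanifolds can be simultaneously straightened (so that $W=\{t=0\}$ and $S_p=\{z=0\}$ in one chart), whereas you straighten $W$ first and then perform a linear change in the $w$-variables; both are equivalent and your version is just more explicit. One small presentational point: your parenthetical justification for $\mathcal S\to M$ being a submersion is circular as written (it appeals to the very conclusion you are proving); the paper's proof sidesteps this by simply using $T_vS_u=T_{(u,v)}\mathcal S\cap T_vN$ and the continuity of tangent spaces along $\mathcal S$, which amounts to the same implicit-function argument without naming submersivity.
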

\begin{proof}
Consider the point $q\in S_p\cap W$. Since $S_p\transv W$, there is a chart $\phi\colon \overline{N_0}\to\D^m\times\D^{s-m}\times \D^{n-s}=\{ (t,y,z)\}$ centered at $q$, such that $W\cap N_0=\{(0,y,z)\colon y,z\}$ and $S_p\cap N_0=\{(t,y,0)\colon t,y\}$. 
Now, let us consider the set $\mathcal{S}=\{(u,v)\in M\times N\colon u\in S_v\}$ (see Definition \ref{def:krok}) and observe that $T_{v}S_u=T_{(u,v)}\mS\cap T_v N$. Since 
$\mathcal{S}$ is a submanifold of $M\times N$ by Definition \ref{def:krok}, we deduce that for any $u,v$ close to $p,q$ in $M\times N$, the tangent spaces $T_vS_u$ and $T_qS_p$ are close to each other. This implies that for all $u$ in a neighborhood $O_p$ of $p$, de submanifold $S_u\cap N_0$ can be parametrized as the graph of a function $g_u(t,y)$ and that this function depends smoothly on $u$.
\end{proof}
\begin{lemma}\label{lem:undici}
\HPK. Let $(X^1,X^2,X^3):=\phi\circ X$ and define 
\be 
q\colon M\times \D^m\times \D^{s-m}\to N_0,
\ee
\be q(u,t,y):=\phi^{-1}(t,y,(g(u,t,y)))\in S_u\cap W_t\cap N_0.\ee Then the expression of $\delta_{X\in W_t}=\rho_{X\in W_t}du$ in the coordinates $t,y,z$ is the following.
\begin{multline}\label{eq:densieasy}
\rho_{X\in W_t}(u) =\\ 
=\int_{ \D^{s-m}}\E\left\{\left|\det\left(\frac{\de X^1(u)}{\de u}\right)\right|\Bigg| X(p)=q(u,t,y)\right\}\rho_{X(u)}(q(u,t,y))\sqrt{\det G(u,t,y)} dy,
\end{multline}
where $G(u,t,y)=\langle  \begin{pmatrix}\frac{\de q}{\de t} & \frac{\de q}{\de y}
\end{pmatrix}^T,\begin{pmatrix}\frac{\de q}{\de t} & \frac{\de q}{\de y}
\end{pmatrix} \rangle$ is the Gram matrix of $S_u$ with respect to the coordinates $t,y$.
\end{lemma}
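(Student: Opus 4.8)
The plan is to read off \eqref{eq:densieasy} directly from Lemma \ref{lem:sette}, after transporting everything into the chart supplied by the KROK model $(O_p,N_0,\phi,g)$; no new probabilistic ingredient is needed, only a change of variables together with an identification of the various Jacobians and volume densities.

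First I would let $\f:=\mathrm{pr}_1\circ\phi\colon N_0\to\D^m\subset\R^m$ be the first block of coordinates of $\phi$. By Definition \ref{defi:krokcoords}(i) its zero fiber is $W_0=W\cap N_0$, and by Definition \ref{defi:krokcoords}(ii) for every $u\in O_p$ the set $\phi(S_u\cap\overline{N_0})$ is the graph of $g(u,\cdot)$ over the $(t,y)$-variables; a one-line computation with tangent spaces then gives $W_t\transv S_u$ for all $t$ and all $u\in O_p$. Hence $(X,\{W_t\}_t)$ satisfies the standing hypotheses of Theorem \ref{thm:general} over $O_p$ (in the KROK setting the conditional expectation is taken to be the continuous representative fixed by KROK.\ref{itm:krokEcont}, so the measurability hypothesis of Theorem \ref{thm:general} is automatic), and Lemma \ref{lem:sette} applies:
\be
\delta_{X\in W_t}(u)=\int_{S_u\cap W_t}\E\left\{\frac{\delta_u(\f\circ X)}{J_x(\f|_{S_u})}\bigg|X(u)=x\right\}\rho_{X(u)}(x)\,d(S_u\cap W_t)(x).
\ee

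Next I would evaluate the three pieces in coordinates. Since $\f\circ X=X^1\colon M\to\D^m$ is a map between $m$-dimensional manifolds, $\delta_u(\f\circ X)=\big|\det(\tfrac{\de X^1(u)}{\de u})\big|\,du$. The scalar $J_x(\f|_{S_u})^{-1}$ does not depend on the random map, so it factors out of the conditional expectation and must then be absorbed into $d(S_u\cap W_t)$. Using the parametrization $\psi_u\colon(t,y)\mapsto q(u,t,y)=\phi^{-1}(t,y,g(u,t,y))$ of $S_u\cap N_0$, whose pulled-back metric is precisely the Gram matrix $G(u,t,y)$, the map $\f|_{S_u}$ becomes the projection $(t,y)\mapsto t$; comparing the coordinate expression $\sqrt{\det G}\,dt\,dy$ of the volume of $S_u$ with the coarea formula (Theorem \ref{thm:coarea}) applied to $\f|_{S_u}\colon S_u\to\D^m$ yields the pointwise identity
\be
\frac{d(S_u\cap W_t)(q(u,t,y))}{J_{q(u,t,y)}(\f|_{S_u})}=\sqrt{\det G(u,t,y)}\,dy .
\ee
Substituting this together with $\delta_u(\f\circ X)=\big|\det(\tfrac{\de X^1(u)}{\de u})\big|\,du$ into the displayed form of Lemma \ref{lem:sette}, and using that the fiber $S_u\cap W_t$ is swept out by $y\in\D^{s-m}$ at fixed $t$, produces exactly \eqref{eq:densieasy}.

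I do not expect a genuine obstacle here: once the KROK chart is in hand the argument is an elementary change of variables. The only point demanding care is the bookkeeping of which metric each Jacobian and volume density is computed against — $\delta_u(\f\circ X)$ against the fixed chart on $O_p$, $J_x(\f|_{S_u})$ against the metric $S_u$ inherits from $N$, and $d(S_u\cap W_t)$ against the metric the fiber inherits — and the verification that these collapse into the single factor $\sqrt{\det G}$; the coarea comparison above is what makes this transparent, and it is essentially the same manipulation already carried out in the proof of Lemma \ref{lem:sette} and in the fourth line of the computation in the proof of Theorem \ref{thm:general}.
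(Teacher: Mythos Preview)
Your proposal is correct and follows the same overall approach as the paper: both start from Lemma \ref{lem:sette}, identify $\delta_u(\f\circ X)=\big|\det(\tfrac{\de X^1}{\de u})\big|\,du$, and then reduce the problem to the single identity $\dfrac{d(S_u\cap W_t)}{J(\f|_{S_u})}=\sqrt{\det G}\,dy$.

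The one genuine difference is in how that identity is justified. You obtain it by comparing the coarea formula for the projection $(t,y)\mapsto t$ on $S_u$ against the coordinate volume $dS_u=\sqrt{\det G}\,dt\,dy$, which forces the fiberwise equality $J(\f|_{S_u})\sqrt{\det G}\,dy=d(S_u\cap W_t)$. The paper instead computes both sides explicitly: it uses the formula \eqref{eq:jacobidet} for the Jacobian of a submersion to get $J(\f|_{S_u})=\sqrt{\det G_{22}/\det G}$ via a Schur complement identity, and separately notes that $d(S_u\cap W_t)=\sqrt{\det G_{22}}\,dy$ since $G_{22}$ is the Gram matrix of the slice. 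Your coarea argument is slightly slicker and avoids the block-matrix algebra; the paper's computation is more hands-on but makes the role of the Schur complement visible. Either way the content is the same.
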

\begin{proof}
It is convenient to write $\rho_{X\in W_t}$ with the formula of Lemma \ref{lem:sette}, where $\f\colon N_0\to \R^m$ is the function defined by $\f(\phi^{-1}(t,y,z))=t$. Thus, it is sufficient to show that 
\be 
\frac{d(S_u\cap W_t)}{J_v\left(\f|_{S_u}\right)}=\sqrt{\det G}\ dy.
\ee
Let us start by looking at the most tedious piece, namely the jacobian. The matrix of $d_u\f$ in the coordinates $t,y$ is $\begin{pmatrix}
\mathbb{1} & 0
\end{pmatrix}$, so that formula \eqref{eq:jacobidet} in Appendix, yelds
\be 
\begin{aligned}
J_u\left(\f|_{S_u}\right)&=\det\left(\begin{pmatrix}
\mathbb{1}& 0
\end{pmatrix}G^{-1}\begin{pmatrix}
\mathbb{1}\\ 0
\end{pmatrix}\right)^{\frac12}
=\det\left( S^{-1}\right)^{\frac12}= \left( \frac{\det G_{2,2}}{\det G}\right)^{\frac12},
\end{aligned}
\ee
where $S=G_{11}-G_{12}G_{22}^{-1}G_{12}$\footnote{$S$ is called the Schur complement of the block $G_{22}$ in $G=\begin{pmatrix}
G_{11} & G_{12} \\ G_{21} & G_{22}
\end{pmatrix}$.}. The last two equalities can be deduced from the identity: 
\be 
\begin{pmatrix}
G_{11} & G_{12} \\ G_{21} & G_{22}
\end{pmatrix}\cdot \begin{pmatrix}
\mathbb{1} & 0 \\ -G_{22}^{-1}G_{21} & \mathbb{1}
\end{pmatrix}=\begin{pmatrix}
S & G_{12} \\ 0 & G_{22}
\end{pmatrix}.
\ee
Now, since $\phi(N_0\cap S_u\cap W_t)=\{(t,y)\colon y\in \D^{s-m}\}$, the Gram matrix of $S_u\cap W_t$ in the coordinates $y$ is exactly $G_{22}$. Therefore, we conclude
\be 
\begin{aligned}
\frac{d(S_u\cap W_t)}{J_v\left(\f|_{S_u}\right)}
&=
\frac{\sqrt{\det G_{22}}dy}{\sqrt{ \frac{\det G_{2,2}}{\det G}}}
=\sqrt{\det G}\ dy.
\end{aligned}
\ee
\end{proof}
\begin{remark}\label{rem:independencemetrix}
Formula \eqref{eq:densieasy} can be rewritten in a form that doesn't involve the metrics on $M$ and $N$. Define a random element $X^1(u)\in \R^m\cup\{\arian\}$ such that $X^1(u)=\phi^1(X(u))$ if $X(u)\in N_0$ and $X^1(u)=\arian$ whenever $X(u)\notin N_0$. Then, if $A\subset \R^m$,
\be
\begin{aligned}
\P\{X^1(u)\in A\}&= \P\{X(u)\in N_0\cap \phi^{-1}(A\times \D^{s-m})\}\\
&=\int_{A}\left(\int_{\D^{s-m}}\rho_{X(u)}(q(u,t,y))\sqrt{\det G(u,t,y)} dy\right)dt,
\end{aligned}
\ee
hence the restriction of the measure $[X^1(u)]$ to $\R^m$ is absolutely continuous,
so that if we denote its density by $\rho_{X^1(u)}(t)dt$ we obtain an equivalent expression to that in formula \eqref{eq:densieasy}.
\be\label{eq:indepensity} 
\delta_{X\in W_t}(u)=\E\left\{\left|\det\left(\frac{\de X^1(u)}{\de u}\right)\right|\Bigg| \begin{aligned}&X(u)\in N_0 \\&X^1(u)=t\end{aligned}\right\}\rho_{X^1(u)}(t) du.
\ee
Notice that $\delta_{X\in W_t}=\delta_{X^1\in \{t\}}$. The above formula is completely independent from the Riemannian structures of $M$ and $N$.
\end{remark}
\subsection{A construction}\label{sec:constru}

The purpose of this section is to show that Theorems \ref{thm:main} and \ref{thm:maingraph} are actually equivalent. In fact, although the latter is evidently a more general result, it can be proved with a particularly simple application of the former. To understand this,
let us define a new $\mC^1$ random map $\XW\colon \MW \to \NW$, such that 
 \bega\label{eq:XW}
 \XW\colon \MW =\bigsqcup_{p\in M}\{p\}&\times (S_p\cap W) \longrightarrow \NW=\bigsqcup_{(p,q)\in \MW }\{(p,q)\}\times S_p,
 \\
 &(p,q) \mapsto (p,q,X(p)).
 \eega
 \begin{remark}
The fact that $\mathcal{S}$ is closed guarantees that $X(p)\in S_p$ for all $p\in M$, with probability one, hence this definition is well posed.  Indeed given a dense countable subset $D\subset M$, then 
\be 
\P\{X(p)\in S_p\ \forall p\in D\}=\P\left(\bigcap_{p\in D}\{X(p)\in S_p\}\right)=1
\ee 
and if the $X(p)\in S_p$ for all $p\in D$, then $(p,X(p))\in \overline{\mathcal{S}}=\mathcal{S}$ for all $p\in M$, by density and continuity.
\end{remark}
 Define also the diagonal submanifold $\DElta=\{(p,q,q)\colon (p,q)\in \MW \}\subset \NW$.
 Now, the random set $\Gam$ from Theorem \ref{thm:maingraph}, defined for a KROK couple $(X,W)$, can be interpreted as 
 \be 
\Gam=\left\{(p,q)\in M\times W\colon q=X(p)\right\}=\XW^{-1}(\DElta) ,
\ee
 therefore
 $\#_{\Gam}(V)=\#_{\XW\in \DElta}(V\cap \MW )$ for any $V\subset M\times N$.


Observe that if $\dim S_p=s$, then the dimensions of $\MW , \NW, \DElta$ are $s,2s,s$.

\begin{claim}
The couple $(\XW,\DElta)$ is KROK and
\be
d[\XW(p,q)](x,y,y')=\delta_p(x)\delta_q(y)\rho_{X(p)}(y')dZ_{p,q}\footnote{Here $\delta_p$ denotes the $\delta-$measure on the point $p$.}
\ee
where $Z_{p,q}=\{p,q\}\times S_p$. Moreover $ Z_p\cap \DElta=\{(p,q,q)\}$ is always finite.
\end{claim}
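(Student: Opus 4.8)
The plan is to check, one by one, the hypotheses of Definition~\ref{def:krok} for the couple $(\XW,\DElta)$, and then to read off from this the two displayed assertions. As auxiliary Riemannian metrics I would put a product metric on $M\times N\times N$ and restrict it to $\NW\subset M\times N\times N$ and to $\MW\subset M\times N$; then every fibre $Z_{p,q}=\{(p,q)\}\times S_p$ becomes isometric to $S_p$ through the last coordinate, and by Subsection~\ref{sec:indepmetr} this choice is irrelevant for the density. Several conditions are then immediate. Since $\XW(p,q)=(p,q,X(p))$ has deterministic first two components, $[\XW(p,q)]$ is the pushforward of $[X(p)]=\rho_{X(p)}\,dS_p$ under $x\mapsto(p,q,x)$, which is exactly $\delta_p(x)\delta_q(y)\rho_{X(p)}(y')\,dZ_{p,q}$; this gives KROK.\ref{itm:krok:3} with $S_{(p,q)}=Z_{p,q}$. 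A point $(p,q,x)\in Z_{p,q}$ lies in $\DElta=\{(p',q',q')\colon(p',q')\in\MW\}$ iff $p'=p$, $q'=q$ and $x=q$, so $Z_{p,q}\cap\DElta=\{(p,q,q)\}$ is a single point, and there $Z_{p,q}\transv\DElta$ because the diagonal of $S_p\times S_p$ meets the slice $\{q\}\times S_p$ transversally (this is KROK.\ref{itm:krok:6}). The assignment $f\mapsto\XW_f$, $\XW_f(u,w):=(u,w,f(u))$, is continuous $\mC^1(M,N)\to\mC^1(\MW,\NW)$, so $\XW$ is a random $\mC^1$ map (KROK.\ref{itm:krok:2}), and $\DElta$ has codimension $2s-s=s=\dim\MW$ in $\NW$ (KROK.\ref{itm:krok:4}).

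Next I would treat the two conditions of a more geometric flavour. The projection $\NW\to\MW$, $(p,q,x)\mapsto(p,q)$, is a smooth fibre bundle: this follows from KROK.\ref{itm:krok:6}--\ref{itm:krok:7} for $(X,W)$, which say that $\mS$ is a closed smooth submanifold and that $\MW$ is smooth. Consequently the incidence set $\{((p,q),\eta)\colon\eta\in Z_{p,q}\}$, which is the graph of this projection --- equivalently the preimage of the diagonal of $\MW\times\MW$ under $\mathrm{id}\times(\NW\to\MW)$ --- is a closed smooth submanifold, and $\{((p,q),(p,q,q))\}\cong\MW$ is smooth; this is KROK.\ref{itm:krok:7} for $(\XW,\DElta)$. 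Since $\rho_{\XW(p,q)}((p,q,x))=\rho_{X(p)}(x)$, the continuity of this function at $\{((p,q),(p,q,q))\}$ is just the continuity of $\rho_{X(\cdot)}(\cdot)$ at $\MW$ for $(X,W)$, with variables relabelled. For the almost sure transversality KROK.\ref{itm:kroktransvas} I would pass to a KROK chart (Lemma~\ref{lem:dieci}), in the notation of Lemma~\ref{lem:undici}: writing $\phi\circ X=(X^1,X^2,X^3)$, $W=\{t=0\}$, $S_u=\{z=g(u,t,y)\}$, one parametrizes $\MW$ near $(p,q)$ by $(u,y)$, $\NW$ by $(u,y,t,y')$, the submanifold $\DElta$ by $(u,y)\mapsto(u,y,0,y)$, and $\XW$ by $(u,y)\mapsto(u,y,X^1(u),X^2(u))$. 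A one-line linear-algebra check then shows that, at a point of $\Gam$, $\XW$ is transverse to $\DElta$ iff $d_pX^1$ is invertible, and this is exactly the condition that $X$ be transverse to $W$ at $p$; since $\Gam$ is in bijection with $X^{-1}(W)$, the events $\{\XW\transv\DElta\}$ and $\{X\transv W\}$ coincide, and the latter has probability one by KROK.\ref{itm:kroktransvas} for $(X,W)$.

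The remaining hypothesis, KROK.\ref{itm:krokEcont}, is the main obstacle. I would take as regular conditional probability the pushforward $[\XW\mid\XW(p,q)=(p,q,x)]:=\iota_*[X\mid X(p)=x]$ of the one fixed for $(X,W)$ in Subsection~\ref{intro:krok9}, where $\iota(f):=\XW_f$; that this is a regular conditional probability for $\XW$ given $\XW(p,q)$ follows by slicing, since for $V\subset\NW$ one has $\{\XW(p,q)\in V\}=\{X(p)\in V_{p,q}\}$ with $V_{p,q}$ the fibre slice. It then remains to prove that $((p,q),(p,q,x))\mapsto\E\{\a(\XW,(p,q))J_{(p,q)}\XW\mid\XW(p,q)=(p,q,x)\}$ is continuous at $\{((p,q),(p,q,q))\}$; by the argument of Proposition~\ref{prop:CazzoDuro}, which goes through verbatim with the parameter in $\MW$ instead of $M$, this is equivalent to the narrow continuity there of the measure $J_{(p,q)}\cdot\iota_*[X\mid X(p)=x]$. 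With the metrics chosen above one computes
\be
J_{(p,q)}\XW_f=\sqrt{\det\big(\mathbb{1}_{T_{(p,q)}\MW}+(d_pf\circ\pi_M)^*(d_pf\circ\pi_M)\big)},
\ee
where $\pi_M\colon T_{(p,q)}\MW\to T_pM$ is the differential of the bundle projection, so that $J_{(p,q)}\XW_f$ is a continuous function of $(p,q)$ and of the first jet $j^1_pf$.

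To finish I would run the Skorohod-representation argument of Proposition~\ref{prop:CazzoDuro}: represent the $J$-weighted conditionals $\mu(p_n,x_n)$ of $(X,W)$ --- which converge narrowly to $\mu(p,q)$ at $\MW$ by KROK.\ref{itm:krokEcont} for $(X,W)$ --- as laws $[Y_n]$ of random maps with $Y_n\to Y$ almost surely, which reduces the required convergence to that of $J_{(p_n,q_n)}\XW_{Y_n}/J_{p_n}Y_n$; this ratio is almost surely well defined (the $J$-weighted measure charges no $f$ with $J_pf=0$) and converges almost surely. The hard part will be the uniform integrability needed to pass the limit inside the expectation, that is, the boundedness of $\E\{J_{(p_n,q_n)}\XW_X\mid X(p_n)=x_n\}$ along the sequence; I expect to obtain it by dominating $J_{(p,q)}\XW_f$ by a combination, with coefficients locally bounded in $(p,q)$, of the $k$-dimensional Jacobian contents $\|\Lambda^k d_pf\|$ for $k\le m$, and controlling each of these near $\{((p,q),(p,q,q))\}$ using the invertibility of $d_pX^1$ there together with the local uniformity afforded by the KROK chart. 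Once KROK.\ref{itm:krokEcont} is established, $(\XW,\DElta)$ is KROK, the Claim holds, and Theorem~\ref{thm:maingraph} follows by applying Theorem~\ref{thm:main} to $(\XW,\DElta)$ and using $\#_{\Gam}(V)=\#_{\XW\in\DElta}(V\cap\MW)$.
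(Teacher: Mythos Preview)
The paper's entire proof of the Claim is the single sentence ``It is enough to observe that if $X\transv W$, then $\XW\transv \DElta$, and that if $S_p\transv W$, then $Z_{p,q}\transv \DElta$'': only KROK.\ref{itm:kroktransvas} and KROK.\ref{itm:krok:6} are mentioned, the rest being treated as evident. Your verification of the remaining items is more careful than the paper's and essentially correct up to KROK.\ref{itm:krokEcont}, which you rightly flag as the only substantial point.

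Your proposed resolution of KROK.\ref{itm:krokEcont}, however, cannot be completed from KROK.\ref{itm:krokEcont} for $(X,W)$ alone. That hypothesis controls only expectations of the form $\E\{\beta(X,p)\,J_pX\mid X(p)=x\}$ with $\beta$ bounded; it carries no information on the lower–order contents $\|\Lambda^k d_pX\|$ for $k<m$, so your domination of $J_{(p,q)}\XW_f$ by these quantities does not give the uniform integrability you need (one can place conditional mass on $\{J_pf=0\}$ in a way invisible to $J_p\cdot[X\mid X(p)=x]$ but visible to $J_{(p,q)}\XW_{\cdot}\cdot[X\mid X(p)=x]$, since $J_{(p,q)}\XW_f\ge 1$). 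What saves the situation --- and what the paper actually uses, in the Lemma right after the Claim and then in the proof of Theorems~\ref{thm:main}--\ref{thm:maingraph} --- is that the abstract $J_{(p,q)}\XW$ is never needed: in the KROK chart for $(\XW,\DElta)$ built there one has $|\det(\partial\XW^1/\partial u^{\mathcal X})|=|\det(\partial X^1/\partial u)|$, and by Cauchy--Binet $|\det(\partial X^1/\partial u)|\le C(p,X(p))\,J_pX$ (orthogonal projection onto the first $m$ coordinates does not increase $m$–volume, up to a continuous metric factor). Hence the ratio $|\det(\partial X^1/\partial u)|/J_pX$ is bounded and continuous on $\{J_pf>0\}$, and the Skorohod argument of Proposition~\ref{prop:CazzoDuro} applied to $\mu(p,x)=J_p\cdot[X\mid X(p)=x]$ gives the required continuity directly, with dominated (not uniform) convergence. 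The right fix is therefore not to prove KROK.\ref{itm:krokEcont} for $(\XW,\DElta)$ in the abstract, but to observe that the proof of Theorem~\ref{thm:main}, when run for $(\XW,\DElta)$, only uses it through this coordinate expression.
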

\begin{proof}
It is enough to observe that if $X\transv W$, then $\XW\transv \DElta$, and that if $S_p\transv W$, then $Z_{p,q}\transv \DElta$.
\end{proof}
It follows that the expression $V\mapsto \E\#_{\XW\in \DElta}(V)$  defines a Borel measure on $\MW $ and putting $V=A\times B$, where $A\in \mathcal{B}(M)$ and $B\in\mathcal{B}(N)$, we have
\be 
\#_{X\in B\cap W}(A)=\#_{\XW\in \DElta}(A\times B\cap \MW ).
\ee
In other words, we can consider the measure $\E\#_{X\in W}(\cdot)$ as the section $\mu(W\times(\cdot))$ of a Borel measure $\mu=\E\#_{\XW\in \DElta}(\MW \cap (\cdot))$ on $M\times N$, with $\text{supp}(\mu)\subset \MW $\footnote{It is a strict inclusion, when $\rho_{X(p)}(q)= 0$ on a not negligible set of points $p,q$.}.

Moreover, the fact that $Z_{p,q}\cap \DElta$ is always equal to the point $(p,q,q)\in \NW$, permits to get rid of the integral in the formula for $\delta_{\XW\in \DElta}$:
\be
\delta_{\XW\in \DElta}(p,q)=\E\left\{\delta_{(p,q)}\XW\frac{\sigma_{(p,q,q)}(\XW,\DElta)}{\sigma_{(p,q,q)}(Z_{p,q},\DElta)}\bigg|X(p)=q\right\}\rho_{X(p)}(q).
\ee

\begin{lemma} Let $\XW\colon \MW  \to \NW$ be the map defined above. Then the density element $\delta_{\XW\in \DElta}(p,q)\in \Delta_{(p,q)}\MW$ can be written as follows.
\bega
\delta_{\XW\in \DElta}(p,q)&=
\E\left\{J_pX\frac{\sigma_x(X,W)}{\sigma_x(S_p,W)}\bigg|X(p)=x\right\}\rho_{X(p)}(x)d\MW (p,q)
\eega
where $\MW $ is endowed with the Riemannian metric induced by the isomorphism $T_{(p,q)}\MW \cong T_pM\oplus T_q(W\cap S_p)$, by declaring it to be an orthogonal splitting.
\end{lemma}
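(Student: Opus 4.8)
The plan is to obtain the formula by applying the machinery already developed to the auxiliary couple $(\XW,\DElta)$ and then carrying out a short computation in a \emph{KROK chart}. From the preceding Claim we know that $(\XW,\DElta)$ is KROK and that $Z_{p,q}\cap\DElta=\{(p,q,q)\}$ is a single point, so that the density formula of Theorem \ref{thm:maindens}, applied to $(\XW,\DElta)$, carries no fibre integral:
\be
\delta_{\XW\in\DElta}(p,q)=\E\left\{\delta_{(p,q)}\XW\,\frac{\sigma_{(p,q,q)}(\XW,\DElta)}{\sigma_{(p,q,q)}(Z_{p,q},\DElta)}\;\bigg|\;X(p)=q\right\}\rho_{X(p)}(q).
\ee
Hence everything reduces to the \emph{pointwise} identity, to be checked for $X$ with $X(p)=q$,
\be
\delta_{(p,q)}\XW\,\frac{\sigma_{(p,q,q)}(\XW,\DElta)}{\sigma_{(p,q,q)}(Z_{p,q},\DElta)}=J_pX\,\frac{\sigma_q(X,W)}{\sigma_q(S_p,W)}\;d\MW(p,q),
\ee
where $d\MW(p,q)=dM(p)\otimes d(S_p\cap W)(q)$ under the stated splitting. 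If $X\not\transv W$ at $p$ both sides vanish, so by KROK.\ref{itm:kroktransvas} we may assume transversality, which holds a.s. under the conditional law.

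To prove this pointwise identity I would fix a KROK model $(O_p,N_0,\phi,g)$ at $(p,q)$ as in Lemma \ref{lem:dieci}. In the induced coordinates, $W\cap N_0=\{t=0\}$ and $S_u\cap N_0$ is the graph of $g(u,\cdot)$; the manifold $\MW$ acquires coordinates $(u,y)\in O_p\times\D^{s-m}$, the fibre $Z_{p,q}\cong S_p$ acquires coordinates $(t',y')\in\D^m\times\D^{s-m}$, and $\NW$ acquires coordinates $(u,y,t',y')$; moreover $\XW(u,y)=(u,y,X^1(u),X^2(u))$ where $(X^1,X^2,X^3)=\phi\circ X$, while $\DElta=\tilde\f^{-1}(0)$ for the submersion $\tilde\f(u,y,t',y')=(t',\,y'-y)$. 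Applying the purely linear-algebraic identity proved inside Lemma \ref{lem:sette} to $(\XW,\DElta)$ with the submersion $\tilde\f$ (legitimate since $\DElta=\tilde\f^{-1}(0)$ and $\tilde\f^{-1}(\tau)\transv Z_{p,q}$ for every $\tau$), and using $\XW(p,q)=(p,q,q)$, gives
\be
\delta_{(p,q)}\XW\,\frac{\sigma_{(p,q,q)}(\XW,\DElta)}{\sigma_{(p,q,q)}(Z_{p,q},\DElta)}=\frac{\delta_{(p,q)}(\tilde\f\circ\XW)}{J_{(p,q,q)}(\tilde\f|_{Z_{p,q}})}.
\ee
Both pieces on the right are elementary. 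The map $\tilde\f\circ\XW(u,y)=(X^1(u),\,X^2(u)-y)$ has coordinate differential $\bigl(\begin{smallmatrix}\partial_uX^1 & 0\\ \partial_uX^2 & -\mathbb{1}_{s-m}\end{smallmatrix}\bigr)$, so $\delta_{(p,q)}(\tilde\f\circ\XW)=|\det\partial_uX^1|\,du\wedge dy$; being a pullback of the Lebesgue density, this is independent of all chosen metrics. The restriction $\tilde\f|_{Z_{p,q}}$ reads $(t',y')\mapsto(t',\,y'-y)$, with identity differential at the point $(0,y)$, hence $J_{(p,q,q)}(\tilde\f|_{Z_{p,q}})=\bigl(\det\mathbf{G}_{S_p}(0,y)\bigr)^{-1/2}$, where $\mathbf{G}_{S_p}$ is the Gram matrix of $S_p$ in the coordinates $(t',y')$. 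Therefore the left-hand side of the pointwise identity equals $|\det\partial_uX^1|\,\sqrt{\det\mathbf{G}_{S_p}(0,y)}\;du\wedge dy$.

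For the right-hand side I would run the same identity from Lemma \ref{lem:sette}, this time for $(X,W)$ with the submersion $\f(\phi^{-1}(t,y,z))=t$, so that $\f\circ X=X^1$ and $W=\f^{-1}(0)$: this yields $J_pX\,\frac{\sigma_q(X,W)}{\sigma_q(S_p,W)}\,dM(p)=\delta_p(X^1)/J_q(\f|_{S_p})$, and the Jacobian $J_q(\f|_{S_p})$ was already computed in the proof of Lemma \ref{lem:undici}, namely $J_q(\f|_{S_p})=\bigl(\det\mathbf{G}_{S_p\cap W}(y)/\det\mathbf{G}_{S_p}(0,y)\bigr)^{1/2}$, where $\mathbf{G}_{S_p\cap W}$ is the Gram matrix of $S_p\cap W$ in the $y$-coordinates. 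Since $\delta_p(X^1)=|\det\partial_uX^1|\,du$ and $d(S_p\cap W)(q)=\sqrt{\det\mathbf{G}_{S_p\cap W}(y)}\,dy$, multiplying $J_pX\,\frac{\sigma_q(X,W)}{\sigma_q(S_p,W)}$ by $d\MW(p,q)=dM(p)\otimes d(S_p\cap W)(q)$ makes the factors $dM(p)$ and $\det\mathbf{G}_{S_p\cap W}(y)$ cancel in pairs, leaving exactly $|\det\partial_uX^1|\,\sqrt{\det\mathbf{G}_{S_p}(0,y)}\;du\wedge dy$, which matches the previous paragraph. Substituting the pointwise identity into the fibre-integral-free formula for $\delta_{\XW\in\DElta}$ then gives the statement. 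The step I expect to be most delicate is precisely this last bookkeeping: one must keep three metrics apart --- the one on $M$, the one induced on $S_p$, and the one induced on $S_p\cap W$ --- and verify that the Gram determinants produced on the $\NW$-side cancel those on the $N$-side; it is worth stressing that no auxiliary metric on $\NW$ survives in the final formula, since $\delta_{(p,q)}(\tilde\f\circ\XW)$ is a pullback of Lebesgue and $J_{(p,q,q)}(\tilde\f|_{Z_{p,q}})$ only sees the metric on the fibre $Z_{p,q}\cong S_p$ induced from $N$, so the computation is valid for any connected Riemannian metric on $\NW$ compatible with the fibrewise data (which is all that is needed for $\delta_{\XW\in\DElta}$ to be defined at all).
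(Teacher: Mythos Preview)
Your proof is correct and follows essentially the same route as the paper: both set up a KROK chart at $(p,q)$, express $\tilde\f\circ\XW$ (equivalently $\XW^1$) as $(X^1(u),X^2(u)-y)$, compute its Jacobian as $|\det\partial_u X^1|$ via the block-triangular differential, and then identify the remaining metric factor with $\sqrt{\det G_{S_p}}$ so as to match the right-hand side through Lemma~\ref{lem:undici}. The only cosmetic difference is that the paper builds an explicit KROK model $(O_{(p,q)},N_0^{\mathcal{X}},\phi^{\mathcal{X}},g^{\mathcal{X}})$ for $(\XW,\DElta)$ (using the shifted coordinate $y+y'$) and then quotes Lemma~\ref{lem:undici}, whereas you invoke the linear-algebraic identity inside Lemma~\ref{lem:sette} twice and match the two sides directly; the computations are identical.
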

\begin{proof}
\HPK . Define $q(u,t,y)=\phi^{-1}(t,y,g(u,t,y))$. In the rest of the proof we will identify $O_p\cong \D^{m}$, so that in particular $p=0$ and $q=q(0,0,0)$. 
 Let $N_0^\mathcal{X}$ be the open neighborhood of $(p,q)$ in $\NW$, defined as the image of the map
\be 
\D^s\times \D^s\ni(t^\mathcal{X},z^\mathcal{X})=\left((t',y'),(u,y)\right)\mapsto \left(u,q(u,0,y),q(u,t',y+y')\right).\footnote{After a rescaling of the coordinate $y$, we can assume that $\D^s\times\D^s\subset \phi(N_0)$. }
\ee
We call $\phi^\mathcal{X}\colon \overline{N^\mathcal{X}_0}\to\D^s\times \D^s$ the inverse of the above map, which provides a coordinate chart for $N_0^\mathcal{X}$. 
Let us consider the following small open subset in $\MW $ 
\be O^\mathcal{X}_{(p,q)}=\{(u,q(u,0,y))\colon (u,y)\in \D^{s}\}\cong \D^s\ee
and let us define the coordinate $u^\mathcal{X}=(u,y)$ on it.

We can see now that $(O_{(p,q)}, N_0^\mathcal{X},\phi^\mathcal{X},g^\mathcal{X})$ is a KROK model for $(\XW,\DElta)$, where $ g^\mathcal{X}(u^\mathcal{X},t^\mathcal{X})=u^\mathcal{X}$. Indeed $\DElta\cap N_0^\mathcal{X}=\{t^\mathcal{X}=0\}$ and $\phi^\mathcal{X}(Z_{u^\mathcal{X}}\cap N_0^\mathcal{X})=\{(t^\mathcal{X}, z^\mathcal{X})\colon z^\mathcal{X}=u^\mathcal{X}\}$ is equal to the graph of the map $ g^\mathcal{X}$.
It follows that $\delta_{\XW\in \DElta}=\rho_{\XW\in \DElta}du^\mathcal{X}$ can be represented with the formula of Lemma \ref{lem:sette}, where $\f(p,q,q')=t^\mathcal{X}$.
\bega 
\rho_{\XW\in \DElta}(p,q)
&=\E\left\{\left|\det\left(\frac{\de \XW^1}{\de u^\mathcal{X}}\right)\right|\Bigg| \XW(p,q)=(p,q,q)\right\}\rho_{\XW(p,q)}(p,q,q)\sqrt{\det G^\mathcal{X}}\\
&= 
\E\left\{\bigg|\det
\begin{pmatrix}
\frac{\de X^1}{\de u} & 0 \\ \frac{\de X^2}{\de u} & -\mathbb{1}
\end{pmatrix}
\bigg|\Bigg|X(p)=q\right\}\rho_{X(p)}(q) \vol\left(\frac{\de (\phi^\mathcal{X})^{-1}}{\de t^\mathcal{X}}\right)\\
&=
\E\left\{\left|\det\left(\frac{\de X^1}{\de u}\right)\right|\Bigg|X(p)=q\right\}\rho_{X(p)}(q)\vol\left(\frac{\de }{\de (t',y')}q(u,t',y+y')\right)\\
&=
\E\left\{\left|\det\left(\frac{\de X^1}{\de u}\right)\right|\Bigg|X(p)=q\right\}\rho_{X(p)}(q)\sqrt{\det G}.
\eega 
Here we used that, according to our definition, $\XW^1(u,y)=(X^1(u),X^2(u)-y)$.

In terms of the density,  recalling Lemma \ref{lem:undici} we just showed that
\bega 
\delta_{\XW\in \DElta}&=\E\left\{\left|\det\left(\frac{\de X^1}{\de u}\right)\right|\Bigg|X(p)=q\right\}\rho_{X(p)}(q)\sqrt{\det G}dydu\\
&=
\E\left\{\delta_pX\frac{\sigma_q(X,W)}{\sigma_q(S_p,W)}\bigg|X(p)=q\right\}\rho_{X(p)}(q)d(S_p\cap W)du.
\eega
and since, by definition, $d(S_p\cap W)du=d\MW $, we conclude.
\end{proof}
In other words, the relation between the absolutely continuous Borel measures with densities $\delta_{X\in W}=\rho_{X\in W}dM\in L^+(M)$ and $\delta_{\XW\in \DElta}=\rho_{\XW\in \DElta}d\MW \in L^+(\MW )$ mirrors the relation between the Borel measures $\E\#_{X\in W}$ on $M$ and $\E\#_{\XW\in \DElta}$ on $\MW$. Indeed, observing that $d\MW (p,q)=dM(p)\otimes d(S_p\cap W)(q)$, we have
\be 
\delta_{X\in W}(p)=\int_{W\cap S_p}\delta_{\XW\in \DElta}(p,q)dq.
\ee
Moreover, the hypotheses II and III, ensure that $\delta_{\XW\in \DElta}$ is a continous density on $\MW $.

\subsection{Proof of Theorems \ref{thm:main} and \ref{thm:maingraph}}
\begin{thm}
\HP. Then for any $A\in\mathcal{B}(M)$
\be 
\E\#_{X\in W}(A)=\int_{A}\delta_{X\in W}.
\ee
\end{thm}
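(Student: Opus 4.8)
The plan is to reduce, through the measure $\E\#_{\Gam}$ on $M\times N$, to a local statement in a KROK model, and then to prove the two inequalities ``$\le$'' and ``$\ge$'' there, the second being the real content.

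\emph{Reduction.} By Lemma \ref{lem:dieci} every $(p,q)\in\MW$ lies in the chart $O_p\times N_0$ of a KROK model $(O_p,N_0,\phi,g)$ centred at $(p,q)$; after shrinking we may assume each slice $S_u\cap W\cap N_0$ ($u\in O_p$) is relatively compact, so that the density $\delta_{\Gam}$ of Theorem \ref{thm:maingraph} is finite and continuous on the corresponding chart $O_p\times\D^{s-m}\cong\MW\cap(O_p\times N_0)$ of $\MW$. Since $\MW$ is second countable it is covered by countably many such charts; since $\E\#_{\Gam}$ is a Borel measure on $M\times N$ supported on $\MW$ (because $\Gam\subset\MW$ almost surely), with $\E\#_{X\in W}(A)=\E\#_{\Gam}(A\times N)$ and $\int_A\delta_{X\in W}=\int_{\MW\cap(A\times N)}\delta_{\Gam}$ by Tonelli and \eqref{eq:twodens}, it suffices to prove that $\E\#_{\Gam}$ and $\int\delta_{\Gam}$ agree on each such chart and to glue along a countable Borel partition of $\MW$. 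In a fixed chart, writing $W_0=W\cap N_0=\f^{-1}(0)$ as a leaf of the foliation $\{W_t=\f^{-1}(t)\}_t$ with $\f=\mathrm{pr}_t\circ\phi$, this is equivalent to
\[
\E\#\big[X^{-1}(W_0)\cap A\big]=\int_A\delta_{X\in W_0}\qquad(A\in\mathcal{B}(O_p)),
\]
and by Lemmas \ref{lem:undici}, \ref{lem:sette} and the continuity properties in Definition \ref{def:krok} the map $(u,t)\mapsto\delta_{X\in W_t}(u)$ is finite and continuous for $u\in O_p$ and $t$ near $0$, so Theorem \ref{thm:general} applies.

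\emph{The inequality ``$\le$''.} From that continuity, $\delta_{X\in W_t}\to\delta_{X\in W_0}$ in $L^1_{loc}(O_p)$ as $t\to0$; since $X\transv W_0$ almost surely (KROK.\ref{itm:kroktransvas}), part \ref{itm:general:3} of Theorem \ref{thm:general}, applied with $t_0=0$ and $\delta_{top}=\delta_{X\in W_0}$, shows that $\E\#_{X\in W_0}$ is absolutely continuous with density $\le\delta_{X\in W_0}$; hence $\E\#[X^{-1}(W_0)\cap A]\le\int_A\delta_{X\in W_0}$ for every Borel $A\subset O_p$.

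\emph{The inequality ``$\ge$''.} Fix $p_0\in O_p$ and a radius $r$ with $\overline{B_r(p_0)}$ compact in $O_p$ and $\P\{X^{-1}(W_0)\cap\partial B_r(p_0)\neq\emptyset\}=0$ (all but countably many $r$). For almost every (transverse) $X$, the set $X^{-1}(W_0)\cap\overline{B_r(p_0)}$ is finite, and near each of its points transversality yields pairwise disjoint balls $U\subset B_r(p_0)$ on which $\f\circ X$ is a diffeomorphism onto a ball $B_{\e(X)}\subset\R^m$; hence $\#[X^{-1}(W_t)\cap B_r(p_0)]\ge\#[X^{-1}(W_0)\cap B_r(p_0)]$ for $|t|<\e(X)$, so $\liminf_{\e\to0}\frac1{\vol B_\e}\int_{B_\e}\#[X^{-1}(W_t)\cap B_r(p_0)]\,dt\ge\#[X^{-1}(W_0)\cap B_r(p_0)]$ almost surely. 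Taking expectations (Fatou), interchanging with the $t$-integral (Tonelli), using the a.e.-$t$ identity of part \ref{itm:general:1} of Theorem \ref{thm:general} and the continuity of $t\mapsto\int_{B_r(p_0)}\delta_{X\in W_t}$ at $0$, one obtains
\[
\E\#\big[X^{-1}(W_0)\cap B_r(p_0)\big]\le\liminf_{\e\to0}\frac1{\vol B_\e}\int_{B_\e}\int_{B_r(p_0)}\delta_{X\in W_t}\,dt=\int_{B_r(p_0)}\delta_{X\in W_0}.
\]
Combined with ``$\le$'' this gives equality on every admissible ball; such balls generate $\mathcal{B}(O_p)$ and both sides are measures (the right one finite on compacts by ``$\le$''), so the local identity holds, and the glueing above finishes the proof.

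The main obstacle is the ``$\ge$'' direction: one must bound the number of preimages of the distinguished leaf from below by those of the nearby leaves and transport this through the expectation with no transversality bound uniform in $X$. The argument above — the local normal form at each of the finitely many solutions, the averaged-leaf identity of Theorem \ref{thm:general}, and Fatou — is the measure-theoretic substitute for the classical subdivision of $M$; the continuity hypotheses KROK.\ref{itm:krok:7}, KROK.\ref{itm:krokEcont} and the continuity of $\rho_{X(\cdot)}$ enter precisely to make $\delta_{X\in W_t}$ depend continuously on $t$, which is used on both sides.
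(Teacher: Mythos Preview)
Your reduction to a KROK model via the measure $\E\#_{\Gam}$ on $\MW$ parallels the paper's construction in Section \ref{sec:constru}, and your ``$\le$'' paragraph correctly invokes Theorem \ref{thm:general}\ref{itm:general:3}. The gap is in the paragraph you label ``$\ge$'': the chain of inequalities written there terminates in
\[
\E\#\big[X^{-1}(W_0)\cap B_r(p_0)\big]\ \le\ \int_{B_r(p_0)}\delta_{X\in W_0},
\]
which is the \emph{same} direction as ``$\le$'' --- indeed it is essentially the proof of Theorem \ref{thm:general}\ref{itm:general:3} unfolded. The pathwise observation that transversality at $W_0$ produces at least as many preimages of the nearby leaves, $\#[X^{-1}(W_t)\cap B_r]\ge\#[X^{-1}(W_0)\cap B_r]$ for $|t|<\e(X)$, can only be fed into Fatou in the direction $\E[\liminf]\le\liminf\E$; combined with Theorem \ref{thm:general}\ref{itm:general:1} and the continuity of $t\mapsto\int\delta_{X\in W_t}$ this yields only an upper bound on $\E\#_0$. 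You have therefore proved ``$\le$'' twice, and the inequality $\E\#_{X\in W_0}(A)\ge\int_A\delta_{X\in W_0}$ is nowhere established.

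What is missing is a mechanism to pass from ``a.e.\ $t$'' to ``$t=0$'' without losing mass; Fatou alone cannot do this because $\#[X^{-1}(W_t)\cap B_r]$ is not bounded uniformly in $X$. The paper manufactures the needed uniform bound by restricting to the event $\mathscr{A}_s=\{f\colon f^1|_{B_s}\text{ is an embedding into }\R^m\}$, on which $\#_{f\in W_t}(B_r)\le 1$ for all $t$ and all $r\le s$. Approximating $1_{\mathscr{A}_s}$ by continuous weights $\a_{s,\eta}$ (so that KROK.\ref{itm:krokEcont} makes the weighted density $\rho^{s,\eta}_{X\in W_t}$ continuous at $t=0$), one may use dominated convergence rather than Fatou and obtain $\E\#_{X\in W_0}(B_r)\ge\int_{B_r}\rho^{s,\eta}_{X\in W_0}$. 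Dividing by $\mathscr{L}^m(B_r)$, sending $r\to0$ along Lebesgue points of the density $g$ of $\E\#_{X\in W_0}$ (whose existence is precisely the content of ``$\le$''), then $\eta\to\infty$ and finally $s\to0$, yields $g(p)\ge\rho_{X\in W_0}(p)$ at every Lebesgue point. This Lebesgue differentiation argument with the embedding cutoff is the new idea advertised in the introduction and is exactly what your proposal lacks.
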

\begin{proof}
Because of the construction of Section \ref{sec:constru}, we can assume that $S_p\cap W=\{q(p)\}$ is a point, for every $p\in M$.

 By Lemma \ref{lem:dieci} we can cover $M$ with a countable collection of embedded $m$-disks $D_{i}$, such that for each $i$, there is an open set $N_i\subset N$ containing $q(D_i)$ and such that there exists a KROK model $(D_i,N_i,\phi_i,g_i)$ for each $i$ (see Definition \ref{defi:krokcoords}). in particular $X(O_i)\subset N_i$. If we assume that the theorem holds for each $X_i=X|_{O_i}$, then for every $A\subset O_i$ we have:
\be 
\E\#_{X\in W}(A)=\E\#_{X_i\in W_i}(A)=\int_{A}\delta_{X_i\in W_i}=\int_{A}\delta_{X\in W}.
\ee 
The last equality is due to the fact that for every $p\in A$ the point $q(p)$ is already in $N_i$.
This implies that the two Borel measures $\E\#_{X\in W}$ and $\int \delta_{X\in W}$ coincide for every $A$ small enough ($A\subset O_i$ for some $i$), thus they are equal.

For this reason, we can assume that there is a global KROK model $(M,N,\phi,g)$. In this case the variable $y$ is not needed because $\dim(S_p)=m$ and we have, from Lemma \ref{lem:undici}, that $\delta_{X\in W_t}=\rho_t du$ with
\be 
\rho_t(u):=\E\left\{\left|\det\left(\frac{\de X^1(u)}{\de u}\right)\right|\Bigg|X(u)=q(u,t)\right\}\rho_{X(u)}(q(u,t))\sqrt{\det G(u,t)},
\ee
where $q(u,t)=\phi^{-1}(t,g(u,t))$ and $q(u,0)=q(u)$.
The KROK assumptions ensure that the function $(u,t)\mapsto \rho_{t}(u)$ is continuous at $M\times 0$, thus $\delta_{X\in W_t}\to \delta_{X\in W_0}$ in $L^1_{loc}$, so
 that from point \ref{itm:general:3} of Theorem \ref{thm:general}, applied with $\f(t,z)=t$, it follows that there exists a measurable function $g\colon M\to [0,1]$, such that $\E\#_{X\in W} =\int g du$ and $g\le \rho_0$.
To end the proof it is sufficient to show that $g(p)\ge \rho_0(p)$ for almost every $p\in M$. 

Let us consider the subset $P\subset M$ (recall that we are assuming $M=\D^{m}$) of all Lebesgue points for $g$, so that $p\in P$ if and only if
\be 
g(p)=\lim_{r\to 0}\frac{\E\#_{X\in W}(B_r)}{\mathscr{L}^m(B_r)}.
\ee
 We will prove that if $p\in P$, then $g(p)\ge \rho_0$. After that, the proof will be concluded since, by Lebesgue Differentiation theorem, $P$ is a full measure set in $M$.

Let $B_r$ be a closed ball of radius $r>0$ in $M$ centered at $p\in P$. 
Let $f^1=\f\circ f$ and consider the set
\be 
\mathscr{A}_s=\left\{f\in \mathcal{C}^1(M,N)\colon \begin{aligned}
& f(B_s)\subset N_0, \\
&f^1|_{B_s}\colon B_s\hookrightarrow \R^m\text{ is an embedding}
\end{aligned}
\right\}.
\ee
It is straightforward to see that $\mathscr{A}_s$ is an open set in $\mathcal{C}^1(M,N)$. Define a family of continuous functions $\a_{s,\eta}\colon \coo1MN\to [0,1]$ such that $a_{s,\eta}\nearrow\mathbb{1}_{\mathscr{A}_s}$, when $\eta\to +\infty$. 

Let $r\le s$. Now, the random variable $\#_{X\in W_t}(B_r)\a_{s,\eta}(X)$ is bounded by $1$ and converges almost surely as $t\to 0$, because $\E\#_{X\in W}(\de B_r)=0$.
As a consequence, by dominated convergence, we have that
\be
\E\left\{\#_{X\in W_0}(B_r)\a_{s,\eta}(X)\right\}=\lim_{t\to 0}\E\left\{\#_{X\in W_t}(B_r)\a_{s,\eta}(X)\right\}.
\ee
Moreover, arguing as in the proof of point \ref{itm:general:1} of Theorem \ref{thm:general}, we have the following equality for almost every $t\in \R^m$ and every $\eta$:
\be 
\E\left\{\#_{X\in W_t}(B_r)\a_{s,\eta}(X)\right\}=\int_{B_r}\rho_{X\in W_t}^{s,\eta}(u)du,
\ee
where 
\be 
\rho_{X\in W_t}^{s,\eta}(u):=\E\left\{\left|\det\left(\frac{\de X^1(u)}{\de u}\right)\right|\a_{s,\eta}(X)\Bigg|X(u)=q(u,t)\right\}\rho_{X(u)}(q(u,t))\sqrt{\det G(u,t)}.
\ee
The important point here is that $\rho_{X\in W_t}^{s,\eta}(u)$ is continuous at $t=0$\footnote{This is why we defined the continuous functions $\a_{s,\eta}$, instead of simply using the characteristic functions $\mathbb{1}_{{\mathcal{A}_s}}$.} because of the KROK. \ref{itm:krokEcont}, so that we are allowed to do the last step in the following sequence of inequalities. 
\be 
\begin{aligned}
\E\#_{X\in W}(B_r)&\ge 
\E\{\#_{X\in W_0}(B_r)\a_{r,\eta}(X)\} 
\\
&=\lim_{t\to 0}\int_{B_r}\E\{\#_{X\in W_t}\a_{r,\eta}(X)\}\\
&=\lim_{t\to 0}\int_{B_r}\rho_{X\in W_t}^{r,\eta}(u)du\\
&=\int_{B_r}\rho_{X\in W_0}^{r,\eta}(u)du.
\end{aligned}
\ee
Now, because $p\in P$ and $\rho_{X\in W_0}^{s,\eta}$ is continuous, we have that
\be
\begin{aligned}
g(p)&=\lim_{r\to 0}\frac{\E\#_{X\in W}(B_r)}{\mathscr{L}^m(B_r)}
\ge
\lim_{r\to 0}\frac{\int_{B_r}\rho_{X\in W_0}^{s,\eta}(u)du}{\mathscr{L}^m(B_r)}
=\rho_{X\in W_0}^{s,\eta}(p).
\end{aligned}
\ee
Taking the supremum over all $\eta$ we obtain that
\be\label{eq:daje}
g(p)
\ge  \E\left\{\left|\det\left(\frac{\de X^1}{\de u}(p)\right)\right|\mathbb{1}_{\mathscr{A}_s}(X)\Bigg|X(p)=q\right\}\rho_{X(p)}(q)\sqrt{\det G(p,0)},
\ee
where $q=q(p,0)\in W$.
Observe that $\mathscr{A}_s$ is an increasing family (as $s\to 0$) whose union is equal to the set $\mathscr{A}_0$ consisting of all functions $f\in \coo1MN$, such that $f(p)\in N_0$ and such that $d_pf^1$ is invertible. 
Therefore, recalling that $X\transv W$ almost surely, we see that taking the supremum with respect to $s$ in \eqref{eq:daje} we
obtain the thesis, valid for all $p\in P$:
\be 
g(p)\ge \rho_0(p).
\ee 
\end{proof}


%
%
%
%

\subsection{The case of fiber bundles: Proof of Theorem \ref{thm:megafica}}

\begin{proof}[Proof of Theorem \ref{thm:megafica}]
(We refer to Appendix \ref{app:angle} for the notations with frames.)
Let $q\in W$ and $p=\pi(q)$.
Let us take an orthonormal basis $\de_u$ of $T_pM$ and an orthonormal basis $\de_y$ of $T_qS_p\cap T_qW$. Let us complete the latter to an orthonormal basis of $T_qS_p$ by adding an orthonormal frame $\de_t$. Let us denote by $h(de_u)\in T_q S_p^\perp$ the frame such that $d_q\pi(h(\de_u))=\de_u$.
Finally, observe that $T_qW\cap (T_qS_p\cap T_qW)^\perp$ is contained in the space generated by $(h(\de_u),\de_t)$ and it projects surjectively on $T_pM$, hence it has a (not orthonormal) basis of the form $\de_z=h(\de_u)+A\de_t$, for some $m\times m$ matrix $A$.
 (the letters are coherent with the KROK coordinates, see Definition \ref{defi:krokcoords}).
 
 Now, by construction and by Definition \ref{defi:angle}, we have
 \bega
 \frac{d(S_p\cap W)(q) dM(p)}{\sigma(S_p,W)}&=\frac{\vol(\de_z)}{\vol\begin{pmatrix}
 \de_t & \de_z 
 \end{pmatrix}} dy du
 \\
 &= \frac{1}{\vol{\begin{pmatrix}
 \mathbb{1}_m & A \\
 0 & \mathbb{1}_m
 \end{pmatrix}}}\left(\vol\begin{pmatrix}
 \de_y & \de_z
 \end{pmatrix}dydu\right)
\\
&=dW(p).
\eega
\end{proof}
\subsection{Other counting measures: Proof of Theorem \ref{thm:mainalph}}
\begin{proof}[Proof of Theorem \ref{thm:mainalph}]
It is sufficient to prove the case in which $\a$ is continuous, bounded and positive. Indeed, then the result can be extended by monotone convergence to any positive Borel function and finally to any Borel function $\a$ by presenting it as $\a=\a^+-\a^-$.
If $\a$ is continuous, then the hypothesis KROK.\ref{itm:krokEcont} ensures that one can repeat the whole proof of Theorem \ref{thm:maindens} with $\#^\a_{X\in W}$ and $\delta^\a_{X\in W}$. The only thing to check is the very first step, which is provided by the Area and Coarea formulas at equations \eqref{eq:detform} and \eqref{eq:appcoarea} in the proof of the general formula of Theorem \ref{thm:general}. 
For the weighted case, we have to apply the Area formula to the function $\f\circ X$, with
\be 
g(p)=\a(X,p) a\left(\f\left(X(p)\right)\right)1_A(p)1_{N_0}\left(X(p)\right),
\ee
to get a generalization of identity \eqref{eq:detform}:
\bega
\int_{\R^m}a(t)\#^\a_{X\in W_t}(A)dt &=\int_{\R^m}a(t)\left(\sum_{p\in A\cap X^{-1}(W_t)}\a(X,p) \right)dt
\\
&=\int_{A} \a(X,p) a(\f\circ X(p)) 1_{N_0}( X(p)) \delta(\f\circ X)(p).
\eega
On the other hand, via the Coarea formula, the identity \eqref{eq:appcoarea} becomes.
\bega
&\ \E\left\{\a(X,p) a(\f(X(p))1_{N_0}(X(p))\delta_p(\f\circ X)\right\}=
\\
&= \int_{S_p\cap N_0}a(\f(x))\E\{\a(X,p)\delta_p(\f\circ X)|X(p)=x\}\rho_{X(p)}(x)dS_p 
\\
&=\int_{\R^m}a(t)\left(\int_{S_p\cap W_t}\E\left\{\a(X,p)\frac{\delta_p(\f\circ X)}{J_x(\f|_{S_p})}\bigg| X(p)=x\right\}\rho_{X(p)}(x)d(S_p\cap W_t)(x)\right)dt
\\
&=\int_{\R^m}a(t)\delta^\a_{X\in W_t}(p) dt.
\eega
\end{proof}
\section{The Gaussian case: Proofs}
\label{sec:proofgauss}

\subsection{Proof of Theorem \ref{thm:maingau} and Corollaries \ref{cor:maingauW} and \ref{cor:connectedform}}
\begin{proof}[Proof of Theorem \ref{thm:maingau}]
It suffices to show that the couple $(X,W)$ is KROK and use Theorem \ref{thm:main}. The only conditions of Definition \ref{def:krok} that have to be checked are KROK.\ref{itm:kroktransvas} and KROK.\ref{itm:krokEcont}.
 The former is a consequence of the Probabilistic Transversality theorem from \cite{dtgrf} and the latter follows from next Lemma.
\end{proof}

\begin{lemma}\label{lem:gioiellino}
Let $X=(X^0,X^1)\colon \R^m\to\R^k\times\R^h$ be a $\mC^1$ Gaussian random field and assume that $X^0$ is non-degenerate Let $\a\colon \mC^1(\R^m,\R^{k+h})\times\R^m\to \R$ be a continuous function such that
\be\label{eq:subpolynomial}
|\a\left(f,p\right)|\le N\left(\left|f(p)\right|+\left|\de_1 f(p)\right|+\dots +\left|\de_m f(p)\right|\right)^N+N,
\ee
for some constant $N>0$. Then
 the next function is continuous
 \be
 \R^m\times\R^k\ni (p,q)\mapsto \E\left\{\a(X,p)\big|X^0(p)=q\right\}\in \R.
 \ee
\end{lemma}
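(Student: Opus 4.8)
The plan is to use the Gaussian regression formula to decouple $X^0(p)$ from the rest of the field. First I would fix $p\in\R^m$ and condition on $X^0(p)=q$. Since $X^0$ is non-degenerate, $X^0(p)\in\R^k$ has an invertible covariance matrix $\Sigma_0(p)$, and by the standard Gaussian regression formula the conditional law of the whole field $X=(X^0,X^1)$ given $X^0(p)=q$ is again Gaussian: it equals the law of
\begin{equation}
X^q_p(\cdot):=\widetilde X(\cdot)+R_p(\cdot)\,q,
\end{equation}
where $\widetilde X$ is a fixed centered Gaussian field (the "regressed" field, independent of $q$) and $R_p(\cdot)$ is the deterministic regression operator $R_p(x)=\mathrm{Cov}(X(x),X^0(p))\,\Sigma_0(p)^{-1}$, a $\mathcal C^1$-in-$x$ linear map depending smoothly on $p$ through the covariance tensor $K_X$. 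Thus
\begin{equation}
\E\left\{\a(X,p)\,\big|\,X^0(p)=q\right\}=\E\left\{\a\!\left(\widetilde X+R_p(\cdot)q,\ p\right)\right\},
\end{equation}
where now the expectation is over the fixed field $\widetilde X$ with no conditioning.

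Next I would address continuity in $(p,q)$. Take $(p_n,q_n)\to(p,q)$. I claim the random variables $Y_n:=\widetilde X+R_{p_n}(\cdot)q_n$ converge to $Y:=\widetilde X+R_p(\cdot)q$ almost surely in $\mC^1(\R^m,\R^{k+h})$ (weak Whitney topology), because $R_{p_n}(\cdot)q_n\to R_p(\cdot)q$ in $\mC^1$ uniformly on compacts — this is a purely deterministic statement following from the smoothness of $K_X$, hence of $(p,x)\mapsto R_p(x)$, in the appropriate topology. Since $\a$ is continuous, $\a(Y_n,p_n)\to\a(Y,p)$ almost surely. To pass the limit through the expectation I would invoke dominated convergence, and this is where the growth bound \eqref{eq:subpolynomial} enters: on any fixed compact $K\ni p$ (containing all $p_n$ eventually), $|\a(Y_n,p_n)|\le N(\sup_{x\in K'}(|Y_n(x)|+\sum_i|\de_iY_n(x)|))^N+N$ for a slightly larger compact $K'$, and since $\|Y_n\|_{\mC^1(K')}\le \|\widetilde X\|_{\mC^1(K')}+C$ with $C$ a uniform constant (the $R_{p_n}(\cdot)q_n$ being uniformly bounded in $\mC^1(K')$), the dominating variable is $N(\|\widetilde X\|_{\mC^1(K')}+C)^N+N$. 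This has finite expectation because Gaussian fields have finite moments of all orders: $\E\{\|\widetilde X\|_{\mC^1(K')}^N\}<\infty$ by Fernique's theorem (or by a direct Borell–TIS / Gaussian-concentration argument on the separable Banach space $\mC^1(K')$).

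The main obstacle — really the only delicate point — is the interplay between the growth hypothesis \eqref{eq:subpolynomial} and the need for a \emph{uniform} dominating function as $(p_n,q_n)$ varies; one must make sure that the bound $C$ on the $\mC^1(K')$-norm of the deterministic perturbations $R_{p_n}(\cdot)q_n$ is genuinely uniform, which reduces to continuity of $(p,x)\mapsto R_p(x)$ together with $q_n$ being bounded, both of which are automatic. A secondary technical point is making the Gaussian regression formula rigorous at the level of laws on the Polish space $\mC^1(\R^m,\R^{k+h})$ rather than just for finite-dimensional marginals: this follows because the conditional law is characterized by its finite-dimensional projections, which are exactly the classical Gaussian regression formulas, and these assemble into a well-defined Gaussian measure on $\mC^1$ with the stated mean and covariance. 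Once these are in place, the Skorohod-type almost-sure representation is unnecessary — the representation $X^q_p=\widetilde X+R_p(\cdot)q$ is explicit — and the argument closes as above. Finally, I would note that this lemma applies to verify KROK.\ref{itm:krokEcont} in Theorem \ref{thm:maingau}: one takes $\a(f,p)=\widetilde\a(f,p)J_pf$ for bounded continuous $\widetilde\a$, and since the Jacobian $J_pf$ is a polynomial in the first derivatives $\de_if(p)$, the product satisfies \eqref{eq:subpolynomial}; the continuity of $(p,q)\mapsto\E\{\widetilde\a(X,p)J_pX\mid X(p)=q\}$ at points of $\MW$ is then exactly the conclusion, after restricting to the submanifold $W\subset E_p$ where the fiber coordinate plays the role of $q$.
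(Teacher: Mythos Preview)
Your overall strategy---Gaussian regression, almost-sure convergence of the conditioned field, then passage to the limit in the expectation---matches the paper's. There is, however, a slip in the setup: the regressed field $\widetilde X$ depends on the base point $p$, since $\widetilde X_p(\cdot)=X(\cdot)-R_p(\cdot)X^0(p)$. When you vary $(p_n,q_n)$ and write $Y_n=\widetilde X+R_{p_n}(\cdot)q_n$ with the \emph{same} $\widetilde X$, the resulting $Y_n$ does not have the conditional law of $X$ given $X^0(p_n)=q_n$; the correct object is $\widetilde X_{p_n}+R_{p_n}(\cdot)q_n$, and then the convergence $Y_n\to Y$ is no longer ``purely deterministic'' because $\widetilde X_{p_n}$ contains the random term $R_{p_n}(\cdot)X^0(p_n)$. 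The paper resolves this by rewriting the conditional field as
\[
Z_{p,q}(\cdot)=X(\cdot)+A(\cdot,p)\bigl(q-X^0(p)\bigr),
\]
so that the randomness is carried entirely by the fixed field $X$, and the almost-sure $\mC^1$-convergence $Z_{p_n,q_n}\to Z_{p,q}$ follows from continuity of $A$ together with the almost-sure continuity of $p\mapsto X^0(p)$. Your argument is easily repaired along the same lines.

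For the passage to the limit you invoke dominated convergence, bounding by $N(\|\widetilde X\|_{\mC^1(K')}+C)^N+N$ and citing Fernique/Borell--TIS. This is correct (once the dominating variable is rebuilt from $\|X\|_{\mC^1(K')}$ after the fix above), but the paper takes a lighter route: it applies Fatou's lemma once to get the $\liminf$ inequality, and then Fatou again to the nonnegative integrand $\phi(p,q)-\a(Z_{p,q},p)$, where
\[
\phi(p,q)=\E\left\{N\bigl(|Z_{p,q}(p)|+|\de_1Z_{p,q}(p)|+\dots+|\de_mZ_{p,q}(p)|\bigr)^N+N\right\}
\]
is continuous in $(p,q)$ because it is a moment of a finite-dimensional Gaussian whose mean and covariance depend continuously on $(p,q)$. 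This double-Fatou trick exploits that the bound \eqref{eq:subpolynomial} involves only the first jet at the single point $p$, and thereby avoids any appeal to Gaussian concentration for suprema.
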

 \begin{proof}
Let us fix two points $u,p\in\R^m$. By using a standard argument we can find a Gaussian random vector $Y(u,p)\in \R^{k+h}$ that is independent from $X^0(p)$ and such that 
\be\label{eq:gausdegress}
X(u)=A(u,p)X^0(p)+Y(u,p)
\ee
For some $(k+h)\times k$ matrix $A(u,p)$.
Then, defining $K^{*,0}_X(u,p)=\E\{X(u)X^0(p)^T\}$, we have 
\be\label{eq:covagausdegress}
K^{*,0}_X(u,p)\left(K_X^{0,0}(p,p)\right)^{-1}=A(u,p),
\ee
from which we deduce that $A$ is a $\mC^1$ function of $u,p$. 

Consider now the $\mC^1$ Gaussian random field $Y_p=Y(\cdot,p)\colon \R^m\to \R^{k+h}$ defined, for each $p\in \R^m$, by the identity \eqref{eq:gausdegress} above. 
The previous computation ensures that the random vectors $Y_p(u)$ and $X^0(p)$ are independent for every $u\in\R^m$ and therefore $Y_p(\cdot)$ is independent from the vector $X^0(p)$ as a field. It follows that
\be\label{eq:expgausdegress}
\E\{\a(X,p)|X^0(p)=q\}=\E\{\a(Y_p(\cdot)+A(\cdot,p)q,p)\}=\E\{\a(Z_{p,q},p)\},
\ee
for any $q\in\R^k$, where $Z_{p,q}$ is the $\mC^1$ (noncentered) Gaussian random field \bega
Z_{p,q}(\cdot)&=Y_p(\cdot)+A(\cdot,p)q\colon \R^m\to \R^{k+h}\\
&=
X(\cdot)+A(\cdot,p)\left(q-X^0(p)\right)
\eega

To end the proof, let us show that the right hand side of equation \eqref{eq:expgausdegress} depends continuously on $p,q$. Given two converging sequences: $p_n\to p$ in $\R^m$ and $q_n\to q$ in $\R^k$, it is clear that the random map $Z_{p_n,q_n}$ converges almost surely to $Z_{p,q}$ in the space of $\mC^1$ functions, thus
\be\label{eq:fatou1lastlemma}
\E\{\a(Z_{p,q},p)\}\le \liminf_{n\to+\infty} \E\{\a(Z_{p_n,q_n},p_n)\}
\ee
by Fatou's lemma. Now observe that, since $Z_{p,q}(p),\de_1 Z_{p,q}(p),\dots, \de_mZ_{p,q}(p)$ are jointly Gaussian and their covariance matrices depend continuously on $p,q$ because of equation \eqref{eq:covagausdegress}, then the function $\f\colon\R^m\to \R$:
\be
\f(p,q)=\E\{N\left(|Z_{p,q}(p)|+|\de_1Z_{p,q}(p)|+\dots +|\de_m Z_{p,q}(p)|\right)^N+N\}
\ee
is continuous. The hypothesis \eqref{eq:subpolynomial} allows us to apply Fatou's Lemma once again to obtain that $\limsup_{n\to+\infty}\E\{\a(Z_{p_n,q_n},p_n)\}\le \E\{\a(Z_{p,q},p)\}$:
\begin{multline}
\f(p,q)-\E\{\a(Z_{p,q},p)\}
 \\
=\E\left\{\left(N\left(|Z_{p,q}(p)+|\de_1Z_{p,q}(p)|+\dots +|\de_m Z_{p,q}(p)|\right)^N+N\right)-\a(Z_{p,q},p)\right\}\\
\le
\liminf_{n\to+\infty}\left(\f(p_n,q_n)-\E\{\a(Z_{p_n,q_n},p_n)\}\right)\\
=
\f(p,q)-\limsup_{n\to+\infty}\E\{\a(Z_{p_n,q_n},p_n)\}.
\end{multline}
This, together with \eqref{eq:fatou1lastlemma}, implies that $\E\{\a(Z_{p_n,q_n},p_n)\}\to\E\{\a(Z_{p,q},p)\}$.
\end{proof}
\begin{proof}[Proof of Corollary \ref{cor:maingauW}]
Follows directly from the version of the formula for fiber bundles.
The only novelty to prove here is that the density $\delta_{\Gam}\in \mathscr{D}^0(W)$ is smooth in this case. This follows from Lemma \ref{lem:gagagaga} below.
\end{proof}
\begin{proof}[Proof of Corollary \ref{cor:connectedform}]
Since $W\subset E$ is parallel for the structure, we can express the formula of Corollary \ref{cor:maingauW}, in the form described in Remark \ref{rem:connectionForm}, so that $\E\#\left(W \cap V\right)=\int_{W\cap V}\delta_{\Gam}$ where
\be 
\delta_{\Gam}=
\E\left\{|\det\left(\Pi_{T_xW_p^\perp}\circ\left(\nabla^X X\right)_p\right) \Big| X(p)=x\right\}\frac{e^{-\frac{|x|^2}{2}}}{(2\pi)^{\frac s2}}dW(x).
\ee
Here, we already used the fact that the metric structure $g$ is defined by $X$, meaning that $K_p^{-1}\langle x,x\rangle=g(x,x)$ and $\det K_p=1$. Furthermore, from the definition of $\nabla^X$ given by the identity \eqref{eq:nablaX} (compare with \cite{Nicolaescu2016}) we observe that, in a trivialization chart, for any $i,j$ and $v\in T_pM$, we have
\be 
\E\{(\nabla_v^X X)^i_p X^j(p)\}=\E\left\{(D_vX)^i_p X^j(p)-\E\{(D_v X)^i_p |X(p)\}X^j(p)\right\}=0,
\ee
meaning that $(\nabla^X X)_p$ and $X(p)$ are uncorrelated and thus, being Gaussian, independent. Therefore the conditioning can be removed:
\be 
\E\left\{|\det\left(\Pi_{T_xW_p^\perp}\circ\left(\nabla^X X\right)_p\right) \Big| X(p)=x\right\}=\E\left\{|\det\left(\Pi_{T_xW_p^\perp}\circ\left(\nabla^X X\right)_p\right) \right\}
\ee
\end{proof}
\subsection{Proof that the notion of sub-Gaussian concentration is well defined}
\begin{lemma}\label{lem:subGwell}
Let $\pi\colon E=\D^m\times\R^s\to \D^m$ and let $W\subset E$ be a smooth submanifold. If $W\subset E$ has sub-Gaussian concentration for some linearly connected Riemannian metric on $\pi$, then the same holds for any other.
\end{lemma}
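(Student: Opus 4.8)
The plan is to prove the stronger statement that, on the trivial bundle $\pi\colon\D^m\times\R^s\to\D^m$, sub-Gaussian concentration of $W$ is unchanged when a linearly connected Riemannian metric $g_1$ is replaced by any other linearly connected Riemannian metric $g_2$; by symmetry it is enough to deduce concentration for $g_2$ from concentration for $g_1$. Fix a compact $D\subset\D^m$ and $\e>0$, and write $B^{(i)}_R\subset E$ for the ball bundle of radius $R$ and $dW^{(i)}$ for the volume density of $W$, both relative to $g_i$ as in Definition \ref{def:subexpvol}. First I would unwind the structure of $g_i$: it is built from a metric $h_i$ on $\D^m$, a bundle metric $p\mapsto\langle\cdot,\cdot\rangle_{i,p}$ on the fibre $\R^s$, and a \emph{linear} connection with connection form $A^{(i)}$ a $\mathfrak{gl}(s)$-valued $1$-form on $\D^m$; in coordinates $(p,y)$ the horizontal space at $(p,y)$ is $H^{(i)}_{(p,y)}=\{(v,-A^{(i)}_p(v)y):v\in T_p\D^m\}$, $g_i$-orthogonal to the vertical space $V=\{0\}\times\R^s$, with $g_i|_V=\langle\cdot,\cdot\rangle_{i,p}$.

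Second, I would record the easy comparison of balls: over the compact set $D$ the bundle metrics $\langle\cdot,\cdot\rangle_{1,p}$, $\langle\cdot,\cdot\rangle_{2,p}$ and the standard Euclidean one are mutually uniformly equivalent, so there is $\lambda\ge1$ with $B^{(2)}_R\cap\pi^{-1}(D)\subseteq B^{(1)}_{\lambda R}\cap\pi^{-1}(D)$ for all $R>0$, and every $(p,y)\in B^{(i)}_\rho\cap\pi^{-1}(D)$ has $|y|\le\lambda\rho$.

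Third comes the only substantive point, the comparison of the volume densities of $W$, which is where linearity of the connection enters. Decomposing $\partial_{p_a}=h^{(i)}_a\oplus v^{(i)}_a$ $g_i$-orthogonally yields $h^{(i)}_a=(\partial_{p_a},-A^{(i)}_p(\partial_{p_a})y)$ and $v^{(i)}_a=(0,A^{(i)}_p(\partial_{p_a})y)$, so on $\pi^{-1}(D)$ the Gram matrix of the coordinate frame $\partial_{p_1},\dots,\partial_{p_m},\partial_{y_1},\dots,\partial_{y_s}$ with respect to $g_i$ has all entries $O(1+|y|^2)$, while the inequality $|\xi|^2_{g_i}\ge c_1|\xi'|^2+c_2|\xi''+A^{(i)}_p(\xi')y|^2$ (valid over $D$ for $\xi=(\xi',\xi'')$) gives, after a two-case estimate, a lower bound $c\,(1+|y|^2)^{-1}$ for its eigenvalues relative to the standard metric $g_0$. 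Restricting the quadratic form $g_i$ to the $k$-plane $T_qW$, $k=\dim W$, and taking determinants, one obtains for $q=(p,y)\in W$ with $p\in D$,
\[
c'\,(1+|y|^2)^{-k/2}\,dW^{(0)}(q)\ \le\ dW^{(i)}(q)\ \le\ C'\,(1+|y|^2)^{k/2}\,dW^{(0)}(q),
\]
with $dW^{(0)}$ the $g_0$-volume density of $W$; in particular $dW^{(2)}(q)\le P(|y|)\,dW^{(1)}(q)$ there, with $P(t)=(C'/c')^{k/2}(1+t^2)^{k}$.

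Finally I would combine the pieces: for every $R>0$, since $\pi^{-1}(D)\cap B^{(2)}_R\subseteq\pi^{-1}(D)\cap B^{(1)}_{\lambda R}$ and $|y|\le\lambda^2R$ on the latter,
\[
\mathrm{Vol}^{(2)}_W\bigl(\pi^{-1}(D)\cap W\cap B^{(2)}_R\bigr)\ \le\ P(\lambda^2R)\;\mathrm{Vol}^{(1)}_W\bigl(\pi^{-1}(D)\cap W\cap B^{(1)}_{\lambda R}\bigr),
\]
and sub-Gaussian concentration for $g_1$, applied with $\e':=\e/(2\lambda^2)$, bounds the right-hand volume by $C(\e')e^{\e'(\lambda R)^2}=C(\e')e^{(\e/2)R^2}$; as $P(\lambda^2R)$ is polynomial, the product is $\le C''(\e)e^{\e R^2}$ for all $R$, which is sub-Gaussian concentration for $g_2$. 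The hard part is the third step: it is exactly the linearity of the connection that makes the distortion of the metric along the fibres polynomial in $|y|$ rather than arbitrary, so the hypothesis ``linearly connected'' is genuinely consumed there (for a general Ehresmann connection the argument would break down), while everything else reduces to routine compactness estimates over $D$.
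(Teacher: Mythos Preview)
Your argument is correct and follows essentially the same route as the paper: compare the fibrewise balls (uniform equivalence of bundle metrics over the compact base), compare the volume densities on $W$ (polynomial distortion in $|y|$ coming from the linearity of $\Gamma$), and absorb the polynomial factor into the Gaussian. The only cosmetic differences are that you pass through the standard flat metric $g_0$ as an intermediary and obtain a two-sided bound $c'(1+|y|^2)^{-k/2}\le dW^{(i)}/dW^{(0)}\le C'(1+|y|^2)^{k/2}$, whereas the paper compares the two connected metrics directly and simply asserts the one-sided estimate $dW_1\le N(1+R^2)\,dW_0$ without elaboration; your eigenvalue argument for the lower bound is in fact a genuine justification of the step the paper leaves implicit.
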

\begin{proof}
In this case $E=\D^m\times\R^s\to\D^m$,  $E_u=\{0\}\times \R^s$, a connected Riemannian metric is any Riemannian metric whose matrix is of the form
\be\label{eq:lcRmetric}
g_E(u,x)=
\begin{pmatrix}
g_M(u) & 0
\\ 0 & g_v(u)
\end{pmatrix}+
\begin{pmatrix}
\Gamma(u,x)^Tg_v(u)\Gamma(u,x) & \Gamma(u,x)^Tg_v(u)
\\ g_v(u)\Gamma(u,x) & 0
\end{pmatrix},
\ee
where $g_M(u)$ is the Riemannian metric on $\D^m$, while $\Gamma\colon E\to \R^{m\times s}$ is some smooth function. Moreover, in this case, we have that the horizontal space $H_{(u,x)}=(T_{(u,x)}E_u)^\perp$ is the graph of the linear function $\R^m\ni\dot{u}\mapsto -\Gamma(u,x)\dot{u}\in\R^s$.

The connection is linear precisely when $\Gamma(u,x)$ depends linearly on $x$, so that we can define the Christoffel symbols\footnote{They are the Christoffel symbols of the corresponding covariant derivative: given a smooth section $s(u)=(u,s_v(u))$,  the vertical projection of $d_us$ is given by
\be (\nabla s)_u=d_us_v+\Gamma(u,s_v(u))\in\R^{m\times s}.\ee } $\Gamma_{k,i}^j\in \mC^\infty(\D^m)$ as follows
\be 
\Gamma_{k}^j(u,x)=\sum_{i=1}^s\Gamma_{k,i}^j(u)x^i, \quad \forall k=1,\dots, m \text{ and } j=1,\dots, s.
\ee
Let $g_0$ and $g_1$ be any two linearly connected metrics on $E$ and assume that $W$ has sub-Gaussian concentration for $g_0$. The corresponding vertical norms on $E_u$
\be 
\|x\|_i=\sqrt{g_i\langle\begin{pmatrix}
0 \\ x
\end{pmatrix},\begin{pmatrix}
0 \\ x
\end{pmatrix}\rangle}
\ee
are equivalent and since $u\in \D^m$ there exists a uniform constant $N>0$ such that
\be 
\frac 1 {N}\|\cdot\|_0\le \|\cdot\|_1\le N\|\cdot\|_0.
\ee
Moreover, given the linearity of $\Gamma$  in equation \eqref{eq:lcRmetric}  for both $g_0$ and $g_1$, we can assume that
\be 
dW_1(u,x)\le N(1+R^2)dW_0(u,x),
\ee
for all $x\in B_R^1\cap W$ and every $R>0$. It follows that
\bega 
\vol_{W_1}\left(W\cap B^1_R\right)&=\int_{W\cap B^1_R}dW_1 
\\
&\le \int_{W\cap B_R^1}N(1+R^2)dW_0
\\
&\le \int_{W\cap B_{NR}^0}N(1+R^2)dW_0
\\
&\le N(1+R^2)C_0(\delta)\exp(\delta N^2R^2).
\eega
the last inequality is true for any $\delta>0$ because $W$ has sub-Gaussian concentration with respect to the metric $g_0$. Now, for every fixed $\e>0$,  there exists $C_1(\e)>0$ such that
\be 
N C_0\left(\frac12 \e N^{-2}\right)(1+R^2)\le C_1(\e)\exp\left(\frac12 \e R^2\right),
\ee
simply because $1+R^2=o(e^{\frac12 \e R^2})$. We obtain that $W$ has sub-Gaussian concentration for the metric $g_1$ as well, since for every fixed $\e>0$, setting $\delta=\frac12 \e N^{-2}$, we get
\be 
\vol_{W_1}\left(W\cap B^1_R\right)\le C_1(\e)\exp\left(\frac12 \e R^2\right)\exp(\delta N^2R^2)=C_1(\e)\exp\left( \e R^2\right).
\ee
\end{proof}

\subsection{Proof of theorem \ref{thm:mainEgau}}
Since the statement is local we can assume that $E=\D^m\times\R^s$ and $X\colon \D^m\to\D^m\times \R^m$ is a smooth Gaussian random section of
the trivial bundle, so that $X(u)=(u,V(u))$ for some smooth Gaussian random function $V\colon \D^m\to\R^s$. Let us define the smooth Gaussian random field 
\be 
j^1X\colon \D^m\to \R^{s}\times \R^{m\times s}, \quad u\mapsto j^1_uX=(V(u),d_uV),
\ee
Since $X$ is smooth and nondegenerate, the covariance matrix of $j^1_uV$ 
\bega
K_{j^1X}\colon \D^m \to \mathcal{U}, \quad
K_{j^1_uX}= \begin{pmatrix}
K_0(u) & K_{01}(u)\\ K_{10}(u) & K_1(u)
\end{pmatrix}
\eega
is a smooth function taking values in the open set 
\be 
\mathcal{U}=\left\{K=\begin{pmatrix}
K_0 & K_{01}\\ K_{10} & K_1
\end{pmatrix}\in \R^{(m+ms)\times (m+ms)}\colon \begin{matrix}
\text{$K$ is symmetric, } \\
\text{semipositive}
\\
\text{and $\det(K_0)>0$}
\end{matrix}\right\}.
\ee 
For the proof of the first statement of the theorem, it is sufficient to prove the following Lemma.
\begin{lemma}\label{lem:gagagaga}
Let $E=\D^m\times \R^s$, be endowed with the standard Euclidean metric. Let $E_u:=\{u\}\times\R^s$, 
and let $W\subset E$ be a smooth submanifold of codimension $m$ such that $W\transv E_u$ for every $u\in\D$.\footnote{Notice that $T_{(u,x)}W$ is a well defined $m$ dimensional subspace of $\R^m\times\R^s$ even if $u\in\de\D^m$. In such case the transversality $E_u\transv T_{(u,x)}W$ is still meant in the space $\R^m\times\R^s$.}
There exists a continuous function
\be 
f_W\colon W\times \mathcal{U}\to \R,
\ee
such that for any nondegenerate smooth Gaussian random section $X\colon\D^m\to\D^m\times\R^s$, with $X(u)=(u,V(u))$, we have
\be 
\delta_{\Gam}(u,x)=f_W(u,x,K_{j^1_uX})dW(u,x).
\ee
Moreover, there exists a smooth function $N\in\mathcal{U}\to\R_+$ such that, $\forall (u,x,K)\in W\times\mathcal{U}$,
\be 
f_W(u,x,K)\le N(K)\exp\left(-\frac{1}{N(K)}\|x\|^2\right).
\ee
\end{lemma}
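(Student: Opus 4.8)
The plan is to obtain $\delta_{\Gam}$ from the fibre-bundle version of the main formula and then to read off $f_W$ from the Gaussian regression of $d_uV$ on $V(u)$. First I would note that $(X,W)$ is a KROK couple: $X$ is smooth and nondegenerate and $W\transv E_u$ for every $u$, so a.s.\ transversality is automatic by Theorem \ref{thm:probtransv} and the remaining conditions hold by the discussion following it. Since $\pi\colon E=\D^m\times\R^s\to\D^m$ with the standard metric is a Riemannian submersion with fibres $S_u=E_u$, Theorem \ref{thm:megafica} (equivalently Corollary \ref{cor:maingauW}) applies, and rewriting $J_uX\,\sigma_{(u,x)}(X,W)=J\bigl(\Pi_{T_{(u,x)}W^\perp}\circ d_uX\bigr)$ via Remark \ref{rem:altforms} gives
\[ \delta_{\Gam}(u,x)=\E\Bigl\{J\bigl(\Pi_{T_{(u,x)}W^\perp}\circ d_uX\bigr)\ \Big|\ X(u)=(u,x)\Bigr\}\,\rho_{X(u)}(u,x)\;dW(u,x), \]
where $X(u)=(u,V(u))$, $d_uX(\dot u)=(\dot u,d_uV\,\dot u)$, the blocks of $K:=K_{j^1_uX}\in\mathcal U$ are $K_0,K_{01},K_{10},K_1$, and $\rho_{X(u)}(u,x)$ (the density of $N(0,K_0(u))$ at $x\in E_u\cong\R^s$) equals $c(K_0)\,e^{-\frac12 x^{T}K_0^{-1}x}$ for a constant $c(K_0)$ depending smoothly on $K_0$.

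Next, by Gaussian regression (cf.\ \eqref{eq:gausdegress}), conditionally on $V(u)=x$ the matrix $d_uV$ is Gaussian with mean $A(K)x$ and covariance $\Lambda(K)$, where $A(K)=K_{10}K_0^{-1}$ and $\Lambda(K)=K_1-K_{10}K_0^{-1}K_{01}$ depend only on $K\in\mathcal U$. I would therefore set, for $(u,x)\in W$ and $K\in\mathcal U$,
\[ f_W(u,x,K):=c(K_0)\,e^{-\frac12 x^{T}K_0^{-1}x}\;\E_Z\Bigl\{J\bigl(\Pi_{T_{(u,x)}W^\perp}\circ L_{x,K,\sqrt{\Lambda(K)}\,Z}\bigr)\Bigr\}, \]
where $Z$ is a standard Gaussian, $\sqrt{\Lambda(K)}$ is the (continuous) positive semidefinite square root, and $L_{x,K,Y}\colon\R^m\to\R^m\times\R^s$ is the linear map $\dot u\mapsto(\dot u,(A(K)x+Y)\dot u)$. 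This depends only on $W$ and on $(u,x,K)$, is well defined on all of $W\times\mathcal U$ (here $\det K_0>0$ is used), and by construction $\delta_{\Gam}(u,x)=f_W(u,x,K_{j^1_uX})\,dW(u,x)$, since the conditional law of $d_uX$ given $X(u)=(u,x)$ is exactly that of $\dot u\mapsto(\dot u,(A(K)x+\sqrt{\Lambda(K)}Z)\dot u)$.

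For continuity of $f_W$ on $W\times\mathcal U$ I would note: the map $(u,x)\mapsto T_{(u,x)}W^\perp$ is continuous because $W$ is a smooth submanifold; $K\mapsto(K_0^{-1},A(K),\Lambda(K))$ is smooth on $\mathcal U$ (rational, denominator $\det K_0\neq0$) and $K\mapsto\sqrt{\Lambda(K)}$ is continuous on the positive semidefinite cone; and $J(\Pi_V\circ L_{x,K,Y})$ is a polynomial of degree $\le m$ in $(x,Y)$ with coefficients continuous in $(V,K)$, hence of polynomial growth in $Z$ uniformly for parameters in compact sets, so dominated convergence gives joint continuity of the $Z$-expectation; multiplying by the continuous prefactor finishes this part. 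For the bound, since $\|\Pi_V\|=1$ and $V$ ranges in the compact Grassmannian $G(m,m+s)$, expanding the determinant yields $|J(\Pi_V\circ L_{x,K,Y})|\le P(\|A(K)\|,|x|,|Y|)$ for a universal polynomial $P$ of degree $\le m$ in each argument; taking $Y=\sqrt{\Lambda(K)}Z$ and using $\E|Y|^j\le c_j(\operatorname{tr}\Lambda(K))^{j/2}$ bounds the $Z$-expectation by a polynomial $Q(K,|x|)$ in $|x|$ with coefficients smooth in $K$. Finally $x^{T}K_0^{-1}x\ge|x|^2/\operatorname{tr}(K_0)$, so $f_W(u,x,K)\le c(K_0)\,Q(K,|x|)\,e^{-|x|^2/(2\operatorname{tr}K_0)}$; absorbing $Q$ into half of this exponent (a polynomial in $|x|$ times $e^{-a|x|^2}$ is bounded by a constant depending smoothly on $a>0$) and assembling a large enough smooth majorant from $\det K_0,\operatorname{tr}K_0,\operatorname{tr}\Lambda(K),\operatorname{tr}(A(K)^{T}A(K))$ produces a smooth $N\colon\mathcal U\to\R_+$ with $f_W(u,x,K)\le N(K)\,e^{-\|x\|^2/N(K)}$.

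The main obstacle I expect is the possible degeneracy of the conditional covariance $\Lambda(K)$ — there is no uniform positive lower bound for it over $\mathcal U$ — which forces working with $\sqrt{\Lambda(K)}$ paired with a standard Gaussian rather than with a conditional density, and calls for care in the continuity step; a secondary bookkeeping point is routing every estimate through trace-type quantities so that the majorant $N$ comes out genuinely smooth rather than merely continuous.
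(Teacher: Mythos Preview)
Your proposal is correct and follows essentially the same route as the paper: write $\delta_{\Gam}$ via the fibre-bundle formula (Corollary \ref{cor:maingauW}/Remark \ref{rem:altforms}), apply Gaussian regression to express the conditional law of $d_uV$ through $A(K)=K_{10}K_0^{-1}$ and $\Lambda(K)=K_1-K_{10}K_0^{-1}K_{01}$, then bound the conditional expectation polynomially in $|x|$ and absorb it into the Gaussian factor $\rho_{X(u)}(x)$. The only cosmetic difference is that you parametrize the conditional distribution as $A(K)x+\sqrt{\Lambda(K)}\,Z$ with a standard Gaussian $Z$ (which handles the possible degeneracy of $\Lambda(K)$ cleanly), whereas the paper works directly with a Gaussian of covariance $K_J(K)=\Lambda(K)$ and appeals to the smooth dependence of Gaussian moments on the covariance; your trace-based construction of $N(K)$ is also a bit more explicit than the paper's, but both arguments are the same in substance.
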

\begin{proof}
By Equation \eqref{eq:gaudelgamma}, we have
\be 
\delta_\Gam=\E\left\{\a(u,x,d_uV)|V(u)=x\right\} \rho_{X(u)}(x)dW(u,x),
\ee
where $\rho_{X(u)}$ is the density of the Gaussian random vector $X(u)$, while where $\a\colon W\times \R^{m\times s}\to \R_+$ is the function defined as follows.
\be \label{eq:alpha}
\a(u,x,J):=\vol\begin{pmatrix}
\mathbb{1}_m \\ J
\end{pmatrix}\sigma\left(\text{Im}\begin{pmatrix}
\mathbb{1}_m \\ J
\end{pmatrix},T_{(u,x)}W\right)=\vol(\Pi_{T_{(u,x)}W^\perp}\begin{pmatrix}
\mathbb{1}_m \\ J
\end{pmatrix}),
\ee
where $\vol$ and $\sigma$ are taken with respect to the Euclidean Riemannian structure on $E=\D^m\times\R^s$ (in the last equality we used Proposition \ref{prop:angleperp} from Appendix \ref{app:angle}). 

From formula \eqref{eq:dgauvec} it is clear that $\rho_{X(u)}(x)=\rho(u,x,K_{j^1_uX})$ with $\rho\colon E\times \mathcal{U}\to\R_+$ being the smooth function
\be \label{eq:rhocazzo}
\rho (x,K)=
\frac{
\exp
\left(
-\frac{1}{2}x^TK_0^{-1}x
\right)
}{
\pi^\frac s2\sqrt{\det\left(K_0\right)}
}
\le N_\rho(K)\exp(-\frac{\|x\|^2}{N_\rho(K)}).
\ee
Clearly there exists a continuous function $N_\rho\colon \mathcal{U}\to\R_+$ such that the inequality \eqref{eq:rhocazzo} holds.

Let us consider the function $\E\{\a(u,x,d_uV)|V(u)=x\}$. We will show that it is a continuous function of $u,x$ and $K_{j^1X}$. 

First, observe that $\a$ is smooth because $W$ is a smooth submanifold. Like we did in the proof of lemma \ref{lem:gioiellino}, let us define a new Gaussian random vector $J(u)$ such that $ 
d_uV=J(u)+A(u)V(u)
$, for some smooth function $A(u)\in\R^{m\times ms}$, linear in $x\in \R^s$, and such that $J(u)$ and $V(u)$ are independent. Then $K_{J(u)}=K_J(K_{j^1_uX})$ and $A(u)=A(K_{j^1_uX})$, where $K_J,A\in\mC^0(\mathcal{U})$ are smooth functions:
\bega\label{eq:dependance}
K_{J}(K)&=K_1-K_{10}K_0^{-1}
K_{01};
\\
A(K)&=K_{10}K_0^{-1}.
\eega
Consider the expression
$E(u,x,K_J,A)=\E\{\a(u,x,J-Ax)\}$. Since $\a$ is of the form \eqref{eq:alpha}, all of its derivatives are bounded by a polynomial in $J$, when $J\to+\infty$ with $u,x$ fixed. From this it follows that
$\E\{\a(u,x,J-Ax)\}$
is finite and depends smoothly on $u,x,A,K_J$, where $K_J$ is the covariance matrix of $J$. Therefore, the function $F(u,x,K)=E\{\a(u,x,K_J(K),A(K)\}$ is smooth in $u,x,K$ and we have
\be 
\E\{\a(u,x,d_uV)|V(u)=x\}=F\left(u,x,K_{j^1_uX}\right).
\ee

Moreover, let $J(K)\in\R^{m\times s}\cong \R^{ms}$ be the Gaussian random matrix corresponding to the Gaussian random vector having covariance matrix $K_J(K)$. Let $\|\cdot\|$ be any norm on $\R^{m\times s}$. Then
\bega
F(u,x,K)&=\E\{\a(u,x,J(K)-A(K)x)\}
\\
&\le \E\left\{\vol\begin{pmatrix}
\mathbb{1}_m \\ J(K)+A(K)x
\end{pmatrix}\right\}
\\
&\le \left(\E\left\{\left\|\vol\begin{pmatrix}
\mathbb{1}_m \\ J(K)
\end{pmatrix}\right\|^m\right\}^\frac1m
+ \| A(K)x\|
\right)^m
\\
&\le N_\a(K)\left(1+\|x\|\right)^m.
\eega
for some big enough continuous function $N_\a\colon \mathcal{U}\to \R_+$. This is because the $m^{th}$ moment of a Gaussian random vector depends continuously on its covariance matrix. 
Combining this with \eqref{eq:rhocazzo}, we see that the function 
\be 
f_W(u,x,K):=F(u,x,K)\rho(x,K).
\ee 
has all the properties that we wanted to show.
\end{proof}

Let us show the continuity statement for a sequence of GRSs $X_d$, with limit $X_\infty$.
 By Theorem \ref{thm:maingau} and Lemma \ref{lem:gagagaga} we have
\be 
\E\#_{X_d\in W}(A)=\int_{\pi^{-1}(A)\cap W}\delta_{\Gamma(X_d,W)}dW
=
\int_{\pi^{-1}(A)\cap W} f_W(u,x,K_d)dW(u,x),
\ee
where $K_d=K_{j^1X_d}$ converges to $K_\infty=K_{j^1X_{\infty}}$ because they depend linearly on the $2^{nd}$ jet of $K_{X_d}$ and $K_{X_\infty}$.
From Lemma \ref{lem:gagagaga} we deduce that $f_W(u,x,K_d)\to f_W(u,x,K_{\infty})$, therefore we could conclude by the dominated convergence theorem if we show that $f_W(u,x,K_d)$  is uniformly bounded by an integrable (on $W$) function. Since $K_d\in\mathcal{U}$ is a convergent sequence, we have that $N(K_d)$ is uniformly bounded by some constant $N\in\N$, thus 
\bega
f_W(u,x,K_d)&\le Ne^{-\frac{|x|^2}{N}}.
\eega
Now, the fact that $W$ has sub-Gaussian concentration implies that the latter function is integrable on $W$:
\bega
\int_W Ne^{-\frac{|x|^2}{N}}dW(u,x) &\le \sum_{R\in\N}\int_{W\cap B_R} Ne^{-\frac{(R-1)^2}{N}}dW(u,x)
\\
&\le 
\sum_{R\in\N}\vol_W\left(W\cap B_R\right) Ne^{-\frac{(R-1)^2}{N}}
\\
&\le 
\sum_{R\in\N}C(\e)e^{\e R^2} Ne^{-\frac{(R-1)^2}{N}}<+\infty,
\eega
for all $\e<\frac{1}{N}$. this concludes the proof of theorem \ref{thm:mainEgau}.

\subsection{Proof of Corollary \ref{cor:mainjgau}}
\begin{proof}[Proof of Corollary \ref{cor:mainjgau}]
Corollary \ref{cor:mainjgau} follows directly from Theorem \ref{thm:mainEgau} and the following observation. The GRS $j^rX$ is a Gaussian random section of the vector bundle $J^rE\to M$. Its first jet $j^1(j^rX)$ is equivalent to $j^{r+1}X$ and thus its covariance tensor is determined by the $(2r+2)^{th}$ jet of $K_X$. 
\end{proof}
%
%
%
%
\appendix 
\section{Densities}\label{app:densities}
Let $M$ be a smooth manifold. The \emph{density bundle} or, as we call it, the \emph{bundle of density elements} $\Delta M$ is the vector bundle:
\be \Delta M=\wedge^m(T^*M)\otimes L,\ee
where $L$ is the orientation bundle (see \cite[Section 7]{botttu}); $\Delta M$ is a smooth real line bundle and the fiber can be identified canonically with
\bega\label{eq:apdensi}
\Delta_pM 
=\{\delta\colon (T_pM)^m\to \R\colon \delta=\pm |\w|, \text{ for some $\w\in \wedge^mT^*_pM$}\}.
\eega
We call an element $\delta\in \Delta_pM$, a \emph{density element} at $p$.

 Given a set of coordinates $x^1,\dots,x^m$ on $U$, we denote
\be
dx=dx^1\dots dx^m=|dx^1\wedge\dots\wedge dx^m|.
\ee
Using the language of \cite{botttu}, $dx$ is the section $(dx_1\wedge\cdots \wedge dx_m)\otimes e_U$, where $e_U$ is the section of $L|_{U}$ which in the trivialization induced by the chart $(x_1, \ldots, x_m)$ corresponds to the constant function $1$.

We have, directly from the definition, that 
\be 
dx=\left|\det\left(\frac{\de x}{\de y}\right)\right|dy.
\ee
for any other set of coordinates $y^1,\dots,y^m$. It follows that an atlas for $M$ with transition functions $g_{a,b}$ defines a trivializing atlas for the vector bundle $\Delta M$, with transition functions for the fibers given by $|\det(Dg_{a,b})|$. 

Each density element $\delta$, is either positive or negative, respectively if $\delta=|\w|$ or $\delta=-|\w|$, for some skew-symmetric multilinear form $w\in \wedge^mT^*_pM$. In other words, the bundle $\Delta M$ is canonically oriented and thus trivial (but not canonically trivial) and we denote the subbundle of positive elements by $\Delta^+ M\cong M\times [0,+\infty)$ (again, not canonically).

The \emph{modulus} of a density element is defined in coordinates, by the identity $|(\w(x)dx)|=|\w(x)|dx$. This defines a continuous map $|\cdot|\colon \Delta M\to \Delta^+ M$.

The sections of $\Delta M$ are called \emph{densities} and we will usually denote them as maps  $p\mapsto \delta(p)$. We define $\mathscr{D}^r(M)$ to be the space of $\mC^r$ densities and by $\mathscr{D}^r_c(M)$ the subset of the compactly supported ones. For smooth densities, we just write $\mathscr{D}=\mathscr{D}^\infty$.
From the formula for transitition functions it is clear that there is a canonical linear function 
\be 
\int_M \colon \mathscr{D}_c(M)\to \R \qquad \int_M \delta =:\int_{M}\delta(p)dp.
\ee
If $N$ is Riemannian and $f\colon M\to N$ is a $\ci$ map with $\dim M\le \dim N$, we define the \emph{jacobian density} $\delta f\in\mathscr{D}(M)$ by the following formula, in coordinates:
\be\label{eq:indensity}
\delta_u f=
\sqrt{\det\left(\frac{\de f}{\de u}^Tg(f(u))\frac{\de f}{\de u}\right)}du\in \Delta_p^+M,
\ee
In particular $\delta(\text{id}_M)$ is the Riemannian volume density of $M$ and we denote it by $dM$. Similarly, if $f$ is a Riemannian inclusion $M\subset N$, then $\delta f=dM$. 

Given any Riemannian metric on the manifold $M$, one can make the identification $\Delta_p M= \R (dM(p))$ and treat densities as if they were functions. Moreover this identification preserves the sign, since $dM$ is always a positive density: $dM(p)\in\Delta_p^+M$, for every $p\in M$. 
We denote by $B(M)$ the set of all Borel measurable functions $M\to [-\infty,+\infty]$ and by $L(M)=B(M)dM$ the set of all measurable, not necessarily finite, densities (the identification with $B(M)$ depends on the choice of the metric). Let $B^+(M)$ be the set of positive measurable functions $M\to [0,+\infty]$ and $L^+(M)$ be the set of densities of the form $\rho dM$, for some $\rho\in B^+(M)$. In other words 
\be 
L^+(M)=\left\{ \text{measurable functions }M\ni p\mapsto \delta(p)\in\Delta^+_pM \cup \{+\infty\}\right\}
\ee 
is the set of all nonnegative, non necessarily finite measurable densities. 
The integral can be extended in the usual way (by monotone convergence) to a linear function $\int_M\colon L^+(M)\to \R\cup\{\infty\}$. Similarly, we define the spaces $L^1(M),L^\infty(M),L^1_{loc}(M),L^\infty_{loc}(M)$ and their respective topologies by analogy with the standard case $M=\R^m$. 

\begin{defi}\label{def:abscont} We say that a real Radon measure (see \cite{ambrofuscopalla}) $\mu$ on $M$ is \emph{absolutely continuous} if $\mu(A)=0$ on any zero measure subset $A\subset M$. In other words $\mu$ is absolutely continuous if and only if $\f_*\left(\mu|_{U}\right)$ is absolutely continuous with respect to the Lebesgue measure $\mathscr{L}^m$ for any chart $\f\colon U\subset M\to \R^m$.
\end{defi}
In this language, the Radon-Nikodym Theorem takes the following form.
\begin{thm}\label{thm:RadoNiko} Let $\mu$ be an absolutely continuous real Radon measure on $M$. Then there is a density $\delta\in L^1_{loc}(M)$ such that, for all Borel subsets $A\subset M$,
\be 
\mu(A)=\int_A\delta.
\ee
\end{thm}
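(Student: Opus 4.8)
The plan is to localise the statement to coordinate charts, apply the classical Radon--Nikodym theorem in $\R^m$, and then glue the resulting local densities using the very transition functions $|\det(Dg_{ab})|$ out of which the density bundle $\Delta M$ is built (see Appendix \ref{app:densities}). Since $\mu$ is a \emph{real} Radon measure, I would first replace it by its Jordan decomposition $\mu=\mu^+-\mu^-$ into two positive Radon measures, each of which is again absolutely continuous, and reduce to the case $\mu\ge 0$; the general case then follows by linearity.

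First, fix a countable atlas $\{(U_i,\varphi_i)\}_{i\in\N}$ of $M$ (it exists since $M$ is second countable). For each $i$ the pushforward $\mu_i:=(\varphi_i)_*(\mu|_{U_i})$ is a positive Radon measure on the open set $\varphi_i(U_i)\subset\R^m$, and by Definition \ref{def:abscont} it is absolutely continuous with respect to $\mathscr{L}^m$. The classical Radon--Nikodym theorem on $\R^m$ then provides a function $\rho_i\in L^1_{loc}(\varphi_i(U_i))$ with $\mu_i=\rho_i\,\mathscr{L}^m$. Pulling back, define the local density $\delta^{(i)}:=(\rho_i\circ\varphi_i)\,dx_i$, where $dx_i=|dx_i^1\wedge\cdots\wedge dx_i^m|$ is the coordinate density of the chart $\varphi_i$; this is a non-negative, measurable, locally integrable density on $U_i$.

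Next, check compatibility on overlaps. On $U_i\cap U_j$ the change of variables formula for $\mathscr{L}^m$ relates $\rho_i$ to $\rho_j$ via the Jacobian determinant of the transition map, while the transformation rule for coordinate densities recalled in Appendix \ref{app:densities} relates $dx_i$ to $dx_j$ via its reciprocal; multiplying, the Jacobian factors cancel and $\delta^{(i)}=\delta^{(j)}$ almost everywhere on $U_i\cap U_j$. Hence the family $\{\delta^{(i)}\}$ patches together to a single measurable density $\delta$ on $M$, and since local integrability is a local property, $\delta\in L^1_{loc}(M)$. To obtain $\mu(A)=\int_A\delta$ for an arbitrary Borel set $A$, disjointify the cover by setting $V_1:=U_1$ and $V_i:=U_i\setminus(U_1\cup\cdots\cup U_{i-1})$, so that $\{V_i\}$ is a Borel partition of $M$ with $V_i\subset U_i$. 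For each $i$, $A\cap V_i$ is a Borel subset of the chart $U_i$, and the defining property of $\rho_i$ together with $\delta|_{U_i}=\delta^{(i)}$ gives $\mu(A\cap V_i)=\int_{A\cap V_i}\delta$. Summing over $i$ and using countable additivity of $\mu$ on the left and monotone convergence for $\int\delta$ on the right yields $\mu(A)=\int_A\delta$. Uniqueness of $\delta$ up to $\mathscr{L}^m$-a.e.\ equivalence in charts is inherited from the classical statement.

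The step that needs the most care is the overlap compatibility: it is exactly there that one must line up the Jacobian factor in the change of variables formula for Lebesgue measure with the cocycle $|\det(Dg_{ab})|$ defining $\Delta M$, and this matching is precisely what makes the notion of an ``absolutely continuous density'' coordinate-independent. Everything else --- the reduction to $\R^m$, the disjointification, and the passage from charts to arbitrary Borel sets --- is routine measure-theoretic bookkeeping; the only additional point to watch is the signed case, dispatched by the Jordan decomposition at the outset.
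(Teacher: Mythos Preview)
Your argument is correct and is exactly the standard localise--apply--glue proof of this statement. The paper, however, does not give a proof at all: Theorem \ref{thm:RadoNiko} is simply stated in Appendix \ref{app:densities} as the manifold reformulation of the classical Radon--Nikodym theorem, and the proof is omitted. So there is nothing to compare against; your proposal supplies precisely the routine argument that the paper leaves to the reader.

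One very small remark on presentation: when you write $\delta^{(i)}=\delta^{(j)}$ almost everywhere on $U_i\cap U_j$, it is worth noting explicitly that ``almost everywhere'' is chart-independent (a set has Lebesgue measure zero in one chart if and only if it does in any other, since the transition maps are diffeomorphisms), so that the a.e.\ equivalence classes patch coherently. This is implicit in what you wrote, but making it explicit justifies why the glued $\delta$ is well defined as an element of $L^1_{loc}(M)$ rather than merely as a collection of local representatives.
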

\section{Angle between subspaces}\label{app:angle}
Let $E, \langle\cdot,\cdot \rangle$ be a euclidean vector space (i.e. a finite dimensional real Hilbert space).
\begin{defi}
Let $f=(f_1,\dots,f_k)$ be a tuple (in row) of vectors $f_i\in E$. We define its volume as
\be 
\vol(f)=\sqrt{\det\langle f^T, f\rangle}.
\ee
If the vectors $f_1,\dots,f_k$ are independent, then $f$ is called a \emph{frame}. The \emph{span} $\text{span}(f)$ of the tuple $f$ is the subspace of $E$ spanned by the vectors $f_1,\dots,f_k$. A tuple $f$ is a \emph{basis} of a subspace $V\subset E$ if and only if it is a frame and its span is $V$. Given two frames $v,w$, we can form the tuple $(v,w)$.
\end{defi}
\begin{defi}\label{defi:angle}
Let $V,W\subset E$ be subspaces. Let $v$ and $w$ be frames in $E$ such that:
\begin{enumerate}[$\bullet$]
    \item $\text{span}(v)=V\cap (V\cap W)^\perp$;
    \item $\text{span}(w)=W\cap (V\cap W)^\perp$.
\end{enumerate}
We define the \emph{angle} 
 between $V$ and $W$ as
\be 
\sigma(V,W)=\begin{cases}
\frac{\vol(v w)}{\vol(v)\vol(w)} \quad &\text{ if $V\not\subset W$ and $W\not\subset V$;}\\
1 &\text{ otherwise.} 
\end{cases}
\ee
\end{defi}
It is easy to see that the definition is well posed, independently from the choices of the frames.  This definition corresponds to Howard's \cite{Howard} in the case when $V\cap W=\{0\}$.
Observe that $\sigma$ is symmetric and that we have $\sigma(V,W)=1$ if and only if $V=A\oplus_\perp B$ and $W=A\oplus_\perp C$ with and $B\perp C$.

When $V$ and $W$ are one dimensional, $\sigma(V,W)=|\sin\theta |$, where $\theta$ is the angle between the two lines. In general $\sigma(V,W)\in [0,1]$ is equal to the product of the sines of the nontrivial principal angles between $V$ and $W$ (see \cite{JordAngles,Zhu_2013_ang,MIAO199281_ang}). In particular, notice that $\sigma(V,W)>0$ always.

It is important to notice that $\sigma\colon \text{Gr}_k(E)\times \text{Gr}_h(E)\to [0,1]$ is not a continuous function. However the restriction to the subset of the pairs of subspaces $(V,W)$ such that $\dim (V+W)=n$ is continuous.
\begin{prop}\label{prop:chi}
Assume that $W\not\subset V$. Let $w$ be a basis for $W\cap (V\cap W)^\perp$, as in Definition \ref{defi:angle}, then
\be 
\sigma(V,W)=\frac{\vol\left(\Pi_{V^\perp}(w)\right)}{\vol(w)}
\ee
\end{prop}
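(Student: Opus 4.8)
The plan is to reduce the statement to a direct linear-algebra computation by choosing a convenient orthonormal basis of $E$ adapted to the subspaces $V\cap W$, $W\cap(V\cap W)^\perp$ and $V$. First I would set $A=V\cap W$ and split $W=A\oplus_\perp W'$, where $W'=W\cap(V\cap W)^\perp$, and let $w$ be the basis of $W'$ appearing in Definition \ref{defi:angle}. Since the assumption $W\not\subset V$ guarantees $W'\neq 0$, the second case of the definition does not occur, so $\sigma(V,W)=\vol(v\,w)/(\vol(v)\vol(w))$ where $v$ is a basis of $V\cap(V\cap W)^\perp$. The key observation is that, because $v$ spans a subspace of $V$ and $w$ is orthogonal to $V\cap W$, the Gram matrix $\langle (v\,w)^T,(v\,w)\rangle$ has a block structure whose determinant can be expanded by the Schur complement of the $\langle v^T,v\rangle$ block; the Schur complement is exactly the Gram matrix of the components of $w$ orthogonal to $\mathrm{span}(v)$.

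The main step is then to identify $\mathrm{span}(v)^\perp$-projection of $w$ with the $V^\perp$-projection of $w$. This is where the geometry enters: one has $\mathrm{span}(v)=V\cap(V\cap W)^\perp$, hence $V=(V\cap W)\oplus_\perp \mathrm{span}(v)$, and since every vector of $w$ is already orthogonal to $V\cap W$, the orthogonal projection of $w$ onto $V$ coincides with its projection onto $\mathrm{span}(v)$. Consequently $\Pi_{V^\perp}(w)=\Pi_{\mathrm{span}(v)^\perp}(w)$ (the component of $w$ orthogonal to $V$), and therefore
\be
\vol(v\,w)^2=\det\langle v^T,v\rangle\cdot\det\big\langle \Pi_{\mathrm{span}(v)^\perp}(w)^T,\Pi_{\mathrm{span}(v)^\perp}(w)\big\rangle=\vol(v)^2\,\vol\big(\Pi_{V^\perp}(w)\big)^2.
\ee
Dividing by $\vol(v)^2\vol(w)^2$ and taking square roots yields $\sigma(V,W)=\vol(\Pi_{V^\perp}(w))/\vol(w)$, as claimed.

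I expect the only real obstacle to be bookkeeping: making the Schur-complement identity for the Gram determinant fully rigorous and checking carefully that the projection of $w$ onto $V$ really lands in $\mathrm{span}(v)$ (which uses crucially that $w\perp (V\cap W)$). Both are elementary once the orthogonal decompositions $V=(V\cap W)\oplus_\perp\mathrm{span}(v)$ and $W=(V\cap W)\oplus_\perp\mathrm{span}(w)$ are written down; no subtlety about continuity or non-degeneracy is needed here, since $\sigma(V,W)>0$ is automatic and the formula is a clean identity of volumes. An alternative, essentially equivalent route would be to pick an orthonormal basis extending a basis of $V$, write the matrix whose columns are $v$ followed by $w$ in these coordinates, and compute $\vol(v\,w)$ as the square root of the determinant of its Gram matrix directly; I would present whichever of the two is shorter, but the Schur-complement version seems cleanest.
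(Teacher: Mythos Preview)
Your proof is correct and essentially the same as the paper's: both hinge on the observation that $w\perp V\cap W$ forces $\Pi_V(w)=\Pi_{\mathrm{span}(v)}(w)$, and then compute $\vol(v\,w)$ by a block decomposition---you via the Schur complement of the Gram matrix, the paper via an explicit orthonormal basis $(\tau,v,\nu)$ of $V+W$ and a block-triangular determinant (which is precisely the ``alternative route'' you sketch at the end). One small oversight: the hypothesis $W\not\subset V$ alone does not exclude $V\subset W$, but in that residual case $w\perp V$ already and the identity reduces to $1=1$.
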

\begin{proof}
First, observe that the projected frame $\Pi_{V^\perp}(w)$ is a basis of the space $(V+W)\cap V^\perp$. Let $\nu$ be an orthonormal basis of the same space, let $v$ be an orthonormal basis for $V\cap (V\cap W)^\perp$ and let $\tau$ be a basis for $V\cap W$. It follows that $(\tau, v,\nu)$ is a basis for $V+W$. Therefore, there is an invertible matrix $B$ and a matrix $A$ such that
\be w=\begin{pmatrix}\tau & v &\nu\end{pmatrix}\begin{pmatrix} 0\\ A\\ B\end{pmatrix}.\ee
Then, by Definition \ref{defi:angle}, we have
\be 
\begin{aligned}
\sigma(V,W)=\frac{\vol(v,w)}{\vol(v)\vol(w)}
=\frac{\left|\det\begin{pmatrix}\mathbb{1} & A \\ 0 & B \end{pmatrix}\right|}{\vol(w)}=\frac{\vol\left(\Pi_{V^\perp}(w)\right)}{\vol(w)}.
\end{aligned}
\ee
\end{proof}
\begin{prop}\label{prop:angleperp}
$\sigma(V^\perp,W^\perp)=\sigma(V,W).$
\end{prop}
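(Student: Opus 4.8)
The plan is to reduce the statement $\sigma(V^{\perp},W^{\perp})=\sigma(V,W)$ to the case of a direct sum $V\cap W=\{0\}$ and then to a symmetric normal form. First I would record the basic structural fact (already noted after Definition \ref{defi:angle}): $\sigma(V,W)=1$ exactly when one can write $V=A\oplus_{\perp}B$ and $W=A\oplus_{\perp}C$ with $B\perp C$, where $A=V\cap W$. Dualizing, $V^{\perp}=(A\oplus_{\perp}B)^{\perp}$ and $W^{\perp}=(A\oplus_{\perp}C)^{\perp}$; writing $E=A\oplus_{\perp}B\oplus_{\perp}C\oplus_{\perp}D$ for the orthogonal complement $D$ of $A+B+C$, one gets $V^{\perp}=C\oplus_{\perp}D$ and $W^{\perp}=B\oplus_{\perp}D$, which again have the ``$1$'' form, so both sides equal $1$ simultaneously. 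This disposes of the degenerate cases and lets me assume $V\not\subset W$ and $W\not\subset V$ on both sides.

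Next I would split off the common part. Let $A=V\cap W$, and set $V_0=V\cap A^{\perp}$, $W_0=W\cap A^{\perp}$. By definition $\sigma(V,W)=\sigma(V_0,W_0)$ computed inside $E$. I claim it equals the angle of $V_0,W_0$ computed inside the smaller space $E_0:=(V_0+W_0)\oplus\text{(orthogonal stuff)}$; more precisely $\sigma$ only depends on the relative position of the two subspaces, so I may pass to $E':=A^{\perp}$ and compute everything there, where now $V_0\cap W_0=\{0\}$. The subtlety is that $V^{\perp}$ and $W^{\perp}$ are complements \emph{in $E$}, not in $E'$: one has $V^{\perp}=A^{\perp}\cap V_0^{\perp_{E}}$, and $V_0^{\perp_{E}}=(V_0^{\perp_{E'}})\oplus_{\perp}(E')^{\perp}=(V_0^{\perp_{E'}})\oplus_{\perp}A$. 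Hence $V^{\perp}=(V_0)^{\perp_{E'}}$ where the complement is now taken inside $E'=A^{\perp}$, and similarly for $W^{\perp}$. Therefore, after replacing $E$ by $A^{\perp}$, it suffices to prove the identity when $V\cap W=\{0\}$, i.e. $\sigma(V,W)=\vol(v\,w)/(\vol(v)\vol(w))$ for bases $v$ of $V$ and $w$ of $W$.

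In the transverse case I would use Proposition \ref{prop:chi} twice. Pick an orthonormal basis $v$ of $V$ and an orthonormal basis $w$ of $W$; then $\sigma(V,W)=\vol(\Pi_{V^{\perp}}w)$. Now $\Pi_{V^{\perp}}=\mathrm{Id}-\Pi_V$, and $\Pi_V w$ has matrix $\langle v^{T},w\rangle$ in the basis $v$, so $\vol(\Pi_{V^{\perp}}w)^2=\det(\mathrm{Id}-\langle w^{T},v\rangle\langle v^{T},w\rangle)=\det(\mathrm{Id}-M^{T}M)$ where $M=\langle v^{T},w\rangle$ is the matrix of inner products $\langle v_i,w_j\rangle$. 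Running the same computation with $V^{\perp}$ and $W^{\perp}$ in place of $V$ and $W$, and orthonormal bases $v^{\perp},w^{\perp}$ of $V^{\perp},W^{\perp}$, gives $\sigma(V^{\perp},W^{\perp})^2=\det(\mathrm{Id}-N^{T}N)$ with $N=\langle (v^{\perp})^{T},w^{\perp}\rangle$. The key linear-algebra lemma is then that the nonzero singular values of $M$ and of $N$ coincide (equivalently, the nonzero eigenvalues of $\mathrm{Id}-M^{T}M$ and $\mathrm{Id}-N^{T}N$ agree, and the remaining eigenvalues are all $1$ so contribute nothing to the determinants once one checks dimensions match up — here the transversality/full-dimension bookkeeping is what makes the two determinants literally equal rather than merely proportional). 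This is the standard fact that principal angles between $V,W$ and between $V^{\perp},W^{\perp}$ are the same (the nontrivial ones), which can be proved by diagonalizing $\Pi_V\Pi_W\Pi_V$ and observing that on a suitable four-dimensional-at-most invariant decomposition of $E$ the pair $(V,W)$ looks like a product of $2$-dimensional rotations and trivial blocks, under which $V\leftrightarrow V^{\perp}$, $W\leftrightarrow W^{\perp}$ permutes the blocks but preserves the rotation angles.

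The main obstacle I expect is precisely the bookkeeping in that last step: keeping track of which eigenvalues are $1$ versus which are genuine $\cos^2$ of a principal angle, and verifying that after removing the common subspace $V\cap W$ and intersecting everything with $(V+W)^{\perp}$ the counts on the two sides match so that $\det(\mathrm{Id}-M^{T}M)=\det(\mathrm{Id}-N^{T}N)$ exactly. A clean way to organize this is the orthogonal decomposition $E=(V\cap W)\oplus(V\cap W^{\perp})\oplus(V^{\perp}\cap W)\oplus(V^{\perp}\cap W^{\perp})\oplus R$, where $R$ is the ``generic'' part on which $V$ and $W$ are in general position with all principal angles in $(0,\pi/2)$; $R$ decomposes into $2$-planes each carrying one principal angle $\theta_k$, and on each such plane $\sigma$ contributes a factor $\sin\theta_k$, while passing to orthogonal complements swaps the roles of the two lines in the plane but leaves $\sin\theta_k$ unchanged. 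The four ``pure'' summands contribute factor $1$ to both sides. Assembling the factors gives $\sigma(V,W)=\prod_k\sin\theta_k=\sigma(V^{\perp},W^{\perp})$, completing the proof.
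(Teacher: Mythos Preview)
Your proof is correct in outline, but it takes a genuinely different and considerably longer route than the paper. The paper dispenses with the principal-angles machinery entirely: after the trivial case, it picks an orthonormal basis $w$ of $W\cap(V\cap W)^\perp$ and an orthonormal basis $\nu$ of $(V+W)\cap V^\perp$, observes that these two frames have the \emph{same} cardinality (both equal $\dim W-\dim(V\cap W)$), and that $\nu$ is precisely a basis of $V^\perp\cap(V^\perp\cap W^\perp)^\perp$. Two applications of Proposition~\ref{prop:chi} then give $\sigma(V,W)=\lvert\det\langle \nu^T,w\rangle\rvert$ and $\sigma(W^\perp,V^\perp)=\lvert\det\langle w^T,\nu\rangle\rvert$, and the proof ends with $\det A=\det A^T$. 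No reduction to $V\cap W=\{0\}$, no block decomposition, no singular-value matching.

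Your argument, by contrast, first strips off $V\cap W$ (correctly), then aims at $\det(I-M^TM)=\det(I-N^TN)$; but as you yourself flag, this identity is not immediate because after your reduction you have $V\cap W=\{0\}$ but not $V^\perp\cap W^\perp=\{0\}$, so the $N$-computation does not literally parallel the $M$-computation and the ``remaining eigenvalues are $1$'' bookkeeping is genuinely needed. You then switch to the five-block decomposition and the $2$-plane normal form for principal angles, which does work and is the standard structural picture. What your approach buys is a conceptual explanation (the nontrivial principal angles of $(V,W)$ and $(V^\perp,W^\perp)$ coincide); what the paper's approach buys is a four-line proof that stays entirely within the frame calculus already set up and avoids invoking or reproving the principal-angle classification.
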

\begin{proof}
The statement is trivially true if $V\subset W$ or $W \subset V$, so let us assume that this is not the case. Let $\nu$ be an orthonormal basis of the space $(V+W)\cap V^\perp$ and $v$ be an orthonormal basis of $V\cap (V\cap W)^\perp$. Besides, let $(\tau,w)$ be an orthonormal basis of $W$, such that $\tau$ is a basis for $V\cap W$. We have 
\be 
\begin{aligned}
\sigma(V^\perp,W^\perp)&=\sigma(W^\perp,V^\perp)\\
&=\vol\left(\Pi_{W}(\nu)\right)=\\
&=\det\langle w^T,\nu\rangle=\\
&=\det\langle \nu^T,w\rangle=\\
&= \vol \left(\Pi_{V^\perp}(w)\right)=\sigma(V,W).
\end{aligned}
\ee
\end{proof}
\section{Area and Coarea formula}\label{app:coarea} 
\begin{defi}\label{def:jacob}
Let $\f\colon M\to N$ be a $\ci$ map between $\ci$ Riemannian manifolds. The \emph{Jacobian} (often called \emph{normal Jacobian} when $f$ is a submersion) of $\f$ at $p\in M$ is
\be 
J_p\f:=\begin{cases}
0 & \text{if rank$(d_p\f)$ is not maximal;} \\
\frac{\vol_{N}\left(d_p\f(e)\right)}{\vol_{M}(e)} & \text{otherwise;}
\end{cases}
\ee
where $e=(e_1,\dots, e_k)$ is any basis of $\ker(d_p\f)^\perp\subset T_pM$. 

If $L\colon V_1\to V_2$ is a linear map between metric vector spaces, then we write $J L:=J_0L$ (Clearly $J_p\f=Jd_p\f$). If $V_1,V_2$ have the same dimension, then, to stress this fact, we may write $|\det L|:=JL$, although the sign of $\det L$ is not defined, unless we specify orientations.
\end{defi}
In particolar, let $\f\colon (\R^m;g_1)\to (\R^n;g_2)$ with differential $\frac{\de \f}{\de u}(u)=A$ having maximal rank, then
\be \label{eq:jacobidet}
J_u\f:=\begin{cases}
\sqrt{\frac{\det(A^Tg_2A)}{\det(g_1)}} & \text{if $m\le n$;} \\
\sqrt{\frac{\det(Ag_1^{-1}A^T)}{\det(g_2)^{-1}}} & \text{if $m\ge n$.}
\end{cases}
\ee
\begin{remark} In the case $m\le n$, the density induced on $M$ by a map $\f$, defined in \eqref{eq:indensity} corresponds to
$\delta_p\f=(J_p\f) dM$.
\end{remark}
\begin{thm}[Area formula]\label{thm:area}
Let $f\colon M\to N$ be a Lipschitz map between $\ci$ Riemannian manifolds, with $\dim M=\dim N$. Let $g\colon M\to [0,+\infty]$ be a Borel function, then
\be 
\int_M g(p)(J_pf) dM(p)=\int_N\left[\sum_{p\in f^{-1}(q)}g(p)\right]dN(q).
\ee
\end{thm}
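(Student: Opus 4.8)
The plan is to reduce the statement to the classical area formula for Lipschitz maps between open subsets of $\R^m$ and then to recall how the latter is proved. First I would localize: both sides of the asserted identity are $\sigma$-additive in the domain, so I would write $M=\bigsqcup_i U_i$ as a countable disjoint union of Borel sets, each contained in the domain of a chart of $M$ with relatively compact image, and refine this decomposition so that every $f(U_i)$ lies in the domain of a single chart of $N$ (intersect with the $f^{-1}$ of the chart domains of $N$). Monotone convergence takes care of the left-hand side, and the analogous splitting of the right-hand side is again monotone convergence, now for the series over $i$; hence it suffices to treat the case in which $M,N\subseteq\R^m$ are open, carry smooth Riemannian metrics $g_1,g_2$, and $\overline M$ is compact. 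On such a chart $g_1$ is comparable to the Euclidean metric, so $f$ is Lipschitz for the Euclidean distances as well, and Rademacher's theorem gives that $A(x):=D f(x)$ exists for a.e.\ $x$.

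Next I would absorb the metrics. By \eqref{eq:jacobidet} one has $dM=\sqrt{\det g_1}\,dx$, $dN=\sqrt{\det g_2}\,dy$, and $J_xf=\bigl(\det(A(x)^{T}g_2(f(x))A(x))/\det g_1(x)\bigr)^{1/2}$ whenever $A(x)$ exists; when $A(x)$ is singular both $J_xf$ and $\det(A^{T}g_2A)$ vanish, so the identity persists a.e. Since $\det(A^{T}g_2A)=(\det A)^2\det g_2$, the left-hand side equals $\int_M g(x)\,|\det A(x)|\,\sqrt{\det g_2(f(x))}\,dx$, while the right-hand side equals $\int_N\sum_{x\in f^{-1}(y)}g(x)\sqrt{\det g_2(y)}\,dy=\int_{\R^m}\sum_{x\in f^{-1}(y)}u(x)\,dy$, where $u(x):=g(x)\sqrt{\det g_2(f(x))}$ is a Borel function with values in $[0,+\infty]$. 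After extending $f$ to a Lipschitz map on all of $\R^m$ (the values outside $M$ being irrelevant, as $u$ vanishes there), the assertion becomes exactly the Euclidean area formula $\int_{\R^m}u\,|\det Df|\,dx=\int_{\R^m}\bigl(\sum_{x\in f^{-1}(y)}u(x)\bigr)\,dy$.

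For the Euclidean area formula I would argue in the standard way (see \cite{ambrofuscopalla}): by monotone convergence it is enough to take $u=\mathbf 1_A$ with $A$ Borel and bounded, i.e.\ to prove $\int_A|\det Df|=\int_{\R^m}\#(A\cap f^{-1}(y))\,dy$. I would split $A$ into the Lebesgue-null set where $f$ is not differentiable, the set $Z$ where $Df$ is singular, and the regular set $R=\{x:\det Df(x)\neq0\}$. On $R$ one uses the elementary decomposition $R=\bigsqcup_k R_k$ into Borel pieces on each of which $f$ is bi-Lipschitz onto its image with $Df$ close to a fixed invertible matrix, deduces the identity on each $R_k$ from the linear change-of-variables formula via a Vitali covering argument, and sums over $k$. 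For $Z$ one shows $\mathscr L^m(f(Z))=0$: covering $Z$ by small balls on which $f$ is approximated by a singular affine map, which compresses each such ball into an $\varepsilon$-thin slab around an affine hyperplane, a covering argument gives $\mathscr L^m(f(Z))\le C\varepsilon$ locally for every $\varepsilon>0$; consequently $Z$ contributes nothing to either side.

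The only substantive analytic inputs are Rademacher's theorem and these two facts about Lipschitz maps — the bi-Lipschitz decomposition of the regular set and the nullity of the image of the singular set — which together constitute the main obstacle; everything else is bookkeeping. The one point requiring mild care in the reduction is choosing relatively compact charts, so that the Lipschitz hypothesis, stated for the Riemannian distances, transfers to the Euclidean setting. Finally, since $\dim M=\dim N$ we work throughout with $\mathscr L^m$ and never need the more delicate $n<m$ version of the area formula involving $n$-dimensional Hausdorff measure.
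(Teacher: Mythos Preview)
Your argument is correct. The paper itself does not prove this theorem at all: its entire proof reads ``See \cite[Theorem 3.2.3]{federer1996}.'' You, by contrast, carry out the standard reduction explicitly --- localizing via relatively compact charts so that the Riemannian Lipschitz condition transfers to a Euclidean one, absorbing the metric densities $\sqrt{\det g_i}$ into the integrand using \eqref{eq:jacobidet}, and then invoking (and sketching) the Euclidean area formula as in \cite{ambrofuscopalla}. This is exactly how one would unpack the citation to Federer in the equal-dimension case, and your remark that only $\mathscr L^m$ (rather than Hausdorff measure) is needed here matches the paper's own comment following the theorem. The one place where your write-up is slightly informal is the localization: the pieces $U_i$ are Borel rather than open, so strictly speaking you are proving the Euclidean identity for $u=\mathbf 1_{U_i}\cdot g\sqrt{\det g_2\circ f}$ on an ambient open chart, not for ``$M\subset\R^m$ open''; but since you immediately reduce to $u=\mathbf 1_A$ with $A$ Borel anyway, this is harmless.
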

\begin{proof}
See \cite[Theorem 3.2.3]{federer1996}.
\end{proof}
The Area formula is actually much more general than this, in that it holds for $\dim M\le \dim N$ and with the Hausdorff measure instead of $dN$. However, this simplified statement is all that we need in this paper. It also can be thought as a generalization of the following, in the case $\dim M=\dim N$.
\begin{thm}[Coarea formula]\label{thm:coarea}
Let $f\colon M\to N$ be a $\mC^1$ submersion between smooth Riemannian manifolds, with $\dim M\ge \dim N$. Let $g\colon M\to [0,+\infty]$ be a Borel function, then
\be 
\int_M g(p) (J_pf) dM(p)=\int_N\int_{f^{-1}(q)}g(p) d\left(f^{-1}(q)\right)(p) dN(q).
\ee
\end{thm}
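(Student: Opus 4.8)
The plan is to localize the identity to a model in which $f$ is a coordinate projection, and then to reduce it to Fubini's theorem via a block decomposition of the Riemannian metric on $M$. First I would use that both sides are additive in $g$ and, for $g\ge 0$, continuous under monotone limits (monotone convergence on the left, Tonelli on the right), so that it suffices to treat $g$ supported in an arbitrarily small neighbourhood of a given point $p_0\in M$. Since $f$ is a $\mC^1$ submersion, the submersion normal form (from the implicit function theorem) gives a chart $\beta\colon V\to\R^n$ of $N$ near $f(p_0)$ and a chart $\alpha\colon U\to\R^{m-n}\times\R^n$ of $M$ near $p_0$, with $f(U)\subset V$, such that $\beta\circ f\circ\alpha^{-1}$ is the standard projection $\pi\colon(x,y)\mapsto y$. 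Because $J_pf$, $dM$, the fibre density $d(f^{-1}(q))$ and $dN$ all transform consistently under changes of coordinates, after a partition of unity I may assume $M=U\subset\R^{m-n}\times\R^n$ and $N=V\subset\R^n$ are open, $f=\pi$, the metrics $g_M$ on $U$ and $g_N$ on $V$ are arbitrary smooth Riemannian metrics, and $g$ is compactly supported.

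Next I would carry out the local computation. Writing $g_M=\bigl(\begin{smallmatrix} A & B\\ B^{T} & C\end{smallmatrix}\bigr)$ in the splitting $(x,y)$, with $A(x,y)$ the Gram matrix of $\de_{x_1},\dots,\de_{x_{m-n}}$ and $S=C-B^{T}A^{-1}B$ the Schur complement, one has $dM=\sqrt{\det A}\,\sqrt{\det S}\,dx\,dy$; the fibre $\pi^{-1}(y)=U_y\times\{y\}$ carries the induced metric $A(\cdot,y)$, so $d(\pi^{-1}(y))=\sqrt{\det A}\,dx$; and $dN=\sqrt{\det g_N}\,dy$. For the Jacobian, $\ker d_p\pi$ is the span of the $\de_{x_i}$, hence $(\ker d_p\pi)^{\perp}$ (with respect to $g_M$) is $\{(-A^{-1}Bv,v)\colon v\in\R^n\}$, whose $g_M$-Gram matrix is exactly $S$; since $d_p\pi$ carries the basis $(-A^{-1}B\hat e_j,\hat e_j)$ to the standard basis of $T_yV$, one finds $J_p\pi=\sqrt{\det g_N(y)}\big/\sqrt{\det S}$. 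Multiplying, the factor $\sqrt{\det S}$ cancels and $g(p)(J_p\pi)\,dM=g\,\sqrt{\det g_N(y)}\,\sqrt{\det A}\,dx\,dy$; integrating over $U$ and applying Fubini to the nonnegative function $g\sqrt{\det A}$ then yields exactly $\int_V\bigl(\int_{\pi^{-1}(y)}g\,d(\pi^{-1}(y))\bigr)dN(y)$, and undoing the localization gives the general statement.

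I expect the genuinely fiddly step to be the bookkeeping rather than any analytic obstacle: one must check that the four densities transform compatibly under adapted charts, and verify measurability of $q\mapsto\int_{f^{-1}(q)}g\,d(f^{-1}(q))$, which the local picture delivers since it becomes $y\mapsto\int_{U_y}g\sqrt{\det A}\,dx$, measurable by Fubini. Alternatively, the whole statement follows at once from Federer's coarea theorem \cite[Theorem 3.2.12]{federer1996}, noting that for a submersion the exceptional set where the coarea factor vanishes is empty; the self-contained route above is preferable here only because it also explains \emph{why} the formula holds — the seemingly miraculous cancellation of $\det S$ between $J_p\pi$ and $dM$.
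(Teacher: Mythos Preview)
Your argument is correct. The paper, however, does not give a self-contained proof of the coarea formula: its entire proof reads ``See \cite{chavel} or deduce it from \cite[Theorem 3.2.12]{federer1996}.'' So what you have written is strictly more than what the paper provides, and in fact your closing remark (that the statement follows at once from Federer's Theorem 3.2.12) \emph{is} the paper's proof.

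The self-contained route you sketch---reduce to the model $\pi\colon(x,y)\mapsto y$ via the submersion normal form, block-decompose $g_M$, compute $J_p\pi$ using the Schur complement $S=C-B^TA^{-1}B$, and watch $\sqrt{\det S}$ cancel between $J_p\pi$ and $dM$---is the standard direct proof in the submersion case and is entirely sound; the Schur-complement calculation you do is exactly the same one the paper uses elsewhere (in the proof of Lemma~\ref{lem:undici}) to handle $J(\varphi|_{S_u})$. What your approach buys is an explanation of \emph{why} the formula holds and no dependence on Federer's machinery; what the paper's approach buys is brevity and the full strength of Federer's theorem (Lipschitz maps, Hausdorff measure), which is not needed here since $f$ is assumed to be a $\mC^1$ submersion.
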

\begin{proof}
See \cite{chavel} or deduce it from \cite[Theorem 3.2.12]{federer1996}.
\end{proof}

%
%
%
\bibliographystyle{plain}
\bibliography{KacRiceFormulaForTransverseIntersections.bib}

\end{document}